\documentclass[11pt, a4paper, twoside]{article}

\RequirePackage{amssymb,amsthm,amsmath}
\usepackage[colorlinks=true,linkcolor=blue, citecolor=red]{hyperref}
\usepackage[nameinlink,capitalize]{cleveref}
\usepackage[dvipsnames]{xcolor}
\usepackage{algorithm, algorithmicx, algpseudocode}
\usepackage{graphicx}
\usepackage{subfig}
\usepackage{amsfonts,amsmath,amssymb,amsthm,enumerate,dsfont,bbm}
\crefname{equation}{}{}
\numberwithin{equation}{section}

\usepackage[mathscr]{euscript}
\usepackage[font=footnotesize,labelfont=bf]{caption}
\usepackage{a4wide}
\usepackage{authblk}
\usepackage{comment}

\theoremstyle{plain}
\newtheorem{lemma}{Lemma}[section]
\newtheorem{prop}[lemma]{Proposition}
\newtheorem{thm}[lemma]{Theorem}
\newtheorem{cor}[lemma]{Corollary}
\newtheorem{hyp}[lemma]{Assumption}
\newtheorem{althyp}[lemma]{Assumption}
\theoremstyle{definition}
\newtheorem{df}[lemma]{Definition}
\newtheorem{ex}[lemma]{Example}
\newtheorem{remark}[lemma]{Remark}
\newtheorem{convention}[lemma]{Convention}
\newcommand{\od}{\mathrm{d}}
\newcommand{\E}{{\mathbb{E}}}
\renewcommand{\P}{{\mathbb{P}}}
\newcommand{\R}{{\mathbb{R}}}
\newcommand{\N}{{\mathbb{N}}}
\newcommand{\cB}{\mathcal{B}}
\newcommand{\cF}{\mathcal{F}}
\newcommand{\cG}{\mathcal{G}}
\newcommand{\cP}{\mathcal{P}}
\newcommand{\cT}{\mathcal{T}}
\newcommand{\cY}{\mathcal{Y}}
\newcommand{\cZ}{\mathcal{Z}}
\newcommand{\cN}{\mathcal{N}}
\newcommand{\cW}{\mathcal{W}}
\newcommand{\rwB}{{B}^n}
\newcommand{\tY}{{\sf Y}}
\newcommand{\tZ}{{\sf Z}}
\newcommand{\bx}{{\bf x}}
\newcommand{\sptext}[3]{\hspace{#1 em}\mbox{#2}\hspace{#3 em}}
\newcommand{\rset}{\mathbb{R}}
\newcommand{\nset}{\mathbb{N}}
\newcommand{\ep}{\varepsilon}
\newcommand{\ind}{\mathbf{1}}
\newcommand{\e}{\mathbb{E}}
\newcommand{\p}{\mathbb{P}}
\newcommand{\m}{\mathcal} 
\newcommand{\ot}{\overline{t}}
\newcommand{\ut}{\underline{t}}
\newcommand{\uto}{\underline{t_o}}
\newcommand{\os}{\overline{s}}
\newcommand{\us}{\underline{s}}
\newcommand{\ur}{\underline{r}}
\newcommand{\lip}{\text{\rm Lip}}
\newcommand{\gho}{|g|_\ep}   
\newcommand{\fhot}{|f|^{\scriptscriptstyle loc}_{\alpha;t}}  
\newcommand{\fhox}{|f|_{\ep;x}} 
\newcommand{\fhoxt}{|{\sf f}|_{\ep;x}} 
\newcommand{\fn}{{{\sf f}_n}}
\newcommand{\f}{{\sf f}}
\newcommand{\ps}[1]{\left |#1\right |_{(\ep,\beta)}}
\newcommand{\Ps}[2]{|#1|_{(#2,\beta)}}
\newcommand{\Psg}[2]{|#1|_{(#2,\gamma)}}
\newcommand{\Psgb}[2]{|#1|_{(#2,\overline \gamma)}}

\allowdisplaybreaks
\numberwithin{equation}{section}


\title{Convergence rate for random walk approximations of mean field BSDEs\\}

\author[1]{Boualem Djehiche}
\author[2,4]{Hannah Geiss}
\author[2,5,7]{Stefan Geiss}
\author[3]{C\'eline Labart}
\author[2,6]{Jani Nykänen}
\affil[1]{Department of Mathematics, KTH, SE-100 44 Stockholm, boualem@math.kth.se}
\affil[2]{Department of Mathematics and Statistics, P.O.Box 35, FI-40014 University of Jyvaskyla, Finland}
\affil[3]{Univ. Grenoble Alpes, Univ. Savoie Mont Blanc, CNRS, LAMA, 73000 Chambéry, France, celine.labart@univ-smb.fr}
\affil[4]{hannah.r.geiss@jyu.fi}
\affil[5]{stefan.geiss@jyu.fi}
\affil[6]{jani.m.nykanen@jyu.fi  \medskip}

\affil[7]{corresponding author}


\begin{document}

\maketitle
 
\begin{abstract}
We study the rate of convergence w.r.t.~a Wasserstein type distance for
random walk approximations of mean field BSDEs.
Our method does not use the particle method but instead a freezing
technique. We extend results by Briand,
Ch. Geiss, S. Geiss, and Labart [Bernoulli, 27(2) 2021] about  the rate
of convergence of a Donsker-type theorem for  BSDEs
from the classical setting to the mean field setting. In this connection
the mean field setting leads to new phenomena and requires
new techniques that should be of independent interest:
The H\"older continuous terminal condition causes a singularity in time
of the generator when seen as a generator in the
non-mean field setting. To handle this singularity we introduce  a
concept of modified Hölder continuity
by which we  are able to achieve, up to a logarithmic term, the same
polynomial approximation rates as in the classical
non-mean field setting (in fact, already when approximating the Brownian motion itself a logarithmic
term is necessary). Moreover, the exploited freezing technique of the mean field terms
yields to the problem to handle the quantitative behavior of several
different generators. Using BMO-techniques we obtain the rate of convergence for the
integrated gradient process in the scale of Lorentz (type) spaces of exponential type.
\end{abstract}

{\bf MSC2020}: 60H10,60G50,65C30,60H35,65G99

{\bf Keywords}: Mean field backward stochastic differential equations, convergence rate, scaled random walk, Wasserstein distance \bigskip

\tableofcontents


\section{Introduction and main results}

Given $0\le t_o < T$ and a stochastic basis $(\Omega,\cF,\P,(\cF_s)_{s\in [t_o,T]})$ satisfying the usual conditions,
carrying a 1d-standard Brownian motion $B=(B_s)_{s\in [t_o,T]}$ with $B_0\equiv 0$,
a random variable  $\xi \in L^{2 \ep}( \Omega, \cF, \P)$ for some $\ep \in (0,1]$ that is independent from $B$,
and given a generator
\[ f:  [t_o,T[ \times \rset \times \rset \times\rset \times \m P_2(\rset) \times \m P_2(\rset) \to \rset, \]
we consider  under \cref{H:A1} the discretization of solutions to mean field BSDEs of form
\begin{equation} \label{eq:mainBSDE}
\left .
\begin{array}{r}
   	Y^{t,x}_s  = g\left(B^{t,x}_T\right) + \int_s^T  f\left(r,B^{t,x}_r,Y^{t,x}_r, Z^{t,x}_r, [Y_r^{t_o,\xi}], [Z_r^{t_o,\xi}] \right) dr
    - \int_s^T Z^{t,x}_r\, dB_r \\
    t_o\le t\leq s\leq T
\end{array}
\right \},
\end{equation}
where $g$ is $\ep$-H\"older continuous, 
\[ B^{t,x}_r := x+B_r-B_t, \] 
and $(Y^{t_o, \xi}, Z^{t_o, \xi})$ solves \eqref{eq:mainBSDE} with  $(t, x)$ replaced by $(t_o, \xi)$.
Here  $[\eta]$ stands for the law of the random variable $\eta$ and 
as filtration we choose $\cF_s:= \sigma(\xi) \vee \sigma( B_r-B_{t_0}: r\in [t_o,s] )\vee \cN$ with 
$\cN$ being the $\P$-null sets.
\medskip

We recall, that for $p\ge 1$ the set $\m P_p(\rset)$ consists of probability measures with a finite $p$-th moment, endowed with the Wasserstein distance
\[ \m W_p(\mu,\mu'):=\inf_{\pi}\bigg(\int_{\rset\times\rset}|x-x'|^pd\pi(x,x')\bigg)^{1/p}
   \sptext{1}{for}{1} (\mu,\mu')\in \m P_p(\rset)\times \m P_p(\rset), \]
the infimum being taken over the probability distributions $\pi$ on $\rset\times\rset$ 
whose marginals on $\rset$ are $\mu$ and $\mu'$, respectively.
Notice that if $X$ and $X'$ are $\R$-valued random variables defined on the same probability space with a $p$-th moment, then by definition we have
\begin{equation}
\label{Wasserstein distance_maj}
\m W_p([X],[X'])\leq \Big[\E|X-X'|^p\Big]^{1/p}.
\end{equation}

Mean field BSDEs (also McKean-Vlasov BSDEs in the literature) were introduced in \cite{BLP07,BDLP07}.
In \cite{BDLP07} the Markovian setting is used, in which existence
and uniqueness of the solution  for  terminal conditions of type
$\eta=\E\big[g(x,X_T)\big]_{|x=X_T}$ is verified,  where the forward process $X$ is a stochastic differential equation of mean field type  and
the generator is given by $\E\big[f(s,\Lambda,(X_s,Y_s,Z_s))\big]_{|\Lambda=(X_s,Y_s,Z_s)}$. In \cite{BLP07}, the
 existence and uniqueness  result is extended  to a more general setting and
mean field BSDEs are related to nonlocal partial differential equations. 
\smallskip

Mean field forward BSDEs (mean field FBSDEs in the following) arise naturally in the probabilistic analysis of mean field games or mean field control problems. Mean field games are introduced by \cite{LL06a} and \cite{LL06b}, and independently, by \cite{huang06} to model games with interactions between many similar players. In this theory, each player’s dynamics and cost take into account the empirical distribution of all agents. For example, in \cite{AD19}, the authors model pedestrian crowd by using a mean field game approach and solve the problem with a mean field FBSDE. Optimal control problems of mean field type can be interpreted as large population limits of
cooperative games, where the players collaborate to optimize a joint objective \cite{Lac17}. For example, in \cite{AC21}, the authors solve an optimal portfolio with mean field risk minimization problem. It is therefore a relevant challenge to solve such equations numerically.
\smallskip

Numerical methods for mean field FBSDEs are of natural interest already for some time 
(see \cite{A15}, \cite{CT15}, \cite{CCD19}).  To solve forward mean field  SDEs typically particle methods are exploited \cite{AKH02,TV03,Bos05}, where the empirical measure of a large number of
interacting particles approximates the mean field term.
However, for backward  equations the numerical implementation yields to a high complexity
when a larger number of particles is used.
The above articles on numerical methods for mean field FBSDEs are not based on particle algorithms:
\cite{CT15} investigates decoupled mean field FBSDE by a cubature method and
\cite{CCD19} develops a numerical method for fully coupled mean field FBSDEs applying recursive Picard iterations
on small time intervals. The scheme is based on a variation of the method of continuation.
In \cite{GMW21}, the authors propose several algorithms to solve mean field FBSDEs with the help of neural
networks and estimate the solution or the gradient of it via minimization problems.
We also refer to \cite{CCSJ21}, \cite{FZ20} and \cite{HHL23} for other papers based on deep learning methods.
In \cite{ABGL22} the authors solve non Markovian mean field BSDEs by using the Wiener chaos expansion and a particle system approximation.
In \cite{ZM23}, the authors propose first order and Crank-Nicholson numerical schemes to solve certain kinds of mean field BSDEs.
They provide $L^p$ error estimates for the proposed schemes.
\bigskip

\paragraph{The random walk BSDE}\ \smallskip

We propose to approximate the solution $(Y^{t, x}, Z^{t, x})$ to \eqref{eq:mainBSDE} using a scaled random walk.
For $n\in\nset:=\{1,2,3,...\}$ we set $h:=\frac{T}{n}$ and consider the time-net $(t_k)_{k=0}^n$ 
\[ t_k:= kh.\]
For $t\in [0,T]$ we set
\[ \ut := \max \left\{ t_k \mid k \in \left\{ 0, ..., n \right\}  \textrm{ and } t \geq t_k  \right\} \sptext{1}{and}{1}
   \ot := \inf \left\{ t_k \mid k \in \left\{ 0, ..., n \right\} \textrm{ and } t \leq t_k  \right\}\]
so that $0\le \ut \le t \le \ot \le T$.
For $n\ge n_0 > \frac{2T}{T-t_o}$ we let $j\in \{0,1,\ldots,n-3\}$ such that $\underline{t_o}=t_j$
and use as scaled random walk
\begin{equation}\label{eqn:definition:Btn}
	\rwB_{t_j}\equiv 0 \sptext{1}{and}{1} 	\rwB_{t_k} := \sqrt{h}\, \sum_{m=j+1}^k \zeta_m,
\end{equation}
where $(\zeta_k)_{k=j+1}^n$ is an i.i.d. sequence of Rademacher random
variables (taking values $-1$ and $1$ with probability $1/2$). 
The quadratic variation  of  $(\rwB_t)_{t \in [0,T]}$  is deterministic and it holds
\[ \langle \rwB \rangle_s = h \sum_{k=j+1}^n \delta_{t_k}((0,s]). \]
We let
\[ \rwB_t := \rwB_{\ut} \sptext{1}{for}{1} t \in [\uto,T]
   \sptext{1}{and}{1}
   B_s^{n,t,x} := x+\rwB_s- \rwB_t \sptext{1}{for}{1} x\in\rset
                                   \sptext{1}{and}{1} \uto \leq t\leq s\leq T. \]
For $t\in [t_o,T[$ the random walk scheme associated to \eqref{eq:mainBSDE} is
\begin{equation}\label{eq:mainBSDEn-hat}
\left . \begin{array}{r}
    Y^{n,\ut,x}_s
  = g( B^{n,\ut,x}_T)+\int\limits_{]s,T]} f(r\wedge t_{n-1},  B^{n,\ut,x}_{r-}, Y^{n,\ut,x}_{r-},  Z^{n,\ut,x}_r,\mu_r,\nu_r ) d\langle \rwB \rangle_r
   - \int\limits_{]s,T]}  Z^{n,\ut,x}_r d\rwB_r \\
\hspace*{-6em} t_o\le t \le s \le T
\end{array} \right \}
\end{equation}
\smallskip

where $\mu_s:= [Y^{n,t_o, \xi}_r]$ and $\nu_r =[Z^{n,t_o, \xi}_r]$
are the laws of the global solution to the equation with the terminal condition $g( B^{n,\ut,\xi}_T)$.
In the generator $f$ we use $r\wedge t_{n-1}$ as $f$ is  defined on
$[t_o,T[\times\rset\times\rset\times\rset  \times \cP_{2}(\R) \times \cP_2(\R) \to\rset $ only.
Both schemes, \eqref{eq:mainBSDE} and \eqref{eq:mainBSDEn-hat}, can be seen as functional maps
\begin{align}
\cT^n:(B_s^{n,\ut,x})_{s\in [\ut,T]} & \longmapsto  \Big ( (Y^{n,\ut,x}_s)_{s\in [\ut,T]},(Z^{n,\ut,x}_s)_{s\in ]\ut,T]} \Big ), \label{eqn:map_Tn}\\
  \cT:(B_s^{t,x})_{s\in [t,T]} \hspace{.8em}      & \longmapsto  \Big ( (Y^{t,x}_s)_{s\in [t,T]},(Z^{t,x}_s)_{s\in [t,T[}\Big ). \label{eqn:map_T}
\end{align}
The operators $\cT,\cT^n$ depend on the data $(g,f,\xi)$ of the BSDE \eqref{eq:mainBSDE}, and
one has that
\begin{align*}
   \Big ( (Y^{t,\xi}_s)_{s\in [t,T]},(Z^{t,\xi}_s)_{s\in [t,T[}\Big )
&   = \Big ( (Y^{t,x}_s)_{s\in [t,T]},(Z^{t,x}_s)_{s\in [t,T[}\Big )_{x=\xi},\\
   \Big ( (Y^{n,\ut,\xi}_s)_{s\in [\ut,T]},(Z^{n,\ut,\xi}_s)_{s\in [\ut,T[}\Big )
&   = \Big ( (Y^{n,\ut,x}_s)_{s\in [\ut,T]},(Z^{n,t,x}_s)_{s\in [\ut,T[}\Big )_{x=\xi}.
\end{align*}

\paragraph{Previous results on random walk approximations}\ \smallskip

One of the first studies on the approximation of standard BSDEs (i.e. $f$ may depend on $Y$ and $Z$, but not on their laws)
by a random walk scheme is due to Briand, Delyon and Mémin in \cite{MR1817885}. They proposed an approximation based
on Donsker's theorem. They showed that the solution $(Y,Z)$ to a standard BSDE can be approximated by the solution $(Y^n,Z^n)$
to the BSDE
\begin{equation*}
  Y^n_t = G(B^n) + \int_{]t,T]} f(B^n_{s-},Y^n_{s-},Z^n_{s})\, d\langle B^n\rangle_s - \int_{]t,T]} Z^{n}_s\, dB^n_s, \quad 0\leq t\leq T.
\end{equation*}
They proved, in full generality (meaning that $G(B)$ is only
required to be a square integrable random variable) that $(Y^n,Z^n)$ converges weakly
to $(Y,Z)$. However, the question of the rate of convergence was left
open. For general path-dependent terminal conditions $G(B)$ this question is open, the
Markovian case $G(B)=g(B_T)$ has been studied in  \cite{BGGL21}:
Assuming \cref{A1_alt} {\bf without the singularity $(T-t\vee t')^{-\frac{1}{2}}$}
it was shown in \cite[Theorem 12]{BGGL21} that
\begin{align}
\label{eq:was_Y}
\cW_p\left([Y^{t,x}_s], [Y^{n,t,x}_s]\right) & \leq c_\eqref{eq:was_Y}\, (1+|x|)^\ep \, n^{-\alpha \wedge\frac{\ep}{2}},\\
\label{eq:was_Z}
\cW_p\left([Z^{t,x}_s], [Z^{n,t,x}_s]\right)  & \leq  c_\eqref{eq:was_Z}\, \frac{(1+|x|)^\ep}{\sqrt{T-s}} \, n^{- \alpha \wedge\frac{\ep}{2}}
\end{align}
for any $\, p\in [1,\infty [$ and constants $  c_\eqref{eq:was_Y}, c_\eqref{eq:was_Z}  >0$
depending at most on  $(T,\ep,\alpha,f,g,p)$.
This result exploits the PDE structure behind the Markovian case combined with a result of Rio~\cite{Rio09} which directly implies
that there exists a constant $c_p>0$ such that $\cW_p(B^n_1,  \mathcal{G}) \leq c_p\, n^{-1/2}$
for $n\geq 1$ and  $p\geq 1$, when $\mathcal{G}$ is a standard Gaussian random variable.
The rate in \eqref{eq:was_Y} and  \eqref{eq:was_Z} improve  \cite{GLL20a} and \cite{GLL20b}, where the
$L^2$ distance was studied which resulted in an order of $n^{-1/4}$ for smooth coefficients.

\paragraph{Main results}\ \smallskip

We come back \eqref{eqn:map_Tn} and \eqref{eqn:map_T}. For fixed $\xi:\Omega\to \R$ we define
the functions $U^n:[\underline{t_o},T]\times \R\to \R$ and $u:[t_o,T]\times \R\to \R$ by
\[ U^n(t,x) := Y_t^{n,\ut,x}
   \sptext{1}{and}{1}
   u(t,x) := Y_t^{t,x}. \]
Moreover, we use   
\[    \Delta^n: [\underline{t_o},T[ \times \R\to \R   \sptext{1}{by}{1} \Delta^n(t,x)
   := \frac{1}{2 \sqrt{h} } \left [ U^n (\ut+h,x+\sqrt{h}) - U^n (\ut+h,x-\sqrt{h}) \right ].
\]
Exploiting \eqref{eqn:def:mainPDEnf_Deltanf} for the relations
\eqref{eqn:fn:U->Y},
\eqref{eqn:fn:Delta->Z-I},
and  \eqref{YandZfrom-u}
we can take,
as versions of the solution processes,
\begin{align*}
Y_s^{n,t,x} & = U^n(s,B_s^{n,t,x}) \sptext{1}{for}{1} s\in [\ut,T],   \\
Z_s^{n,t,x} & = \Delta^n(s,B_s^{n,t,x}) \sptext{1}{for}{1} s\in ]\ut,T]
                \sptext{1}{and}{1} s \not = \us,   \\
Y_s^{t,x}   & = u(s,B_s^{t,x}) \sptext{2.5}{for}{1} s\in [t,T], \\
Z_s^{t,x}   & = \nabla u(s,B_s^{t,x}) \sptext{1.7}{for}{1} s\in [t,T[.
\end{align*}
So we get explicit path-dependent functionals $\cT^n$ and $\cT$ in  \eqref{eqn:map_Tn} and \eqref{eqn:map_T}.
Let  $\Gamma(B,B^n)$ denote the set of all the processes $(\cB, \cB^n)$  with $\cB$  continuous and $\cB^n$ being
c\`adl\`ag that have the same finite dimensional distributions as  $B$ and $B^n$, respectively.
\medskip

Our first main result is a general coupling result for mean field BSDEs:
\medskip

\begin{thm}\label{statement:main_result_local_version}
Suppose \cref{H:A1}, $\beta>\frac{1}{2}$, $n\ge n_0$, and that
$\|\cdot\|$ is a monotone semi-norm on $(\Omega, \cF, \p)$ with $\|\cdot\| \le \kappa \|\cdot \|_{L^{\exp(1)}}$
for some $\kappa>0$.
Then, for $t \in [t_o,T[$ and $x,y\in \R$,  one has
\begin{equation}\label{eqn:1:statement:main_result_local_version}
     |u(t,x) - U^n(t,y)|
\le c_\eqref{eqn:1:statement:main_result_local_version} \left [ |x-y|^\ep + (1+|x|)^\ep \frac{|\log (n+1)|^\beta}{n^{\alpha \wedge \frac{\ep}{2}}} \right ]
\end{equation}
and, for each coupling  $(\cB, \cB^n)\in \Gamma(B,B^n)$,
\begin{align}
&   \left \| \int_t^T  \Big| \nabla u(s, \cB_s^{t,x}) - \Delta^n(s, \cB^{n,\underline{t},y}_s)\Big|^2  \od s \right \|^\frac{1}{2}
      \notag \\
& \le c_\eqref{eqn:2:statement:main_result_local_version} \left [   \left \| \sup_{t\le s\le T}   \ps  {\cB_s^{t,x}-\cB^{n,\underline{t},y}_s}^2  \right \|^\frac{1}{2}
       + \left ( 1 + \sqrt{\kappa} \, (1+|y|)^\ep \right ) \frac{|\log (n+1)|^{\beta}}{n^{\alpha \wedge \frac{\ep}{2}}} \right ],
       \label{eqn:2:statement:main_result_local_version}
\end{align}
where $c_\eqref{eqn:2:statement:main_result_local_version}, c_\eqref{eqn:1:statement:main_result_local_version}>0$
depend at most on $(\Theta,f_0,M)$.
\end{thm}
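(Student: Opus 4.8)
The plan is to reduce the comparison to the non-mean field setting by \emph{freezing} the law arguments. Once \eqref{eq:mainBSDE} has been solved, the laws $[Y^{t_o,\xi}_r]$ and $[Z^{t_o,\xi}_r]$ are fixed elements of $\m P_2(\rset)$, so $u$ is the value function of the \emph{standard} Markovian BSDE with the frozen generator $F(r,b,y,z):=f(r,b,y,z,[Y^{t_o,\xi}_r],[Z^{t_o,\xi}_r])$; likewise $U^n$ is the value function of the random walk scheme \eqref{eq:mainBSDEn-hat} with the (different) frozen generator $F^n(r,b,y,z):=f(r\wedge t_{n-1},b,y,z,\mu_r,\nu_r)$. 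Because the $\ep$-Hölder terminal condition forces $r\mapsto Z^{t_o,\xi}_r=\nabla u(r,\cdot)$ to grow at most like $(T-r)^{-1/2}$, the frozen generator $F$ only satisfies \cref{A1_alt} \emph{including} the singular factor $(T-t\vee t')^{-1/2}$; this is precisely where the notion of modified Hölder continuity enters, and this singularity is the source of the logarithmic correction $|\log(n+1)|^\beta$ with $\beta>\tfrac12$ (already unavoidable when coupling $B$ and $\rwB$ themselves, cf.\ \cite{Rio09}). Thus the proof must simultaneously control several different generators ($f$, $F$, $F^n$, and the time-truncated $F$), each with the regularity it actually possesses.

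\textbf{Step 1 (decomposition and the non-mean field estimate).} Introduce the auxiliary object $\widehat U^n$, the value function of the random walk scheme driven by the \emph{continuous} frozen generator $F$, with discrete gradient $\widehat\Delta^n$, and split
\[
|u(t,x)-U^n(t,y)|\le|u(t,x)-\widehat U^n(t,y)|+|\widehat U^n(t,y)-U^n(t,y)|.
\]
The first term is handled by the coupling estimate for standard BSDEs obtained by extending the arguments of \cite{BGGL21} from \cref{A1_alt} without singularity to \cref{A1_alt} \emph{with} the $(T-t\vee t')^{-1/2}$ singularity; this extension yields exactly $|u(t,x)-\widehat U^n(t,y)|\le c\,[\,|x-y|^\ep+(1+|x|)^\ep |\log(n+1)|^\beta n^{-\alpha\wedge\ep/2}\,]$, together with the corresponding bound for $\int_t^T|\nabla u(s,\cB^{t,x}_s)-\widehat\Delta^n(s,\cB^{n,\ut,y}_s)|^2\od s$ in the semi-norm $\|\cdot\|$. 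The second term, $|\widehat U^n(t,y)-U^n(t,y)|$, is controlled by a stability estimate for the random walk scheme itself (linearisation along the time net, Gronwall, and BMO control of the discrete martingale part, using that $f$ is Lipschitz in $(y,z)$ and in both measure arguments and $\tfrac12$‑Hölder‑type regular in time): it is bounded by a norm of $\int_t^T|F(r,\cdot)-F^n(r,\cdot)|\,\od\langle\rwB\rangle_r$, which in turn is $\lesssim\int_t^T(|r-r\wedge t_{n-1}|^{\alpha'}+\cW_2([Y^{t_o,\xi}_r],\mu_r)+\cW_2([Z^{t_o,\xi}_r],\nu_r))\,\od\langle\rwB\rangle_r$ for a suitable $\alpha'>0$, the time term contributing at most $c\,n^{-\alpha'}$ and the Wasserstein terms being the mean field discrepancies.

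\textbf{Step 2 (closing the mean field loop).} By \eqref{Wasserstein distance_maj}, for \emph{any} coupling of $B$ and $\rwB$, $\cW_2([Y^{t_o,\xi}_r],\mu_r)\le(\E|u(r,B^{t_o,\xi}_r)-U^n(r,B^{n,t_o,\xi}_r)|^2)^{1/2}$, and in integrated form $\int_{t_o}^T\cW_2([Z^{t_o,\xi}_r],\nu_r)^2\,\od r\le\E\int_{t_o}^T|\nabla u(r,B^{t_o,\xi}_r)-\Delta^n(r,B^{n,t_o,\xi}_r)|^2\,\od r$. Choosing a good (Rio/KMT‑type) coupling of $B$ and $\rwB$ and inserting the \emph{pointwise} bounds of Step 1 evaluated at $(t_o,\xi)$ produces a self‑referential system for $e_n^Y(r):=(\E|Y^{t_o,\xi}_r-Y^{n,t_o,\xi}_r|^2)^{1/2}$ and for the integrated $Z$‑discrepancy; since all $Z$‑quantities enter \emph{only} integrated over subintervals of $[t_o,T]$, no non‑integrable kernel appears, so Gronwall's lemma gives $\sup_{r}e_n^Y(r)+(\E\int_{t_o}^T|Z^{t_o,\xi}_v-Z^{n,t_o,\xi}_v|^2\od v)^{1/2}\le c\,|\log(n+1)|^\beta n^{-\alpha\wedge\ep/2}$. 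Feeding this back into Step 1 removes the remaining terms and proves \eqref{eqn:1:statement:main_result_local_version}; this loop could alternatively be isolated as an a priori mean field consistency lemma preceding the theorem.

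\textbf{Step 3 (the integrated gradient estimate) and the main difficulty.} For \eqref{eqn:2:statement:main_result_local_version} one compares the two equations along an arbitrary coupling $(\cB,\cB^n)\in\Gamma(B,\rwB)$; since $\nabla u$ and $\Delta^n$ are deterministic functions, the left‑hand side depends on the coupling only through $\sup_{t\le s\le T}\|\ps{\cB^{t,x}_s-\cB^{n,\ut,y}_s}^2\|$. The $Z$‑difference is extracted from the martingale part of the difference of the two BSDEs via an Itô/energy identity, which brings in the terminal discrepancy (bounded through $\ep$‑Hölder continuity of $g$, hence by $|x-y|^\ep$ plus the coupling rate for $\rwB_T$ versus $B_T$), the frozen‑generator discrepancy of Step 1 (already $\le c\,|\log(n+1)|^\beta n^{-\alpha\wedge\ep/2}$ after Step 2), and the $Y$‑discrepancy bounded by \eqref{eqn:1:statement:main_result_local_version}; the upgrade from the natural $L^2$ bound to the monotone semi‑norm $\|\cdot\|\le\kappa\|\cdot\|_{L^{\exp(1)}}$, and the appearance of $1+\sqrt{\kappa}(1+|y|)^\ep$, comes from the John--Nirenberg inequality and energy estimates for BMO martingales, as in the non-mean field theory. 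I expect the main obstacle to be Step 2 interacting with the singular behaviour near $T$: the Gronwall argument must be run while all $Z$‑type quantities — only $O((T-\cdot)^{-1/2})$ pointwise — survive merely in integrated/BMO form, and the constants must be kept uniform in $n$ across the several generators, each used with its own modified‑Hölder regularity.
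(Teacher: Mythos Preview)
Your strategy (freeze the measure arguments, split into a non-mean field comparison plus a residual generator discrepancy, close the mean field loop, upgrade to $L^{\exp(1)}$ via John--Nirenberg) is correct in spirit and matches the paper's architecture. But there is a structural difference that matters, and your Step~2 as written has a circularity you do not resolve.

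The paper does not use a two-term split $u\to\widehat U^n\to U^n$. It introduces \emph{two} frozen auxiliary objects: the continuous BSDE with generator $\fn$ (producing $u_{\fn},\nabla u_{\fn}$) and the discrete scheme with the \emph{same} $\fn$ (producing $U^n_{\fn},\Delta^n_{\fn}$), and splits $u\to u_{\fn}\to U^n_{\fn}\to U^n$. The middle step is the extension of \cite{BGGL21} (your Step~1), the first step is a trivial continuous a~priori estimate (\cref{prop_tilde_difference}), and the last step is the discrete stability estimate (\cref{statement:disrcrete-rate}). The point of the extra intermediate $u_{\fn}$ is that the Wasserstein terms needed for $|U^n_{\fn}-U^n|$ are \emph{not} $\cW_2([Y^{t_o,\xi}],[Y^{n,t_o,\xi}])$ (your target quantity) but, after \cref{statement:a-priori-discrete}\eqref{item:3:statement:a-priori-discrete:special-2}, the quantities $\cW_2([Y^{n,t_o,\xi}_{\fn}],[Y^{t_o,\xi}])$ and $\cW_2([Z^{n,t_o,\xi}_{\fn}],[Z^{t_o,\xi}_{\cdot\wedge t_{n-1}}])$. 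These are bounded \emph{directly} by \cref{prop_tilde_difference} and \cref{rate-singularity} (the already-proven first two steps), so there is no self-reference at all.

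Your Step~2, by contrast, bounds $|\widehat U^n-U^n|$ via \cref{statement:a-priori-discrete}\eqref{item:2:statement:a-priori-discrete:special-1}, which leaves $\cW_2([Y^{t_o,\xi}],[Y^{n,t_o,\xi}])$ and $\cW_2([Z^{t_o,\xi}],[Z^{n,t_o,\xi}])$ on the right, i.e.\ exactly the quantities you are estimating. Writing $e^Z_n:=\int_{t_o}^T\cW_2^2([Z^{t_o,\xi}_s],[Z^{n,t_o,\xi}_s])\,d\langle B^n\rangle_s$, your inequality becomes $e^Z_n\le 2B_n^2+2c_{\eqref{eqn:item:2:statement:a-priori-discrete:special-1}}(\,\cdot\,+e^Z_n)$ with $c_{\eqref{eqn:item:2:statement:a-priori-discrete:special-1}}=c(T,L_f)$ not small; there is no time-ordering structure to Gronwall away, since both sides carry the full integral over $]t_o,T]$. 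The paper absorbs this loop \emph{inside} the a~priori estimate: \cref{statement:a-priori-discrete}\eqref{item:3:statement:a-priori-discrete:special-2} takes $f^1$ to be the mean field discrete generator itself, uses $\cW_2([Y^{n,1,\xi}],\mu^2)\le\|\tY^\xi\|_{L^2}+\cW_2([Y^{n,2,\xi}],\mu^2)$ and Gronwalls on $\E|\tY^\xi_{t_{k-1}}|^2$ (time-indexed, so legitimate), leaving only the non-self-referential $\cW_2([Y^{n,2,\xi}],\mu^2)$. Your last sentence in Step~2 (``this loop could alternatively be isolated as an a priori mean field consistency lemma'') is exactly right---that is what the paper does---but your displayed argument does not carry it out. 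A minor additional point: your $\widehat U^n$ uses $F$ rather than $\fn$; since $[Z^{t_o,\xi}_r]$ is undefined at $r=T$ and blows up as $r\uparrow T$, the paper's truncation $r\mapsto r\wedge t_{n-1}$ in $\fn$ is not cosmetic.
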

\bigskip

The meaning of $n_0$ and of the parameters $(\Theta,f_0,M)$, the constants depend on, are explained in \cref{section_notation} below.
The function $[0,\infty[ \ni r\mapsto \ps r \in [0,\infty[$ is a modification of the function
$r\mapsto r^\ep$ where the behaviour around zero is slightly changed to fit our purposes. Moreover,
we call $\| \cdot \|:L^0(\Omega,\cF,\P)\to [0,\infty]$ a monotone semi-norm provided that
$\|\eta\|=0$ if $\eta=0$ a.s., $\|\eta_1+\eta_2\|\le \|\eta_1\|+\|\eta_2\|$, $\|\lambda \eta\| = |\lambda| \|\eta\|$ if
$\lambda \not =0$, and $\|\eta_1\| \le \|\eta_2\|$ if $|\eta_1| \le |\eta_2|$ a.s.
Furthermore, for a random variable $f:\Omega \to \R$ and $\gamma\in ]0,\infty[$ we use the quasi--norm
\[ \|f\|_{L^{\exp(\gamma)}}
   := \inf \left \{ c > 0 : \P(|f| > \lambda) \le e^{1-\sqrt[\gamma]{\frac{\lambda}{c}}} \mbox{ for }
       \lambda \ge c \right \}.\]
The spaces $L^{\exp(\gamma)}$ measure an exponential tail behaviour of a random variable.
In particular, for all $\gamma,p>0$ one has that
\[  \|f\|_{L^p} \le c_{\gamma,p} \|f\|_{L^{\exp(\gamma)}}.\]
\smallskip

\cref{statement:main_result_local_version} can be applied to any coupling
$(\cB,\cB^n)$ between the Brownian motion and the random walk.
Here there are different options: One can use the Skorohod coupling, which only gives approximately the rate
$n^{-1/4}$ but has the advantage that the coupling can be realized by stopping times
which directly enables the application of stochastic analysis methods.
On the other hand, exploiting the coupling due to J.~Koml\'os, P.~Major and G.~Tusn\'ady
gives approximately the rate $n^{-1/2}$ but there is no joint stochastic basis for
the Brownian motion and the random walk.
Using this coupling we will deduce the following theorem:
\medskip

\begin{thm} \label{coro_final_pathwise_estimate}
Suppose \cref{H:A1}, $\beta > \frac{1}{2}$, and $n \geq n_0$.
Then there is a coupling  $(\cB, \cB^n)   \in \Gamma(B, B^n)$ with 
\begin{align}
     \left \|   \sup_{s \in [t, T]} |\cY^{t, x}_s-{\cY}^{n, t, y}_s|
     \right \|_{L^{\exp(\ep)}}
&\le c_\eqref{eqn:coro_final_pathwise_estimate:Y} \left [ |x-y|^\ep + (1+ |x|)^\ep \frac{|\log (n+1)|^{\ep \vee \beta} } {n^{\alpha \wedge\frac{\ep}{2}}}
     \right ],
     \label{eqn:coro_final_pathwise_estimate:Y} \\
     \left \|  \sqrt{\int_t^T \left| \cZ^{t, x}_s - {\cZ}^{n,t,y}_s \right|^2 \od s} \right \|_{L^{\exp(\beta+\ep)}}
&\le c_\eqref{eqn:coro_final_pathwise_estimate:Z}
     \left [   D_n^\ep \left | 1\vee \log \frac{1}{D_n}\right |^\beta
            + (1 + |y|)^\ep \frac{|\log (n+1)|^{\beta} } {n^{\alpha \wedge\frac{\ep}{2}}} \right ]
     \label{eqn:coro_final_pathwise_estimate:Z} \\
\mbox{for } D_n & := |x-y| + c_0 \frac{\log(n+1)}{\sqrt{n}} \notag
\end{align}
and some $c_0=c_0(T)>0$,
where the processes $({\cY}^{t, x}, {\cZ}^{t, x})$ and $({\cY}^{n, t, y}, {\cZ}^{n, t, y})$
solve \cref{eq:mainBSDE} and \cref{eq:mainBSDEn-hat}
with  driving processes $\cB$ and $\cB^n$, respectively,
and
$c_\eqref{eqn:coro_final_pathwise_estimate:Y},
 c_\eqref{eqn:coro_final_pathwise_estimate:Z}>0$
depend mostly on the parameters $(\Theta,f_0, M)$.
\end{thm}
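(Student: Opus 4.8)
The plan is to derive \cref{coro_final_pathwise_estimate} from the general coupling result \cref{statement:main_result_local_version} by choosing $(\cB,\cB^n)$ to be the Komlós--Major--Tusnády (KMT) coupling. The first step is to recall the quantitative form of the KMT embedding: on a suitable probability space one can realize a Brownian motion $\cB$ and a random walk $\cB^n$ (with the prescribed finite-dimensional distributions, so $(\cB,\cB^n)\in\Gamma(B,B^n)$) such that the sup-distance satisfies an exponential tail bound of the form $\P(\sup_{s\in[\uto,T]}|\cB_s-\cB^n_s| > \lambda + c_0\tfrac{\log(n+1)}{\sqrt n}) \le C e^{-c\lambda\sqrt n}$, equivalently $\|\sup_{s}|\cB_s-\cB^n_s|\|_{L^{\exp(1)}} \lesssim \tfrac{\log(n+1)}{\sqrt n}$. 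From this I would control $\|\sup_{t\le s\le T}\ps{\cB^{t,x}_s-\cB^{n,\ut,y}_s}\|_{L^{\exp(\cdot)}}$: since $\cB^{t,x}_s-\cB^{n,\ut,y}_s = (x-y)+(\cB_s-\cB^n_s)-(\cB_t-\cB^n_{\ut})$, and $\ps\cdot$ behaves like $r\mapsto r^\ep$, the monotonicity and near-concavity of $\ps\cdot$ give $\ps{\cB^{t,x}_s-\cB^{n,\ut,y}_s} \lesssim |x-y|^\ep + (\sup_s|\cB_s-\cB^n_s|)^\ep$; raising an $L^{\exp(1)}$ variable to the power $\ep$ lands in $L^{\exp(1/\ep)}$, and $L^{\exp(1/\ep)} \hookrightarrow L^{\exp(\ep)}$ is false in general — so here I must be careful about the direction: in fact $\|\eta^\ep\|_{L^{\exp(\ep)}} = \|\eta\|_{L^{\exp(1)}}^\ep$ by the definition of the quasi-norm, which is exactly what gives the exponent $L^{\exp(\ep)}$ in \eqref{eqn:coro_final_pathwise_estimate:Y} and the $D_n^\ep$ term with $D_n = |x-y| + c_0\tfrac{\log(n+1)}{\sqrt n}$.

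For the $Y$-estimate \eqref{eqn:coro_final_pathwise_estimate:Y}: the processes satisfy $\cY^{t,x}_s = u(s,\cB^{t,x}_s)$ and $\cY^{n,t,y}_s = U^n(s,\cB^{n,\ut,y}_s)$ (the versions fixed in the excerpt). Writing $\cY^{t,x}_s - \cY^{n,t,y}_s = [u(s,\cB^{t,x}_s) - U^n(s,\cB^{n,\ut,y}_s)]$, I would split this: the difference $|u(s,a) - U^n(s,b)|$ is handled by \eqref{eqn:1:statement:main_result_local_version} pointwise in $(a,b)$ (with $t$ there replaced by the running time $s$, and $x,y$ replaced by $a,b$), giving $\lesssim |a-b|^\ep + (1+|a|)^\ep\tfrac{|\log(n+1)|^\beta}{n^{\alpha\wedge\ep/2}}$. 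Substituting $a=\cB^{t,x}_s$, $b=\cB^{n,\ut,y}_s$, taking $\sup_{s\in[t,T]}$ inside (the bound is increasing in $|a-b|$ and in $|a|$), and then applying $\|\cdot\|_{L^{\exp(\ep)}}$: the first term is controlled by the computation above, yielding $D_n^\ep$ up to a log factor — but since here I only need the crude bound $D_n^\ep \lesssim |x-y|^\ep + (\tfrac{\log(n+1)}{\sqrt n})^\ep$ and $(\tfrac{\log(n+1)}{\sqrt n})^\ep = \tfrac{|\log(n+1)|^\ep}{n^{\ep/2}} \le \tfrac{|\log(n+1)|^{\ep\vee\beta}}{n^{\alpha\wedge\ep/2}}$, this absorbs into the stated right-hand side; the second term needs $\|(1+|\cB^{t,x}_s|)^\ep\|_{L^{\exp(\ep)}} \lesssim (1+|x|)^\ep$, which follows since $\cB^{t,x}_s = x + (\cB_s-\cB_t)$ and a Brownian increment is in $L^{\exp(2)}\subset L^{\exp(\ep)}$, so $(1+|x|+|\cB_s-\cB_t|)^\ep \lesssim (1+|x|)^\ep + |\cB_s-\cB_t|^\ep$ with the latter in $L^{\exp(2/\ep)}\subset L^{\exp(\ep)}$.

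For the $Z$-estimate \eqref{eqn:coro_final_pathwise_estimate:Z}: here $\cZ^{t,x}_s = \nabla u(s,\cB^{t,x}_s)$ and $\cZ^{n,t,y}_s = \Delta^n(s,\cB^{n,\ut,y}_s)$, so $\sqrt{\int_t^T|\cZ^{t,x}_s-\cZ^{n,t,y}_s|^2\,\od s}$ is precisely the quantity whose $\|\cdot\|$-norm is estimated in \eqref{eqn:2:statement:main_result_local_version}. I apply \eqref{eqn:2:statement:main_result_local_version} with the monotone semi-norm $\|\cdot\| := \|\cdot\|_{L^{\exp(\beta+\ep)}}$, which indeed satisfies $\|\cdot\| \le \kappa\|\cdot\|_{L^{\exp(1)}}$ for a suitable $\kappa=\kappa(\beta,\ep)$ since $\beta+\ep \ge 1$ would be needed — and indeed $\beta>\tfrac12$, $\ep\in(0,1]$, but $\beta+\ep$ can be less than $1$; so more carefully I take $\kappa$ from the embedding $L^{\exp(\beta+\ep)}\hookleftarrow L^{\exp(1)}$ which holds for \emph{any} positive parameter since smaller exponent parameter means heavier allowed tails, i.e. $\|\cdot\|_{L^{\exp(\gamma)}} \le c\|\cdot\|_{L^{\exp(1)}}$ whenever $\gamma\le 1$, and a separate elementary comparison when $\gamma>1$. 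The first term on the right of \eqref{eqn:2:statement:main_result_local_version}, namely $\|\sup_{t\le s\le T}\ps{\cB^{t,x}_s-\cB^{n,\ut,y}_s}^2\|^{1/2}_{L^{\exp(\beta+\ep)}}$, is then bounded, via $\ps r^2 \le C r^{2\ep}$ and the quasi-norm identity $\|\eta^{2\ep}\|^{1/2}_{L^{\exp(\beta+\ep)}} = \|\eta^{\ep}\|_{L^{\exp((\beta+\ep)/(2))}}$ ... — the cleanest route is: $\sup_s\ps{\cdot}^2 \lesssim (\sup_s|\cB^{t,x}_s-\cB^{n,\ut,y}_s|)^{2\ep} \lesssim D_n^{2\ep}\,\eta^{2\ep}$ where $\eta := 1 + \tfrac{\sqrt n}{c_0\log(n+1)}\sup_s|\cB_s-\cB^n_s|$ has $\|\eta\|_{L^{\exp(1)}}\lesssim 1$ (absorbing $|x-y|$ into $D_n$), hence after the square root $\lesssim D_n^\ep \|\eta^\ep\|_{L^{\exp(\beta+\ep)}}$, and the $\log$-correction $|1\vee\log\tfrac1{D_n}|^\beta$ in the statement arises exactly from estimating $\|\eta^\ep\|_{L^{\exp(\beta+\ep)}}$ against $\|\eta\|_{L^{\exp(1)}}^\ep$ with a careful tracking of the $\beta$-dependent constant in the tail comparison between parameter $1$ and parameter $\beta+\ep$ (the worst case being when $D_n$ is small, forcing $\lambda$ into the regime where the two exponential rates differ by a $\beta$-power). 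The second term, $(1+\sqrt\kappa(1+|y|)^\ep)\tfrac{|\log(n+1)|^\beta}{n^{\alpha\wedge\ep/2}}$, is already of the stated form.

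The main obstacle I anticipate is the bookkeeping around the Orlicz/Lorentz-type $L^{\exp(\gamma)}$ quasi-norms: precisely tracking how the parameter $\gamma$ transforms under (i) raising to the power $\ep$, (ii) the KMT tail bound, and (iii) the comparison constants between different $\gamma$'s, so that the final exponents $L^{\exp(\ep)}$ and $L^{\exp(\beta+\ep)}$ and the logarithmic correction factors $|\log(n+1)|^{\ep\vee\beta}$ and $|1\vee\log\tfrac1{D_n}|^\beta$ come out exactly as claimed rather than with a worse power. Everything else — substituting the KMT coupling into \cref{statement:main_result_local_version}, using monotonicity of the right-hand sides to pull the running-time supremum inside, and absorbing the polynomially-in-$n$ small terms — is routine given the two theorems already established.
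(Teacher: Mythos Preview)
Your overall strategy---plug the KMT coupling into \cref{statement:main_result_local_version}---is exactly the paper's, and your treatment of the $Y$-estimate is essentially correct. But the $Z$-estimate contains two genuine gaps.

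First, you apply \eqref{eqn:2:statement:main_result_local_version} directly with $\|\cdot\|=\|\cdot\|_{L^{\exp(\beta+\ep)}}$. The hypothesis of \cref{statement:main_result_local_version} requires $\|\cdot\|\le\kappa\|\cdot\|_{L^{\exp(1)}}$, and by \eqref{eqn:statement:LPhi_vs_Lp} one has $\|f\|_{L^{\exp(\gamma)}}\sim\sup_p p^{-\gamma}\|f\|_{L^p}$, so the embedding $\|\cdot\|_{L^{\exp(\gamma)}}\le c\|\cdot\|_{L^{\exp(1)}}$ holds precisely when $\gamma\ge1$, not $\gamma\le1$ as you write. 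Since $\beta>\tfrac12$ and $\ep\in(0,1]$ allow $\beta+\ep<1$, the hypothesis can fail and your direct route is blocked. The paper instead applies \eqref{eqn:2:statement:main_result_local_version} with $\|\cdot\|=\|\cdot\|_{L^{p/2}}$ for each $p\ge2$, taking $\kappa=\kappa_p\sim p$; the $\sqrt{\kappa}\sim\sqrt p$ in the second term of \eqref{eqn:2:statement:main_result_local_version} is then harmless after dividing by $p^{\beta+\ep}$ (since $\beta+\ep>\tfrac12$), and \eqref{eqn:statement:LPhi_vs_Lp} converts the resulting uniform-in-$p$ bound into $L^{\exp(\beta+\ep)}$.

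Second, you write $\ps{r}^2\le Cr^{2\ep}$, which is false: by definition $\ps{r}=r^\ep\Phi_\beta(r^{2\ep})$ and $\Phi_\beta(s)\sim(\log\tfrac1s)^\beta\to\infty$ as $s\downarrow0$. The logarithmic factor $|1\vee\log\tfrac1{D_n}|^\beta$ in \eqref{eqn:coro_final_pathwise_estimate:Z} comes precisely from this $\Phi_\beta$, not from comparing Orlicz parameters as you suggest. The paper isolates this in a separate lemma: if $\sup_p p^{-1}\|\eta\|_{L^p}\le c$, then $\sup_p p^{-(\ep+\beta)}\|\ps{\eta}\|_{L^p}\lesssim c^\ep|\log(c^{-1})|^\beta$. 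Applying this with $\eta=\sup_s|\cB^{t,x}_s-\cB^{n,\ut,y}_s|$ and $c=D_n$ (from the KMT bound \eqref{eqn:th_random_walk_uniform_convergence}) gives both the correct exponent $\ep+\beta$ and the correct logarithmic correction simultaneously.
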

\bigskip

\cref{coro_final_pathwise_estimate} enables us to approximate the mean field equation
\[  \left .	\begin{array}{r}
    Y^{t_o,\xi}_s  = g\left(B^{t_o,\xi}_T\right) + \int_s^T  f\left(r,B^{t_o,\xi}_r,Y^{t_o,\xi}_r, Z^{t_o,\xi}_r, [Y_r^{t_o,\xi}], [Z_r^{t_o,\xi}] \right) dr
    - \int_s^T Z^{t_o,\xi}_r\, dB_r \\
     t_o \leq s\leq T
    \end{array} \right \} \]
by
\[ \left . \begin{array}{r}
     Y^{n,\uto,\xi^n}_s
   \!\!\!= g(B^{n,\uto,\xi^n}_T)\!+\!\!\int\limits_{]s,T]} f(r\wedge t_{n-1},  B^{n,\uto,\xi^n}_{r^-}, Y^{n,\uto,\xi^n}_{r^-}\!\!\!,
                            Z^{n,\uto,\xi^n}_r\!,[Y^{n,\uto, \xi^n}_r],[Z^{n,\uto, \xi^n}_r]) d\langle \rwB \rangle_r \\
   - \int\limits_{]s,T]}  Z^{n,\uto,\xi^n}_r d\rwB_r, \quad t_o\le s \le T
   \end{array} \right \}, \]
where $\xi^n$ does not necessarily take only finitely many values, although this is the typical application. We also
remark that the random walk BSDE has always a solution by \eqref{eqn:fn:U->Y} and \eqref{eqn:fn:nablaU->Z}
with $x$ replaced by $\xi^n$. We let
\[ u(t_o,x,\mu)  := 	Y_{t_o}^{t_o,x} \sptext{1.8}{if}{1} \mu=[\xi]
   \sptext{1}{and}{1}
   U^n(\uto,x,\mu_n) := 	Y_{t_o}^{n,\uto,x} \sptext{1}{if}{1} \mu_n= [\xi^n]. \]
\medskip
\begin{cor}\label{statement:meanfield_PDE}
For $x,y\in \R$ and probability measures $\mu,\nu$ on $\cB(\R)$ with
$\int_\R |z|^{2\ep} \mu(\od z) \le M$ and $\int_\R |z|^{2\ep} \nu(\od z) \le M$
one has
\begin{equation} \hspace{-.75em}
     \left |u(t_o,x,\mu) - U^n(\uto,y,\nu) \right |
 \le c_\eqref{eqn:1:statement:meanfield_PDE}
     \left [ W^\ep_{2\ep}(\mu,\nu) + |x-y|^\ep +  (1+ |y|)^\ep \frac{|\log (n+1)|^{\ep \vee \beta} } {n^{\alpha \wedge\frac{\ep}{2}}}
     \right ], \label{eqn:1:statement:meanfield_PDE}
\end{equation}
and with the coupling from \cref{coro_final_pathwise_estimate},
\begin{multline}
      \left \| \sqrt{\int_{t_o}^T  \Big| \nabla u(s, B_s^{t_o,x},\mu) - \Delta^n(s, B^{n,\uto,y}_s,\nu)\Big|^2  \od s}  \right \|_{L^2} \\
 \le c_\eqref{eqn:2:statement:meanfield_PDE}
\left [ W^\ep_{2\ep}(\mu,\nu) + |x-y|^\ep + (1+ |y)^\ep \frac{|\log (n+1)|^{\ep+\beta} } {n^{\alpha \wedge\frac{\ep}{2}}}
     \right ],  \label{eqn:2:statement:meanfield_PDE}
\end{multline}
where
$c_\eqref{eqn:1:statement:meanfield_PDE}=c_\eqref{eqn:1:statement:meanfield_PDE}(\Theta,f_0,M)>0$ and
$c_\eqref{eqn:2:statement:meanfield_PDE}=c_\eqref{eqn:2:statement:meanfield_PDE}(\Theta,f_0,M)>0$.
\end{cor}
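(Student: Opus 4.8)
\emph{Proof strategy.} The plan is to deduce \cref{statement:meanfield_PDE} from \cref{statement:main_result_local_version} (using, for the gradient statement, the Koml\'os--Major--Tusn\'ady coupling already produced in \cref{coro_final_pathwise_estimate}) by splitting the total error into the error made when approximating the Brownian motion by the random walk \emph{at a fixed mean field datum}, and the error made when \emph{replacing the law $\mu$ by the law $\nu$}. Fix $\xi$ with $[\xi]=\mu$ and $\xi^n$ with $[\xi^n]=\nu$. Since the mean field terms $[Y_r^{t_o,\xi}],[Z_r^{t_o,\xi}]$ and their random walk analogues $[Y^{n,\uto,\xi}_r],[Z^{n,\uto,\xi}_r]$ depend on the anchor variable only through its law, \cref{statement:main_result_local_version} applied to the data $(g,f,\xi)$ refers precisely to $u(t_o,\cdot,\mu)$ and $U^n(\uto,\cdot,\mu)$; hence $|u(t_o,x,\mu)-U^n(\uto,y,\nu)|\le|u(t_o,x,\mu)-U^n(\uto,y,\mu)|+|U^n(\uto,y,\mu)-U^n(\uto,y,\nu)|$, and inserting $\Delta^n(\cdot,\cdot,\mu)$ gives the corresponding triangle inequality for the $L^2$--norm of the integrated gradient difference.

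\emph{Step 1 (fixed datum $\mu$).} For the value function, \eqref{eqn:1:statement:main_result_local_version} with $t=t_o$ yields $|u(t_o,x,\mu)-U^n(\uto,y,\mu)|\le c[|x-y|^\ep+(1+|x|)^\ep|\log(n+1)|^\beta/n^{\alpha\wedge\frac{\ep}{2}}]$; the elementary bounds $(1+|x|)^\ep\le(1+|y|)^\ep+|x-y|^\ep$ and $|\log(n+1)|^\beta\le|\log(n+1)|^{\ep\vee\beta}$ then put this into the shape of the first two summands of \eqref{eqn:1:statement:meanfield_PDE}. For the integrated gradient difference I would instead apply \eqref{eqn:2:statement:main_result_local_version} with the monotone semi-norm $\|\cdot\|=\|\cdot\|_{L^2}$ (admissible because $\|\cdot\|_{L^2}\le\kappa\|\cdot\|_{L^{\exp(1)}}$ for some universal $\kappa$) and with the coupling $(\cB,\cB^n)$ of \cref{coro_final_pathwise_estimate}; since $\cB^n_{t_o}=\cB^n_{\uto}$ one has $|\cB_s^{t_o,x}-\cB^{n,\uto,y}_s|\le|x-y|+3\sup_u|\cB_u-\cB^n_u|$, so by the quasi--subadditivity of $r\mapsto\ps r$ and the exponential moment bound $\|\sup_u|\cB_u-\cB^n_u|\|_{L^{\exp(1)}}\le c_0\log(n+1)/\sqrt n$ the coupling term $\|\sup_{t_o\le s\le T}\ps{\cB_s^{t_o,x}-\cB^{n,\uto,y}_s}^2\|_{L^2}^{1/2}$ is at most $c(|x-y|^\ep+(\log(n+1)/\sqrt n)^\ep)$; since $\alpha\wedge\frac{\ep}{2}\le\frac{\ep}{2}$ the last summand is dominated by $(1+|y|)^\ep|\log(n+1)|^{\ep+\beta}/n^{\alpha\wedge\frac{\ep}{2}}$, which is exactly the ``$\mu$--part'' of \eqref{eqn:2:statement:meanfield_PDE}.

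\emph{Step 2 (change of datum; the main work).} Here one must prove that $\sup_{y\in\R}|U^n(\uto,y,\mu)-U^n(\uto,y,\nu)|$, together with the $L^2$--norm of the integrated difference of $\Delta^n(\cdot,\cdot,\mu)$ and $\Delta^n(\cdot,\cdot,\nu)$ (evaluated along the random walk path), is bounded by $c\,\cW_{2\ep}^\ep(\mu,\nu)$ with $c=c(\Theta,f_0,M)$, uniformly in $n$. I would fix an optimal $\cW_{2\ep}$--coupling $(\xi,\xi^n)$ of $(\mu,\nu)$, independent of the noise, and subtract the two random walk mean field BSDEs \eqref{eq:mainBSDEn-hat} started from $\xi$ and from $\xi^n$: the generators differ only in their measure arguments, so by the Lipschitz dependence of $f$ on $\cP_2(\R)\times\cP_2(\R)$ from \cref{H:A1} the discrepancy is controlled by $\cW_2([Y^{n,\uto,\xi}_r],[Y^{n,\uto,\xi^n}_r])+\cW_2([Z^{n,\uto,\xi}_r],[Z^{n,\uto,\xi^n}_r])$. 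Using \eqref{Wasserstein distance_maj} and the uniform-in-$n$ $\ep$--H\"older continuity in space of $U^n$ (already used for the representation of $Y^{n}$ via $U^n$), each such distance is at most $c(\E|\xi-\xi^n|^{2\ep})^{1/2}$ plus the corresponding ``$\mu$ versus $\nu$'' quantity, and $(\E|\xi-\xi^n|^{2\ep})^{1/2}=\cW_{2\ep}^\ep(\mu,\nu)$ for the optimal coupling --- this is where the exponent $\ep$ and the power $\cW_{2\ep}^\ep$ enter. Feeding this into the a priori estimate for \eqref{eq:mainBSDEn-hat} --- an $L^\infty$ Gronwall argument over $[t_o,T]$ for the $Y$--part and the $L^2(\od\langle\rwB\rangle)$/BMO estimate for the $\Delta^n$--part, the $\od\langle\rwB\rangle$--integration over the fixed horizon providing the contraction --- closes the fixed-point loop and gives the asserted bound; the $M$--dependence enters through the $2\ep$--moment bounds on $\mu,\nu$, which guarantee $[Y^{n,\uto,\xi}_r],[Z^{n,\uto,\xi}_r]\in\cP_2(\R)$ and control the a priori constants. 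Combining Steps 1 and 2 yields \eqref{eqn:1:statement:meanfield_PDE} and \eqref{eqn:2:statement:meanfield_PDE}.

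I expect the main obstacle to be the $\Delta^n$--contribution in Step 2: because the mean field term of \eqref{eq:mainBSDEn-hat} also carries the law of $Z^{n,\uto,\xi}_r$, the generator perturbation produced by changing $\mu$ into $\nu$ contains $\cW_2([Z^{n,\uto,\xi}_r],[Z^{n,\uto,\xi^n}_r])$, a quantity of exactly the same type as the one being estimated; closing this self-referential estimate requires that the $\od\langle\rwB\rangle$--integration genuinely contracts \emph{uniformly in $n$}, which rests on the BMO--type bounds for the random walk scheme, and it must be carried out so that the resulting constants are independent of $n$ and of $y$ and depend on the measures only through $M$. Step 1, by contrast, is essentially a direct specialisation of \cref{statement:main_result_local_version}.
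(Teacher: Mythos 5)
Your proposal is correct in outline but takes a genuinely different route from the paper. The paper's proof splits $u(t_o,x,\mu)-U^n(\uto,y,\nu)$ into $[u(t_o,x,\mu)-u(t_o,y,\nu)]+[u(t_o,y,\nu)-U^n(\uto,y,\nu)]$, so that the change of measure $\mu\to\nu$ is performed entirely on the \emph{continuous} side and the continuous-to-discrete passage happens at the fixed datum $\nu$; the first term is then controlled by a continuous a priori estimate (cited from GeissYlinen, Lemma~5.26) plus the mean field a priori estimate of Li (Theorem~A.2), and the second term is handled directly by \cref{coro_final_pathwise_estimate} with $x=y$. You instead split through $U^n(\uto,y,\mu)$, so the continuous-to-discrete step is at the fixed datum $\mu$ and the change of measure happens on the \emph{discrete} side. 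This is a valid triangle inequality, and your Step~1 is essentially a specialisation of \cref{statement:main_result_local_version} (one caveat: the coupling term you write, $c(|x-y|^\ep+(\log(n+1)/\sqrt n)^\ep)$, misses the additional logarithm that $\ps\cdot$ introduces; tracing \cref{statement:estimate_exponential_norm_with_Psi} the coupling part is of size $D_n^\ep|\log(1/D_n)|^\beta$ as in \cref{coro_final_pathwise_estimate}, but this is absorbed into $|\log(n+1)|^{\ep+\beta}/n^{\alpha\wedge\frac{\ep}{2}}$, so the conclusion is unaffected). The real cost of your route is Step~2: you need a uniform-in-$n$ stability estimate for the \emph{discrete} mean field scheme in its measure argument, including the $L^2(\od\langle\rwB\rangle)$ bound for the $\Delta^n$-difference, which in turn requires a uniform-in-$n$ H\"older estimate for $U^n$ (and a singular one for $\Delta^n$) as well as a discrete analogue of Li's mean field a priori estimate with $n$-independent constants. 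These ingredients are not stated in the paper; \cref{statement:a-priori-discrete} provides the raw a priori estimates and shows how the self-referential mean field term is closed via Gronwall (and indeed $n_0>T(72L_f^2+1)$ is exactly what makes the discrete contraction work), so your argument can be made rigorous, but it amounts to re-proving on the discrete side what Li's theorem already gives on the continuous side. Put differently: the paper's decomposition is deliberately chosen so that the measure-change step leans on established continuous results, while yours shifts that burden onto the random walk scheme, buying nothing in generality and requiring additional lemma work. You correctly identify the self-referential $\cW_2([Z^{n,\uto,\xi}_r],[Z^{n,\uto,\xi^n}_r])$ term as the crux, but note that what closes it is the smallness of $h=T/n$ relative to $L_f$ (the Gronwall mechanism in \cref{statement:a-priori-discrete}) rather than the BMO bounds, which serve the exponential-tail upgrade and not the contraction itself.
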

\smallskip

The proof of Theorems
\ref{statement:main_result_local_version}/\ref{coro_final_pathwise_estimate} and
\cref{statement:meanfield_PDE}
will be given in \cref{subsection_pathwise}.  

\begin{remark}
Regarding \cref{coro_final_pathwise_estimate} we also  have the following lower bound:
Assume $t_o=0$, $f\equiv 0$, $\ep=1$ and $g(x)=x$.
Then we have that ${\cY}^{0,0}=\cB$ and  ${\cY}^{n,0,0}=\cB^n$, so that
\cref{th_random_walk_uniform_convergence} below implies that
\[ \left \| \sup_{s \in [0, T]} |\cY^{0,0}_s-{\cY}^{n, 0, 0}_s|   \right \|_{L^1} \ge \frac{1}{c} \frac{\sqrt{\log(n+1)}}{\sqrt{n}}. \] 
\end{remark}
\medskip

The benefit of the  approach described by Theorems
\ref{statement:main_result_local_version}/\ref{coro_final_pathwise_estimate} and  \cref{statement:meanfield_PDE}
is that it works without the particle method.
Here the  recombining scheme allows to determine the  conditional
expectation in an easy way while usually Monte Carlo methods or other sophisticated procedures are necessary.
This results in a  simple and fast algorithm (see  \cref{algorithm}, in particular \cref{rem_num_scheme}).
If we denote the solution to  \cref{eq:mainBSDE}  for a given $\xi$ by  $Y^{t,x,[\xi]}_t$, the map
$(t,x,[\xi])  \mapsto  Y^{t,x,[\xi]}_t$ can be interpreted as a viscosity solution to the associated  non-local PDE (provided  suitable assumptions such that a solution exists).
 This  associated  non-local PDE was  established  in a more general setting with jumps and
an SDE as a forward process  in \cite{Li17}.  First results in this direction can be found already in \cite{BLP07}.
Hence our approach  provides a method which permits to approximate the PDE solution by a random walk procedure  and an estimate of the approximation rate.
\medskip

The proof of \cref{statement:main_result_local_version} and \cref{coro_final_pathwise_estimate}
relies on a freezing method and the extension of results from \cite{BGGL21}, where we have three steps to
passage from the continuous setting to the discrete one:

\paragraph{The three steps from continuous to discrete} \ \medskip

By freezing the distributions  $[Y^{t_o,\xi}_s]$ and $[Z^{t_o,\xi}_{s\wedge t_{n-1}}]$ we introduce 
\begin{align} \label{f-zero}
  {\sf f}_n(r,x,y,z) :=f(r\wedge t_{n-1} ,x,y,z,[Y^{t_o,\xi}_r],  [Z^{t_o,\xi}_{r\wedge t_{n-1}}])
\end{align}
and consider two more BSDEs: One driven by  a Brownian motion
  \begin{align}  \label{eq:mainBSDEn-tilde}
  Y^{t,x}_{{\sf f}_n,s}  = g\left(B^{t,x}_T\right) + \int_s^T {\sf f}_n\left(r,B^{t,x}_r, Y^{t,x}_{{\sf f}_n,r},  Z^{t,x}_{{\sf f}_n,r} \right) dr  - \int_s^T Z^{t,x}_{{\sf f}_n,r} \, dB_r \sptext{1.5}{for}{.5} s \in [t,T] 
  \end{align}
and another one driven by a random walk
  \begin{align}\label{eq:mainBSDEn}
      Y^{n,\ut,x}_{{\sf f}_n,s}
   &= g(B^{n,t,x}_T) +  \int_{]s,T]} {\sf f}_n(r, B^{n,t,x}_{r-},Y^{n,\ut,x}_{{\sf f}_n,r-},  Z^{n,\ut,x}_{{\sf f}_n,r}) \, d\langle \rwB \rangle_r - \int_{]s,T]} Z^{n,\ut,x}_{{\sf f}_n,r}\, d \rwB_r  \sptext{1.5}{for}{.5} s \in [\ut,T].
  \end{align}

Then we split our problem into three parts by
\begin{align}\label{eq:error_terms}
\mathrm{error}((Y^{t,x}, Z^{t,x}),(Y^{n,t,x}, Z^{n,t,x})) \leq \,\, & \mathrm{error}((Y^{t,x}, Z^{t,x}),( Y^{t,x}_{{\sf f}_n},  Z^{t,x}_{{\sf f}_n}))\notag\\
+& \mathrm{error}(( Y^{t,x}_{{\sf f}_n},  Z^{t,x}_{{\sf f}_n}),( Y^{n,\underline{t},x}_{{\sf f}_n}, Z^{n,\underline{t},x}_{{\sf f}_n})) \\
+& \mathrm{error}((Y^{n,\underline t,x}_{{\sf f}_n}, Z^{n,\underline t,x}_{{\sf f}_n}),(Y^{n, \underline t,x}, Z^{n, \underline t,x})).\notag
\end{align}

\paragraph{New techniques and obstacles caused by the mean field setting}\ \smallskip

\begin{enumerate}[(a)]
\item {\it Comparison of different generators:}
      Freezing the distributions in the generators of \eqref{eq:mainBSDE} and \eqref{eq:mainBSDEn-hat} yields to two generators in the {\it non mean field
      setting}, where one has to compare the corresponding solutions. This is in contrast to  \cite{BGGL21}, where only one generator was present.
      This comparison is done in \cref{statement:disrcrete-rate} and yields to the additional log factor
      $|\log(n+1)|^\beta$, even for the $Y$-process.

\item {\it Singularity in time of the generator:} When freezing the distributions in the generators of \eqref{eq:mainBSDE} and \eqref{eq:mainBSDEn-hat}
      one needs to handle a singularity in time of order
      $|T-t\vee t'|^{-\frac{1}{2}} \left|t-t'\right|^{\alpha{ \wedge \frac{\ep}{2}}}$, see \cref{eq:prop_f_new}
      of \cref{A1_alt} and also \cref{example:singularity:Z}.
      As we consider a discrete time approximation, this singularity needs to be understood.
      In order to estimate the corresponding
      $\mathrm{error}(( Y^{t,x}_{{\sf f}_n},  Z^{t,x}_{{\sf f}_n}),( Y^{n,\underline{t},x}_{{\sf f}_n}, Z^{n,\underline{t},x}_{{\sf f}_n}))$
      we cannot use the estimate \eqref{eq:was_Z} because the singularity in time (if squared)  is not integrable.
      We will therefore prove a new version of   \cite[Theorem 12]{BGGL21} (especially a new version  of \eqref{eq:was_Z})
      in \cref{rate-singularity} where the convergence rate and the singularity are connected. In addition to this,  we are in the
      mean field setting and have to handle the approximation of the mean field terms in parallel to the classical terms.

\item {\it Tail estimates of exponential type:} The coupling of Koml\'os, Major and Tusn\'ady between the Brownian motion and the random walk
      gives an exponential tail estimate. To deduce also tail estimates of exponential type for the final result
      we use two techniques: Firstly, the John-Nirenberg theorem in the continuous and the discrete setting,  see \cref{prop_tilde_difference}
      and \cref{statement:disrcrete-rate}. Secondly, we encode the   tail estimates of exponential type
      into $L^p$-estimates by a control of the behaviour of the constants in dependence on $p$
      (see \eqref{eqn:statement:LPhi_vs_Lp})
      and trace back these behaviour of the constants throughout the proofs.
      
\item {\it Modified H\"older continuity:} We introduce 
      the function  $[0,\infty[\ni r\mapsto \ps r \in [0,\infty[$ which behaves for large $r$ like the  function $r^\ep$ but has a modified behaviour  around zero 
      with the  aim  to get  the same  polynomial factor $n^{-\alpha \wedge \frac{\ep}{2}}$ as in  the pointwise estimates  \eqref{eq:was_Y}, \eqref{eq:was_Z} also   in \cref{statement:main_result_local_version} and
      \cref{coro_final_pathwise_estimate}  (up to a logarithm).
      We need that $[0,\infty[\ni r\mapsto \Ps{r}{\frac{1}{2}}^2 \in [0,\infty)$ is equivalent to a concave function up to a multiplicative constant
      and that, for example, 
      \[ \int_0^T \frac{\od s}{\Ps{s}{\frac{1}{2}}^2}<\infty.\]
      The latter allows for handling the $Z$-processes of the BSDEs.
      A typical application of $\ps r$ one finds in \cref{lemma36} in which we use the replacement of $\ep$-H\"older continuity,
      \begin{align*} 
      |\varphi(x)-\varphi(x')| \le  \Ps {x-x'} \ep  
      \sptext{1}{for}{1} x,x'\in \R.
      \end{align*}

\end{enumerate}

\paragraph{Organization of the article}\ \smallskip

Section \ref{section_notation} introduces the notation and assumptions. 
In \cref{sec:mphi-spaces} we recall facts about function spaces describing exponential tail behaviour.
Section \ref{sect:rw_approx} states results on the random walk approximation of the Brownian motion. 
Regularity properties of $u$ and $\nabla u$ (where $u$ represents the solution of the PDE associated to the standard BSDE)
are proven in \cref{sect:regularity_u_grad_u} under \cref{A1_alt} where we allow a singularity of the 
H\"older regularity in time of the generator. The incorporation of this singularity is essential for the proof
of \cref{statement:main_result_local_version} and \cref{coro_final_pathwise_estimate}.
After recalling the discrete scheme in \cref{sec:discrete_scheme}, we examine the three 
approximation steps from the continuous problem to the discrete problem  in  \cref{sect:conv_rate}.
\cref{subsection_pathwise}  contains the proof of \cref{statement:main_result_local_version}, \cref{coro_final_pathwise_estimate} and  \cref{statement:meanfield_PDE}.
\newpage
\paragraph{Open problems}\ \smallskip

So far we did not succeed to treat general mean field FBSDEs  with an  SDE as forward process since  a  counterpart to the upper
estimate in  \cref{th_random_walk_uniform_convergence}  for SDEs  instead of the Brownian motion is missing. The only upper
estimate which is known to us  (\cite[Proposition 3.1]{GLL20b}) has rate $\frac{1}{4}$. Also the question of treating higher dimensions
is an open problem.


\section{Some notation and assumptions}
\label{section_notation}
We list and discuss here  the assumptions and  some needed notation.
\begin{enumerate}[(1)]
\item $(\Omega,\cF,\P)$ is a complete probability space.
\item $\xi:\Omega \to \R$ is a random variable.
\item We fix $t_o\in [0,T[$ and let $B=(B_t)_{t\in [t_o,T]}$ be a standard Brownian motion,
      where all paths are continuous and $B_{t_o}\equiv 0$, which is
      independent from $\xi$.
\end{enumerate}
If we define  
$\cF_t:= \sigma (\xi, B_s: s\in [t_o,t])\vee \cN$ with 
$\cN:= \{ A\in \cF: \P(A)=0\}$, then the stochastic basis 
$(\Omega,\cF,\P,(\cF_t)_{t\in [t_o,T]})$ satisfies the usual assumptions
by Jacod's absolute continuity condition, see \cite[Lemma 4.20]{Aksamit:Jeanblank:2017}.

\begin{hyp}\label{H:A1}\label{H:A2}
	\renewcommand{\labelenumi}{(\roman{enumi})}
For $0<\ep\leq 1$, $0<\alpha \leq 1$, and $M>0$ the following is satisfied:
	\begin{enumerate}[{\rm (i)}]
    \item The function $g:\rset\to\rset$ is $\ep$-H\"older continuous: for all
      $(x,x')\in\rset^2$ one has
		\begin{equation*}
			 \left|g(x)-g\left(x'\right)\right|\leq \gho \, \left|x-x'\right|^{\ep}, \quad \text{where} \quad \gho := \sup_{x\neq y} \frac{|g(x)-g(y)|}{|x-y|^{\ep}} <\infty. 
		\end{equation*}
      \item The function $f: [t_o,T[\times\rset\times\rset\times\rset  \times \cP_{2}(\R) \times \cP_2(\R) \to \rset  $
        is  locally $\alpha$-H\"older continuous in time, $\ep$-H\"older continuous in
        space and Lipschitz continuous with respect to $(y,z,\mu ,\nu)$: for all
        $(t,x,y,z, \mu ,\nu)$ and $(t',x',y',z',\mu' ,\nu')$ in
        $[t_o,T[\times\rset\times\rset\times\rset \times \cP_{2}(\R) \times \cP_2(\R)$ one has
        \begin{align} \label{eq:prop_f}
		&	 \hspace*{-5em}\left|f(t,x,y,z,\mu ,\nu)-f\left(t',x',y',z',\mu' ,\nu'\right)\right| \nonumber \\
		 \hspace*{-5em} &   \leq    \,   \fhot  \,\frac{ \left| t-t'\right|^{\alpha}}{(T-t\vee t')^\frac{1}{2}} + \fhox \, \left|x-x'\right|^\ep  \nonumber \\ 
       & \quad+ L_f \Big (  \left|y-y'\right|+ \left|z-z'\right|+ \cW_{2}(\mu,\mu') +  \cW_2(\nu,\nu') \Big ).
		    \end{align}
       \item One has $\int_{t_o}^T|f(t,0,0,0,\delta_0,\delta_0)|^2 d t <\infty$.
       \item $\xi \in L^{2\ep}(\cF_{t_o})$ with $\| \xi\|_{L^{2\ep}}\le M$.
		\end{enumerate}
\end{hyp}
\smallskip

Under  \cref{H:A1} there exists a unique solution $(Y^{t_o,\xi},Z^{t_o,\xi})$ to
\begin{align} \label{eq:mainBSDE-new}
	Y^{t_o,\xi}_s &= g\left(B^{t_o, \xi}_T\right) + \int_s^T  f\left(r,B^{t_o,\xi}_r,Y^{t_o,\xi}_r, Z^{t_o,\xi}_r, [Y^{t_o,\xi}_r], [Z^{t_o,\xi}_r] \right) dr - \int_s^T Z^{t_o,\xi}_r\, dB_r,  \notag\\
	&  \hspace{20em} \quad t_o\leq s\leq T,
\end{align}
see \cite[Theorem A.1]{Li17}. 
In \cref{loc-Holder-cont} we consider generators with frozen measure components, that is, generators of type
${\sf f}: [t_o, T[ \times \R \times \R \times \R \to \R$,
and show that  \cref{H:A1} implies 
\cref{A1_alt} below:

\begin{althyp} \label{A1_alt}
\renewcommand{\labelenumi}{(\roman{enumi})}
 There exist $0<\ep\leq 1$  and  $0<\alpha \leq 1$ such that   \cref{H:A1}{\rm((i),(iv))} hold and
 the generator ${\sf f}:[t_o,T[\times\rset\times\rset\times\rset   \to \rset  $ satisfies
 	\begin{enumerate}
	\item[{\rm (ii')}]   
	For all  $(t,x,y,z), (t',x',y',z') \in  [t_o,T[\times\rset\times\rset\times\rset $

\begin{multline} \label{eq:prop_f_new}
     \left|{\sf f}(t,x,y,z)- {\sf f}(t',x',y',z')\right|
\leq |{\sf f}|^{\scriptscriptstyle loc}_{\alpha \wedge \frac{\ep}{2};t} \,\frac{ \left|t-t'\right|^{\alpha{ \wedge \frac{\ep}{2}}}}{(T-t\vee t')^\frac{1}{2}}
      + \fhoxt  \left|x-x'\right|^\ep \\
      + L_{{\sf f}}\Big (  \left|y-y'\right|+ \left|z-z'\right|\Big ).
\end{multline}
\item[{\rm (iii')}] One has $\int_{t_o}^T|\f(t,0,0,0)|^2 d t <\infty$.
\end{enumerate}
\end{althyp}
\smallskip

\begin{ex}
\label{example:singularity:Z}
A singularity in $T$  like  in \eqref{eq:prop_f_new}  can originate from the frozen measure component $[Z_t]$  if $g$ is not Lipschitz continuous, but only H\"older continuous: Let us assume  that $g(x)= \max \{x,0\}^\varepsilon$  
for some $\varepsilon \in ]0,\tfrac{1}{2}[$, $T:=1$, and consider the  BSDE
\[ Y_s = g\left(B_1\right) + \int_s^1  \|Z_r\|_{L^2} dr - \int_s^1 Z_r\, dB_r,
	   \quad 0\leq s\leq 1, \]
i.e. we have the generator $f(t,x,y,z,[Y_{t}],[Z_{t}] )= f([Z_t]):= \| Z_t\|_{L^2}$, so that
\[ | f(t',x',y',z',[Y_{t'}],[Z_{t'}] )  -f(t,x,y,z,[Y_{t}],[Z_{t}] )|
    = |\|Z_{t'}\|_{L^2} - \|Z_t\|_{L^2}|
   \le \cW_2([Z_{t'}], [Z_t]). \]
For $v(t,x) := \E  g ( x+ B_{1-t})$ it holds $  Z_t =   \frac{\partial}{\partial x} v(t, B_t),$
where $Z$ is a martingale and
\begin{equation}\label{eqn:integtal_Z_curvature}
  Z_{t'} =   Z_t +  \int_t^{t'}  \frac{\partial^2}{\partial x^2} v(s, B_s) dB_s \mbox{ a.s.}
  \sptext{1}{for}{1}  0\le t<t'<1.
\end{equation}
In \cite[Remark 2(i)]{GGG:2012} it was shown   that
$\lim_{t \to 1}  (1-t)^{\frac{3}{2} - \varepsilon}  \E  \left  | \frac{\partial^2}{\partial x^2} v(t, B_t)   \right |^2 \in \, ]0,\infty[$.
Therefore there are $c_0\ge 1$ and $T_0\in [0,1[$ such that
\begin{equation}\label{eqn:singularity:Z}
\frac{1}{c_0} (1-t)^{\ep - \frac{3}{2}}  \le \E  \left  | \frac{\partial^2}{\partial x^2} v(t, B_t)   \right |^2 \le c_0 (1-t)^{\ep - \frac{3}{2}}
    \sptext{1}{for}{1}
    t\in [T_0,1[.
\end{equation}
Combining \eqref{eqn:integtal_Z_curvature} and \eqref{eqn:singularity:Z} we find 
$c_1\ge 1$ and $T_1\in [T_0,1[$ such that
\[  \frac{1}{c_1} (1-t)^{\ep - \frac{1}{2}}  \le \E  | Z_t |^2 \le c_1 (1-t)^{\ep - \frac{1}{2}}
    \sptext{1}{for}{1}
    t\in [T_1,1[.\]
Let $A>1$ and define the time net $t_k:= 1 - \frac{1}{A^k}$ so that $0=t_0<t_1<t_2 < \cdots  \uparrow 1$.
We choose $A>1$ such that $\frac{1}{c_1} A^{(\frac{1}{4}-\frac{\ep}{2})} -  c_1=2$.
Then, for $T_1 < t_k < t_{k+1} < 1$ one has
\begin{multline*}
    |f(t_{k+1} ,x,y,z,[Y_{t_{k+1}}],[Z_{t_{k+1}}]) - f(t_k,x',y',z',[Y_{t_k}],[Z_{t_k}])|
  = \Big | \| Z_{t_{k+1}} \|_{L^2} - \| Z_{t_k}\|_{L^2} \Big | \\
\ge A^{k(\frac{1}{4}-\frac{\ep}{2})} \left [ \frac{1}{c_1} A^{(\frac{1}{4}-\frac{\ep}{2})} -  c_1 \right ]
 = 2  A^{k(\frac{1}{4}-\frac{\ep}{2})}
 = \left [ \frac{2}{ \sqrt{A} \left ( 1 - \frac{1}{A} \right ) ^{\frac{1}{4} + \frac{\ep}{2}}}\right ] \frac{|t_{k+1}-t_k|^{\frac{1}{4} + \frac{\ep}{2}}}{|1-t_{k+1}|^\frac{1}{2}}.
\end{multline*}
\end{ex}

\smallskip

\begin{remark}
Although {\it global} H\"older continuity in time for the generator was assumed in  \cite{BGGL21},  the results can still be proved
under local  H\"older continuity given in \eqref{eq:prop_f_new}. We will discuss the necessary changes in \cref{sec:middle_part}.
\end{remark} 
\smallskip

We collect the notation we use in the sequel:
\begin{align*}
       T > 0 & : \quad \mbox{terminal time} \\
t_o\in [0,T[ & : \quad \mbox{time at that the forward process starts} \\
\ep\in ]0,1] & : \quad \mbox{H\"older continuity in the space variable } x \\
M>0  & : \quad \|\xi\|_{L^{2\ep}} \le M \\
\beta \in \{0\} \cup \left ]\frac{1}{2},\infty \right [ & : \quad \mbox{exponent for logarithmic weight } \\
\gho  &:= \sup_{x\neq y}                  \left [\frac{|g(x)-g(y)|}{|x-y|^{\ep}} \right ] \\
\fhot &:= \sup_{t\neq t', x,y,z,\mu ,\nu} \left [ \frac{|f(t,x,y,z,\mu ,\nu)-f\left(t',x,y,z,\mu ,\nu\right)| }{| t-t'|^{\alpha}}{(T-t\vee t')^\frac{1}{2}} \right ]  \\
\fhox &:= \sup_{x\neq x',t, y,z,\mu ,\nu} \left [ \frac{|f(t,x,y,z,\mu ,\nu)-f\left(t,x',y,z,\mu ,\nu\right)| }{| x-x'|^{\ep}} \right ] \\
L_f \sptext{.5}{and}{0.5} L_{\sf f}   & : \quad \mbox{Lipschitz constants related to the generators } f \sptext{.5}{and}{.5} {\sf f} \\
c^{(2)}_{f_0} & := \| f(\cdot, 0, 0, 0,\delta_0,\delta_0) \|_{L^2([t_o, T])}\\
c_{{\sf f}_0}^{(2)} & := \| {\sf f}(\cdot, 0, 0, 0) \|_{L^2([t_o, T])}\\
f_0           & := f(0, 0, 0, 0, \delta_0, \delta_0) \\
n_0           & = n_0(T,L_f,t_o)  >   T\max\left \{72 L_f^2+1, \frac{2}{T-t_o} \right \} \\
h   &:= \frac{T}{n} \mbox{ for } n\ge n_o \sptext{1}{step-size}{0} \\
t_k &:= kh \\
\ut & := \max \left\{ t_k \mid k \in \left\{ 0, ..., n \right\}  \textrm{ and } t \geq t_k  \right\} \\
\ot &:= \inf \left\{ t_k \mid k \in \left\{ 0, ..., n \right\} \textrm{ and } t \leq t_k  \right\}\\
t_j &:= \uto.
\end{align*} 
To shorten the notation we also use
\[ \Theta:= (T,\ep,\alpha,\beta,\gho,g(0),\fhot,\fhox,L_f,c_{f_0}^{(2)}).\]

\begin{remark}
We will use $n\ge n_0(T,L_f,t_o)$ in the sequel. In particular, this ensures
$n_0 > T L_f$, so that we have a unique solution to \eqref{eq:mainBSDEn-hat} and \eqref{eq:mainBSDEn} (see \cite{MR1817885}).
Our assumption on $n_0$ also guarantees the requirements for \cref{statement:a-priori-discrete} (that generalizes \cite[Lemma 14]{BGGL21})
and that $t_{n-2}> t_o$.
\end{remark} 


\section{The function spaces \texorpdfstring{$M_\varphi^0$}{Mphi}}
\label{sec:mphi-spaces}

For the convenience of the reader we provide a list of inequalities and facts from function space theory which will be used later on.
We assume an increasing bijection $\psi:[1,\infty[ \to [1,\infty[$
and define for a random variable $f:\Omega \to \R$ on some probability space $(\Omega,\cF,\P)$
the quantity
\[ |f|_{M_\varphi^0}
   := \inf \left \{ c > 0 : \P(|f| > \lambda) \le e^{1-\psi(\lambda/c)} \mbox{ for }
       \lambda \ge c \right \}
   \sptext{1}{for}{1}
   \varphi(t):= \frac{1}{\psi^{-1}(1+\log(1/t))}.\]       
As particular functions we consider
\[ \psi_\gamma (x) := x^\gamma \sptext{1}{and}{1}
   \varphi_\gamma(t):= \frac{1}{\sqrt[\gamma]{(1+\log(1/t))}}
   \sptext{1}{for}{1} \gamma \in ]0,\infty[.\]
In particular, for the spaces used in \cref{statement:main_result_local_version} and \cref{coro_final_pathwise_estimate}
we have $\|\cdot\|_{L^{\exp(\gamma)}}=|\cdot|_{M_{\varphi_{\frac{1}{\gamma}}}^0}$.
\smallskip

We will use the following relations:
\smallskip

\begin{enumerate}[(1)]
\item \label{item:1:Mphi}
      {\cite[Lemma 5.2]{Geiss97}}: If there is an $a>1$ such that
      $\inf_{\lambda \ge 1}\frac{\psi(a\lambda)}{\psi(\lambda)}>1$,
      then one has
      \begin{equation}\label{eqn:statement:Mvarpi_supremum}
      \left | \sup_{1\le k \le n} \frac{|f_k|}{\psi^{-1}(1+\log k)}\right |_{M_\varphi^0}
      \le c_\eqref{eqn:statement:Mvarpi_supremum}   \sup_{1\le k \le n} |f_k|_{M_\varphi^0}
      \end{equation}
       for all random variables $f_1,\ldots,f_n:\Omega \to \R$,
       where $c_\eqref{eqn:statement:Mvarpi_supremum}  =c_\eqref{eqn:statement:Mvarpi_supremum}(\varphi)>0$.

\item \label{item:2:Mphi}
      For $\gamma\in ]0,\infty[$ there is a constant
      $c_\eqref{eqn:statement:LPhi_vs_Lp}=c_\eqref{eqn:statement:LPhi_vs_Lp}(\gamma) \ge 1$
      such that
      \begin{equation}\label{eqn:statement:LPhi_vs_Lp}
      \frac{1}{c_\eqref{eqn:statement:LPhi_vs_Lp}} |f|_{M^0_{\varphi_\gamma}}
      \le \sup_{p\in [1,\infty[} \frac{\| f\|_{L^p}}{\sqrt[\gamma]{p}}
      \le c_\eqref{eqn:statement:LPhi_vs_Lp} |f|_{M^0_{\varphi_\gamma}},
      \end{equation}
      for all random variables $f:\Omega \to \R$,
      see for example \cite[Proposition 19.2.7]{GG:book:MPFA}.

\item Combining items \eqref{item:1:Mphi} and \eqref{item:2:Mphi} yields:
      If $\gamma \in ]0,\infty[$ and if $f_1,\ldots,f_n:\Omega \to \R$ are random variables with
      $\|f_k \|_{L^q} \le \sqrt[\gamma]{q}$ for $q\in [1,\infty[$ and $k=1,\ldots,n$, then for $p\in [1,\infty[$ one has
      \begin{equation}\label{eqn:2:statement:moments_of_supremum_of_iid}
      \left \| \sup_{1\le k\le n} |f_k| \right \|_{L^p} \le c_\eqref{eqn:2:statement:moments_of_supremum_of_iid} \sqrt[\gamma]{p\,\log(n+1)}
      \end{equation}
      for some $c_\eqref{eqn:2:statement:moments_of_supremum_of_iid}=c_\eqref{eqn:2:statement:moments_of_supremum_of_iid}(\gamma)>0$.

\item  Hoeffding's inequality \cite{Hoeffding:63}:
       If $(\zeta_k)_{k\geq 1}$ is an i.i.d. sequence of Rademacher random variables, then
       \begin{equation}\label{eqn:hoeffding}
       c_\eqref{eqn:hoeffding} :=\sup_{n\in \N} \left | \frac{\zeta_1+\cdots +\zeta_n}{\sqrt{n}} \right |_{M^0_{\varphi_2}} < \infty,
       \end{equation}
\item  If $(g_k)_{k\ge 1}$ are i.i.d. standard Gaussian random variables, then
       \begin{equation}\label{eqn:1:statement:moments_of_supremum_of_iid}
 \frac{1}{ c_{\eqref{eqn:1:statement:moments_of_supremum_of_iid}}} \sqrt{\log(n+1)}     \le    \left \| \sup_{1\le k\le n} |g_k| \right \|_{L^1}  \le   c_{\eqref{eqn:1:statement:moments_of_supremum_of_iid}} \sqrt{\log(n+1)},
       \end{equation}
       where $c_\eqref{eqn:1:statement:moments_of_supremum_of_iid} \ge  1$ is an absolute constant. This relation
       is folklore, see for example \cite[Theorem 19.1.12, Example 19.1.15]{GG:book:MPFA}.

\end{enumerate}


\section{Random walk approximation of the Brownian motion}\label{sect:rw_approx}

The following \cref{th_random_walk_uniform_convergence} gives the pathwise convergence rate for the random walk approximation of the Brownian motion,
where the lower estimate given in item \eqref{item:1:th_random_walk_uniform_convergence} should be folklore.
Item \eqref{item:2:th_random_walk_uniform_convergence} is based on a result of
J.~Koml\'os, P.~Major and G.~Tusn\'ady \cite[Theorem 1]{KMT75} which states (for the special case  of  Rademacher  
random variables  we consider here)  that if  $(g_i)_{i=1}^{\infty}$ are   i.i.d. standard Gaussian random variables, 
then there is a sequence of Borel functions $f_n:\R^{2n}\to \R$, $n\in \N$, such that
\[ (r_n)_{n\in \N} := (f_n(g_1, g_2,..., g_{2n}))_{n\in \N} \]
are independent Rademacher variables, and it holds  for  $S_k := \sum_{m=1}^k r_m$ and $T_k := \sum_{m=1}^k g_m,$  that
\begin{equation} \label{exp-estimate}
  \p\left (\sup_{1\le k\le n}  |S_k -T_k|> C_\eqref{exp-estimate} \log n+ x\right ) \le K_\eqref{exp-estimate} e^{-\lambda_\eqref{exp-estimate} x}
  \sptext{1}{for}{1} x >0 \sptext{.5}{and}{.5} n\ge 1,
\end{equation}
where    $C_\eqref{exp-estimate}, K_\eqref{exp-estimate},\lambda_\eqref{exp-estimate}>0$  are constants.
\medskip

\begin{thm}
\label{th_random_walk_uniform_convergence}
\begin{enumerate}[{\rm (1)}]
\item \label{item:1:th_random_walk_uniform_convergence}
      For $n\in \N$ and $t\in [0,T]$ assume random variables $A_t^n:\Omega\to \R$, $t\in [0,T]$, such that $t\mapsto A_t^n(\omega)$ is
      constant on all $\left [{(k-1)\frac{T}{n}},{k\frac{T}{n}}\right [$, and a standard Brownian motion $W_t:\Omega\to \R$
      with $W_0\equiv 0$ and all paths being continuous. Then
      one has
      \[ \sup_{t\in [0,T]} |W_t-A_t^n| \ge \frac{1}{2} \sup_{k=1,\ldots,n}\left |W_{k\frac{T}{n}}-W_{(k-1)\frac{T}{n}}\right | \]
      so that
      \[  \frac{ \sqrt{T}}{2 c_\eqref{eqn:1:statement:moments_of_supremum_of_iid}}
          \sqrt{ \frac{\log (n+1)}{n}} \le \left \| \sup_{t\in [0,T]} |W_t-A_t^n| \right \|_{L_1}.\]

\item \label{item:2:th_random_walk_uniform_convergence}
      There is a $c_\eqref{eqn:th_random_walk_uniform_convergence}=c_\eqref{eqn:th_random_walk_uniform_convergence}(T)>0$ such that for all
      $n \in \N$ there is a coupling of the Brownian motion from item \eqref{item:1:th_random_walk_uniform_convergence}
      with a random walk $(W_t^n)_{t\in [0,T]}$ with $W_t^n = W_{\ut}^n = \sqrt{\frac{T}{n}} \sum_{k=1}^{n\ut/T} r_k$, where
      $r_1,\ldots,r_n:\Omega \to \{-1,1\}$ are independent Rademacher variables, such that
      \begin{equation}\label{eqn:th_random_walk_uniform_convergence}
          \sup_{p\in [1,\infty[} \frac{1}{p}  \left \| \sup_{t \in [0, T]} \left| W_t - W_t^n \right | \right \|_{L^p}
      \le c_\eqref{eqn:th_random_walk_uniform_convergence} \, \frac{\log (n+1)}{\sqrt{n}}.
      \end{equation}
\end{enumerate}
\end{thm}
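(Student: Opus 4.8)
The plan is to treat the two items separately: item (1) is a soft pathwise argument, while item (2) combines the Komlós--Major--Tusnády (KMT) strong approximation \eqref{exp-estimate} with the function-space inequalities recalled in \cref{sec:mphi-spaces}.

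For item (1): on each mesh interval $I_k:=[(k-1)T/n,kT/n[$ the process $A^n$ is constant, say $A^n\equiv a_k$ there, so by the triangle inequality, the continuity of $W$, and letting $t\uparrow kT/n$,
\[ 2\sup_{t\in I_k}|W_t-A^n_t|\ \ge\ |W_{(k-1)T/n}-a_k|+|W_{kT/n}-a_k|\ \ge\ |W_{kT/n}-W_{(k-1)T/n}|. \]
Taking the maximum over $k=1,\dots,n$ gives the stated pathwise lower bound. The increments $W_{kT/n}-W_{(k-1)T/n}$ are i.i.d.\ centred Gaussians with variance $T/n$, so normalising them and invoking the lower bound in \eqref{eqn:1:statement:moments_of_supremum_of_iid} yields $\|\sup_t|W_t-A^n_t|\|_{L^1}\ge \tfrac12\sqrt{T/n}\,\|\sup_{k\le n}|g_k|\|_{L^1}\ge \tfrac{\sqrt T}{2 c_\eqref{eqn:1:statement:moments_of_supremum_of_iid}}\sqrt{\log(n+1)/n}$.

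For item (2), fix $n$ and invoke KMT: on a sufficiently rich probability space there are i.i.d.\ standard Gaussians $(g_i)_{i\ge1}$ and independent Rademacher variables $r_1,\dots,r_n$ (measurable with respect to $g_1,\dots,g_{2n}$) such that, with $T_k=g_1+\dots+g_k$ and $S_k=r_1+\dots+r_k$, the tail bound \eqref{exp-estimate} holds, and there $\log n$ may be replaced by $\log(n+1)$. Since $(\sqrt{T/n}\,T_k)_{k=0}^n$ has exactly the covariance of a Brownian motion sampled on the grid $\{kT/n\}$, after enlarging the space by independent Brownian bridges one obtains a process $(W_t)_{t\in[0,T]}$ with continuous paths, $W_0\equiv0$, and $W_{kT/n}=\sqrt{T/n}\,T_k$ for all $k$; set $W^n_t:=W^n_{\ut}:=\sqrt{T/n}\,S_{n\ut/T}$. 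Then $W$ is admissible for item (1) and $W^n$ is the required scaled random walk. Inserting $W_{\ut}$ and using $W_{\ut}=\sqrt{T/n}\,T_{n\ut/T}$ gives
\[ \sup_{t\in[0,T]}|W_t-W^n_t|\ \le\ \sup_{t\in[0,T]}|W_t-W_{\ut}|\ +\ \sqrt{\tfrac Tn}\,\sup_{0\le k\le n}|T_k-S_k|. \]
For the second term I would translate \eqref{exp-estimate} into $\big|\sup_{0\le k\le n}|T_k-S_k|\big|_{M^0_{\varphi_1}}\le c\log(n+1)$ — the routine passage from a tail bound of the form $\P(|f|>C\log(n+1)+x)\le Ke^{-\lambda x}$ to the defining inequality of $M^0_{\varphi_1}$, with $c$ depending only on the absolute constants in \eqref{exp-estimate} — and then apply \eqref{eqn:statement:LPhi_vs_Lp} with $\gamma=1$ to obtain $\sup_{p\ge1}\tfrac1p\big\|\sqrt{T/n}\,\sup_k|T_k-S_k|\big\|_{L^p}\le c'\sqrt T\,\tfrac{\log(n+1)}{\sqrt n}$. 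For the first term, by stationarity of increments and Brownian scaling, $\sup_{t\in[0,T]}|W_t-W_{\ut}|=\max_{1\le k\le n}\sup_{t\in I_k}|W_t-W_{(k-1)T/n}|$ is the maximum of $n$ i.i.d.\ copies of $\sqrt{T/n}\,\sup_{s\in[0,1]}|B_s|$; since $\sup_{s\in[0,1]}|B_s|$ has a sub-Gaussian tail, its $L^q$-norms are $O(\sqrt q)$, so \eqref{eqn:2:statement:moments_of_supremum_of_iid} with $\gamma=2$ gives $\big\|\max_{k\le n}\sup_{t\in I_k}|W_t-W_{(k-1)T/n}|\big\|_{L^p}\le c\sqrt{T/n}\,\sqrt{p\log(n+1)}$ and hence $\sup_{p\ge1}\tfrac1p\|\cdot\|_{L^p}\le c\sqrt T\,\sqrt{\log(n+1)/n}$. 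Adding the two estimates via the triangle inequality for $\|\cdot\|_{L^p}$ (after dividing by $p$ and taking the supremum over $p\ge1$) yields \eqref{eqn:th_random_walk_uniform_convergence}.

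The only genuinely delicate point is the construction of the joint coupling: KMT produces the Rademacher variables as functions of a \emph{fixed} Gaussian sequence, so one must build, on the same (enlarged) probability space, an honest continuous Brownian motion whose grid values are the scaled Gaussian partial sums while keeping the Rademacher random walk at hand. Conceptually the error splits into an interpolation part (the modulus of continuity of $W$ over meshes of size $T/n$, giving the $\sqrt{\log(n+1)/n}$ scale) and a strong-approximation part (the KMT gap, giving the larger $\log(n+1)/\sqrt n$ scale), and it is the latter that dictates the rate in \eqref{eqn:th_random_walk_uniform_convergence}.
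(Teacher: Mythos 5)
Your proposal is correct, and both items follow essentially the same strategy as the paper. The only notable presentational differences are in item (2): the paper starts with a Brownian motion on $[0,2T]$, normalizes the first $2n$ grid increments to obtain the $2n$ i.i.d.\ Gaussians that KMT requires, and then reads off $r_1,\ldots,r_n$ as functions of that Brownian motion — thereby avoiding the intermediate step of grafting Brownian bridges onto the scaled Gaussian partial sums, which you correctly identify as the only delicate point in your construction; and the paper passes directly from the tail bound \eqref{exp-estimate} to an $L^p$ inequality $\|\cdot\|_{L^p}\le c\,p\log(n+1)$ rather than going through $|\cdot|_{M^0_{\varphi_1}}$ and \eqref{eqn:statement:LPhi_vs_Lp}, though these routes are equivalent. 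For the modulus-of-continuity term the paper spells out the Doob-maximal-inequality bound on $\|\sup_{s\le T/n}|W_s|\|_{L^q}$ for $q\ge 2$ (and truncates for $q\in[1,2[$) before applying \eqref{eqn:2:statement:moments_of_supremum_of_iid}, which is exactly the $O(\sqrt q)$ sub-Gaussian estimate you invoke. Item (1) is identical in substance.
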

\begin{proof}
\eqref{item:1:th_random_walk_uniform_convergence}
We use the relation \eqref{eqn:1:statement:moments_of_supremum_of_iid} for the last term in
\[ \sup_{t\in [0,T]} |W_t-A_t^n|
   \ge \sup_{k=1,\ldots,n} \sup_{t\in \left [(k-1)\frac{T}{n},k\frac{T}{n}\right [} \left |W_t-A_{(k-1)\frac{T}{n}}\right |
   \ge \frac{1}{2} \sup_{k=1,\ldots,n}\left |W_{k\frac{T}{n}}-W_{(k-1)\frac{T}{n}}\right |. \]
\eqref{item:2:th_random_walk_uniform_convergence}
Let $(\Omega, \m{F},\p)$ be a probability space carrying a Brownian motion $(W_t)_{t \in [0,2T]}$.
We choose $n \in \nset$ and put  $t_k := k \tfrac{T}{n}$ for $k=0,1,...,2n.$ This provides us with the $2n$ i.i.d.~Gaussian random variables
$(W_{t_k}-W_{t_{k-1}})_{k=1}^{2n}$
from which  the $r_1,...,r_n$ can be  constructed following  \cite[Theorem 1]{KMT75}.
Then for $(W^n_t)_{t \in [0,T]}$ as defined in assertion \eqref{item:2:th_random_walk_uniform_convergence}
we have
\begin{align*}
    \sup_{t \in [0,T]} |W_t -W^n_t|^p
& = \left ( \max_{1\le k\le n}  \sup_{t \in [t_{k-1},t_k[}  |W_t -W^n_{t_{k-1}}|^p \right ) \vee |W_{t_n} -W^n_{t_n}|^p \\
&\hspace*{-5em} \le \left [ 2^{p-1} \left  (\max_{1\le k\le n}  \sup_{t \in [t_{k-1},t_k[} (|W_t -W_{t_{k-1}}|^p + |W_{t_{k-1}} -W^n_{t_{k-1}}|^p) \right ) \right ]
     \vee |W_{t_n} -W^n_{t_n}|^p  \\
&\hspace*{-5em} \le 2^{p-1} \left ( \max_{1\le k\le n}  \sup_{t \in [t_{k-1},t_k[} |W_t -W_{t_{k-1}}|^p +   \max_{1\le k\le n}   |W_{t_k} -W^n_{t_k}|^p \right ).
 \end{align*}
\underline{First term:} Here we prove that
\begin{equation}\label{eqn:term:1:rw_approximation_BM}
    \left (\e \max_{1\le k\le n}  \sup_{t \in [t_{k-1},t_k[} |W_t -W_{t_{k-1}}|^p \right)^{1/p}
\le c_\eqref{eqn:term:1:rw_approximation_BM} \, p\, \sqrt{\frac{\log (n+1)}{n}}.
\end{equation}
To verify this, we define
\[ f_k :=  \sup_{t \in [t_{k-1},t_k[} |W_t -W_{t_{k-1}}|.\]
The sequence is an i.i.d. sequence of random variables. By definition,
\[   \left (\e \max_{1\le k\le n}  \sup_{t \in [t_{k-1},t_k[} |W_t -W_{t_{k-1}}|^p \right)^{1/p}
   = \left \| \max_{1\le k\le n} |f_k| \right \|_{L^p}.\]
By Doob's maximal inequality we get for
$q\in [2,\infty[$ that
\[ \| f_1 \|_{L^q} \le \frac{q}{q-1} \| W_{t_1} \|_{L^q}
                    = \frac{q}{q-1} \sqrt{\frac{T}{n}} \|  W_1 \|_{L^q}
                    \le  \frac{q}{q-1} \sqrt{\frac{T}{n}} \kappa \sqrt{q}
                    \le (2 \sqrt{T} \kappa) \,\,  \sqrt{\frac{q}{n}},\]
where we used that  $\|  W_1 \|_{L^q} \le \kappa  \sqrt{q} $ for an  absolute constant   $\kappa>0$.
For $q\in [1,2[$ we derive
\[      \|\sqrt{n} f_1 \|_{L^q} \le     \|\sqrt{n} f_1 \|_{L^2}  \le    (2 \sqrt{T} \kappa) \,\,  \sqrt{2} \le    (2 \sqrt{2T} \kappa) \,\,  \sqrt{q}.\]
Using \eqref{eqn:2:statement:moments_of_supremum_of_iid} implies

\[      \left \| \max_{1\le k\le n} \sqrt{n} |f_k| \right \|_{L^p}
    \le \Big (c_\eqref{eqn:2:statement:moments_of_supremum_of_iid} 2 \sqrt{2T} \kappa \Big)\, \, \sqrt{p} \, \sqrt{\log(n+1)}.\]
\smallskip
\underline{Second term:}
By  the inequality \eqref{exp-estimate}, for $x \ge1$ and $n\ge 2$, we have for $S_k := \sum_{m=1}^k r_m$ and
$T_k := \sum_{m=1}^k \frac{W_{t_m} -W_{t_{m-1}}}{\sqrt{h}}$
\begin{align*}
     \p\left (\sup_{1\le k\le n}  |S_k -T_k| > (C_\eqref{exp-estimate}+1) x \log (n+1) \right )
&\le \p\left (\sup_{1\le k\le n}  |S_k -T_k| > C_\eqref{exp-estimate} \log n + x    \right ) \\
&\le K_\eqref{exp-estimate} e^{-\lambda_\eqref{exp-estimate} x}.
\end{align*}
This is the same as
\[      \p\left (\sup_{1\le k\le n} \left [ \sqrt{\frac{n}{T}}|W_{t_k} - W^n_{t_k}| \right ] > (C_\eqref{exp-estimate} +1) x \log(n+1) \right )
   \le  K_\eqref{exp-estimate} e^{-\lambda_\eqref{exp-estimate}  x}. \]
And  this implies that
\begin{equation}\label{eqn:proof:th_random_walk_uniform_convergence}
 \left \| \sup_{1\le k\le n} \left [\sqrt{\frac{n}{T}} |W_{t_k} - W^n_{t_k}| \right] \right \|_{L^p}
    \le c_\eqref{eqn:proof:th_random_walk_uniform_convergence} \, p \, \log(n+1)
\end{equation}
with
$  c_\eqref{eqn:proof:th_random_walk_uniform_convergence}
 = c_\eqref{eqn:proof:th_random_walk_uniform_convergence}  (C_\eqref{exp-estimate}, K_\eqref{exp-estimate},\lambda_\eqref{exp-estimate})>0$.
\end{proof}


\section{Regularity properties of \texorpdfstring{$u$}{u} and \texorpdfstring{$\nabla u$}{nabla-u} under  \texorpdfstring{\cref{A1_alt}}{A1} }\label{sect:regularity_u_grad_u}

In this section we  generalize some regularity results obtained in \cite{BGGL21}. 
If ${\sf f}:[t_o,T[\times\rset\times\rset\times\rset  \to \rset  $ is a  generator satisfying
\cref{A1_alt}, then we define
\[ u(t,x):= Y^{t,x}_t. \]
Later we will use that, a.s., 
\begin{align} \label{YandZfrom-u}
 Y^{t,x}_s =u(s, B^{t,x}_s)
 \sptext{.75}{on}{.75} [t_o,T]
 \sptext{1}{and}{1}
 Z_s^{t,x}=\nabla u(s, B^{t,x}_s)
 \sptext{.75}{on}{.75} [t_o,T[.
\end{align}
The first relation follows from the  regularity of $u$ in \cref{en:regu},
\cite[Theorem 4.1]{ElKPQ:97}.  Since in \cite[Theorem 4.1]{ElKPQ:97} 
 the generator  is assumed to be bounded in $t$ one has to  truncate the generator and use    a standard a-priori argument.
The second relation is treated in  \cref{lemma37}\eqref{ma_zhang_formula} below.
\medskip

\begin{lemma}[{\cite[Lemma 6]{BGGL21}}]
\label{en:regu}

Under \cref{A1_alt} there exists a constant
$c_{\eqref{u-estimates}}=c_{\eqref{u-estimates}}(T,\ep,\gho,g(0),\fhoxt,L_{\sf f},c^{(2)}_{{\sf f}_0})>0$
such that for $u(t,x):= Y^{t,x}_t$ and for all $(t,x)\in[t_o,T]\times\rset$,
\begin{equation} \label{u-estimates}
		|u(t,x)| \leq c_{\eqref{u-estimates}}\, (1+|x|)^\ep,  \qquad | u(t,\cdot)|_{\ep;x} \leq
        c_{\eqref{u-estimates}},  \qquad  |u(\cdot,x)|_{\ep/2;t} \leq c_{\eqref{u-estimates}}\, (1+|x|)^\ep.
\end{equation}
\end{lemma}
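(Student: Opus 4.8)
The plan is to reduce everything to the classical $L^2$ a priori estimates for Lipschitz BSDEs. Under \cref{A1_alt} the generator $\f(r,\cdot,\cdot,\cdot)$ is Lipschitz in $(y,z)$ with constant $L_{\sf f}$, and evaluating \eqref{eq:prop_f_new} at equal times gives $|\f(r,B^{t,x}_r,0,0)|\le|\f(r,0,0,0)|+\fhoxt|B^{t,x}_r|^\ep$ and $|g(B^{t,x}_T)|\le|g(0)|+\gho|B^{t,x}_T|^\ep$. Using $\ep\le 1$ together with the Gaussian moments of $B^{t,x}_r=x+B_r-B_t$ these yield $\|g(B^{t,x}_T)\|_{L^2}+\|\f(\cdot,B^{t,x}_\cdot,0,0)\|_{L^2([t,T]\times\Omega)}\le c(1+|x|)^\ep$ with a constant of the claimed type; in particular the BSDE $Y^{t,x}_s=g(B^{t,x}_T)+\int_s^T\f(r,B^{t,x}_r,Y^{t,x}_r,Z^{t,x}_r)\od r-\int_s^T Z^{t,x}_r\od B_r$ is well posed and $u(t,x)=Y^{t,x}_t$ is deterministic (the solution on $[t,T]$ is adapted to the filtration of the increments of $B$ after $t$, which is trivial at time $t$).

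First I would prove the growth bound: since $Y^{t,x}_t$ is deterministic, the standard a priori estimate gives $|u(t,x)|^2=\E|Y^{t,x}_t|^2\le\E\sup_{t\le s\le T}|Y^{t,x}_s|^2\le c(1+|x|)^{2\ep}$, hence $|u(t,x)|\le c(1+|x|)^\ep$; the same estimate also yields $\E\int_t^T|Z^{t,x}_r|^2\od r\le c(1+|x|)^{2\ep}$, needed below. For the spatial regularity I would linearise: with $(\delta Y,\delta Z):=(Y^{t,x}-Y^{t,x'},Z^{t,x}-Z^{t,x'})$ one gets a linear BSDE with terminal value $g(B^{t,x}_T)-g(B^{t,x'}_T)$, bounded coefficients $|a_r|,|b_r|\le L_{\sf f}$, and free term $\varphi_r$ with $|\varphi_r|\le\fhoxt|B^{t,x}_r-B^{t,x'}_r|^\ep=\fhoxt|x-x'|^\ep$ (again \eqref{eq:prop_f_new} at the same time and $B^{t,x}_r-B^{t,x'}_r\equiv x-x'$); the a priori estimate for linear BSDEs gives $|u(t,x)-u(t,x')|=|\delta Y_t|\le c(\gho+\fhoxt)|x-x'|^\ep$, i.e.\ $|u(t,\cdot)|_{\ep;x}\le c$.

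For the time regularity, the only step needing care, I would fix $x$ and $t_o\le t<t'\le T$ and reduce to $t'-t\le 1$ (for $t'-t\ge 1$ the growth bound already suffices). By uniqueness of BSDE solutions one has the flow identity $Y^{t,x}_{t'}=u(t',B^{t,x}_{t'})$ a.s.\ (legitimate since $u(t',\cdot)$ is continuous by the previous step); conditioning at time $t$ kills the stochastic integral, so $u(t,x)=\E[u(t',B^{t,x}_{t'})]+\E\int_t^{t'}\f(r,B^{t,x}_r,Y^{t,x}_r,Z^{t,x}_r)\od r$ and hence $|u(t,x)-u(t',x)|\le|u(t',\cdot)|_{\ep;x}\,\E|B^{t,x}_{t'}-x|^\ep+\int_t^{t'}\E|\f(r,B^{t,x}_r,Y^{t,x}_r,Z^{t,x}_r)|\od r$. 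The first term is $\le c(t'-t)^{\ep/2}$. In the second I bound the integrand by \eqref{eq:prop_f_new} at equal times, $|\f(r,B^{t,x}_r,Y^{t,x}_r,Z^{t,x}_r)|\le|\f(r,0,0,0)|+\fhoxt|B^{t,x}_r|^\ep+L_{\sf f}(|Y^{t,x}_r|+|Z^{t,x}_r|)$; note the singular time factor of \eqref{eq:prop_f_new} is absent here, which is exactly why the constant in the lemma depends neither on the time--H\"older norm of $\f$ nor on $\alpha$. Then $\int_t^{t'}|\f(r,0,0,0)|\od r\le\sqrt{t'-t}\,c_{{\sf f}_0}^{(2)}$, $\int_t^{t'}\E(|B^{t,x}_r|^\ep+|Y^{t,x}_r|)\od r\le(t'-t)c(1+|x|)^\ep$ (the $Y$-part via the growth bound on $u$), and $\int_t^{t'}\E|Z^{t,x}_r|\od r\le\sqrt{t'-t}\,\big(\E\int_t^T|Z^{t,x}_r|^2\od r\big)^{1/2}\le\sqrt{t'-t}\,c(1+|x|)^\ep$ by Cauchy--Schwarz and the first-step estimate. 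Since $(t'-t)\vee\sqrt{t'-t}\le(t'-t)^{\ep/2}$ for $t'-t\le 1$, collecting terms gives $|u(t,x)-u(t',x)|\le c(1+|x|)^\ep(t'-t)^{\ep/2}$, i.e.\ $|u(\cdot,x)|_{\ep/2;t}\le c(1+|x|)^\ep$.

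I expect the $Z$-term in the last step to be the only real obstacle: $Z^{t,x}_r$ blows up as $r\uparrow T$, so it cannot be controlled pointwise; the fix is to stay on the $L^2(\od r\otimes\od\P)$ scale and absorb the blow-up into the $\sqrt{t'-t}$ produced by Cauchy--Schwarz. The same observation explains why moving from the global time--H\"older assumption of \cite{BGGL21} to the local, possibly singular one \eqref{eq:prop_f_new} is free for this lemma, since in the proof $\f$ is compared only at equal times.
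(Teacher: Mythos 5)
Your proof is correct and takes essentially the same route the paper (and its source \cite[Lemma 6]{BGGL21}) takes: $L^2$ a priori estimates for the $(Y,Z)$ pair together with the Markovian flow identity $Y^{t,x}_{t'}=u(t',B^{t,x}_{t'})$, with the key observation — which you state explicitly — that $\f$ is only ever compared at equal time arguments, so the singular time–H\"older term in \eqref{eq:prop_f_new} never enters and the passage from the global assumption of \cite{BGGL21} to \cref{A1_alt} is cost-free. The paper's own proof is only a pointer to \cite[Lemma 6]{BGGL21} plus \cite[Lemma 5.26]{GeissYlinen}; you have supplied exactly the intended details, including the Cauchy–Schwarz absorption of the unbounded $Z$ into $\sqrt{t'-t}$.
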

\bigskip
The proof follows the approach from \cite[Lemma 6]{BGGL21} and an a-priori estimate in the form of
\cite[Lemma 5.26]{GeissYlinen}, so that \cref{en:regu} holds true under \cref{A1_alt} as well.
\medskip

Next, we introduce  a method which permits to prove new bounds for the gradient $\nabla u(t,x)$ because the known bounds   (for example in  \cite[Lemma 7]{BGGL21})
have a singularity $(T-t)^{-1/2}.$  However, we need square integrability in time,  for example in \eqref{eqn:2:statement:main_result_local_version}.
It turns out that it is possible to make the changes so subtle that in \cref{statement:main_result_local_version} and \cref{coro_final_pathwise_estimate}
the rate $1/n^{\alpha \wedge \frac{\ep}{2}}$ (known from pointwise estimates like in   \cite[Theorem 12]{BGGL21}) is only changed  by the  logarithmic 
factors $|\log(n+1)|^{\beta}$ and $|\log(n+1)|^{\ep + \beta}$, respectively.
\medskip

For  $\beta  \in \{0\} \cup \left ]\frac{1}{2},\infty\right [$ and  $\ep \in ]0,1]$  we introduce
$\Phi_{\beta}$ and $\ps \cdot : ]0, \infty[ \to [0,\infty[$ by
\begin{align*}
 \Phi_{\beta}(r) & := \begin{cases}
                      \left( \log \Big (\frac{1}{r\wedge r_\beta} \Big ) \right)^{ \beta} &: \beta > \frac{1}{2}
                      \sptext{1}{with}{1} r_\beta := e^{-2\beta}\\
                      1 &: \beta = 0
                      \end{cases}, \\
        \ps r   & :=    r^\ep \Phi_{\beta }(r^{2\ep} ).
\end{align*}
The notation $ \ps r$ was chosen to indicate that it is just a modification of $r^\ep$ for small
$r$ by the help of $\Phi_\beta$.
We continuously extend $ \ps \cdot $ to $ \ps \cdot :\R \to [0,\infty[$ by setting $ \ps 0 :=0 $
and \[ \ps x  := |x|^\ep \Phi_{\beta }(|x|^{2\ep}).  \]

We will use the following properties:
\smallskip

\begin{lemma}
\label{Psi-lemma}
\begin{enumerate}[{\rm (i)}]
\item \label{the-Phi} $\Phi_{\beta }$ is non-increasing and $\Phi_{\beta }(r) \ge 1$ for all $r\in ]0,\infty[$.
\item \label{the-Psi} The map $]0, \infty[ \ni r \mapsto r^a  \Phi_{\beta }(r^2) $ is non-decreasing for all $a \geq 1$.
      Especially, the function $]0, \infty[ \ni r \mapsto    \Ps r {\ep}^2$ is  non-decreasing.
\item \label{item:4:Psi-lemma} 
      There are a continuous concave function $\Psi_{\beta}: [0,\infty[\to [0,\infty[$
      and $\kappa_\beta\ge 1$ 
      such that 
      \[ \frac{1}{\kappa^2_\beta} \Psi_{\beta}(r) \le  \Ps r {\frac{1}{2}}^2 \le  \Psi_{\beta}(r). \]
\end{enumerate}
\end{lemma}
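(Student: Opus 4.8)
The plan is to verify the three claims of \cref{Psi-lemma} in order, since each is an elementary analytic fact about the function $\Phi_\beta$ and the derived quantities $\ps r = r^\ep \Phi_\beta(r^{2\ep})$.

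\textbf{Item \eqref{the-Phi}.} For $\beta=0$ there is nothing to prove. For $\beta>\frac12$, note that $r\mapsto r\wedge r_\beta$ is non-decreasing, $r\mapsto 1/(r\wedge r_\beta)$ is non-increasing, and $\log$ is increasing, so $r\mapsto \log(1/(r\wedge r_\beta))$ is non-increasing; raising to the power $\beta>0$ preserves this, giving that $\Phi_\beta$ is non-increasing. For the lower bound $\Phi_\beta(r)\ge 1$, observe that $r\wedge r_\beta \le r_\beta = e^{-2\beta}$, hence $\log(1/(r\wedge r_\beta)) \ge 2\beta \ge 1$, so $\Phi_\beta(r) = (\log(1/(r\wedge r_\beta)))^\beta \ge 1$.

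\textbf{Item \eqref{the-Psi}.} Fix $a\ge 1$. On the range $r^2 \ge r_\beta$, i.e.\ $r\ge \sqrt{r_\beta}$, one has $\Phi_\beta(r^2)=\log(1/r_\beta)^\beta=(2\beta)^\beta$ constant, so $r\mapsto r^a\Phi_\beta(r^2)$ is just a constant multiple of $r^a$, clearly non-decreasing. On $0<r\le\sqrt{r_\beta}$ write $g(r):=r^a(\log(1/r^2))^\beta = r^a(2\log(1/r))^\beta$. Taking logarithmic derivatives, $g'(r)/g(r) = a/r - \beta/(r\log(1/r)) = \frac{1}{r}\big(a - \beta/\log(1/r)\big)$. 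Since $r\le\sqrt{r_\beta}=e^{-\beta}$ gives $\log(1/r)\ge\beta$, we get $\beta/\log(1/r)\le 1\le a$, so $g'(r)\ge 0$; continuity at $r=\sqrt{r_\beta}$ joins the two pieces. Applying this with $a=2\ge 1$ to $r\mapsto\Ps r{\frac12}^2 = (\sqrt r\,\Phi_\beta(r))^2$... more precisely $\Ps r{1/2}^2 = r\,\Phi_\beta(r)^2$; but $\Phi_\beta(r)^2 = \Phi_{2\beta}(r)\cdot(\text{const adjustment})$ is awkward, so instead I would directly note $\Ps s{\ep}^2 = s^{2\ep}\Phi_\beta(s^{2\ep})^2$ and, substituting $t=s^{2\ep}$, reduce to monotonicity of $t\mapsto t\,\Phi_\beta(t)^2$, which follows from the same logarithmic-derivative computation since $\big(t\,\Phi_\beta(t)^2\big)'/\big(t\,\Phi_\beta(t)^2\big) = 1/t - 2\beta/(t\log(1/t)) = \frac1t(1 - 2\beta/\log(1/t)) \ge 0$ on $0<t\le r_\beta$ because there $\log(1/t)\ge 2\beta$.

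\textbf{Item \eqref{item:4:Psi-lemma}.} This is the main point. Set $\theta(s):=\Ps s{\frac12}^2 = s\,\Phi_\beta(s)$ (using $\beta$-Hölder exponent $\frac12$ so $s^{2\cdot\frac12}=s$); this is the natural candidate but it need not itself be concave near $0$, so I would build a concave majorant comparable to it. The idea: $\theta$ is continuous on $[0,\infty[$ with $\theta(0)=0$, is non-decreasing by item \eqref{the-Psi} (case $a=1$: one checks $\theta'(s)/\theta(s) = 1/s - \beta/(s\log(1/s)) \ge 0$ on $0<s\le r_\beta$ since $\log(1/s)\ge 2\beta>\beta$), and $\theta$ is eventually affine (equal to $(2\beta)^\beta s$ for $s\ge r_\beta$). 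The least concave majorant $\widehat\theta$ of $\theta$ on $[0,\infty[$ is then well-defined, concave, continuous, with $\widehat\theta \ge \theta$; it remains to bound $\widehat\theta \le \kappa_\beta^2\,\theta$. For this I would show $\theta$ is \emph{quasi-concave} in the sense that $s\mapsto\theta(s)/s = \Phi_\beta(s)$ is non-increasing (this is exactly item \eqref{the-Phi}), which together with $\theta$ non-decreasing is a standard sufficient condition for the least concave majorant to satisfy $\widehat\theta(s)\le 2\,\theta(s)$: indeed for any $0<u<s<v$ and $s = (1-\lambda)u+\lambda v$ one estimates $(1-\lambda)\theta(u)+\lambda\theta(v) \le \theta(u) + \theta(v) \le \frac{s}{u}\cdot$... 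I would instead invoke the known lemma that a non-decreasing function $\theta$ with $\theta(s)/s$ non-increasing satisfies $\theta(s) \le \widehat\theta(s)\le 2\theta(s)$, hence take $\Psi_\beta := \widehat\theta$ and $\kappa_\beta := \sqrt2$, except one must also incorporate the constant $(2\beta)^\beta$ from the tail regime, which only changes $\kappa_\beta$ to $\max\{\sqrt2,(2\beta)^{\beta/2},\dots\}$. The hardest part is making the comparison $\widehat\theta\le\kappa_\beta^2\theta$ quantitative and clean; the cleanest route is the quasi-concavity argument just sketched, which sidesteps any explicit formula for $\widehat\theta$. For $\beta=0$, $\theta(s)=s$ is already concave and one takes $\Psi_0(s)=s$, $\kappa_0=1$.
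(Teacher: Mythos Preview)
Your arguments for \eqref{the-Phi} and \eqref{the-Psi} are correct and essentially match the paper's: the paper computes the ordinary derivative where you compute the logarithmic one, and deduces the ``Especially'' clause directly from the case $a=1$ via the substitution $s=r^{\ep}$, but these are cosmetic differences.

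For \eqref{item:4:Psi-lemma} your route is genuinely different. First a correction: $\theta(s):=\Ps s{\frac12}^2 = s\,\Phi_\beta(s)^{2}$, not $s\,\Phi_\beta(s)$; consequently $\theta(s)/s=\Phi_\beta(s)^{2}$, still non-increasing by \eqref{the-Phi}, so your argument survives the typo. The paper does \emph{not} use the least concave majorant. Instead it observes that $\theta$ is actually concave on $[0,r_\beta]$ (its derivative $(\log(1/r))^{2\beta-1}(\log(1/r)-2\beta)$ is decreasing there), then defines $\Psi_\beta$ explicitly by keeping $\theta$ on $[0,r_\beta^{2}]$ and continuing with the tangent line from $r_\beta^{2}$; the comparison constant $\kappa_\beta=2^{\beta}$ is then checked by a short calculation. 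So your remark that $\theta$ ``need not be concave near $0$'' is off: near $0$ it is concave; what fails is global concavity, since the derivative jumps \emph{up} at $r_\beta$.

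Your abstract approach works and is in one sense cleaner: once the lemma ``$\theta$ non-decreasing and $\theta(s)/s$ non-increasing imply $\widehat\theta\le 2\theta$'' is established, you get the uniform constant $\kappa_\beta=\sqrt{2}$, which beats the paper's $2^{\beta}$ for large $\beta$. The one genuine gap is that you invoke this lemma without proof. The missing line is: for fixed $s>0$ the affine function $\ell(t)=\theta(s)+\tfrac{\theta(s)}{s}\,t$ majorizes $\theta$ (on $[0,s]$ by monotonicity of $\theta$, on $[s,\infty[$ by $\theta(t)\le \tfrac{t}{s}\theta(s)$), whence $\widehat\theta(s)\le \ell(s)=2\theta(s)$. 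Your worry about a separate tail adjustment is unnecessary, and continuity of $\widehat\theta$ at $0$ follows from $0\le\widehat\theta(s)\le 2\theta(s)\to 0$.
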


\begin{proof} 
If $\beta = 0$, then all the properties \eqref{the-Phi}--\eqref{item:4:Psi-lemma} are true, so we assume $\beta > \frac{1}{2}$.

\eqref{the-Phi} If $\beta > 1/2$, then  it follows that   $\left( \log \Big (\frac{1}{r} \Big ) \right)^{ \beta} \ge 1$ for all
$r \in  ]0, e^{-2\beta}]$, and obviously $\Phi_{\beta }$ is non-increasing.
\smallskip

\eqref{the-Psi}
We see that  $r \mapsto r^a  \Phi_{\beta }(r^2)$ is  non-decreasing for $a \geq 1$ since  for $r < r_\beta$  we have
\[  \frac{\od}{\od r} \left( r^a  \Phi_{\beta }(r^2) \right)
 =  r^{a-1} \left( \log \Big (\frac{1}{r^2} \Big ) \right)^{ \beta-1}
    \left(  a \log \Big (\frac{1}{r^2} \Big ) -  2\beta   \right)>0.
\]

\eqref{item:4:Psi-lemma} For $r\in ]0,r_\beta[$ we have that
\[   ( \Ps r {\frac{1}{2}}^{2} )' = \frac{d}{dr}   r\Phi_{\beta }^2(r)
   = \left( \log \Big (\frac{1}{r} \Big ) \right)^{2 \beta-1}  \left(  \log \Big (\frac{1}{r} \Big ) -  2\beta   \right) >0 \]
is decreasing so that $ \Ps \cdot {\frac{1}{2}}^2$ is concave on $[0,r_\beta]$.
We define the concave function $\Psi_{\beta}:[0,\infty[\to [0,\infty[$ by
\begin{align*}
\Psi_{\beta}(r) & := \begin{cases}
                       \Ps r {\frac{1}{2}}^2 &:  0\le r\le r^2_\beta \\
                     a( r-r^2_\beta) +  \Ps  {r^2_\beta} {\frac{1}{2}}^2 &: r>r^2_\beta
                      \end{cases}, 
\end{align*} 
where $a $ is chosen to be the derivative of  $r\mapsto  \Ps r {\frac{1}{2}}^2 $ in $r^2_\beta.$  By a  short calculation one proves that for  $\kappa_\beta = 2^{\beta} $ it holds
\[     \frac{1}{\kappa^2_\beta}  \Psi_{\beta}  \le  \Ps \cdot {\frac{1}{2}}^2 
    \le  \Psi_{\beta} . \] 
\end{proof} 

\begin{remark} \label{phi_remark}
By the help of  $\Phi_{\beta }$ we achieve  (and will use) square integrability in time since for $\beta >1/2$ it holds
\begin{equation*}
\int_0^T \frac{1}{s |\Phi_{\beta }(s^\ep)|^2} ds < \infty.
\end{equation*}
\end{remark}
\bigskip

We will show regularity properties of $\nabla u $ by the help of the following lemma.
\medskip

\begin{lemma}\label{lemma36}
Assume for a continuous function  $\varphi:\R \to \R$, $H(u,s):=\frac{B_{s}-B_{u}}{s-u}$ for  $0\le u<s\le T$,
$\ep \in ]0,1]$, and $\gamma \in \{0\} \cup ]\frac{1}{2},\infty[$ such that
\begin{align} \label{theF}
    |\varphi(x)-\varphi(x')|
\le \Psg {x-x'} \ep
    \sptext{1}{for}{1} x,x'\in \R.
\end{align}
Then, for  $0\le r \le t< s \le T$ one has
\begin{equation} \label{eq_new_claim}
     \left | \E [ \varphi(B_s^{r,x})H(r,s)-\varphi(B_s^{t,x})H(t,s) ]\right |
\le \kappa_\gamma
    \left [
        \frac{\Psg  {t-r} {\frac{\ep}{2}}}{|s-r|^\frac{1}{2} } 
      + \frac{\Psg  {s-t} {\frac{\ep}{2}}|t-r|^{\frac{\ep}{2}}} {|s-r|^{\frac{\ep}{2}}\,\, |s-t|^{\frac{1}{2}}} \right ].
\end{equation}
Consequently we have the following:
\begin{enumerate}[{\rm (i)}]
\item \label{first}
      For $\gamma=0$ and $\bar \gamma \in \{0\} \cap ]\frac{1}{2},\infty[$ we get
      \[  \left |  \E [ \varphi (B_s^{r,x})H(r,s)-\varphi(B_s^{t,x})H(t,s) ] \right |
         \le   2 \kappa_\gamma
         \frac{\Psgb  {t-r} {\frac{\ep}{2}}}{ |s-t|^{\frac{1}{2}} \,\Phi_{\bar\gamma}(|s-t|^\ep)}. \]
\item \label{second}
      For $\gamma > \frac{1}{2}$ we get
      \[  \left | \E [\varphi(B_s^{r,x})H(r,s)-\varphi(B_s^{t,x})H(t,s) ]\right |
          \le 2 \kappa_\gamma \frac{\Phi_\gamma (   |s-t|^{\ep} ) \, \Psg {t-r}  {\frac{\ep}{2}} }{ |s-t|^\frac{1}{2}}.\]
\end{enumerate}
\end{lemma}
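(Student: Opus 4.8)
The plan is to establish the master inequality \eqref{eq_new_claim} first and then obtain the two consequences \eqref{first} and \eqref{second} by elementary estimates on $\Phi_\gamma$. For the master inequality, the natural starting point is to split the difference symmetrically:
\[
\E[\varphi(B_s^{r,x})H(r,s)-\varphi(B_s^{t,x})H(t,s)]
= \E[(\varphi(B_s^{r,x})-\varphi(B_s^{t,x}))H(r,s)]
+ \E[\varphi(B_s^{t,x})(H(r,s)-H(t,s))].
\]
For the first term I would use the tower property, conditioning on $\mathcal F_t$: since $B_s^{r,x}-B_s^{t,x}= B_t-B_r = B_t^{r,x}-x$ is $\mathcal F_t$-measurable and $H(r,s)=\frac{B_s-B_r}{s-r}$, one rewrites $H(r,s) = \frac{s-t}{s-r}H(t,s) + \frac{t-r}{s-r}H(r,t)$ (a convex-combination identity for increments), exploits that $\E[H(t,s)\mid\mathcal F_t]=0$ for the piece carrying the $\mathcal F_t$-measurable factor, and bounds the $\varphi$-increment by \eqref{theF}. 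The magnitude $|B_t-B_r|$ integrated against Gaussian densities with the weight $H(r,t)$ (which has $L^2$-norm $|t-r|^{-1/2}$) produces, after using that $\Psg{\cdot}{\ep}^2$ is equivalent up to $\kappa_\gamma^2$ to a concave function (Lemma~\ref{Psi-lemma}\eqref{item:4:Psi-lemma}) and Jensen's inequality, a bound of order $\Psg{t-r}{\frac{\ep}{2}}\cdot|s-r|^{-1/2}$, which is the first term on the right of \eqref{eq_new_claim}. For the second term, $H(r,s)-H(t,s)$ is deterministic-free but again one conditions on $\mathcal F_t$; writing $\varphi(B_s^{t,x}) = \varphi(B_t^{t,x}) + (\varphi(B_s^{t,x})-\varphi(B_t^{t,x}))$ and using $\E[H(r,s)-H(t,s)\mid\mathcal F_t]$, the constant part drops by the martingale property, and the remaining increment $|B_s-B_t|$ of order $|s-t|^{1/2}$, weighted by $|H(r,s)-H(t,s)|$ whose $L^2$-size is controlled by $\frac{|t-r|^{1/2}}{|s-r||s-t|^{1/2}}\cdot|s-t|^{1/2}$, combined once more with the concavity-equivalence of $\Psg{\cdot}{\ep}^2$, yields the second term. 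The bookkeeping of which factor is $\mathcal F_t$-measurable and which vanishes under conditioning is the part demanding care.

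For \eqref{first}, with $\gamma=0$ we have $\Psg{u}{\frac\ep2}=u^{\ep/2}$, so \eqref{eq_new_claim} reduces to
\[
\le \kappa_\gamma\left[\frac{|t-r|^{\ep/2}}{|s-r|^{1/2}} + \frac{|s-t|^{\ep/2}|t-r|^{\ep/2}}{|s-r|^{\ep/2}|s-t|^{1/2}}\right].
\]
Since $|s-r|\ge|s-t|$, both $|s-r|^{-1/2}\le|s-t|^{-1/2}$ and $|s-t|^{\ep/2}|s-r|^{-\ep/2}\le 1$, so each summand is at most $|t-r|^{\ep/2}|s-t|^{-1/2}$; inserting the trivial lower bound $\Phi_{\bar\gamma}(|s-t|^\ep)\ge 1$ from Lemma~\ref{Psi-lemma}\eqref{the-Phi} and $\Psgb{t-r}{\frac\ep2}\ge|t-r|^{\ep/2}$ gives the claimed form with the factor $2$.

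For \eqref{second}, with $\gamma>\frac12$ one again uses $|s-r|\ge|s-t|$ to dominate the first summand of \eqref{eq_new_claim} by $\Psg{t-r}{\frac\ep2}|s-t|^{-1/2}$ (and then by $\Phi_\gamma(|s-t|^\ep)\Psg{t-r}{\frac\ep2}|s-t|^{-1/2}$ since $\Phi_\gamma\ge 1$), and for the second summand one needs $\Psg{s-t}{\frac\ep2}|s-t|^{-\ep/2}|s-r|^{-1/2+\ep/2}\le \Phi_\gamma(|s-t|^\ep)|s-t|^{-1/2}$; here $\Psg{s-t}{\frac\ep2} = |s-t|^{\ep/2}\Phi_\gamma(|s-t|^{\ep})$ by definition of $\ps\cdot$, so the $|s-t|^{\ep/2}$ cancels and what remains is $\Phi_\gamma(|s-t|^\ep)|s-r|^{-1/2+\ep/2}|s-t|^{-\ep/2}\cdot|s-t|^{\ep/2} \le \Phi_\gamma(|s-t|^\ep)|s-t|^{-1/2}$ using $|s-r|\ge|s-t|$ and $-1/2+\ep/2\le 0$. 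Summing the two bounds gives the stated estimate with constant $2\kappa_\gamma$. The only genuine obstacle is the first part — getting the $\mathcal F_t$-conditioning, the increment-splitting identity for $H$, and the concavity/Jensen step to combine into exactly the two stated terms without a spurious extra singularity; the two corollaries are then routine monotonicity estimates on $\Phi_\gamma$ together with $|s-r|\ge|s-t|$.
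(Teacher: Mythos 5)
Your overall decomposition matches the paper's: split into $\E[(\varphi(B_s^{r,x})-\varphi(B_s^{t,x}))H(r,s)]$ plus $\E[\varphi(B_s^{t,x})(H(r,s)-H(t,s))]$, subtract the constant $\varphi(x)$ in the second term (which changes nothing since $\E[H(r,s)-H(t,s)]=0$), bound each piece by Cauchy--Schwarz, and then control $\E\Psg{\cdot}{\ep}^2$ via the concavity of $\Psi_\gamma$ and Jensen. However, there are several gaps.

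First, the conditioning-on-$\cF_t$ story and the convex-combination identity for $H(r,s)$ are not needed and as stated do not apply: the increment $\varphi(B_s^{r,x})-\varphi(B_s^{t,x})$ is \emph{not} $\cF_t$-measurable (only the argument-difference $B_t-B_r$ is), so you cannot isolate "the piece carrying the $\cF_t$-measurable factor" and kill it by $\E[H(t,s)\mid\cF_t]=0$. The paper simply bounds the increment pointwise by $\Psg{B_t-B_r}{\ep}$ (which \emph{is} $\cF_t$-measurable) and applies Cauchy--Schwarz; no conditioning. Second, the $L^2$-norm you quote for $H(r,s)-H(t,s)$ is wrong: a direct computation via independence of increments gives $\E|H(r,s)-H(t,s)|^2 = \frac{t-r}{(s-r)(s-t)}$, so the norm is $\frac{|t-r|^{1/2}}{|s-r|^{1/2}|s-t|^{1/2}}$, not the expression you wrote (which simplifies to $|t-r|^{1/2}/|s-r|$). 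Third, to reach the stated form of \eqref{eq_new_claim} one still needs the elementary step $\frac{|t-r|^{1/2}}{|s-r|^{1/2}}\le\frac{|t-r|^{\ep/2}}{|s-r|^{\ep/2}}$ (valid since $|t-r|\le|s-r|$ and $\ep\le1$), which your proposal omits.

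The most serious gap is in your proof of item \eqref{first}. You bound both summands by $|t-r|^{\ep/2}|s-t|^{-1/2}$ and then claim to recover $2\frac{\Psgb{t-r}{\ep/2}}{|s-t|^{1/2}\Phi_{\bar\gamma}(|s-t|^\ep)}$ "by inserting $\Phi_{\bar\gamma}(|s-t|^\ep)\ge1$ and $\Psgb{t-r}{\ep/2}\ge|t-r|^{\ep/2}$." But these two inequalities push in opposite directions in the target: after canceling $|s-t|^{-1/2}$, what one actually needs is $\Phi_{\bar\gamma}(|s-t|^\ep)\le\Phi_{\bar\gamma}(|t-r|^\ep)$, i.e.\ $|s-t|\ge|t-r|$, which is simply false in general (take $r$ far from $t$ and $s$ close to $t$). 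You have discarded the factor $\left(\frac{|s-t|}{|s-r|}\right)^{\ep/2}$ too early. The correct argument (as in the paper) keeps the intermediate bound $\frac{|t-r|^{\ep/2}}{|s-r|^{\ep/2}|s-t|^{(1-\ep)/2}}$, multiplies by $\frac{\Phi_{\bar\gamma}(|t-r|^\ep)}{\Phi_{\bar\gamma}(|s-r|^\ep)}\ge1$ (valid since $|t-r|\le|s-r|$ and $\Phi$ is non-increasing), and then invokes the monotonicity of $r\mapsto r^{\ep/2}\Phi_{\bar\gamma}(r^\ep)$ from \cref{Psi-lemma}\eqref{the-Psi} to deduce $|s-r|^{\ep/2}\Phi_{\bar\gamma}(|s-r|^\ep)\ge|s-t|^{\ep/2}\Phi_{\bar\gamma}(|s-t|^\ep)$. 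Your argument for item \eqref{second} reaches the right conclusion, though the intermediate inequality you display is garbled (it drops $|t-r|^{\ep/2}$ and has mismatched exponents); the needed estimate is simply $\frac{|s-t|^{\ep/2}}{|s-r|^{\ep/2}}\le1\le\Phi_\gamma(|t-r|^\ep)$ after cancelling $\Phi_\gamma(|s-t|^\ep)$.
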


\begin{proof}
We have  
\begin{align} \label{F-zero-estimate-split}
&   \left |\e \left[\varphi(B_s^{r,x})H(r,s) \right] - \e\left[ \varphi(B_s^{t,x}) H(t,s)\right] \right|  \notag\\
& = \left | \e \left[(\varphi(B_s^{r,x}) -\varphi(B_s^{t,x})) \,\,  H(r,s)\right]  +  \e \left[ (\varphi(B_s^{t,x}) - \varphi(x) )( H(r,s) - H(t,s))\right] \right | \notag\\
&\le\e \left[|\varphi(B_s^{r,x}) -\varphi(B_s^{t,x})| \,\, | H(r,s)|\right]  +  \e \left[ |\varphi(B_s^{t,x}) -\varphi(x)| \,\, | H(r,s) - H(t,s)| \right].
\end{align}
Since  by \cref{Psi-lemma} the function $\Psi_{\gamma}$ is concave, Jensen's inequality permits to move the expectation inside, we have
\begin{align*}
     \left( \e  \Psg {B_t-B_r} {\ep}^2 \right)^\frac{1}{2}
& =  \left( \e  \Psg {\,|B_t-B_r|^{2\ep} \,} {\frac{1}{2}}^2 \right)^\frac{1}{2}
 \le \left( \e \Psi_{\gamma }(|B_t-B_r|^{2\ep})    \right)^\frac{1}{2}  \le  \left(  \Psi_{\gamma }(\e |B_t-B_r|^{2\ep})    \right)^\frac{1}{2} \\
&\le   \Psi^\frac{1}{2}_{\gamma }( |t-r|^{\ep})   \le  \kappa_\gamma  \Psg {t-r}  {\frac{\ep}{2}},   
\end{align*}
where we also used that  $\Psi_{\gamma}$ is increasing and that  $\e |B_t-B_r|^{2\ep} =|t-r|^\ep \e |B_1|^{2\ep} \le |t-r|^\ep$.
For the first term on the RHS of \eqref{F-zero-estimate-split}
we get from \eqref{theF} by applying  the Cauchy-Schwarz  inequality that
\begin{align*} \label{eq_est_first_term}
     \e \left[\left|\varphi(B_s^{r,x}) -\varphi(B_s^{t,x})\right | \frac{|B_s-B_r|}{s-r}  \right]  \nonumber
&\le \e\left(  \Psg {B_t-B_r} {\ep}  \frac{ |B_s-B_r|}{s-r}\right)  \nonumber \\
&\le \kappa_\gamma \,  \Psg {t-r}  {\frac{\ep}{2}} \, \frac{1}{|s-r|^\frac{1}{2}}.
\end{align*}
For the second term on the RHS of \eqref{F-zero-estimate-split} we get  from \eqref{theF} that
\begin{align*}
      \e \left[|\varphi(B_s^{t,x}) -\varphi(x)| \left | H(r,s) - H(t,s) \right |\right]
& \le \left( \e  \Psg {B_s-B_t} {\ep}^2 \right)^\frac{1}{2}
      \frac{|t-r|^{\frac{1}{2}}}{|s-r|^{\frac{1}{2}}|s-t|^{\frac{1}{2}}} \\
&\le \kappa_\gamma \,  \Psg {s-t}  {\frac{\ep}{2}}
      \frac{|t-r|^{\frac{\ep}{2}}}{|s-r|^{\frac{\ep}{2}}|s-t|^{\frac{1}{2}}},
\end{align*}
where we used that $ \E|H(r,s)-H(t,s)|^2 \le  \frac{t-r}{|s-r|\,|s-t|} $ (see \cite[proof of Proposition 10]{BGGL21})
and
\[     \frac{|t-r|^{\frac{1}{2}}}{|s-r|^{\frac{1}{2}}|s-t|^{\frac{1}{2}} } 
   \le \frac{|t-r|^{\frac{\ep}{2}}}{|s-r|^{\frac{\ep}{2}}|s-t|^{\frac{1}{2}}}.\]
Combining both estimates we did prove \eqref{eq_new_claim}.
\medskip

\eqref{first}
We can assume that $t\not = r$.
Here \eqref{eq_new_claim} for $\gamma=0$, where we can chose $\kappa_\gamma=1$, reads as
\begin{align*}
    | \E(\varphi(B_s^{r,x})H(r,s)-\varphi(B_s^{t,x})H(t,s))|
&\le |t-r|^\frac{\ep}{2}
     \left [   \frac{1}{|s-r|^\frac{1}{2}} + \frac{1}{|s-r|^\frac{\ep}{2} |s-t|^{\frac{1-\ep}{2}} } \right ] \\
&\le 2 \frac{|t-r|^\frac{\ep}{2}}{|s-r|^\frac{\ep}{2} |s-t|^{\frac{1-\ep}{2}} } \\
&\le 2 \frac{|t-r|^\frac{\ep}{2}}{|s-r|^\frac{\ep}{2} |s-t|^{\frac{1-\ep}{2}} }
     \frac{\Phi_{\bar\gamma}(|t-r|^\ep)}{ \Phi_{\bar\gamma}(|s-r|^\ep)  } \\
&\le 2 \frac{|t-r|^\frac{\ep}{2}}{|s-t|^\frac{\ep}{2} |s-t|^{\frac{1-\ep}{2}} }
     \frac{\Phi_{\bar\gamma}(|t-r|^\ep)}{ \Phi_{\bar\gamma}(|s-t|^\ep)  } \\
& =  2 \frac{\Psgb  {t-r} {\frac{\ep}{2}}}{|s-t|^{\frac{1}{2}} \Phi_{\bar\gamma}(|s-t|^\ep)  },
\end{align*}
where we used
$\Phi_{\bar\gamma}(|t-r|^\ep)\ge \Phi_{\bar\gamma}(|s-r|^\ep)$ and
$|s-r|^\frac{\ep}{2}\Phi_{\bar\gamma}(|s-r|^\ep)  \ge
 |s-t|^\frac{\ep}{2}\Phi_{\bar\gamma}(|s-t|^\ep)$.
\medskip

\eqref{second}
Again we can assume that $t\not = r$. Here \eqref{eq_new_claim} reads as
\begin{align*}
      \frac{1}{\kappa_\gamma}| \E(\varphi(B_s^{r,x})H(r,s)-\varphi(B_s^{t,x})H(t,s))  |
& \le |t-r|^\frac{\ep}{2}
      \left [   \frac{\Phi_\gamma(|t-r|^{\ep})}{|s-r|^\frac{1}{2}}
              + \frac{\Phi_\gamma(|s-t|^{\ep})}{|s-r|^\frac{\ep}{2} |s-t|^{\frac{1-\ep}{2}} } \right ] \\
& \le 2 |t-r|^\frac{\ep}{2}
     \frac{\Phi_\gamma(|t-r|^{\ep} ) \Phi_\gamma (|s-t|^{\ep} )}{|s-t|^{\frac{1}{2}} }
\end{align*}
where we use that $\Phi_\gamma \ge 1$.
\end{proof}

The  next proposition is a  modification of \cite[Lemma 7 and Proposition 10]{BGGL21}. More precisely, it generalizes the inequality
of Lemma 7(b)(i) giving the possibility to weaken  the singularity in time by  slightly reducing the  regularity in space, while in
relation to Proposition 10 the H\"older continuity in time is reduced  to achieve  a weaker singularity in time.
\medskip
	
\begin{prop}\label{lemma37}  Let \cref{A1_alt} hold 
 and put $F(s,x):= {\sf f}(s,x,u(s,x),\nabla u(s,x))$.
\begin{enumerate}[{\rm (i)}]
\item \label{ma_zhang_formula}
      The function $u$ belongs to $\m C^{0,1}([t_o,T[\times\rset)$ and, for all $(t,x) \in [t_o,T[ \times \rset$,  we have,
      \begin{align}\label{eqZ}
      Z^{t,x}_s = \nabla u(s, B^{t,x}_s)  \quad \text{for a.e.} \,\,
      (s,\omega)\in [t,T[\times \Omega,
      \end{align}
       as well as
      \begin{equation} \label{nabla-u-representation}
      \nabla u(t,x)=\e\left(g(B^{t,x}_T)\frac{B_T-B_t}{T-t}\right)+\e\left(\int_t^T
       F(s,B^{t,x}_s)\frac{B_s-B_t}{s-t}ds\right).
      \end{equation}
      Consequently, for $\e_r := \e [\, \cdot\, |\m F_r]$,
      \begin{align}\label{eq: nabla-u-b}
       \nabla u(r, B^{t,x}_r)&=\e_r\left(g(B^{t,x}_T)\frac{B_T-B_r}{T-r}\right)+\e_r\left(\int_r^T
                     F(s,B^{t,x}_s)\frac{B_s-B_r}{s-r}ds\right) \notag \\& \hspace{15em}\,\,  \text{ a.s. for } \, r \in  [t,T[.
       \end{align}
\item  There exists a constant  $c_\eqref{eqn:item:2:lemma37}=c_\eqref{eqn:item:2:lemma37}(T,\ep,\gho,g(0),\fhoxt,L_{{\sf f}},c^{(2)}_{{\sf f}_0})>0$
       such that
      \label{nabla u-bound}
      \begin{equation}\label{eqn:item:2:lemma37}
      |\nabla u(t,x)|\le \frac{c_\eqref{eqn:item:2:lemma37}}{|T-t|^{(1-\ep)/2}}  \text{ for all } (t,x) \in [t_o,T[ \times \rset.
      \end{equation}
\item \label{nabla-estimate} There exists a constant
       $c_{\eqref{nabla u-diff-eq}}=c_{\eqref{nabla u-diff-eq}}(T,\ep, \beta,\gho,g(0),\fhoxt,L_{{\sf f}},c^{(2)}_{{\sf f}_0})>0$
       such that,
       \label{nabla u-diff} for all $t \in [t_0,T[,$  \, $x,y \in \rset$ and for all $\beta > \frac{1}{2}$  \text{or} $\beta =0$,
       \begin{align}   \label{nabla u-diff-eq}
           |\nabla u(t,x)-\nabla u(t,y)|
       \le c_{\eqref{nabla u-diff-eq}}  \frac{\ps {x-y}}{|T-t|^{\frac{1}{2}} \Phi_{\beta }(|T-t|^\ep)}.
       \end{align}
\item   \label{en:timenabu}
       There exists a constant  $c_{\eqref{en:timenabu-eq}}=c_{\eqref{en:timenabu-eq}}(T,\ep,\beta,\gho,g(0),\fhoxt,L_{{\sf f}},c^{(2)}_{{\sf f}_0})>0$
       such that for  $\beta >\frac{1}{2}$ or $\beta=0$ and for  all $x \in \rset$,
       \begin{align}  \label{en:timenabu-eq}
       |\nabla u(t,x) -\nabla u(r,x) | \le  c_{\eqref{en:timenabu-eq}}   \frac{\Ps  {t-r} {\frac{\ep}{2}} }{ |T-t|^{\frac{1}{2}}  \Phi_{\beta}(|T-t|^{\ep})}
        \quad  \text{ for all} \,\,\, t_o\le r <t <T.
       \end{align}
\end{enumerate}
\end{prop}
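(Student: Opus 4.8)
I would first establish the representation \eqref{nabla-u-representation} together with the pointwise bound \eqref{eqn:item:2:lemma37}, following the argument of \cite[Lemma 7]{BGGL21} and a Ma--Zhang / \cite[Theorem 4.1]{ElKPQ:97}-type identity. The only genuinely new point is that under \cref{A1_alt} the generator ${\sf f}$ carries the factor $(T-s)^{-1/2}$ and is thus unbounded near $T$; one circumvents this by replacing ${\sf f}$ by its restriction to $[t_o,T_1]$ with $T_1<T$ (on which it is bounded in time), running the classical programme there, and letting $T_1\uparrow T$, the a priori estimates of \cref{en:regu} being uniform in $T_1$. For mollified data $(g,{\sf f})$ classical results give $u\in\m C^{1,2}$, $Z^{t,x}_s=\nabla u(s,B^{t,x}_s)$ and the Feynman--Kac formula $u(t,x)=\E[g(B^{t,x}_T)]+\E\int_t^T F(s,B^{t,x}_s)\,\od s$; Gaussian integration by parts in $x$ (using $\E H(t,s)=0$ and $\partial_x\,\E[\varphi(x+B_s-B_t)]=\E[\varphi(x+B_s-B_t)H(t,s)]$) then yields \eqref{nabla-u-representation}, and passing to the limit gives it in general, with \eqref{eq: nabla-u-b} following from the Markov/flow property of $B$ and \eqref{eqZ} from uniqueness of the martingale representation of $u(\cdot,B^{t,x}_\cdot)$. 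For \eqref{eqn:item:2:lemma37} one centres both terms in \eqref{nabla-u-representation} (subtracting $g(x)$, respectively $F(s,x)$, which are annihilated by $H$) and applies Cauchy--Schwarz with $\E|B_s-B_t|^{2\ep}\le|s-t|^\ep$ and $\E H(t,s)^2=(s-t)^{-1}$; the centring removes any dependence on $|{\sf f}(s,0,0,0)|$, so that only the $\ep$-Hölder modulus of $F(s,\cdot)$ enters, which in turn involves the space modulus of $\nabla u(s,\cdot)$, closing a circular dependence by a backward Grönwall argument as in \cite{BGGL21}.

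\textbf{Item (iii): space regularity.} I would argue by cases. If $|x-y|\le|T-t|^{1/2}$, the classical bound $|\nabla u(t,x)-\nabla u(t,y)|\le c\,|x-y|^\ep/|T-t|^{1/2}$ (the new version of \cite[Lemma 7(b)(i)]{BGGL21}, obtained from \eqref{nabla-u-representation} by Gaussian-density estimates and the same Grönwall scheme) already implies \eqref{nabla u-diff-eq}: from $|x-y|^{2\ep}\le|T-t|^\ep$ and the monotonicity of $\Phi_\beta$ one gets $\Phi_\beta(|T-t|^\ep)\le\Phi_\beta(|x-y|^{2\ep})$, hence $|x-y|^\ep/|T-t|^{1/2}\le\ps{x-y}/(|T-t|^{1/2}\Phi_\beta(|T-t|^\ep))$. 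If $|x-y|\ge|T-t|^{1/2}$, one uses \eqref{eqn:item:2:lemma37} instead: $|\nabla u(t,x)-\nabla u(t,y)|\le 2c_\eqref{eqn:item:2:lemma37}\,|T-t|^{-(1-\ep)/2}$, and since $r\mapsto\Ps{r}{\ep}=r^\ep\Phi_\beta(r^{2\ep})$ is non-decreasing (\cref{Psi-lemma}) one has $|T-t|^{\ep/2}\Phi_\beta(|T-t|^\ep)\le|x-y|^\ep\Phi_\beta(|x-y|^{2\ep})=\ps{x-y}$, whence again $|T-t|^{-(1-\ep)/2}\le\ps{x-y}/(|T-t|^{1/2}\Phi_\beta(|T-t|^\ep))$.

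\textbf{Item (iv): time regularity.} This is the heart of the matter. For $t_o\le r<t<T$ I would subtract the representations \eqref{nabla-u-representation} for $(t,x)$ and $(r,x)$, write $\int_r^T=\int_r^t+\int_t^T$, and split into three pieces. (a) The terminal piece $\E[g(B^{r,x}_T)H(r,T)-g(B^{t,x}_T)H(t,T)]$: since $g$ is $\ep$-Hölder, i.e.\ $\Psg{\cdot}{\ep}$-Hölder with $\gamma=0$, \cref{lemma36}(i) with $\overline\gamma=\beta$ (or the $\gamma=0$ bound when $\beta=0$) gives at once $\lesssim\Ps{t-r}{\frac{\ep}{2}}/(|T-t|^{1/2}\Phi_\beta(|T-t|^\ep))$. (b) The short piece $\int_r^t\E[F(s,B^{r,x}_s)H(r,s)]\,\od s$: centring ($\E H(r,s)=0$) and Cauchy--Schwarz with the $\ep$-Hölder modulus of $F(s,\cdot)$, which is $\le C(1+|T-s|^{-1/2}\Phi_\beta(|T-s|^\ep)^{-1})$ by (iii) and \cref{en:regu}, bound this by $\big(\sup_{s\in[r,t]}|T-s|^{-1/2}\Phi_\beta(|T-s|^\ep)^{-1}\big)\int_0^{t-r}u^{(\ep-1)/2}\Phi_\beta(u^\ep)\,\od u$; the supremum equals $|T-t|^{-1/2}\Phi_\beta(|T-t|^\ep)^{-1}$ because $\tau\mapsto\tau^{-1/2}\Phi_\beta(\tau^\ep)^{-1}$ is non-increasing, and the integral is $\lesssim(t-r)^{(\ep+1)/2}\Phi_\beta((t-r)^\ep)\le T^{1/2}\Ps{t-r}{\frac{\ep}{2}}$. (c) The matching integral $\int_t^T\E[F(s,B^{r,x}_s)H(r,s)-F(s,B^{t,x}_s)H(t,s)]\,\od s$: here both generators are evaluated at the \emph{same} time $s$, so only the space modulus of $F(s,\cdot)$ matters, namely $|F(s,y)-F(s,y')|\le C(1+|T-s|^{-1/2}\Phi_\beta(|T-s|^\ep)^{-1})\,\ps{y-y'}$, i.e.\ $\Psg{\cdot}{\ep}$-Hölder with $\gamma=\beta$; \cref{lemma36}(ii) then bounds the integrand by a constant times $(1+|T-s|^{-1/2}\Phi_\beta(|T-s|^\ep)^{-1})\,\Phi_\beta(|s-t|^\ep)\,|s-t|^{-1/2}\,\Ps{t-r}{\frac{\ep}{2}}$, and it remains to check $\int_t^T(1+|T-s|^{-1/2}\Phi_\beta(|T-s|^\ep)^{-1})\Phi_\beta(|s-t|^\ep)|s-t|^{-1/2}\,\od s\lesssim|T-t|^{-1/2}\Phi_\beta(|T-t|^\ep)^{-1}$ by splitting $[t,T]$ at its midpoint. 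Summing (a), (b), (c) yields \eqref{en:timenabu-eq}.

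\textbf{Main obstacle.} The recurring difficulty in (iv) is that the crude estimates place a power of $\Phi_\beta$ in the \emph{numerator} of the time singularity rather than the denominator; this is recovered only because $\sup_{\tau\in(0,T]}\tau^{1/2}\Phi_\beta(\tau^\ep)^2<\infty$ and $\sup_{\tau\in(0,T]}\tau^{\ep/2}\Phi_\beta(\tau^\ep)<\infty$ (a power of the length beats a power of a logarithm), which is precisely why the threshold $|x-y|\asymp|T-t|^{1/2}$ and the modified modulus $\ps{\cdot}$ are calibrated the way they are. The second, structural, obstacle is that (ii), (iii) and (iv) are coupled through $F(s,\cdot)={\sf f}(s,\cdot,u(s,\cdot),\nabla u(s,\cdot))$ and so must be proved simultaneously by a backward Grönwall / fixed-point scheme in the spirit of \cite{BGGL21}; once the relevant weighted sup-norms of $\nabla u$ are shown to be finite, the dependence of the constants on $(T,\ep,\beta,\gho,g(0),\fhoxt,L_{{\sf f}},c^{(2)}_{{\sf f}_0})$ is read off from the estimates above.
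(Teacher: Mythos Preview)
Your treatment of items (i), (ii) and (iv) matches the paper's proof essentially line by line: the paper also defers (i)--(ii) to \cite[Lemma~7]{BGGL21} (remarking that the global time-H\"older property is not used there), and for (iv) it splits $\nabla u(r,x)-\nabla u(t,x)$ into exactly your three pieces, applying \cref{lemma36}\eqref{first} to the $g$-term, \cref{lemma36}\eqref{second} with $\gamma=\beta$ to the $\int_t^T$ piece after inserting the space modulus of $F$ from (iii), and a direct bound on the short integral $\int_r^t$.

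For item (iii), however, you take a genuinely different route. The paper argues directly for general $\beta$: it writes $\|\nabla u(r,B^{t,x}_r)-\nabla u(r,B^{t,y}_r)\|_{L^2}$ via \eqref{eq: nabla-u-b}, and for the $g$-contribution uses the pointwise inequality $|g(B^{t,x}_T)-g(B^{t,y}_T)-g(B^{t,x}_r)+g(B^{t,y}_r)|\le 2\gho\min\{|x-y|^\ep,|B_T-B_r|^\ep\}$, then computes $\E[\min\{|x-y|,|B_v|\}^\ep\,|B_v|/v]$ explicitly by splitting on $\{|B_v|\le|x-y|\}$ and $\{|x-y|\le\sqrt v\}$ to produce the factor $\ps{x-y}/(\sqrt v\,\Phi_\beta(v^\ep))$; the $F$-contribution is then closed by \cref{volterra_gronwall}. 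Your argument instead first invokes the $\beta=0$ bound $|\nabla u(t,x)-\nabla u(t,y)|\le c|x-y|^\ep/|T-t|^{1/2}$ and then upgrades to arbitrary $\beta$ by an external case split on $|x-y|\lessgtr|T-t|^{1/2}$, using only the monotonicity of $\Phi_\beta$ and of $r\mapsto\Ps{r}{\ep}$ together with the a~priori bound \eqref{eqn:item:2:lemma37}. Your version is shorter and avoids the Gaussian-integral computation, at the price of needing the $\beta=0$ case as a separate input (which is exactly \cite[Lemma~7(b)(i)]{BGGL21} under \cref{A1_alt}); the paper's version is self-contained for all $\beta$ in one pass but duplicates the Gr\"onwall step. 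Both are correct.
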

\medskip

\begin{proof}
Items \eqref{ma_zhang_formula} and \eqref{nabla u-bound} are proven in \cite[Lemma 7]{BGGL21}
where the global H\"older regularity of the generator in time is not used which means that our \cref{A1_alt} is sufficient.
\smallskip

\eqref{nabla u-diff} The representation \eqref{eq: nabla-u-b} yields to
 \begin{align} \label{L2-estimate}
	 \notag
 \| \nabla u(r, B^{t,x}_r) - \nabla u(r, B^{t,y}_r)\|_{L^2}  
 &\le \left \|    \e_r\left(\left (g(B^{t,x}_T) - g(B^{t,y}_T) \right )\frac{B_T-B_r}{T-r}\right) \right \|_{L^2}  \\
 & \quad +  \int_r^T  \left \|  \e_r\left(
                     \left(F(s,B^{t,x}_s) -F(s,B^{t,y}_s)\right) \frac{B_s-B_r}{s-r} \right)   \right \|_{L^2}   ds. 
  \end{align}

  Let us start with the $g$ difference.  Throughout the paper  we will repeatedly use that one can add terms like  $g(B^{t,x}_r)\frac{B_T-B_r}{T-r}$ to the calculation since their conditional expectation  w.r.t.~$\cF_r$ is zero. Note that
  \begin{align*}
   & \e_r\left(\left (g(B^{t,x}_T) - g(B^{t,y}_T) \right )\frac{B_T-B_r}{T-r}\right)\\ 
   &=\e_r\left(\left (g(B^{t,x}_T) - g(B^{t,y}_T) - g(B^{t,x}_r)+g(B^{t,y}_r) \right )\frac{B_T-B_r}{T-r}\right)\\
    &\le \e_r \left( \underbrace{|g(B^{t,x}_T) - g(B^{t,y}_T) - g(B^{t,x}_r)+g(B^{t,y}_r)|}_{:=A}\frac{|B_T-B_r|}{T-r} \right).
  \end{align*}

Since  $g$ is $\ep$-Hölder  continuous we have the estimates 
$$ A \le 2  \gho |x-y|^\ep     \quad  \text{and} \quad A\le   2 \gho |B_T-B_r|^\ep,$$
and therefore 
$$ A  \le 2 \gho \min\{ |x-y|^\ep,  |B_T-B_r|^\ep  \}.  $$
Setting $v:=T-r \in\, ]0,T]$ we are interested in the estimate of 
\begin{align*} 
K &:= \e  \left ( \min\{ |x-y|,  |B_v|\}^\ep \, \frac{|B_v|}{v} \right ) \\
     &=  \frac{1}{v}  \int_{|B_v|\le |x-y|} |B_v|^{1+\ep} d \p  + \frac{ |x-y|^\ep}{v}   \int_{|B_v| > |x-y|} |B_v|    d \p \\
     &=:K_1+K_2.
\end{align*}
For $K_1$ we use that $B_v$ and $\sqrt{v} B_1$ have the same distribution  and get
\begin{align*} 
K_1&=\frac{1}{v}   \int_{|z|\le \frac{|x-y|}{\sqrt{v}}}   |z|^{1+\ep}  v^\frac{1+\ep}{2}  e^{-z^2/2}  \frac{dz}{\sqrt{2\pi}} \\
& \\
      &\le \begin{cases}   \sqrt{\frac{1}{2\pi}} \,\, v^\frac{\ep-1 }{2} \left ( \frac{|x-y|}{\sqrt{v}} \right)^{2+\ep}  &\mbox{if } \frac{|x-y|}{\sqrt{v}}  \le 1  \\
      & \\
      v^\frac{\ep-1 }{2} \e |B_1|^{1+\ep}    &\mbox{if } \frac{|x-y|}{\sqrt{v}}  > 1 
      \end{cases}  \\
      & \\
& =   \frac{|x-y|^\ep}{\sqrt{v}}    \left(\sqrt{\frac{1}{2 \pi}}  \left ( \frac{|x-y|}{\sqrt{v}} \right)^{2}  \ind_{ \{ |x-y|  \le \sqrt{v}  \}}  +  \left ( \frac{\sqrt{v}}{|x-y|}\right )^\ep   \e |B_1|^{1+\ep} \ind_{ \{ |x-y|  > \sqrt{v}  \}}   \right ).
\end{align*}
Similarly, we have for $K_2$ that
\begin{align*} 
K_2&=  \frac{ |x-y|^\ep}{v}   \int_{|z| >\frac{|x-y|}{\sqrt{v}}} |z| v^\frac{1}{2}  e^{-z^2/2}  \frac{dz}{\sqrt{2\pi}} \\
& \le \frac{ |x-y|^\ep}{\sqrt{v}} \left ( \e|B_1| \, \ind_{ \{ |x-y|  \le \sqrt{v}  \}}  +    \sqrt{\frac{2}{\pi}}  e^{-|x-y|^2/2v} \,\ind_{ \{ |x-y|  > \sqrt{v}  \}}     \right ).
\end{align*}
Hence
\begin{align*} 
K 
&\le 2 \left [ \frac{ |x-y|^\ep}{\sqrt{v}}   \ind_{ \{ |x-y|  \le \sqrt{v}  \}}  +  v^\frac{\ep -1}{2}\ind_{ \{ |x-y|  > \sqrt{v}  \}} \right ],
\end{align*} 
where we used that  $  e^{-\frac{z^2}{2}} <  z^{-\ep}$   for $z>1$.
Since $\ps {\cdot}$ is non-decreasing we get  that
\[    v^\frac{\ep -1}{2}\ind_{ \{ |x-y|  > \sqrt{v}  \}}
   =  \frac{1}{v^{\frac{1}{2}}   \Phi_{\beta }(v^\ep)}  v^\frac{\ep}{2}    \Phi_{\beta }(v^\ep)  \ind_{ \{ |x-y|  > \sqrt{v}  \}}
  \le \frac{1}{v^{\frac{1}{2}}   \Phi_{\beta }(v^\ep)} \ps {x-y}. \]
For the case $|x-y|  \le  \sqrt{v} $  we use that  $ \Phi_{\beta }(v^\ep) \le \Phi_{\beta }(|x-y|^{2\ep})$ (if $x\not =y$)
and get as well
\[     \frac{ |x-y|^\ep}{\sqrt{v}}   \ind_{ \{ |x-y|  \le \sqrt{v}  \}}
   \le \frac{1}{v^{\frac{1}{2}}   \Phi_{\beta }(v^\ep)}  \ps {x-y}.  \]
Consequently, 
\begin{align*}   \left | \e_r\left(\left (g(B^{t,x}_T) - g(B^{t,y}_T) \right )\frac{B_T-B_r}{T-r}\right) \right | 
&\le 4 \gho  \, \frac{\ps {x-y}}{|T-r|^{\frac{1}{2}}   \Phi_{\beta }(|T-r|^\ep)}  .
\end{align*}

 We continue with the estimate of the  second term on the RHS~of  \eqref{L2-estimate}. By the conditional Cauchy-Schwarz inequality  we obtain
$$ \left \|  \e_r\left(
                     (F(s,B^{t,x}_s) -F(s,B^{t,y}_s)) \frac{B_s-B_r}{s-r} \right)   \right \|_{L^2} \le \left \| 
                     F(s,B^{t,x}_s) -F(s,B^{t,y}_s)   \right \|_{L^2}  \frac{1}{\sqrt{s-r}}.   $$
Using  \eqref{eq:prop_f_new} we have
\begin{align}\label{F_difference2}
	&| F(s,B^{t,x}_s) -F(s,B^{t,y}_s)| \\
	&\leq \fhoxt |x-y|^{\ep}+    L_{{\sf f}}| u(s,B^{t,x}_s) -u(s,B^{t,y}_s) |+ L_{{\sf f}}|  \nabla u(s,B^{t,x}_s)-  \nabla u(s,B^{t,y}_s)|, \notag
\end{align}
and the Hölder continuity of $u$ stated in \cref{en:regu} yields
\begin{align} \label{F-difference}
	 | F(s,B^{t,x}_s) -F(s,B^{t,y}_s)| 
 \le( \fhoxt +  L_{{\sf f}}\, c_{\eqref{u-estimates}})  |x -y |^\ep+ L_{{\sf f}}|  \nabla u(s,B^{t,x}_s)-  \nabla u(s,B^{t,y}_s)|.
\end{align}
By combining the above estimates we conclude from \eqref{L2-estimate}  that 
\begin{align*}
     \| \nabla u(r, B^{t,x}_r) - \nabla u(r, B^{t,y}_r)\|_{L^2}
&\le   4 |g|_\ep \frac{ \ps {x-y} }{|T-r|^{\frac{1}{2}}   \Phi_{\beta }(|T-r|^\ep)} \\
& \quad + 2\, ( \fhoxt + c_{\eqref{u-estimates}}\, L_{{\sf f}})\, |x -y |^\ep \sqrt{T-r} \\
& \quad +   \int_r^T  \left \|   \nabla u(s,B^{t,x}_s)-  \nabla u(s,B^{t,y}_s)  \right \|_{L^2} \frac{ L_{{\sf f}}}{\sqrt{s-r}}  ds.
\end{align*}
By \cref{Psi-lemma}  we have
\[ 1 =  \frac{|T-t|^{\frac{1}{2}} \Phi_{\beta }(|T-t|^\ep)}{|T-t|^{\frac{1}{2}} \Phi_{\beta }(|T-t|^\ep)}
     =  \frac{|T-t|^{\frac{1 - \ep}{2}} \,\,\Ps  {T-t} {\frac{\ep}{2}} }{|T-t|^{\frac{1}{2}} \Phi_{\beta }(|T-t|^\ep)}
    \le \frac{T^{\frac{1 - \ep}{2}} \,\, \Ps  {T} {\frac{\ep}{2}} }{|T-t|^{\frac{1}{2}} \Phi_{\beta }(|T-t|^\ep)} \]
implying
\begin{align}\label{eqn:nable_u_from_above}
&    \| \nabla u(r, B^{t,x}_r) - \nabla u(r, B^{t,y}_r)\|_{L^2} \notag \\
&\le c_\eqref{eqn:nable_u_from_above}   \frac{  \ps {x-y} }{|T-r|^{\frac{1}{2}}   \Phi_{\beta }(|T-r|^\ep)}
     +   L_{{\sf f}} \int_r^T \frac{ \left \|   \nabla u(s,B^{t,x}_s)-  \nabla u(s,B^{t,y}_s)  \right \|_{L^2}}{\sqrt{s-r}}  ds
\end{align}
for $c_\eqref{eqn:nable_u_from_above}:=  4|g|_\ep +2 T^{1-\frac{\ep}{2}} ( \fhoxt + L_{{\sf f}}\, c_{\eqref{u-estimates}})
  \, \Ps  {T} {\frac{\ep}{2}}$.
  Because of \eqref{eqn:item:2:lemma37} we have the integrable bound
\begin{equation*}
	\left \|   \nabla u(s,B^{t,x}_s)-  \nabla u(s,B^{t,y}_s)  \right \|_{L^2} \le 2 \frac{c_\eqref{eqn:item:2:lemma37}}{|T-s|^{(1-\ep)/2}},
\end{equation*}
so that  we may apply Gronwall's lemma  (\cref{volterra_gronwall}) with
$a:= c_\eqref{eqn:nable_u_from_above} \ps {x-y}$ and
$b:= L_{{\sf f}}$ and get
\[      \| \nabla u(r, B^{t,x}_r) - \nabla u(r, B^{t,y}_r)\|_{L^2}
  \le  c_\eqref{eqn:volterra_gronwall} c_\eqref{eqn:nable_u_from_above}
       \frac{ \ps {x-y} }{|T-r|^{\frac{1}{2}}  \Phi_{\beta }(|T-r|^\ep)}. \]
Especially, for $r=t$ this implies
\begin{equation}\label{eqn:nabla_u_diff}
  | \nabla u(t, x) - \nabla u(t, y)|
  \leq c_\eqref{eqn:volterra_gronwall} c_\eqref{eqn:nable_u_from_above} \frac{ \ps {x-y} }{ |T-t|^{\frac{1}{2}} \Phi_{\beta }(|T-t|^\ep)}
\end{equation}
for some  $c_\eqref{eqn:volterra_gronwall}=c_\eqref{eqn:volterra_gronwall}(T,\ep,L_{{\sf f}},\beta)>0$ where
$c_\eqref{eqn:volterra_gronwall}(T,\ep,L_{{\sf f}},\beta)$ is non-decreasing in $T$.
\medskip

\eqref{en:timenabu}
We use \eqref{nabla-u-representation}  to write
 \begin{align*}
  \nabla u(r,x)-\nabla u(t,x)=&\e\left[ g\left(B_T^{r,x}\right)H(r,T)-g\left(B_T^{t,x}\right) H(t,T)\right]\\
  &+\E \int_t^T F(s,B_s^{r,x})H(s,r)-F(s,B_s^{t,x}) H(s,t) ds \\
  &+  \E \int_r^t (F(s,B_s^{r,x})-F(s,x))H(r,s) ds
 \end{align*}
where $H$ was defined in \cref{lemma36}. We bound the three terms of the right hand side:
\medskip

\underline{First term:}  By applying \cref{lemma36}\eqref{first} for $(\gamma,\bar\gamma)=(0,\beta)$ and $s=T$
we get
\begin{align*}
    \left |\e\left[ g\left(B_T^{r,x}\right)H(r,T)-g\left(B_T^{t,x}\right) H(t,T)\right]\right |
\le 2 \kappa_\beta \gho  \frac{\Ps {t-r} {\frac{\ep}{2}}}{  |T-t|^{\frac{1}{2}} \Phi_{\beta}(|T-t|^{\ep}) }.
\end{align*}

\underline{Second term:}  Using \eqref{F-difference}  for $s=t$ and \eqref{eqn:nabla_u_diff}
leads to
\begin{align} \label{F-Lipschitz}
     | F(s,x) - F(s,y)|
&\le ( \fhoxt +  L_{{\sf f}}\, c_{\eqref{u-estimates}})  |x -y |^\ep+ L_{{\sf f}}|  \nabla u(s,x)-\nabla u(s,y)| \notag \\
&\le ( \fhoxt +  L_{{\sf f}}\, c_{\eqref{u-estimates}})  |x -y |^\ep+ L_{{\sf f}}
     c_\eqref{eqn:volterra_gronwall} c_\eqref{eqn:nable_u_from_above}
     \frac{  \ps {x-y} }{|T-s|^{\frac{1}{2}} \Phi_{\beta }(|T-s|^\ep)} \notag \\
& \leq c_{\eqref{F-Lipschitz}}    \frac{\ps {x-y} }{|T-s|^{\frac{1}{2}} \Phi_{\beta }(|T-s|^\ep)},
\end{align}
with
\[ c_{\eqref{F-Lipschitz}}: =  ( \fhoxt +  L_{{\sf f}}\, c_{\eqref{u-estimates}}) T^\frac{1-\ep}{2} \, \Ps T  {\frac{\ep}{2}}
                               +L_{{\sf f}} c_\eqref{eqn:volterra_gronwall} c_\eqref{eqn:nable_u_from_above}. \]
Hence we may apply \cref{lemma36}\eqref{second}  and get
\begin{align*}
&    \E \int_t^T |F(s,B_s^{r,x})H(r,s)-F(s,B_s^{t,x}) H(t,s) |ds \\
&\le  c_{\eqref{F-Lipschitz}} \, 2 \, \kappa_\beta \, \Ps {t-r} {\frac{\ep}{2}}
      \int_t^T \frac{\Phi_\beta (|s-t|^{\ep})}{ |T-s|^{\frac{1}{2}}  \Phi_{\beta }(|T-s|^\ep) |s-t|^{\frac{1}{2}}    }  ds \\
&\le  c_{\eqref{F-Lipschitz}} \, 2 \, \kappa_\beta  \frac{\Ps {t-r} {\frac{\ep}{2}}}{ \Phi_{\beta }(|T-t|^\ep)}
      \int_t^T \frac{\Phi_\beta (|s-t|^{\ep})}{ |T-s|^{\frac{1}{2}}|s-t|^{\frac{1}{2}}    }  ds.
\end{align*}
For the integral term we observe
\begin{align}\label{eqn:beta_psi_integral_upper_bound}
    \int_t^T \frac{\Phi_\beta (|s-t|^{\ep})}{ |T-s|^{\frac{1}{2}}|s-t|^{\frac{1}{2}}    }  ds
& = \int_t^\frac{T+t}{2} \frac{\Phi_\beta (|s-t|^{\ep})}{ |T-s|^{\frac{1}{2}}|s-t|^{\frac{1}{2}}    }  ds
    + \int_\frac{T+t}{2}^T \frac{\Phi_\beta (|s-t|^{\ep})}{ |T-s|^{\frac{1}{2}}|s-t|^{\frac{1}{2}}    }  ds \notag \\
&\le \sqrt{\frac{2}{T-t}} \left [ \int_0^T \frac{\Phi_\beta (|s|^{\ep})}{ s^{\frac{1}{2}}    }  ds
    + \int_\frac{T+t}{2}^T \frac{\Phi_\beta (|s-t|^{\ep})}{ |T-s|^{\frac{1}{2}}    }  ds  \right ]  \notag \\
& = \sqrt{\frac{2}{T-t}} \left [ \int_0^T \frac{\Phi_\beta (|s|^{\ep})}{ s^{\frac{1}{2}}    }  ds
    + \int_\frac{T+t}{2}^T \frac{ |s-t|^\frac{\ep}{2}  \Phi_\beta (|s-t|^{\ep})}{ |s-t|^\frac{\ep}{2} |T-s|^{\frac{1}{2}}    }  ds  \right ]  \notag \\
&\le \sqrt{\frac{2}{T-t}} \left [ \int_0^T \frac{\Phi_\beta (|s|^{\ep})}{ s^{\frac{1}{2}}    }  ds
    + \Ps T {\frac{\ep}{2}}  \int_\frac{T+t}{2}^T \frac{1}{|s-t|^\frac{\ep}{2} |T-s|^{\frac{1}{2}}    }  ds  \right ]  \notag \\
&\le \sqrt{\frac{2}{T-t}} \left [ \int_0^T \frac{\Phi_\beta (|s|^{\ep}b_\ep)}{ s^{\frac{1}{2}}    }  ds
    + \Ps T {\frac{\ep}{2}} T^\frac{1-\ep}{2} B\left (\frac{1}{2},\frac{1}{2}\right ) \right ] \notag \\
& =: c_\eqref{eqn:beta_psi_integral_upper_bound} \, |T-t|^{-\frac{1}{2}}
\end{align}
for some $c_\eqref{eqn:beta_psi_integral_upper_bound} = c_\eqref{eqn:beta_psi_integral_upper_bound}(T,\ep,\beta)>0$.
Summarizing, we proved that
\[
      \E \int_t^T |F(s,B_s^{r,x})H(r,s)-F(s,B_s^{t,x}) H(t,s) |ds
 \le 2 \kappa_\beta c_{\eqref{F-Lipschitz}}c_\eqref{eqn:beta_psi_integral_upper_bound}
     \frac{\Ps {t-r} {\frac{\ep}{2}}}
          {|T-t|^\frac{1}{2} \Phi_{\beta }(|T-t|^\ep)}. \]
\underline{Third term:} Using \eqref{F-Lipschitz}, H\"older's inequality, and \cref{Psi-lemma}
we obtain
\begin{align*}
     \left| \E \int_r^t (F(s,B_s^{r,x})-F(s,x))\frac{B_s-B_r}{s-r} ds \right|
&\le c_{\eqref{F-Lipschitz}}  \kappa_\beta \int_r^t
     \frac{\Ps {s-r} {\frac{\ep}{2}}}
          {|T-s|^{\frac{1}{2}} \Phi_{\beta }(|T-s|^\ep) |s-r|^\frac{1}{2}}ds\\
&\le c_{\eqref{F-Lipschitz}} \kappa_\beta
     \frac{ \Ps {t-r} {\frac{\ep}{2}} }
          {|T-t|^{\frac{1}{2}} \Phi_{\beta }(|T-t|^\ep)  } \int_r^t \frac{1} {|s-r|^\frac{1}{2}}ds\\
&= 2c_{\eqref{F-Lipschitz}} \kappa_\beta \frac{\Ps {t-r} {\frac{\ep}{2}}}
   {|T-t|^{\frac{1}{2}} \Phi_{\beta }(|T-t|^\ep)  }  |t-r|^\frac{1}{2}.
\end{align*}
Collecting all terms proves item \eqref{en:timenabu}.
\end{proof}


\section{Local H\"older continuity  of the generator  with frozen mean field terms }
\label{loc-Holder-cont}

In this section we show that if the measure components are frozen, then the obtained generators satisfy  \cref{A1_alt}
with a uniform control of the constants. The third term in the estimate \eqref{eqn:integral_bound_generator-f} 
is needed to use results from \cite{BGGL21}.

\begin{prop}\label{statement:the-new-f}
Under \cref{H:A1} define (and recall)
${\sf f}, {\sf f}_n: [t_o,T[\times \R\times \R\times \R \to \R$ as
\begin{align} \label{the-new-f}
\f(t,x,y,z)   & := f(t,x,y,z, [Y^{t_o,\xi}_t], [Z^{t_o,\xi}_t] ), \\
\fn(t,x,y,z) &  = f(t\wedge t_{n-1} ,x,y,z,[Y^{t_o,\xi}_t],  [Z^{t_o,\xi}_{t\wedge t_{n-1}}])
\end{align}
with $\|\xi\|_{L^{2\ep}} \le M$.
Then there exists a version of $(Z^{t_o,\xi}_t)_{t\in [t_o,T[}$ such that
$\f$ and $\f_n$ satisfy \cref{A1_alt}:
For $(t,x,y,z),(t',x',y',z')\in [t_o,T[\times\rset\times\rset\times\rset $ one has
\begin{align}
       \left | {\sf f}(t,x,y,z)- {\sf f}(t',x',y',z')\right|
& \leq c_\eqref{eq:the-new-f-new}  \,\frac{ \left|t-t'\right|^{\alpha\wedge \frac{\ep}{2} }}{|T-(t\vee t')|^\frac{1}{2}}
       + \fhox\, \left|x-x'\right|^\ep \notag \\
&      \quad + L_f \Big (  \left|y-y'\right|+ \left|z-z'\right| \Big ), \label{eq:the-new-f-new} \\
       \left | \fn(t,x,y,z)- \fn(t',x',y',z')\right|
& \leq c_\eqref{eq:the-new-fn-new}   \,\frac{ \left|t-t'\right|^{\alpha\wedge \frac{\ep}{2} }}{|T-(t_{n-1}\wedge (t\vee t'))|^\frac{1}{2}}
       + \fhox\, \left|x-x'\right|^\ep \notag \\
&      \quad  + L_f \Big (  \left|y-y'\right|+ \left|z-z'\right| \Big ), \label{eq:the-new-fn-new}
\end{align}
and
\begin{equation}\label{eqn:integral_bound_generator-f} 
       \int_{t_o}^T |{\sf f}(t,0,0,0)|^2 d t
     + \int_{t_o}^T  |{\fn}(t,0,0,0)|^2  d t   
     + \int_{]t_o,T]} |{\fn}(t,0,0,0)|^2 d\langle \rwB \rangle_t
    \le  c_\eqref{eqn:integral_bound_generator-f}
\end{equation}
for $c_\eqref{eq:the-new-f-new},c_\eqref{eq:the-new-fn-new},c_\eqref{eqn:integral_bound_generator-f}>0$
depending at most on $(T,\ep,\alpha,\gho,g(0),\fhot,\fhox ,$ $L_f,c_{f_0}^{(2)},M)$.
\end{prop}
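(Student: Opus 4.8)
The starting point is that, by \cite[Theorem A.1]{Li17}, the mean field BSDE \eqref{eq:mainBSDE-new} has a unique solution $(Y^{t_o,\xi},Z^{t_o,\xi})$ with $\sup_{s\in[t_o,T]}\E|Y^{t_o,\xi}_s|^2+\E\int_{t_o}^T|Z^{t_o,\xi}_r|^2\,\od r$ bounded by a constant depending only on $(T,\ep,L_f,\gho,g(0),c^{(2)}_{f_0},M)$; this is the usual a priori estimate together with $\xi\in L^{2\ep}$ and \cref{H:A1}{\rm (iii)}. Once these laws are frozen, $\f$ and $\fn$ are \emph{deterministic} generators, and $(Y^{t_o,\xi},Z^{t_o,\xi})$ is exactly the solution of the classical BSDE with generator $\f$, terminal datum $g(B^{t_o,\xi}_T)$ and Brownian driver $B$. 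Writing $u(t,x):=Y^{t,x}_{\f,t}$ for the associated value function, I take as the announced version $Z^{t_o,\xi}_t:=\nabla u(t,B^{t_o,\xi}_t)$ for every $t\in[t_o,T[$; by \cref{lemma37} this agrees with the canonical $Z$ for a.e.\ $(t,\omega)$, and since it is defined pointwise in $t$ the measures $[Y^{t_o,\xi}_t]$ and $[Z^{t_o,\xi}_{t\wedge t_{n-1}}]$ in \eqref{the-new-f} make sense for every $t$, in particular on the grid.

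The $x$-, $y$- and $z$-parts of \eqref{eq:the-new-f-new} and \eqref{eq:the-new-fn-new} are immediate from \eqref{eq:prop_f}, since freezing touches none of those variables (so $\fhoxt=\fhox$, $L_\f=L_f$). For \eqref{eqn:integral_bound_generator-f} I bound $|\f(t,0,0,0)|\le|f(t,0,0,0,\delta_0,\delta_0)|+L_f(\|Y^{t_o,\xi}_t\|_{L^2}+\|Z^{t_o,\xi}_t\|_{L^2})$: the first term integrates to $(c^{(2)}_{f_0})^2$ by \cref{H:A1}{\rm (iii)}, the $Y$-term to at most $T\sup_s\E|Y^{t_o,\xi}_s|^2$, and the $Z$-term to $\E\int_{t_o}^T|Z^{t_o,\xi}_r|^2\,\od r$; the analogue for $\fn$ is identical. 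For the $\langle\rwB\rangle$-integral, which equals $h\sum_k|\fn(t_k,0,0,0)|^2$, the only extra point is that $\|Z^{t_o,\xi}_{t_k\wedge t_{n-1}}\|_{L^2}^2\le c_{\eqref{eqn:item:2:lemma37}}^2\,|T-t_k\wedge t_{n-1}|^{-(1-\ep)}$ by \cref{lemma37}, whence $h\sum_k|T-t_k\wedge t_{n-1}|^{-(1-\ep)}\le c\,T^\ep$ uniformly in $n$ (a Riemann-sum estimate exploiting $t_k\wedge t_{n-1}\le t_{n-1}=T-h$); the $Y$-term and the $f(\cdot,0,0,0,\delta_0,\delta_0)$-term are handled the same way, the latter using that by \eqref{eq:prop_f} this function is continuous on $[t_o,T[$ with a near-$T$ blow-up no worse than its square integrability permits.

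For the time regularity I aim at $\cW_2([Y^{t_o,\xi}_t],[Y^{t_o,\xi}_{t'}])\le c\,|t-t'|^{\ep/2}$ and $\cW_2([Z^{t_o,\xi}_t],[Z^{t_o,\xi}_{t'}])\le c\,|t-t'|^{\ep/2}/(T-t\vee t')^{1/2}$. Here I invoke \cref{en:regu} (Hölder continuity of $u$ in space and, with exponent $\ep/2$, in time) and \cref{lemma37}(iii),(iv) with $\beta=0$, which — since then $\Phi_\beta\equiv1$ and $|\cdot|_{(\ep,\beta)}=|\cdot|^\ep$ — read $|\nabla u(t',x)-\nabla u(t',y)|\le c\,|x-y|^\ep/(T-t')^{1/2}$ and $|\nabla u(t',x)-\nabla u(t,x)|\le c\,|t'-t|^{\ep/2}/(T-t')^{1/2}$ for $t_o\le t<t'<T$. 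The point that makes this non-circular is that, although \cref{en:regu} and \cref{lemma37} are stated under \cref{A1_alt}, the bounds we use — and their proofs — rely only on \cref{H:A1}{\rm (i),(iv)}, on the $(x,y,z)$-behaviour of $\f$, and on the $L^2$-in-time bound just established, not on the time-Hölder condition \eqref{eq:prop_f_new} for $\f$ we are about to prove. Granting this, $Y^{t_o,\xi}_t=u(t,B^{t_o,\xi}_t)$ a.s.\ gives $\|Y^{t_o,\xi}_t-Y^{t_o,\xi}_{t'}\|_{L^2}\le\|u(t,B^{t_o,\xi}_t)-u(t',B^{t_o,\xi}_t)\|_{L^2}+\|u(t',B^{t_o,\xi}_t)-u(t',B^{t_o,\xi}_{t'})\|_{L^2}\le c\,|t-t'|^{\ep/2}\,\|(1+|B^{t_o,\xi}_t|)^\ep\|_{L^2}+c\,|t-t'|^{\ep/2}$, and $\|(1+|B^{t_o,\xi}_t|)^\ep\|_{L^2}\le c(1+M)$ since $\xi\in L^{2\ep}$; the same splitting applied to $Z^{t_o,\xi}_t=\nabla u(t,B^{t_o,\xi}_t)$, with the two $\nabla u$-bounds above, yields the claimed estimate for $\cW_2([Z^{t_o,\xi}_t],[Z^{t_o,\xi}_{t'}])$.

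It remains to insert these bounds, together with $\fhot|t-t'|^\alpha\le\fhot\,T^{\alpha-\alpha\wedge\frac{\ep}{2}}|t-t'|^{\alpha\wedge\frac{\ep}{2}}$, into \eqref{eq:prop_f} with the measure slots frozen at $t$ and $t'$, and to absorb the bounded powers of $T$ coming from $|t-t'|^{\ep/2}\le T^{\frac{\ep}{2}-\alpha\wedge\frac{\ep}{2}}|t-t'|^{\alpha\wedge\frac{\ep}{2}}$ and from $|t-t'|^{\ep/2}/(T-t\vee t')^{1/2}\le T^{\frac{\ep}{2}-\alpha\wedge\frac{\ep}{2}}|t-t'|^{\alpha\wedge\frac{\ep}{2}}/(T-t\vee t')^{1/2}$; this gives \eqref{eq:the-new-f-new} with a constant of the stated dependence. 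For \eqref{eq:the-new-fn-new} one repeats the computation with $f$ evaluated at $t\wedge t_{n-1}$, $t'\wedge t_{n-1}$ and with the measures $[Z^{t_o,\xi}_{t\wedge t_{n-1}}]$, $[Z^{t_o,\xi}_{t'\wedge t_{n-1}}]$: since $|t\wedge t_{n-1}-t'\wedge t_{n-1}|\le|t-t'|$, $(t\wedge t_{n-1})\vee(t'\wedge t_{n-1})=t_{n-1}\wedge(t\vee t')$, and both arguments lie in $[t_o,t_{n-1}]\subset[t_o,T[$, every singular denominator becomes $(T-t_{n-1}\wedge(t\vee t'))^{1/2}$, exactly as asserted. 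I expect the main obstacle to be the non-circularity issue just described — ensuring the regularity of $u$ and $\nabla u$ is available before the time-Hölder property of $\f$ is known — together with the $n$-uniform control of the $\langle\rwB\rangle$-Riemann sum in \eqref{eqn:integral_bound_generator-f}, which relies on the borderline growth $\|Z^{t_o,\xi}_t\|_{L^2}^2\le c\,(T-t)^{-(1-\ep)}$ remaining summable on the grid.
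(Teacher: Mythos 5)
Your proposal is correct and follows the same key mathematical ideas as the paper, but the two proofs organize the crucial bootstrap differently. The paper proves the time regularity of $t\mapsto[Y^{t_o,\xi}_t]$ and $t\mapsto[Z^{t_o,\xi}_t]$ by running the Picard iteration for the mean field BSDE starting from $\mu^0_t=\nu^0_t=\delta_0$: at each step $k$, the laws $[Y^{k-1,t_o,\xi}_t]$, $[Z^{k-1,t_o,\xi}_t]$ are by construction pointwise defined in $t$, so the frozen generator at step $k$ is a genuine deterministic function; \cref{en:regu} and \cref{lemma37} are applied to the corresponding $u^k$, $\nabla u^k$, giving bounds uniform in $k$; these pass to the limit on a subsequence and a full-measure set $I\subset[t_o,T[$, after which the version $\tilde Z^{t_o,\xi}_t$ is built by $L^2$-limits from $I$ and the bounds are extended to all $t$ by \cref{lemma37}\eqref{ma_zhang_formula}. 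You instead apply the lemmas directly to the value function $u$ of the frozen BSDE, obtain $Z^{t_o,\xi}_t=\nabla u(t,B^{t_o,\xi}_t)$ a.e., and then redefine $Z^{t_o,\xi}$ by $\nabla u$ to get an everywhere-defined version; since the new version agrees a.e.\ with the old, the generator $\f$ (and hence $u$) is unchanged, so the construction is consistent. Both routes hinge on the same observation, which you state explicitly and correctly: the constants of \cref{en:regu} and \cref{lemma37} depend only on $(T,\ep,\gho,g(0),\fhoxt,L_{\sf f},c^{(2)}_{{\sf f}_0})$ and not on the time-Hölder semi-norm, and the proofs of those lemmas use only the $(x,y,z)$-regularity of the generator and the $L^2$-integrability of ${\sf f}(\cdot,0,0,0)$; this is precisely what breaks the apparent circularity. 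The remaining pieces (absorption of powers of $T$ to pass from $|t-t'|^\alpha$ and $|t-t'|^{\ep/2}$ to $|t-t'|^{\alpha\wedge\ep/2}$ with the singular weight, the Riemann-sum estimate $h\sum_k|T-t_k\wedge t_{n-1}|^{-(1-\ep)}\le cT^\ep$, the identity $(t\wedge t_{n-1})\vee(t'\wedge t_{n-1})=t_{n-1}\wedge(t\vee t')$ for \eqref{eq:the-new-fn-new}) match the paper. One point worth spelling out more carefully if you were to write this up: before the redefinition step, $\f$ is only defined for a.e.\ $t$, so when you invoke \cref{lemma37} you are applying a statement whose hypotheses are phrased pointwise in $t$ to a generator with an a.e.-in-$t$ ambiguity; this is harmless because all the estimates there are obtained through time integrals, but the paper's iteration sidesteps it entirely, which is presumably why the authors chose that route.
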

\medskip

\begin{proof}
(a) We prove that there is a
$ c_\eqref{eqn:upper_bound:Wasserstein-xi-solution}
 =c_\eqref{eqn:upper_bound:Wasserstein-xi-solution}(T,\ep,\gho,g(0),\fhox ,L_f,c_{f_0}^{(2)},M)>0$ such that
\begin{align} \label{eqn:upper_bound:Wasserstein-xi-solution}
 \left \| Y^{t_o,\xi}_t - Y^{t_o,\xi}_{t'} \right \|_{L^2} \le  c_\eqref{eqn:upper_bound:Wasserstein-xi-solution} |t-t'|^\frac{\ep}{2} 
 \sptext{1}{and}{1}
 \left\| Z^{t_o,\xi}_t - Z^{t_o,\xi}_{t'} \right \|_{L^2}   \le c_\eqref{eqn:upper_bound:Wasserstein-xi-solution} \frac{  |t-t'|^{\frac{\ep}{2}}  \,  }{ |T-t\vee t'|^{\frac{1}{2}}  }, 
\end{align}
provided an appropriate version for the $Z$-process is chosen.
To show \eqref{eqn:upper_bound:Wasserstein-xi-solution}
we start with $\mu_t^0=\nu_t^0:= \delta_0$ and consider the iteration
\[ \left . \begin{array}{r}	
      Y_s^{k+1,t_o, \xi} 
    = g\left(B^{t_o,\xi}_T\right) + \int_s^T   
      f\left(r,B^{t_o,\xi}_r, Y_r^{k+1, t_o, \xi}, Z_r^{k+1, t_o, \xi},  \mu_r^k, \nu_r^k \right) dr 
      - \int_s^T Z_r^{k+1, t_o, \xi}\, dB_r  \\ 
      t_o \leq s\leq T 
    \end{array} \right \}, \]
where   $\mu_r^k =[ Y_r^{k, t_o,\xi}]$ and  $ \nu_r^k  =[ Z_r^{k, t_o, \xi}  ]$.
By  \cite[Theorem A.1]{Li17} we get by the equivalence of the '$\beta$-norms'  that 
\begin{equation}\label{eqn:convergence_beta_LI}
\| (Y^{k, t_o, \xi}-Y^{t_o, \xi},  Z^{k, t_o, \xi}-Z^{t_o,\xi} )\|_0 \to 0,
\end{equation}
for $k \to \infty$, where 
$ \| (\cY, \cZ)\|_\beta :=  \left(\e \int_{t_o}^T  e^{\beta s} (\cY_s^2 + \cZ_s^2) ds\right)^\frac{1}{2} . $
This implies
\begin{align*}
& \hspace*{-3em} \left (  \int_{t_o}^T | f(t,0,0,0 ,[Y^{k, t_o, \xi}_t], [Z^{k, t_o, \xi}_t]) |^2dt \right )^\frac{1}{2} \\
& \le c^{(2)}_{f_0}  + L_f  \left (  \int_{t_o}^T [ \cW ([Y^{k, t_o, \xi}_t], \delta_0)+  \cW ( [Z^{k, t_o, \xi}_t],\delta_0)   ]^2dt \right )^\frac{1}{2}  \\
& \le c^{(2)}_{f_0}  + \sqrt{2} L_f \, \| (Y^{k, t_o, \xi}-Y^{t_o, \xi},  Z^{k, t_o, \xi}-Z^{t_o,\xi} )\|_0 
                     + \sqrt{2} L_f \, \| (Y^{t_o, \xi},  Z^{t_o,\xi}) \|_0.
\end{align*}
We choose $k_0\in \N$  such that for all $k\ge k_0$  the second term on the RHS is less than $1$.
By  \cite[Theorem A.2 used for $t=0$ and the expected value]{Li17} we upper bound  $ \| (Y^{t_o, \xi},  Z^{t_o,\xi}) \|_0$
by a $c(\| g(B^{t_o, \xi}_T) \|_{L^2}, T,\ep,\fhox ,L_f,c_{f_0}^{(2)},M)>0$ 
where $\| g\left(B^{t_o, \xi}_T\right) \|_{L^2}$ is bounded again in terms of $(T,\ep, \gho,g(0),M)$.
Hence there is a  $c_\eqref{eqn:bound_L1_generator}:= c_\eqref{eqn:bound_L1_generator}(  T,\ep, \gho,g(0),\fhox ,L_f,c_{f_0}^{(2)},M)$  such that
\begin{equation}\label{eqn:bound_L1_generator}
 c_\eqref{eqn:bound_L1_generator}:=\sup_{k\ge k_0} \int_{t_o}^T | f(t,0,0,0 ,[Y^{k, t_o, \xi}_t], [Z^{k, t_o, \xi}_t]) |^2dt < \infty
\end{equation}
which implies \eqref{eqn:integral_bound_generator-f} for the generator $\f$ with constant
$c_\eqref{eqn:integral_bound_generator-f}^\f>0$.
Now we use \cref{en:regu} for
\[  Y_t^{k,t_o, \xi} = u^{k}(t, B_t^{t_o,\xi}) \quad \text{and} \quad  Z_t^{k, t_o, \xi} =\nabla u^k(t, B_t^{t_o,\xi}),  \]
where $k>k_0$ and $u^k$ corresponds to  $f(t,x,y,z, [Y^{k-1, t_o, \xi}_t], [Z^{k-1, t_o, \xi}_t])$,  to get
\begin{align}\label{Y-difference}
          \|Y_t^{k, t_o, \xi} - Y_{t'}^{k, t_o, \xi} \|_{L^2}
&\le \|u^k(t, B_t^{t_o, \xi}) - u^k(t',B_t^{t_o, \xi})\|_{L^2} + \|u^k(t', B_t^{t_o, \xi}) - u^k(t',B_{t'}^{t_o, \xi})\|_{L^2}  \notag \\
&\le  c_\eqref{u-estimates} \left [ \left \| (1+|B_t^{t_o,\xi}|)^\ep \right \|_{L^2} |t-t'|^\frac{\ep}{2} + \left \| \left |B_t^{t_o, \xi} - B_{t'}^{t_o, \xi}\right |^\ep \right \|_{L^2}
    \right ] \notag \\
&\le  c_\eqref{u-estimates} \left [ \left \| 1+|B_t-B_{t_o}|^\ep+|\xi|^\ep \right \|_{L^2} |t-t'|^\frac{\ep}{2} + \left \| |B_{|t-t'|}|^\ep \right \|_{L^2} \right ] \notag \\
&\le c_\eqref{u-estimates}  c_{\eqref{Y-difference}} \,|t-t'|^\frac{\ep}{2},
\end{align}
where
$c_\eqref{u-estimates} =c_\eqref{u-estimates}(T,\ep,\gho, g(0), |f|_{\ep;x},L_f,c_\eqref{eqn:bound_L1_generator})>0$
and
$c_{\eqref{Y-difference}}=c_{\eqref{Y-difference}}(T,\ep,M)>0$.
\medskip

Similarly by \cref{lemma37}(\eqref{nabla-estimate},\eqref{en:timenabu}) applied with $\beta=0$ we get
\begin{align} 	 \label{Z-difference}
  &\|Z_t^{k, t_o, \xi} - Z_{t'}^{k, t_o, \xi} \|_{L^2}\notag  \\
  & =   \|\nabla u^k(t,B_t^{t_o, \xi}) -\nabla u^k(t',B_{t'}^{t_o, \xi}) \|_{L^2}  \notag \\
  &\le   \|\nabla u^k(t,B_t^{t_o, \xi}) -\nabla u^k(t',B_{t}^{t_o, \xi}) \|_{L^2}
        +\|\nabla u^k(t',B_{t}^{t_o, \xi}) -\nabla u^k(t',B_{t'}^{t_o, \xi}) \|_{L^2} \notag \\
  &\le c_{\eqref{Z-difference}} \frac{  |t-t'|^{\frac{\ep}{2}}  \,  }{ |T-t\vee t'|^{\frac{1}{2}}  },
\end{align}
where   $c_{\eqref{Z-difference}} = c_{\eqref{Z-difference}}(T,\ep, \gho, g(0),\fhox ,  L_f, c_\eqref{eqn:bound_L1_generator})>0$.
From
\[ \lim_{k\to \infty}\| (Y^{k, t_o, \xi}-Y^{t_o, \xi},  Z^{k, t_o, \xi}-Z^{t_o,\xi} )\|_0^2=0 \]
we conclude that there is a subsequence $(k_l)_{l=1}^\infty$, $k_l>k_0$, such that
\[ L^2\mbox{-}\lim_{l\to \infty} Y_t^{k_l, t_o, \xi} = Y_t^{t_o, \xi}
   \sptext{1}{and}{1}
   L^2\mbox{-}\lim_{l\to \infty} Z_t^{k_l,t_o, \xi} = Z_t^{t_o, \xi} \]
for $t \in I \subseteq [t_o,T[$ where $I$ is a Borel set with measure $T-t_o$.
Since for this subsequence we have the relations \eqref{Y-difference} and \eqref{Z-difference}, we get
\begin{equation}\label{eqn:proof:statement:the-new-f}
 \left \| Y^{t_o,\xi}_t - Y^{t_o,\xi}_{t'} \right \|_{L^2} \le  c_\eqref{u-estimates}  c_\eqref{Y-difference} |t-t'|^\frac{\ep}{2}
 \sptext{1}{and}{1}
 \left \| Z^{t_o,\xi}_t - Z^{t_o,\xi}_{t'} \right \|_{L^2}  \le c_{\eqref{Z-difference}} \frac{  |t-t'|^{\frac{\ep}{2}}  \,  }{ |T-t\vee t'|^{\frac{1}{2}}  }
\end{equation}
for $t,t'\in I$. Because of the continuity of the paths of $Y^{t_o,\xi}$ and since  $\E \sup_{t\in [t_o,T]} |Y^{t_o,\xi}_t|^2< \infty$ the estimate for $ Y^{t_o,\xi}$ extends
to all $t,t'\in [t_o,T]$. Moreover, we define $\tilde Z_t^{t_o,\xi}:=  Z_t^{t_o,\xi}$ for $t\in I$ and
$\tilde Z_t^{t_o,\xi}:= L_2\mbox{-}\lim_{l\to \infty} Z_{t_l}^{t_o,\xi}$ for some $(t_l)_{l=1}^\infty \subseteq I$ with $t_l\to t$. Set
$\nu_t^{t_o,\xi}$ to be the law of $\tilde Z_t^{t_o,\xi}$ so that
\[     W_2 \left (\nu^{t_o,\xi}_t,\nu^{t_o,\xi}_{t'} \right ) 
   \le c_{\eqref{Z-difference}} \frac{  |t-t'|^{\frac{\ep}{2}}}{|T-t\vee t'|^{\frac{1}{2}}}
        \sptext{1}{for all}{1} t,t'\in [t_o,T[.\]
Then $(Y_t^{t_o,\xi})_{t\in [t_o,T]}$ and $(Z_t^{t_o,\xi})_{t\in [t_o,T[}$ solve the BSDEs with
generator $f(s,x,y,z,[Y_s^{t_o,\xi}],\nu_s^{t_o,\xi})$ and the generator
satisfies \eqref{eq:the-new-f-new} and \eqref{eqn:integral_bound_generator-f}.
Therefore \cref{lemma37}\eqref{ma_zhang_formula} applies, and we may assume $(Z_t^{t_o,\xi})_{t\in [t_o,T[}$ to be continuous and adapted
and that \eqref{eqn:convergence_beta_LI} holds.
Therefore \eqref{eqn:proof:statement:the-new-f} for the $Z$-process extends to all $t,t'\in [t_o,T[$ by  Fatou's lemma. 
\medskip

(b) To verify  \eqref{eqn:integral_bound_generator-f}   for $\fn$ we conclude from  \eqref{eq:prop_f} that
\begin{align*}
&   \int_{t_o}^T | \fn (t,0,0,0) |^2dt   \\
& = \int_{t_o}^{t_{n-1}} | \f (t,0,0,0) |^2dt   + \int_{t_{n-1}}^T | f (t_{n-1},0,0,0 ,[Y^{t_o,\xi}_t], [Z^{t_o,\xi}_{t_{n-1}}] ) |^2dt  \\
&\le c^\f_\eqref{eqn:integral_bound_generator-f}
     + \int_{t_{n-1}}^T \left [ |f_0| +   \fhot \frac{t_{n-1}^\alpha}{|T-t_{n-1}|^\frac{1}{2}}  + L_f \|Y^{t_o,\xi}_t\|_{L^2} + L_f \| Z^{t_o,\xi}_{t_{n-1}}  \|_{L^2} \right ]^2 dt \\
&\le c^\f_\eqref{eqn:integral_bound_generator-f} +  4 \big ( f_0^2 T + (\fhot)^2 T^{2\alpha} )
     + 4 L_f^2   \int_{t_{n-1}}^T  \left [ \| Y^{t_o, \xi}_t  \|_{L^2}^2 + \| Z^{t_o,\xi}_{t_{n-1}}  \|_{L^2}^2 \right ] dt .
     \end{align*}
By \eqref{eqn:proof:statement:the-new-f} and the triangular inequality we get
 \begin{align*}
 \int_{t_{n-1}}^T\| Z^{t_o,\xi}_{t_{n-1}}  \|_{L^2}^2  dt  &=  \int_{t_{n-2}}^{t_{n-1}}\| Z^{t_o,\xi}_{t_{n-1}}  \|_{L^2}^2  dt \\
 &\le  2   \int_{t_{n-2}}^{t_{n-1}}\| Z^{t_o,\xi}_t  \|_{L^2}^2  dt  +2  c_{\eqref{Z-difference}}  \int_{t_{n-2}}^{t_{n-1}} \frac{  |t_{n-1}-t|^\ep }{ |T-t_{n-1}|} dt
 \end{align*}
and estimate the second term on the RHS by  $2  c_{\eqref{Z-difference}} T^\ep$.  Hence 
\[     \int_{t_o}^T | \fn (t,0,0,0) |^2dt
   \le c_\eqref{eqn:integral_bound_generator-f}^\f
       +  4 \big ( f_0^2 T + (\fhot)^2 T^{2\alpha} )  + 8 L_f^2 \left (   \| Y^{t_o, \xi},  Z^{t_o,\xi} \|_0 +  c_{\eqref{Z-difference}} T^\ep\right ). \]

(c) To verify \eqref{eqn:integral_bound_generator-f}  for $\fn$ with integrator $ d\langle \rwB \rangle_t$ we note that
\begin{align*}
&    \int_{]t_o,T]} |{\fn}(t,0,0,0)|^2 d\langle \rwB \rangle_t \\
& =  \int_{]t_o,T]} | f(\ot \wedge t_{n-1},0,0,0, [Y^{t_o,\xi}_{\ot}], [Z^{t_o,\xi}_{\ot \wedge t_{n-1}} ]  )|^2 dt \\
&\le 2 c^\fn_\eqref{eqn:integral_bound_generator-f}
     + 6 \int_{]t_o,t_{n-1}]}
             \left [  (\fhot)^2 \frac{| \ot - t |^{2\alpha}}{|T-\ot| }
                  + c_\eqref{eqn:upper_bound:Wasserstein-xi-solution}^2 \frac{  |\ot-t|^{\ep}  \,  }{ |T-\ot|} \right ] dt
                  +  6 \int_{]t_o,T]} c_\eqref{eqn:upper_bound:Wasserstein-xi-solution}^2 |\ot-t|^\ep  d t
\end{align*}
and since   $\frac{| \ot - t |}{|T-\ot| } \le 1$ for $t\in ]t_0,t_{n-1}]$
we get a  bound  for the integral depending at most on
$(\alpha, \ep, T, \fhot, c^\fn_\eqref{eqn:integral_bound_generator-f}, c_\eqref{eqn:upper_bound:Wasserstein-xi-solution})$.
\medskip

(d) Finally, \eqref{eq:the-new-f-new} and \eqref{eq:the-new-fn-new}
follow from \eqref{eqn:upper_bound:Wasserstein-xi-solution} and the properties of the generator $f$.
\end{proof}
\medskip

\begin{convention}
In the following we assume that  \eqref{eqn:upper_bound:Wasserstein-xi-solution} is satisfied  for all $t,t' \in [t_o,T[ $.
\end{convention}
\medskip

As an immediate consequence we obtain the following regularity results:
\medskip

\begin{prop}\label{regularity-for-mainBSDE}
\cref{en:regu} and \cref{lemma37} hold
\begin{enumerate}[{\rm (i)}]
\item for $u(t,x)$ and $\nabla u(t,x)$ corresponding to  ${\sf f}$ given by \eqref{the-new-f},
      i.e. for the solution to \eqref{eq:mainBSDE},
\item for $u_\fn(t,x)$ and $\nabla u_\fn(t,x)$   corresponding to  ${\sf f}_n$ given by  \eqref{f-zero},
      i.e. for the solution to \eqref{eq:mainBSDEn-tilde}.
\end{enumerate}
The corresponding constants in \cref{en:regu} and \cref{lemma37} depend at most on
the parameters
$(T,\ep,\alpha,\gho,g(0),\fhot,\fhox ,L_f,c_{f_0}^{(2)},M)$.
\end{prop}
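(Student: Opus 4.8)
The plan is to deduce \cref{regularity-for-mainBSDE} from \cref{statement:the-new-f}, which says that the frozen generators $\f$ and $\fn$ satisfy \cref{A1_alt} with constants that are uniform in $n$, together with \cref{en:regu} and \cref{lemma37}, which have already been established for \emph{any} generator satisfying \cref{A1_alt}.

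First I would check that, under the standing convention that \eqref{eqn:upper_bound:Wasserstein-xi-solution} holds for all $t,t'\in[t_o,T[$ (this fixes the continuous adapted version of $Z^{t_o,\xi}$, so that $\f$ from \eqref{the-new-f} and $\fn$ from \eqref{f-zero} are genuine generators), \cref{statement:the-new-f} yields \cref{A1_alt} for both. Indeed, \eqref{eq:the-new-f-new} is precisely \eqref{eq:prop_f_new} for $\f$, with $|\f|^{\scriptscriptstyle loc}_{\alpha\wedge\frac{\ep}{2};t}\le c_\eqref{eq:the-new-f-new}$, $\fhoxt=\fhox$, $L_{\sf f}=L_f$, and \cref{H:A1}(i),(iv) together with the first summand of \eqref{eqn:integral_bound_generator-f} supply the remaining requirements of \cref{A1_alt}. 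For $\fn$ the only extra point is that $t_{n-1}\wedge(t\vee t')\le t\vee t'<T$, so that
\[ \frac{1}{|T-(t_{n-1}\wedge(t\vee t'))|^{1/2}}\le\frac{1}{(T-t\vee t')^{1/2}}; \]
hence \eqref{eq:the-new-fn-new} again implies \eqref{eq:prop_f_new}, with $|\fn|^{\scriptscriptstyle loc}_{\alpha\wedge\frac{\ep}{2};t}\le c_\eqref{eq:the-new-fn-new}$, and the second summand of \eqref{eqn:integral_bound_generator-f} gives (iii$'$). By \cref{statement:the-new-f} all of $c_\eqref{eq:the-new-f-new}$, $c_\eqref{eq:the-new-fn-new}$ and $c_\eqref{eqn:integral_bound_generator-f}$ depend at most on $(T,\ep,\alpha,\gho,g(0),\fhot,\fhox,L_f,c^{(2)}_{f_0},M)$, and in particular not on $n$.

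Next I would simply apply \cref{en:regu} and \cref{lemma37} with ${\sf f}=\f$ — giving part (i), where $u(t,x)=Y^{t,x}_t$ is the solution component of \eqref{eq:mainBSDE} (whose mean field terms are the laws of the global solution, i.e.\ exactly the measures frozen in $\f$) — and then with ${\sf f}=\fn$ — giving part (ii), where $u_\fn(t,x)=Y^{t,x}_{\fn,t}$ is the solution component of \eqref{eq:mainBSDEn-tilde}. This delivers the bounds \eqref{u-estimates} for $u$ and $u_\fn$ and the representation and regularity statements \eqref{eqZ}--\eqref{en:timenabu-eq} for $\nabla u$ and $\nabla u_\fn$. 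To get the claimed dependence of the constants I would track the constants in \cref{en:regu} and \cref{lemma37}: they depend on $(T,\ep,\gho,g(0),\fhoxt,L_{\sf f},c^{(2)}_{{\sf f}_0})$, with the estimates \eqref{nabla u-diff-eq} and \eqref{en:timenabu-eq} of \cref{lemma37} depending additionally on $\beta$; plugging in $\fhoxt=\fhox$, $L_{\sf f}=L_f$ and $c^{(2)}_{{\sf f}_0}\le c_\eqref{eqn:integral_bound_generator-f}^{1/2}$ (and likewise $\|\fn(\cdot,0,0,0)\|_{L^2([t_o,T])}\le c_\eqref{eqn:integral_bound_generator-f}^{1/2}$) yields the asserted dependence on $(T,\ep,\alpha,\gho,g(0),\fhot,\fhox,L_f,c^{(2)}_{f_0},M)$, uniformly in $n$.

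I do not expect a genuine obstacle: the proposition is essentially bookkeeping. The one point that needs a line of justification is, as above, that the $t_{n-1}$-truncated time-singularity produced by \cref{statement:the-new-f} for $\fn$ is dominated by the singularity allowed in \cref{A1_alt} (the displayed inequality), and that nothing in this truncation degenerates as $n\to\infty$ — which is already guaranteed by the $n$-independent constants of \cref{statement:the-new-f}.
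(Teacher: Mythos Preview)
Your proposal is correct and matches the paper's approach: the paper states this proposition as ``an immediate consequence'' of \cref{statement:the-new-f} without further argument, and your write-up spells out exactly this deduction. The only additional detail you supply---that the $t_{n-1}$-truncated singularity in \eqref{eq:the-new-fn-new} is dominated by the one in \eqref{eq:prop_f_new}---is implicit in the paper and correctly justified.
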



\section{The discrete scheme}
\label{sec:discrete_scheme}

Assume $\underline{t_o} =\frac{jT}{n}$  for  some $j \in \{0,1,...,n-3\}$
and $t\in [t_o,T]$ (recall $n\ge n_0>2T/(T-t_o)$).
\bigskip

\underline{\bf Stochastic scheme:}
We provide the scheme for
\[ (Y^{n, t,x}, Z^{n, t, x})= \left ((Y^{n, t,x}_{t_l})_{l=k}^n,(Z^{n, t,x}_{t_l})_{l=k+1}^n \right )
   \sptext{1}{with}{1}
   \ut = t_k  \sptext{.5}{for some}{.5}
   k\in \{j,\ldots,n\}. \]
For $l= k,...,n-1$ one has that
\begin{align*}
  Y^{n,t,x}_{t_l} &= Y^{n,t,x}_{t_{l+1}} +h f\!\left( t_{l+1}\wedge t_{n-1},B^{n,t,x}_{t_l},Y^{n,t,x}_{t_l},Z^{n,t,x}_{t_{l+1}},[Y^{n,t_o,\xi}_{t_{l+1}}], [Z^{n,t_o,\xi}_{t_{l+1}}]\right)-\sqrt{h}\,Z^{n,t, x}_{t_{l+1}} \, \zeta_{l+1},\\
  Y^{n,t,x}_{t_n} &=g(B^{n,t,x}_T).
\end{align*}
Thus, if $Y^{n,t,x}_{t_{l+1}}$ is given and $\cF^{n,t}_{t_l} := \sigma(\zeta_{k+1},...,\zeta_l)$,
where $\cF^{n,t}_{t_k}$ is the trivial $\sigma$-algebra, then
\begin{align}
	Z^{n,t,x}_{t_{l+1}}
&:= h^{-1/2}\,\e\left[Y^{n,t,x}_{t_{l+1}} \,\zeta_{l+1}\,|\,\m F^{n,t}_{t_l}\right],     \label{z-eqn}   \\
	Y^{n,t,x}_{t_l}
&= \e\left[ Y^{n,t,x}_{t_{l+1}}\,|\,\m F^{n,t}_{t_l}\right]+h\,
    f\left( t_{l+1}\wedge t_{n-1},B^{n,t,x}_{t_l},Y^{n,t,x}_{t_l},Z^{n,t,x}_{t_{l+1}},[Y^{n,t_o,\xi}_{t_{l+1}}], [Z^{n,t_o,\xi}_{t_{l+1}}]
	\right),
	 \label{y-eqn}
\end{align}
where in \eqref{y-eqn}  the value  $Y^{n,t,x}_{t_l}$ is the corresponding fixed point (recall that $L_f \frac{T}{n_0} <1$).
Similarly to \eqref{z-eqn} and \eqref{y-eqn} we define 
\[ (Y^{n,t,x}_{{\sf f}_n}, Z^{n,t,x}_{{\sf f}_n}) = \left ((Y^{n, t,x}_{\fn,t_l})_{l=k}^n,(Z^{n, t,x}_{\fn,t_l})_{l=k+1}^n \right ) \]
as
\begin{align}
    Z^{n,t,x}_{{\sf f}_n,t_{l+1}}
&:= h^{-1/2}\,\e\left[Y^{n,t,x}_{{\sf f}_n,t_{l+1}} \,\zeta_{l+1}\,|\,\m
    F^{n,t}_{t_l}\right],     \label{z-fn-eqn}   \\
	Y^{n,t,x}_{{\sf f}_n,t_l}
& = \e\left[ Y^{n,t,x}_{{\sf f}_n,t_{l+1}}\,|\,\m F^{n,t}_{t_l}\right]
	  +h\, f\left( t_{l+1}\wedge t_{n-1},B^{n,t,x}_{t_l},Y^{n,t,x}_{{\sf f}_n, t_l},Z^{n,t,x}_{{\sf f}_n, t_{l+1}},[Y^{t_o,\xi}_{t_{l+1}}], [Z^{t_o,\xi}_{t_{l+1}\wedge t_{n-1}}]
	\right)  \notag\\
	 \label{y-fn-eqn}
&  = \e\left[ Y^{n,t,x}_{{\sf f}_n,t_{l+1}}\,|\,\m F^{n,t}_{t_l}\right]
	  +h\, \fn\left( t_{l+1},
	B^{n,t,x}_{t_l},Y^{n,t,x}_{{\sf f}_n, t_l},Z^{n,t,x}_{{\sf f}_n, t_{l+1}}
	\right),
\end{align}
where the generator ${\sf f}_n$ was given in \eqref{f-zero}.
\bigskip

\underline{\bf Finite difference scheme:}
The related difference scheme is given by
\begin{align}\label{eq:mainPDEnfn}
 \partial^h_t U^n_\fn(t_l,x)  + L^h U^n_\fn(t_{l+1},x) + \fn\left (t_{l+1},x,U^n_\fn(t_l,x),   \nabla^h U^n_\fn(t_{l+1},x)\right ) & = 0,
 \quad l=j,...,n-1, \notag \\
  \!\!\!U^n_\fn(t_n,x)&=g(x),
\end{align}
where
\begin{align*}
L^h u(x) &:= \frac{1}{2h}\left(u(x+\sqrt{h})+u(x-\sqrt{h})  - 2u(x)\right), \\
\nabla^h u(x)&:= \frac{1}{2 \sqrt{h} }\left(u(x+\sqrt{h})-u(x-\sqrt{h})\right), \\
\partial^h_t u(t) &:= \frac{1}{h} (u(t+h)-u(t)).
\end{align*}
\underline{\bf Connection between the schemes:}
First we extent $U^n$ to 
\[ U^n:[t_j,T]\times \R\to \R \sptext{1}{by}{1}
   U^n_\fn(s,x):= U^n_\fn(\us,x)\]
and set

\begin{equation}
 \Delta^n_\fn: [t_j,T[ \times \R\to \R   \sptext{1}{by}{1} \Delta^n_\fn(s,x)  
:= \nabla^h U^n_\fn  (\us+h,x) \sptext{1}{for}{1} s\in [t_j,T[.
   \label{eqn:def:Deltanfn}
\end{equation}
We will exploit relations from  \cite{BGGL21}:
\begin{align}
   \os                     & := \min \left\{ t_k \mid k \in \left\{ 0, ..., n \right\}, s\leq t_k  \right\},  \notag \\
   Y^{n,t,x}_{{\sf f}_n,s} & = Y^{n,t,x}_{{\sf f}_n,\us}=U^n_\fn(s,B^{n,t,x}_s) \sptext{2.4}{for}{1} s\in [\ut,T], \label{eqn:fn:U->Y} \\
   Z^{n,t,x}_{{\sf f}_n,s} & = Z^{n,t,x}_{{\sf f}_n,\os}=\nabla^h U^n_\fn(\os,B^{n,t,x}_{s-}) \sptext{1}{for}{1} s \in \,]\ut,T],
                                                                                                                   \label{eqn:fn:nablaU->Z}  \\
   Z^{n,t,x}_{{\sf f}_n,s} & = \Delta^n_\fn(s,B^{n,t,x}_s) \sptext{6}{for}{1} s \in \,]\ut,T] \sptext{.5}{and}{.5} s \neq \us,
         \label{eqn:fn:Delta->Z-I} \\
   Z^{n,t,x}_{\fn,t_l} & =  \Delta^n_\fn(t_{l-1},B^{n,t,x}_{t_{l-1}}) \sptext{4.7}{for}{1}  l=k+1,\ldots,n.  \label{eqn:fn:Delta->Z-II} 
\end{align}
In exactly the same way we get for the processes defined by   \cref{eq:mainBSDEn-hat}  with generator  
\[ f(t_{l+1} \wedge t_{n-1},  x, y, z, [   Y^{n,t_o, \xi}_{t_{l+1}}], [Z^{n,t_o, \xi}_{t_{l+1}}] )\]
a finite difference scheme like in \eqref{eq:mainPDEnfn} denoting its solution by 
\begin{equation}\label{eqn:def:mainPDEnf_Deltanf}
U^n:[t_j,T]\times \R\to \R \sptext{1}{and letting}{1}
\Delta^n(s,x) :=\nabla^h U^n(\us+h,x)
\end{equation}
and the relations corresponding to
\eqref{eqn:fn:U->Y}--\eqref{eqn:fn:Delta->Z-II}. 


\section{From continuous to discrete in three steps}
\label{sect:conv_rate}

In this section we establish necessary results required to deduce the main result, \cref{coro_final_pathwise_estimate}.


\subsection{Comparing \texorpdfstring{$(Y,Z)$}{Y,Z} and \texorpdfstring{$(Y_{{\sf f}_n}, Z_{{\sf f}_n})$}{Yf,Zf} }
We start by comparing the solution to \eqref{eq:mainBSDE} to the solution to \eqref{eq:mainBSDEn-tilde}.

\begin{prop}\label{prop_tilde_difference}
Under \cref{H:A1} for the solutions to  \eqref{eq:mainBSDE} and \eqref{eq:mainBSDEn-tilde}
one has
\begin{align}
 \label{cont_tilde_a_priori}
     |u(t,x)-u_{{\sf f}_n}(t,x)|& +
     \left (  \e_{t,x} \int_t^T |\nabla u(s,B_s) - \nabla u_\fn(s,B_s)|^2  \od s  \right )^\frac{1}{2}
\le c_{\eqref{cont_tilde_a_priori}} \, n^{-{((\alpha \wedge \frac{\ep}{2}) +\frac{1}{2})}}
\end{align}
for $n \ge n_0$ and $(t,x) \in [t_o,T]\times \R$, where the constant
$c_{\eqref{cont_tilde_a_priori}}>0$
depends at most on $(T,\ep,\alpha, \gho, g(0),\fhot, \fhox,L_f,c^{(2)}_{f_0},M)$.
Moreover, there is an absolute  constant $c_\eqref{eqn:statement:Lexp_bound_difference_gradient_continuous_case}>0$ such that
\begin{equation}\label{eqn:statement:Lexp_bound_difference_gradient_continuous_case}
    \left |  \int_t^T |\nabla u(s,B_s^{t,x}) - \nabla u_\fn(s,B_s^{t,x})|^2  \od s  \right |_{M^0_{\varphi_1}}^\frac{1}{2}
\le c_\eqref{eqn:statement:Lexp_bound_difference_gradient_continuous_case} c_{\eqref{cont_tilde_a_priori}}  \, n^{-{((\alpha \wedge \frac{\ep}{2}) +\frac{1}{2})}}.
\end{equation}
\end{prop}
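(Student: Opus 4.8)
\emph{Proof strategy.} The point is that the two generators in play differ only on the last mesh interval and only by a time–singular but $L^1$–integrable amount. Write ${\sf f}(r,x,y,z)=f(r,x,y,z,[Y^{t_o,\xi}_r],[Z^{t_o,\xi}_r])$ for the (frozen) generator of \eqref{eq:mainBSDE}, as in \cref{statement:the-new-f}, and ${\sf f}_n$ as in \eqref{f-zero}; by \cref{statement:the-new-f} and \cref{regularity-for-mainBSDE} both satisfy \cref{A1_alt} with constants controlled by $(T,\ep,\alpha,\gho,g(0),\fhot,\fhox,L_f,c_{f_0}^{(2)},M)$, so \cref{en:regu} and \cref{lemma37} apply to both $u$ and $u_{{\sf f}_n}$. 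One checks that ${\sf f}(r,\cdot)={\sf f}_n(r,\cdot)$ for $r\le t_{n-1}$, while for $r\in\,]t_{n-1},T]$, by \eqref{eq:prop_f}, $r-t_{n-1}\le h=T/n<1$ and the time regularity \eqref{eqn:upper_bound:Wasserstein-xi-solution} of $Z^{t_o,\xi}$,
\begin{equation*}
 |{\sf f}(r,x,y,z)-{\sf f}_n(r,x,y,z)|\le \fhot\frac{(r-t_{n-1})^\alpha}{(T-r)^{1/2}}+L_f\,\cW_2\big([Z^{t_o,\xi}_r],[Z^{t_o,\xi}_{t_{n-1}}]\big)\le \rho(r):=\frac{c\,h^{\alpha\wedge\frac{\ep}{2}}}{(T-r)^{1/2}}
\end{equation*}
for \emph{all} $(x,y,z)$, with $c=c(\fhot,L_f,c_\eqref{eqn:upper_bound:Wasserstein-xi-solution})$; note $\int_{t_{n-1}}^T\rho(r)\,\od r\le 2c\,h^{\alpha\wedge\frac{\ep}{2}+\frac12}$, although $\rho\notin L^2(]t_{n-1},T[)$.

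Set $\delta Y_s:=Y^{t,x}_s-Y^{t,x}_{{\sf f}_n,s}$ and $\delta Z_s:=\nabla u(s,B^{t,x}_s)-\nabla u_{{\sf f}_n}(s,B^{t,x}_s)$ (cf.\ \eqref{YandZfrom-u} for both equations). Then $(\delta Y,\delta Z)$ solves a BSDE on $[t,T]$ with terminal value $0$ and generator that is $L_f$–Lipschitz in $(y,z)$ and whose value at $(y,z)=(0,0)$ is dominated, pointwise in $\omega$ and \emph{uniformly in $x$}, by the deterministic $\rho(r)\in L^1$ --- here the $\ep$–Hölder growth in space of $u$ and $u_{{\sf f}_n}$, which would pollute a naive estimate, is common to the two equations and cancels. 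On $[t_{n-1},T]$, where $L_f(T-t_{n-1})=L_f h<1$ for $n\ge n_0$, linearising the generator gives $\sup_x|u(s,x)-u_{{\sf f}_n}(s,x)|\le e^{L_fT}\int_s^T\rho(r)\,\od r\le c\,h^{\alpha\wedge\frac{\ep}{2}+\frac12}$ for $s\in[t_{n-1},T]$; plugging this into the It\^o isometry for $\int_s^T\delta Z\,\od B$ and absorbing the $\delta Z$–term via $L_f^2h<\tfrac{1}{72}$ yields $\sup_x\E_{s,x}\int_s^T|\delta Z_r|^2\,\od r\le c\,h^{2(\alpha\wedge\frac{\ep}{2})+1}$. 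On $[t_o,t_{n-1}]$ the generators coincide, so $(\delta Y,\delta Z)$ solves a zero–source $L_f$–Lipschitz BSDE with terminal $\delta Y_{t_{n-1}}$, bounded uniformly by $c\,h^{\alpha\wedge\frac{\ep}{2}+\frac12}$; the standard a priori estimate and Gronwall's lemma propagate both bounds to $[t_o,t_{n-1}]$, and conditioning at $t_{n-1}$ disposes of the $\int_{t_{n-1}}^T$–part. This gives \eqref{cont_tilde_a_priori}.

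For \eqref{eqn:statement:Lexp_bound_difference_gradient_continuous_case} I would pass to $\int_t^T|\delta Z_r|^2\,\od r=\langle N\rangle_T$ with $N_s:=\int_t^s\delta Z_r\,\od B_r$ and show that $N$ has small BMO norm. For a stopping time $\tau\in[t,T]$, It\^o's formula for $|\delta Y|^2$ over $[\tau,T]$, together with $\delta Y_T=0$, $\|\delta Y\|_{L^\infty}\le c\,h^{\alpha\wedge\frac{\ep}{2}+\frac12}$, $\big\|\int_\tau^T\rho(r)\,\od r\big\|_{L^\infty}\le c\,h^{\alpha\wedge\frac{\ep}{2}+\frac12}$ and absorption of the quadratic term, gives $\big\|\E[\langle N\rangle_T-\langle N\rangle_\tau\mid\cF_\tau]\big\|_{L^\infty}\le c\,h^{2(\alpha\wedge\frac{\ep}{2})+1}$, i.e.\ $\|N\|_{\mathrm{BMO}}^2\le c\,h^{2(\alpha\wedge\frac{\ep}{2})+1}\to0$. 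Since $\|N\|_{\mathrm{BMO}}$ stays below the John--Nirenberg threshold for $n\ge n_0$, the continuous John--Nirenberg inequality supplies an absolute constant with $|\langle N\rangle_T|_{M^0_{\varphi_1}}\le c\,\|N\|_{\mathrm{BMO}}^2$; taking square roots gives \eqref{eqn:statement:Lexp_bound_difference_gradient_continuous_case}, the $(T,\ep,\alpha,\ldots,M)$–dependence being absorbed into $c_\eqref{cont_tilde_a_priori}$.

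The main obstacle is precisely this time singularity: $\rho(r)^2\sim h^{2(\alpha\wedge\frac{\ep}{2})}(T-r)^{-1}$ is not integrable near $T$, so the textbook $\mathcal{H}^2$–stability estimate for BSDEs is unavailable on the last mesh interval. The way around it --- observing that the generator gap is bounded uniformly in the spatial variable (not merely by $(1+|x|)^\ep$), so that the effective forcing is $\rho\in L^1(]t_{n-1},T[)$ with $\int\rho=O(h^{\alpha\wedge\frac{\ep}{2}+\frac12})$, and then using an $L^1$–in–time a priori estimate on the short interval $[t_{n-1},T]$ where $L_fh<1$ --- is the technical heart of the argument; everything else is routine BSDE bookkeeping plus the John--Nirenberg inequality.
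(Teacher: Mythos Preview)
Your argument is correct, but the paper takes a shorter route to \eqref{cont_tilde_a_priori}. The point you identify as the ``main obstacle'' --- that $\rho\notin L^2$ near $T$ --- is real only if one insists on the $L^2$-in-time version of the stability estimate. The paper instead invokes the $L^1$-in-time a priori bound (cf.\ \cite[Lemma~5.26]{GeissYlinen}, valid conditionally):
\[
\E\Big[|\delta Y_t|^2+\int_t^T|\delta Z_s|^2\,\od s\,\Big|\,\cF_t\Big]
\;\le\; c\,\E\Big[\Big(\int_t^T\big|{\sf f}(s,\cdot)-{\sf f}_n(s,\cdot)\big|\,\od s\Big)^2\,\Big|\,\cF_t\Big],
\]
whose right-hand side is exactly $(\int_{t_{n-1}}^T\rho(r)\,\od r)^2\le c\,n^{-2(\alpha\wedge\frac{\ep}{2})-1}$, since the generator difference is deterministic in $\omega$. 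This bypasses your split at $t_{n-1}$, the linearisation/Girsanov step, and the absorption argument entirely. Your two-interval approach does work (the $L^\infty$ bound on $\delta Y$ via linearisation and the absorption of $\int|\delta Z|^2$ using $L_f^2h<\tfrac{1}{72}$ are sound), but it re-derives by hand on $[t_{n-1},T]$ what the $L^1$-estimate gives for free on all of $[t,T]$.

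For \eqref{eqn:statement:Lexp_bound_difference_gradient_continuous_case} the two approaches converge: the paper observes that \eqref{cont_tilde_a_priori}, being true for every $(s,y)$ in place of $(t,x)$, gives by the Markov property $\E[A_T-A_s\mid\cF_s]\le c_{\eqref{cont_tilde_a_priori}}^2 n^{-2(\alpha\wedge\frac{\ep}{2})-1}$ for $A_s:=\int_t^s|\delta Z_r|^2\,\od r$, i.e.\ the $\mathrm{BMO}([0,T])$-bound of \cref{contBMO-def} is immediate and \cref{statement:JN_continuous_time} finishes. Your It\^o computation on $|\delta Y|^2$ over $[\tau,T]$ produces the same $\mathrm{BMO}$ estimate after absorbing the cross term $2L_f|\delta Y_r||\delta Z_r|$; it is slightly heavier machinery for the same output, and note that the paper's $\mathrm{BMO}$ definition and John--Nirenberg theorem are stated for deterministic times, so stopping times are not needed.
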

\begin{proof} 
To show \eqref{cont_tilde_a_priori} we use an  a priori estimate (see, for example, \cite[Lemma 5.26]{GeissYlinen}  which is also true for conditional  expectations)
and use \eqref{eqn:upper_bound:Wasserstein-xi-solution}  to get, for some $c=c(T,L_f)$,
\begin{align*} 
&     \e \left [  |Y_t^{t,x} -Y^{t,x}_{{\sf f}_n,t}|^2 +  \int_{t}^T |Z_s^{t,x} -Z^{t,x}_{{\sf f}_n,s}|^2  ds |\cF_t \right ]  \notag\\
& \le c \, \e \Bigg [ \Bigg |\int_{t}^T   |f (s,B^{t,x}_s,Y^{t,x}_s, Z^{t,x}_s, [Y_s^{t_o,\xi}], [Z_s^{t_o,\xi}] ) \\
& \hspace{15em}                                        -f( s\wedge t_{n-1} ,B^{t,x}_s,Y^{t,x}_s, Z^{t,x}_s,[Y^{t_o,\xi}_s],  [Z^{t_o,\xi}_{s\wedge t_{n-1}}]) | ds  \Bigg |^2 | \cF_t \Bigg ]  \\
&\le  c  \left | \int_{t}^T  \left [ \fhot  \,\frac{ | s- t_{n-1}|^{\alpha}}{|T-s|^\frac{1}{2}}  + L_f
      \cW_2 (Z^{t_o,\xi}_{s\wedge t_{n-1}}, Z^{t_o,\xi}_s) \right ]  ds  \right |^2 \\
&\le  c  \left | \int_{t}^T  \left [ \fhot  \,\frac{ |s-s\wedge   t_{n-1}|^{\alpha}}{|T-s|^\frac{1}{2}}
      + L_f  c_{\eqref{eqn:upper_bound:Wasserstein-xi-solution}}\,  \frac{|s-s\wedge t_{n-1}|^\frac{\ep}{2}}{|T-s|^\frac{1}{2}}  \right ]  ds  \right |^2 \\
& \le c' \left | \int_{t_{n-1}}^T  \frac{|s-t_{n-1}|^{\frac{\ep}{2}\wedge \alpha} }{|T-s|^\frac{1}{2}}    ds  \right |^2
  \le c"   n^{-{2(\alpha \wedge \frac{\ep}{2} +\frac{1}{2})}}.
\end{align*}
Finally we use that
$Y^{t,x}_t              = u(t,x)$,
$Y^{t,x}_{{\sf f}_n,t}  = u_{{\sf f}_n}(t,x)$,
$Z^{t,x}_t              = \nabla u(t,x)$, and
$Z^{t,x}_{{\sf f}_n,t}  = \nabla u_{{\sf f}_n}(t,x)$. Setting $\e_{t,x}[ \cdot] :=    \e \left [  \cdot | B_t =x\right ]$ we conclude
\begin{align*}  
\e \left [   \int_{t}^T |Z_s^{t,x} -Z^{t,x}_{{\sf f}_n,s}|^2  ds |\cF_t \right ] &=  \e \left [    \int_{t}^T  |\nabla u(s,B^{t,x}_s) - \nabla u_\fn(s,B^{t,x}_s)|^2  ds |\cF_t \right ] \\
&= \e_{t,x} \int_t^T |\nabla u(s,B_s) - \nabla u_\fn(s,B_s)|^2  \od s. 
\end{align*}
For the relation \eqref{eqn:statement:Lexp_bound_difference_gradient_continuous_case} we use
\eqref{cont_tilde_a_priori} and  \cref{statement:JN_continuous_time}.  Indeed, setting   
\[         A_s :=         \int_t^s |\nabla u(r,B_r^{t,x}) - \nabla u_\fn(r,B_r^{t,x})|^2  \od r     \]
we derive from  \eqref{cont_tilde_a_priori} and  \cref{contBMO-def} that    
\[\|A\|_{{\rm BMO}([0,T])} \le  c_{\eqref{cont_tilde_a_priori}}^2 \, n^{-{((2 \alpha \wedge \ep) +1)}}.    \]
Then  \cref{statement:JN_continuous_time} implies that 
 \[ |A_T-A_t|_{M^0_{\varphi_1}}^\frac{1}{2} \le c_\eqref{eqn:statement:JN_continuous_time}^\frac{1}{2}  c_{\eqref{cont_tilde_a_priori}} \, n^{-{((\alpha \wedge \frac{\ep}{2}) +\frac{1}{2})}}  .  \]
\end{proof}


\subsection{Finite difference approximation - comparing \texorpdfstring{$(Y_{\sf f_n},Z_{\sf f_n})$ and $(Y^n_{\sf f_n},Z^n_{\sf f_n})$}{comp}}
\label{sec:middle_part}

We will use   \cite[Corollary 5]{BGGL21}  and reformulate it as follows: 
\begin{lemma} \label{corollary5}
There is a $c_\eqref{eqn:corollary5_g}= c_\eqref{eqn:corollary5_g}(T)> 0$
such that for $x\in\rset$ and $t_o\leq t\leq s\leq T$ one has
\begin{equation} \label{eqn:corollary5_g}
\left| \e\left[g\left(B^{n,t,x}_s\right)\right]-\e\left[g\left(B^{t,x}_s\right)\right]\right| \leq c_\eqref{eqn:corollary5_g} \, \gho\, n^{-\ep/2}.
\end{equation}
Moreover, for $\beta\in \{0\}\cup ]\frac{1}{2},\infty[$ there is a $c_\eqref{eqn:corollary5_g}= c_\eqref{eqn:corollary5_g}(T,\beta)> 0$
such that for $\delta(t,s):=\max\left(s-t,\us-\ut\right)$ one has
\begin{multline}\label{eqn:corollary5_nabla_g}
\left| \e\left[g\left(B^{n,t,x}_s\right)\left(B^{n,t,x}_s-x\right)\right]-\e\left[g\left(B^{t,x}_s\right)\left(B^{t,x}_s-x\right)\right]\right| \\
\leq  c_\eqref{eqn:corollary5_nabla_g}\, \gho\, \frac{  \delta(t,s)^{\frac{1}{2}}} { \Phi_{\beta }(|s-t|^\ep)}\, n^{-\frac{\ep}{2}} \Phi_{\beta }\left( \Big( \frac{T}{n} \Big)^\ep \right).
\end{multline}
\end{lemma}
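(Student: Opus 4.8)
The plan is to reduce both assertions to \cite[Corollary 5]{BGGL21}, the only genuinely new ingredient being the bookkeeping for the logarithmic weight $\Phi_\beta$. For the first inequality I would argue directly (this is also the first assertion of \cite[Corollary 5]{BGGL21} specialised to Rademacher increments): by the $\ep$-H\"older continuity of $g$ together with Jensen's inequality (recall $\ep\le 1$, so $r\mapsto r^\ep$ is concave),
\[ \bigl|\E[g(B^{n,t,x}_s)]-\E[g(B^{t,x}_s)]\bigr|\le \gho\,\E\bigl|B^{n,t,x}_s-B^{t,x}_s\bigr|^\ep\le \gho\bigl(\cW_1([B^{n,t,x}_s-x],[B^{t,x}_s-x])\bigr)^\ep \]
for an optimal $\cW_1$-coupling. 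Since $B^{n,t,x}_s-x$ is a scaled Rademacher sum of $m=(\us-\ut)/h$ terms it is, by Rio's theorem \cite{Rio09} (quoted in the introduction), within $O(\sqrt h)=O(\sqrt{T/n})$ in $\cW_1$ of $\mathcal N(0,mh)$, while $B^{t,x}_s-x\sim\mathcal N(0,s-t)$ and $|(s-t)-(\us-\ut)|\le 2h$; hence the bracket is $O(n^{-1/2})$ and \eqref{eqn:corollary5_g} follows.

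For the second inequality, the case $\beta=0$ — where $\Phi_0\equiv 1$ and the claim reads $|\cdots|\le c\,\gho\,\delta(t,s)^{1/2}n^{-\ep/2}$ — is exactly \cite[Corollary 5]{BGGL21}, which I would quote verbatim; for orientation, after subtracting the mean-zero variables $B^{n,t,x}_s-x$, respectively $B^{t,x}_s-x$, one is reduced to comparing $\E[(g(B^{n,t,x}_s)-g(x))(B^{n,t,x}_s-x)]$ with its Brownian counterpart, and there the bound is obtained from $\ep$-H\"older continuity and the (discrete and continuous) integration-by-parts identities for the heat semigroup. The case $\beta>\tfrac12$ is new and is deduced from the $\beta=0$ bound by a dichotomy; here I use that the lemma is applied with $t$ on the grid, so $\ut=t$. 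If $|s-t|\ge T/n$, then $(T/n)^\ep\le|s-t|^\ep$, and since $\Phi_\beta$ is non-increasing (\cref{Psi-lemma}\eqref{the-Phi}) we get $\Phi_\beta((T/n)^\ep)/\Phi_\beta(|s-t|^\ep)\ge 1$, so the weighted bound is implied by the $\beta=0$ one. If $|s-t|<T/n$, then $\us=\ut$, so $B^{n,t,x}_s=x$, the first expectation vanishes, $\delta(t,s)=s-t$, and
\[ \bigl|\E[g(B^{t,x}_s)(B^{t,x}_s-x)]\bigr|=\bigl|\E[(g(B^{t,x}_s)-g(x))(B^{t,x}_s-x)]\bigr|\le \gho\,(s-t)^{\frac{1+\ep}{2}}\,\E|B_1|^{1+\ep}; \]
since $(s-t)^{\ep/2}\le(T/n)^{\ep/2}$, the monotonicity of $r\mapsto r^a\Phi_\beta(r^2)$ for $a\ge1$ (\cref{Psi-lemma}\eqref{the-Psi}, with $a=1$ and $r=(s-t)^{\ep/2}$) gives $(s-t)^{\ep/2}\Phi_\beta((s-t)^\ep)\le(T/n)^{\ep/2}\Phi_\beta((T/n)^\ep)=T^{\ep/2}n^{-\ep/2}\Phi_\beta((T/n)^\ep)$, and dividing through by $\gho(s-t)^{1/2}$ and absorbing $\E|B_1|^{1+\ep}$ and $T^{\ep/2}$ into the constant $c_\eqref{eqn:corollary5_nabla_g}$ yields the claim.

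The one point I would take real care over is the weight bookkeeping in this small-gap regime $|s-t|<T/n$: there $\Phi_\beta(|s-t|^\ep)$ blows up logarithmically as $|s-t|\downarrow 0$, and one must verify that this is dominated by the polynomial factor — which is precisely the content of the monotonicity in \cref{Psi-lemma}\eqref{the-Psi} (note also the mild restriction to $t$ on the grid, without which the random-walk increment need not vanish in that regime, and the estimate would fail). Everything else reduces to \cite[Corollary 5]{BGGL21} and to Rio's estimate, so I expect no further obstacles.
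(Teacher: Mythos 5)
Your proposal tracks the paper's argument quite closely: the first estimate reduces to \cite[Corollary~5]{BGGL21} (you unpack the Rio/KMT ingredient rather than cite it, but it is the same reduction), and for the weighted estimate you take the $\beta=0$ case from \cite[Corollary~5]{BGGL21} and leverage the same two monotonicity facts from \cref{Psi-lemma} to insert $\Phi_\beta$. The paper's dichotomy is $\us=\ut$ versus $\us>\ut$, whereas yours is $|s-t|\ge T/n$ versus $|s-t|<T/n$; under your extra hypothesis $\ut=t$ the two coincide, and the small-gap branch is then handled identically (random-walk term vanishes, bound $\gho(s-t)^{(1+\ep)/2}$ for the Brownian term, absorb via $r\mapsto r\,\Phi_\beta(r^2)$ non-decreasing). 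So the structure is essentially the same.

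The one substantive divergence is that you prove a \emph{restricted} statement by assuming $t$ lies on the time grid, and you flag that without this the regime $\us>\ut$, $|s-t|<T/n$ looks problematic. This observation is correct, and it actually exposes a soft spot in the paper's own proof, which in the case $\us>\ut$ asserts ``$h<(s-t)$'' before concluding $\Phi_\beta(|s-t|^\ep)\le\Phi_\beta((T/n)^\ep)$ — but $h<s-t$ can fail for non-grid $t$ (take $t$ just below $\ut+h$ and $s$ just above $\ut+h$). Indeed with $g(y)=y$, $\ep=1$, $x=0$, $s=\ut+h$, and $t\uparrow s$, the left side tends to $h$ while the right side with $\Phi_\beta(|s-t|^\ep)$ blows up in the denominator and tends to $0$, so \eqref{eqn:corollary5_nabla_g} \emph{as literally stated} is false for $\beta>\tfrac12$. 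The clean resolution is not to restrict $t$ but to replace $\Phi_\beta(|s-t|^\ep)$ by $\Phi_\beta(\delta(t,s)^\ep)$ (equivalently $\Phi_\beta(|\us-\ut|^\ep)$): then $\us>\ut$ forces $\delta(t,s)\ge\us-\ut\ge h$ so the needed monotonicity step goes through for all $t$, and this is exactly the form in which the lemma is applied in the proof of \cref{en:mainPDE}, where $S_2$ is bounded with $\Phi_\beta(|T-\ut|^\ep)$, not $\Phi_\beta(|T-t|^\ep)$. So: your proof is sound for what it proves; you have correctly identified a genuine issue with the case $\ut<t$; but the better fix is to weaken the $\Phi_\beta$-argument to $\delta(t,s)$ rather than to restrict $t$, since the application downstream never needs the unrestricted $|s-t|$ version.
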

\smallskip

\begin{proof}The first result comes from \cite[Corollary 5]{BGGL21}. To prove the second result we adapt the proof of \cite[Corollary 5]{BGGL21}.
We put $G(y):=(g(x+y)-g(x))y$. Then
$\e\left[g\left(B^{n,t,x}_s\right)\left(B^{n,t,x}_s-x\right)\right]-\e\left[g\left(B^{t,x}_s\right)\left(B^{t,x}_s-x\right)\right]=\E(G(B^n_s-B^n_t))-\E(G(B_s-B_t))$.
\medskip

In case of $\underline{s}=\underline{t}$ we have 
$\cW_1([G(B^n_s-B^n_t)],[G(B_s-B_t)])\le \E(|G(B_s-B_t)|)\le \gho(s-t)^{(1+\ep)/2}$
by \cite[page 908, line 5]{BGGL21}.
Since $(s-t)^{(1+\ep)/2}=(s-t)^{\frac{1}{2}}(s-t)^{\frac{\ep}{2}}  \leq  \delta(t, s)^{\frac{1}{2}}  ( s - t)^{\frac{\ep}{2}} $ and $s-t \le h$
and the function $r \mapsto r \Phi_{\beta}(r^2)$ is non-decreasing by \cref{Psi-lemma}, we get
\begin{align*}
        \cW_1([G(B^n_s-B^n_t)],[G(B_s-B_t)]) 
& \leq  \gho\, \frac{  \delta(t,s)^{\frac{1}{2}} |s - t|^{\frac{\ep}{2}} \Phi_{\beta} ( |s - t|^{{\ep}} ) }{ \Phi_{\beta }(|s-t|^\ep)} \\
& \leq  T^\frac{\ep}{2}\, \gho\, \frac{  \delta(t,s)^{\frac{1}{2}}}{ \Phi_{\beta }(|s-t|^\ep)}\, n^{-\frac{\ep}{2}}\Phi_{\beta }\left ( \Big( \frac{T}{n} \Big)^\ep \right ).
\end{align*}
\medskip

In case of $\underline{s}>\underline{t}$ we get from the proof of \cite[Corollary 5, page 908, line 18]{BGGL21} that 
$\cW_1([G(B^n_s-B^n_t)],[G(B_s-B_t)])\le c\gho n^{-\frac{\ep}{2}}\delta(t,s)^{1/2}$,
where $c=c(T,\ep)>0$.
Furthermore, we have $h<(s-t)$, which implies that $\Phi_{\beta }(|s-t|^\ep) \le  \Phi_{\beta }\left ( \Big( \frac{T}{n} \Big)^\ep \right )$, and therefore
\[     \cW_1([G(B^n_s-B^n_t)],[G(B_s-B_t)]) 
   \le c \, \gho\, \frac{  \delta(t,s)^{\frac{1}{2}}} { \Phi_{\beta }(|s-t|^\ep)}\, n^{-\frac{\ep}{2}} \Phi_{\beta }
       \left( \Big( \frac{T}{n} \Big)^\ep \right). \]
       
Now we finish the proof by choosing $f(x)=x$ in the Kantorovich-Rubinstein relation 
\begin{equation*}
     \cW_1([X], [X']) = \sup\{ \e\left[f(X)\right]-\e\left[f( X')\right] : \|f\|_\lip \leq 1\}. \qedhere
\end{equation*} 
\end{proof}
\medskip

We modify  \cite[Theorem 11]{BGGL21}  by introducing the function $\Phi_{\beta} $  with the aim to weaken the singularity in time to get square integrability at the expense of the convergence rate.

\begin{lemma}
\label{en:mainPDE}
Suppose that \cref{H:A1} holds and that $\fn$ is defined as in  \eqref{f-zero}.
Let $\beta\in \{0\} \cup ]\frac{1}{2},\infty[$. 
Then for $(t, x) \in [t_o, T[ \times \R$ and $n \geq n_0$ one has
\begin{align}
       |u_\fn(t,x)-U^n_\fn(t,x)|
&\leq  c_\eqref{eqn:1:en:mainPDE}\, (1+|x|)^{\ep} \,  n^{- \alpha \wedge\frac{\ep}{2}} \label{eqn:1:en:mainPDE}, \\
      |\nabla u_\fn(t,x)-  \Delta^n_\fn(t,x)|
&\leq c_\eqref{eqn:2:en:mainPDE}
      \, \frac{(1+|x|)^{\ep} n^{-\alpha \wedge\frac{\ep}{2}} }{|T - t|^{\frac{1 - \ep}{2} } |T - \ut|^{\frac{\ep}{2}} \Phi_{\beta}(|T - \ut |^{\ep} )    }    
      \Phi_{\beta} \left( \left( \frac{T}{n} \right)^{\ep} \right) \label{eqn:2:en:mainPDE},
\end{align}
where 
$c_\eqref{eqn:1:en:mainPDE}, c_\eqref{eqn:2:en:mainPDE}>0$ depend at most on 
$(T,\ep,\alpha,\gho,g(0),\fhot,\fhox,L_f,c_{f_0}^{(2)},M)>0$ and
$c_\eqref{eqn:2:en:mainPDE} >0$ may depend additionally on $\beta$.
\end{lemma}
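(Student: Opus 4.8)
\emph{Proof proposal.} The plan is to follow the argument of \cite[Theorem 11]{BGGL21}, adapting it on two points. Recall first that, by \cref{regularity-for-mainBSDE}, the frozen generator $\fn$ satisfies \cref{A1_alt} with constants depending only on $(T,\ep,\alpha,\gho,g(0),\fhot,\fhox,L_f,c_{f_0}^{(2)},M)$; hence the value function $u_\fn(t,x)=Y^{t,x}_{\fn,t}$ and its gradient $\nabla u_\fn(t,x)=Z^{t,x}_{\fn,t}$ obey the bounds of \cref{en:regu} and \cref{lemma37}, in particular the pointwise estimate \eqref{eqn:item:2:lemma37} and the space/time regularity \eqref{nabla u-diff-eq}/\eqref{en:timenabu-eq} with the weight $\Phi_\beta$, all with such constants. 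On the discrete side $U^n_\fn$ and $\Delta^n_\fn$ solve \eqref{eq:mainPDEnfn} and satisfy the stochastic representations \eqref{eqn:fn:U->Y}--\eqref{eqn:fn:Delta->Z-II}. The two adaptations needed are: (i) $\fn$ is only \emph{locally} $\alpha\wedge\frac{\ep}{2}$-Hölder in time, with the singularity $|T-t_{n-1}\wedge(t\vee t')|^{-1/2}$ of \eqref{eq:the-new-fn-new}, which has to be absorbed when discretising the generator integral; and (ii) to reach the square-integrable-in-time denominator of \eqref{eqn:2:en:mainPDE} one has to replace $|\cdot|^\ep$ by $\ps\cdot$ and weight the denominators by $\Phi_\beta$, using that $r\mapsto\Ps r{\frac{1}{2}}^2$ is concave up to a multiplicative constant (\cref{Psi-lemma}) and that $\int_0^T s^{-1}\Phi_\beta(s^\ep)^{-2}\,ds<\infty$ (\cref{phi_remark}).

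For \eqref{eqn:1:en:mainPDE} I would introduce $e^n_l(x):=U^n_\fn(t_l,x)-u_\fn(t_l,x)$ and subtract the finite difference identity \eqref{y-fn-eqn} from the dynamic programming identity for $u_\fn$ on $[t_l,t_{l+1}]$ (obtained from \eqref{eq:mainBSDEn-tilde} together with $Y^{t_l,x}_{\fn,t_{l+1}}=u_\fn(t_{l+1},B^{t_l,x}_{t_{l+1}})$). This yields a backward recursion for $e^n_l$ whose one-step inhomogeneity splits into: the moment-matching error between $\E\,u_\fn(t_{l+1},x+\sqrt h\,\zeta)$ and $\E\,u_\fn(t_{l+1},B^{t_l,x}_{t_{l+1}})$, controlled by the $\ep$-Hölder regularity of $u_\fn(t_{l+1},\cdot)$ exactly as in \cref{corollary5}; the time-discretisation of $\int_{t_l}^{t_{l+1}}\fn(r,\cdot)\,dr$, where the Hölder-in-time bound contributes a term of order $\int_{t_l}^{t_{l+1}}(r-t_l)^{\alpha\wedge\ep/2}|T-t_{n-1}\wedge r|^{-1/2}\,dr$, whose sum over $l$ is finite because $-\frac{1}{2}>-1$; the $\ep$-Hölder-in-space fluctuation of $B^{t_l,x}_r$ around $x$, which produces the $(1+|x|)^\ep$ growth via \cref{en:regu}; and the Lipschitz term $L_\f\bigl(|e^n_l(x)|+|\nabla^h U^n_\fn(t_{l+1},x)-\nabla u_\fn(t_{l+1},x)|\bigr)$, which couples this estimate to the gradient part. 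Summing over $l$ with a discrete Gronwall argument (as in \cref{statement:a-priori-discrete}; $n\ge n_0$ ensures $L_\f h<1$) and using \eqref{eqn:corollary5_g} for the terminal contribution gives \eqref{eqn:1:en:mainPDE} modulo the gradient term, which is closed simultaneously with the next step.

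For \eqref{eqn:2:en:mainPDE} --- the main obstacle --- I would use the representation \eqref{nabla-u-representation} of $\nabla u_\fn(t,x)$ with $F_n(s,x):=\fn(s,x,u_\fn(s,x),\nabla u_\fn(s,x))$, together with the analogous discrete representation of $\Delta^n_\fn(t,x)=\nabla^h U^n_\fn(\ut+h,x)$ as a sum of conditional expectations against discrete weights $(B^n_{t_k}-B^n_{t_l})/(t_k-t_l)$. Subtracting, the $g$-part is bounded by \cref{corollary5}\,\eqref{eqn:corollary5_nabla_g}, which is precisely the source of the denominator $|T-\ut|^{\ep/2}\Phi_\beta(|T-\ut|^\ep)$ and of the factor $\Phi_\beta((T/n)^\ep)$; the $F_n$-part requires comparing $\E[F_n(s,B^{t,x}_s)H(t,s)]$ with its discrete counterpart. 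Since, by \eqref{F-Lipschitz} applied to $\fn$ (using \cref{lemma37}\,\eqref{nabla-estimate}), $F_n(s,\cdot)$ is modified-Hölder, $|F_n(s,x)-F_n(s,y)|\le c\,\ps{x-y}\,|T-s|^{-1/2}\Phi_\beta(|T-s|^\ep)^{-1}$, one is forced to carry $\ps\cdot$ in place of $|\cdot|^\ep$; the resulting $s$-integral is split near $s=t$ and near $s=T$ as in \eqref{eqn:beta_psi_integral_upper_bound}, which produces the denominator $|T-t|^{(1-\ep)/2}|T-\ut|^{\ep/2}\Phi_\beta(|T-\ut|^\ep)$. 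Because $F_n$ involves $\nabla u_\fn$ and the discrete generator involves $\Delta^n_\fn$, the bounds \eqref{eqn:1:en:mainPDE} and \eqref{eqn:2:en:mainPDE} are interlocked, so the backward recursion has to be run for the pair $\bigl(e^n_l,\ \nabla^h U^n_\fn(t_{l+1},\cdot)-\nabla u_\fn(t_{l+1},\cdot)\bigr)$ simultaneously, using \eqref{eqn:item:2:lemma37} for the integrable a priori bound required by Gronwall and $n\ge n_0$ for the contraction. I expect the main difficulty to be the uniform-in-$n$ bookkeeping of the two singular weights $|T-s|^{-1/2}\Phi_\beta(|T-s|^\ep)^{-1}$ (continuous) and $|T-\us|^{-1/2}\Phi_\beta(|T-\us|^\ep)^{-1}$ (discrete): one must show that the discrete Riemann sums are comparable, up to the stated constants and uniformly in $n$, to the corresponding integrals --- exactly the delicate point flagged in the remark on the changes needed in \cref{sec:middle_part}.
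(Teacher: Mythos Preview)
Your overall strategy matches the paper's: adapt \cite[Theorem~11]{BGGL21}, accommodating (i) the local time-H\"older bound \eqref{eq:the-new-fn-new} in the generator discretisation and (ii) the $\Phi_\beta$-weighted denominator for the gradient. Your handling of \eqref{eqn:1:en:mainPDE} and of the time-H\"older term $R$ in the gradient part is essentially the paper's.

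The difference lies in how $\Phi_\beta$ enters the $F_n$-part of \eqref{eqn:2:en:mainPDE}. You propose to invoke the modified-H\"older bound \eqref{F-Lipschitz} for $F_n(s,\cdot)$ and carry $\ps\cdot$ through the whole Brownian-versus-random-walk comparison, then reconcile the continuous and discrete singular weights. The paper is more economical: $\Phi_\beta$ enters \emph{only} via the $g$-difference, through \cref{corollary5}\,\eqref{eqn:corollary5_nabla_g}; for the $f$-part the integral inequality from \cite[(43)--(44)]{BGGL21} is quoted unchanged, giving
\[
\gamma_n(t)\le C\Bigl(|g\text{ difference}|+\int_{(\ut+h)\wedge T}^T\bigl(\beta_n(s)+\gamma_n(s)\bigr)\,\frac{ds}{\sqrt{\us-\ut}}+n^{-\alpha\wedge\ep/2}\Bigr).
\]
Combining the $g$-difference bound with $\beta_n(s)\le c\,n^{-\alpha\wedge\ep/2}$ from \eqref{eqn:1:en:mainPDE} yields \eqref{eq:almostZ}; the paper then inserts \eqref{eq:almostZ} once into itself and observes that $|T-s|^{(1-\ep)/2}|T-\us|^{\ep/2}\Phi_\beta(|T-\us|^\ep)\ge|T-s|^{1/2}$ (since $\Phi_\beta\ge1$ and $T-\us\ge T-s$), reducing the second-generation integral to the Beta-function bound $\int(T-s)^{-1/2}(\us-\ut)^{-1/2}\,ds\le\mathrm{B}(\tfrac12,\tfrac12)$ already used in \cite{BGGL21}; ordinary Gronwall then closes. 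This self-insertion together with $\Phi_\beta\ge1$ sidesteps the ``delicate point'' you anticipate --- no direct comparison of continuous and discrete $\Phi_\beta$-weights is needed. Your route should also work but carries more structure than necessary.
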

\smallskip

\begin{proof}
\underline{Estimate \eqref{eqn:1:en:mainPDE}} is shown in \cite[Theorem 11(i)]{BGGL21}
for the setting in \cite{BGGL21}. For the setting of this article
we have to check how our local H\"older continuity  in time given 
in \eqref{eq:the-new-fn-new} influences the  calculations
done in  \cite{BGGL21}. The proof of  \cite[Theorem 11(i)]{BGGL21} contains only one estimate   where the H\"older continuity in time is used. We need to change it  as follows:
Denoting $\Theta^{n,t,x}_s:=(B^{n,t,x}_{s-},Y^{n,t,x}_{s-},Z^{n,t,x}_s) $ we have, by \cref{statement:the-new-f},
\begin{align*} 
 \left| \e\left[\int_{\ut}^T \left({\sf f}_n\left(\os,\Theta^{n,t,x}_s\right)-{\sf f}_n\left(s,\Theta^{n,t,x}_s\right)\right)ds\right] \right| 
&\leq c_{\eqref{eq:the-new-fn-new}} \int_{\ut}^T  \,\frac{ \left|\os -s\right|^{\alpha\wedge \frac{\ep}{2} }}{|T- \os \wedge t_{n-1}|^\frac{1}{2}} ds\\
&\leq c_{\eqref{eq:the-new-fn-new}}  c \, n^{-\alpha\wedge \frac{\ep}{2}}
  \end{align*}
for some $c=c(T)>0$.
\smallskip

\underline{Estimate \eqref{eqn:2:en:mainPDE}}  for the case $\beta=0$  is shown in  \cite[Theorem 11]{BGGL21}. The estimate
concerning the H\"older continuity in time is changed as follows. We use the Cauchy-Schwarz inequality, \cref{statement:the-new-f}
and the fact that $\e| B^{n,t,x}_s-x  |^2=\us-\ut$ to get
for the expression $R$ defined in  \cite[page 920, line 18]{BGGL21} that
  \begin{align} \label{theR}
	|R|&=\left |\int_{\ut+h}^T \e\left[\left({\sf f}_n \left(\os,\Theta^{n,t,x}_s\right)-{\sf f}_n \left(s,\Theta^{n,t,x}_s\right)\right) \frac{B^{n,t,x}_s-x}{\us-\ut} \right] 
	ds \right |  \notag \\
	&\le c_{\eqref{eq:the-new-fn-new}} \left (\frac{T}{n} \right )^{\alpha\wedge \frac{\ep}{2}}   \int_{\ut+h}^T \frac{ 1}{|T- \os \wedge t_{n-1}|^\frac{1}{2}}  \frac{1}{|s-(\ut+h)|^\frac{1}{2}}\, ds \notag \\
	&\le c' c_{\eqref{eq:the-new-fn-new}} \left (\frac{T}{n} \right )^{\alpha\wedge \frac{\ep}{2}},
\end{align}
where the last inequality follows since the integral is bounded.
Indeed, if $t < t_{n-2}$  we have    
  \begin{align*}
 \int_{\ut+h}^{t_{n-2}} \frac{ 1}{|T- \os \wedge t_{n-1}|^\frac{1}{2}}  \frac{1}{|s-(\ut+h)|^\frac{1}{2}}\, ds
& =  \int_{\ut+h}^{t_{n-2}} \frac{ 1}{|T- \os |^\frac{1}{2}}  \frac{1}{|s-(\ut+h)|^\frac{1}{2}}\, ds \\
 &\le \sqrt{2} \int_{\ut+h}^T \frac{ 1}{|T- s |^\frac{1}{2}}  \frac{1}{|s-(\ut+h)|^\frac{1}{2}}\, ds \\
 &=  \sqrt{2} \mathrm{B}\big (\tfrac{1}{2},\tfrac{1}{2} \big ),
\end{align*}
where we used that  $T+s = t_{n-2}+2h+s \ge  2 \os$  for $s< t_{n-2},$ implying  $T-s \le 2(T-\os).$\\
 Since $t <  t_{n-2}$ implies $\ut+h \le t_{n-2}$  we have
 \begin{align*}
 \int_{t_{n-2}}^T \frac{ 1}{|T- \os \wedge t_{n-1}|^\frac{1}{2}} \, \frac{1}{|s-(\ut+h)|^\frac{1}{2}}\, ds
& \le   \int_{t_{n-2}}^T \frac{ 1}{|T-  t_{n-1}|^\frac{1}{2}} \,  \frac{1}{|s-t_{n-2}|^\frac{1}{2} }\, ds = 2 \sqrt{2}.
\end{align*}
If  $t\in [t_{n-1},T]$, then $\ut+h=T$ so it remains to check $t\in [t_{n-2}, t_{n-1}[.$ In this case we have 
 \begin{align*}
 \int_{\ut+h}^T \frac{ 1}{|T- \os \wedge t_{n-1}|^\frac{1}{2}} \, \frac{1}{|s-(\ut+h)|^\frac{1}{2}}\, ds
& =  \int_{t_{n-1}}^T \frac{ 1}{|T-  t_{n-1}|^\frac{1}{2}} \, \frac{1}{|s-t_{n-1}|^\frac{1}{2}}\, ds =2.
\end{align*}
 
To show \eqref{eqn:2:en:mainPDE} for $\beta >\frac{1}{2}$  we only need to make some slight changes in the proof of  \cite[Theorem 11]{BGGL21}
where the statement was shown resulting in a  singularity $\frac{1}{\sqrt{T-t}}$. The needed changes are as follows: 
 As in \cite[Theorem 11]{BGGL21} we  write
\begin{align*} 
  | \nabla u_\fn(t,x)- \Delta^n_\fn(t,x)|\le |\mbox{ $g$ difference }| + |\mbox{ $f$  difference }|.
  \end{align*}
Using the representation \eqref{eq: nabla-u-b}  for $\nabla u_\fn(t,x)$ with $F(s,x)= {\sf f}_n (s,x, u_\fn(s,x),  \nabla u_\fn(s,x))$ and the representation
\begin{align*}
	\Delta^n_\fn(t,x) =& \e\left[g(B^{n,t,x}_T)\frac{B^{n,t,x}_T-x}{T-\ut} \right] \\
&+\e\left[\int_{\ut+h}^T {\sf f}_n\left(\os, B^{n,t,x}_s,U^n_\fn(s,B^{n,t,x}_s),\Delta^n_\fn(s,B^{n,t,x}_{s})\right)\frac{B^{n,t,x}_s-x}{\us-\ut} \,ds\right],
\end{align*}
given in  \cite[equation (33)]{BGGL21} and   recalling that $B_s^{t,x} = x+B_s-B_t$ and $B_s^{n,t,x} = x+B^n_s-B^n_t$, we split the $g$ difference  into
\begin{align*}
 |\mbox{ $g$ difference }| &\le  \left |\e\left[g( B_T^{t,x})(B_T-B_t)\right]\left(\frac{1}{T-t}-\frac{1}{T-\ut}\right) \right | \notag \\
  &\qquad\quad +  \left | \frac{\e\left[g( B_T^{t,x})(B_T-B_t)\right]-\e\left[g( B_T^{n, t,x})(B^n_T-B^n_t)\right] }{T-\ut} \right | \\
&=: S_1 +S_2.
\end{align*}
Then for $S_1$ we have 
\begin{align*}
S_1 & \le \gho\, \frac{|t-\ut|^\frac{\ep}{2}  }{|T-t|^{\frac{1-\ep}{2}}\, |T-\ut|^\frac{\ep}{2} }  
 \le \gho\, \frac{|t-\ut|^\frac{\ep}{2} \Phi_{\beta }(|\ot-\ut|^\ep) }{|T-t|^{\frac{1-\ep}{2}}\, |T-\ut|^\frac{\ep}{2}\,  \Phi_{\beta }(|T-\ut|^\ep)}.
\end{align*}
For the second term we  use  \cref{corollary5} and get
\begin{align*}
      S_2
& \le c_\eqref{eqn:corollary5_nabla_g} \, \gho\,\frac{  \delta(t,T)^{\frac{1}{2}}} { |T-\ut| \,\Phi_{\beta }(|T-\ut|^\ep)}\, n^{-\frac{\ep}{2}}\Phi_{\beta }\left ( \Big( \frac{T}{n} \Big)^\ep \right) \\
&  =  c_\eqref{eqn:corollary5_nabla_g} \, \gho\, \frac{1}{|T-\ut|^{\frac{1}{2}} \, \Phi_{\beta }(|T-\ut|^\ep)  } n^{-\frac{\ep}{2}}\Phi_{\beta }\left ( \Big( \frac{T}{n} \Big)^\ep \right).
\end{align*}
 Then, since $\ot -\ut  \le T/n$, we get
\begin{align*}
 |\mbox{ $g$ difference }|\le \left [ T^\frac{\ep}{2}+ c_\eqref{eqn:corollary5_nabla_g} \right ]  \gho\,  \frac{1}{|T-t|^{\frac{1-\ep}{2}}\, |T-\ut|^\frac{\ep}{2}\, \Phi_{\beta }(|T-\ut|^\ep)   }  n^{-\frac{\ep}{2}}\Phi_{\beta }\left ( \Big( \frac{T}{n} \Big)^\ep \right ).
\end{align*}

Analyzing the proof of \cite[Theorem 11]{BGGL21} it turns out that for 
\begin{equation*} 
	\beta_n(s):=\sup_{x\in\rset}\left\{ \frac{|u_\fn-U^n_\fn|(s,x)}{  (1+|x|)^{\ep}  }\right\}
\sptext{1}{and}{1}
	\gamma_n(s):=\sup_{x\in\rset}\left\{ \frac{|\nabla u_\fn-\Delta^n_\fn|(s,x)}{ (1+|x|)^{\ep} }\right\}
\end{equation*}	
it holds  (see \cite[(43)]{BGGL21} and  \cite[(44)]{BGGL21} together with the estimates for $H(t)$ in this proof)
\begin{equation*}
	\gamma_n(t) \leq C \left( |g \text{ difference}|  +  \int_{(\ut+h)\wedge T}^T  (\beta_n(s)+ \gamma_n(s) ) \frac{ds}{\sqrt{\us-\ut}} +   n^{-\alpha\wedge \frac{\ep}{2}} \right),
\end{equation*}
where the estimate \eqref{theR} for $|R|$ contributes to the last term.
This implies, by the arguments in \cite{BGGL21},
\begin{equation}\label{eq:almostZ}
 \gamma_n(t) \leq c_\eqref{eq:almostZ}
             \left [ \frac{ n^{-\alpha \wedge\frac{\ep}{2}}\Phi_{\beta }\left ( \Big( \frac{T}{n} \Big)^\ep \right )}{|T-t|^{\frac{1-\ep}{2}} \, |T-\ut|^\frac{\ep}{2} \,\Phi_{\beta }(|T-\ut|^\ep)  }
                     + \int_{(\ut+h) \wedge T}^T \gamma_n(s)\, \frac{ds}{\sqrt{\us-\ut}}\right ] \sptext{.75}{for}{.75} t \in [0,T[.
\end{equation}
Observing that
\begin{equation}\label{eqn:upper_bound_Phibeta}
\Phi_{\beta }\left( \Big( \frac{T}{n} \Big)^\ep \right) \leq c_\eqref{eqn:upper_bound_Phibeta} |\log (n+1)|^{\beta}
\end{equation}
for $c_\eqref{eqn:upper_bound_Phibeta} =c_\eqref{eqn:upper_bound_Phibeta} (T, \ep, \beta)>0$,
and inserting \eqref{eq:almostZ} into itself leads to
\begin{align*}
      \gamma_n(t)
&\leq c_\eqref{eq:almostZ} c_\eqref{eqn:upper_bound_Phibeta}
      \frac{|\log (n+1)|^{\beta}}{n^{\alpha \wedge \frac{\ep}{2}}}
      \frac{1}{|T-t|^{\frac{1-\ep}{2}} \, |T-\ut|^\frac{\ep}{2} \, \Phi_{\beta }(|T-\ut|^\ep)} \\
&\quad +  c_\eqref{eq:almostZ}^2 c_\eqref{eqn:upper_bound_Phibeta}  
      \frac{|\log (n+1)|^{\beta}}{n^{\alpha \wedge \frac{\ep}{2}}} \int_{(\ut+h) \wedge T}^T    \frac{1}{ |T-s|^{\frac{1-\ep}{2}}\, |T-\us|^\frac{\ep}{2} \, \Phi_{\beta }(|T-\us|^\ep)}     \frac{ds}{\sqrt{\us-\ut}}  \\
&\quad +  c_\eqref{eq:almostZ}^2 \int_{(\ut+h) \wedge T}^T  \int_{(\us+h) \wedge T}^T \gamma_n(r)\, \frac{dr}{\sqrt{\ur-\us}}    \frac{ds}{\sqrt{\us-\ut}}. 
\end{align*}
Since $T - \us \geq T - s$ and $\Phi_{\beta }((T-\us)^\ep) \geq 1$ by \cref{Psi-lemma}, it holds that 
\begin{equation*}
|T-s|^{\frac{1-\ep}{2}} \, |T-\us|^\frac{\ep}{2} \, \Phi_{\beta }(|T-\us|^\ep) \geq |T-s|^{\frac{1}{2}}
\end{equation*}
and, in particular, 
\begin{equation*}
  \int_{(\ut+h) \wedge T}^T    \frac{1}{|T-s|^{\frac{1-\ep}{2}} \, |T-\us|^\frac{\ep}{2} \, \Phi_{\beta }(|T-\us|^\ep)}     \frac{ds}{\sqrt{\us-\ut}} \leq \int_{(\ut+h) \wedge T}^T    \frac{1}{|T-s|^{\frac{1}{2}}}     \frac{ds}{\sqrt{\us-\ut}}.
\end{equation*}

In  \cite{BGGL21}  (see the end of the proof) it is shown that 
$$ \int_{(\ut+h) \wedge T}^T    \frac{1}{|T-s|^{\frac{1}{2}}}     \frac{ds}{\sqrt{\us-\ut}}  \le  \text{ \tt B} \big (\tfrac{1}{2},\tfrac{1}{2} \big ), $$
 where $ \text{ \tt B}$  denotes the beta function, 
 and that 
 $$  \int_{(\ut+h) \wedge T}^T  \int_{(\us+h) \wedge T}^T \gamma_n(r)\, \frac{dr}{\sqrt{\ur-\us}}    \frac{ds}{\sqrt{\us-\ut}}   \le   \text{ \tt B} \big (\tfrac{1}{2},\tfrac{1}{2} \big )  \int_{(\ut+2h)\wedge T}^T  \gamma_n(r)\, dr.   $$
Then Gronwall's lemma implies the assertion.
\end{proof}
\medskip

\begin{prop} \label{coro_pathwise_estimate_wasserstein}
Suppose that \cref{H:A1} holds and that $\fn$ is defined as in  \eqref{f-zero}.
Let $\beta\in \{0\} \cup ]\frac{1}{2},\infty[$. Then for $(t, x), (t,y) \in [t_o, T[ \times \R$ and
for $n \geq n_0$ one has
\begin{align}
     |u_\fn(t, x) -  U^n_\fn  (t,y) |
&\le c_{\eqref{u-ufn-est}} \left [  |x-y|^\ep + (1+|x|)^{\ep} \,  n^{- \alpha \wedge\frac{ \ep}{2}} \right ], \label{u-ufn-est} \\
     \left | \nabla u_\fn(t,x) - \Delta^n_\fn(t,y)\right|
&\le \frac{c_{\eqref{nabla-u-nabla-ufn-est-pointwise}} }{|T-t|^\frac{1}{2} \Phi_{\beta }(|T-t|^\ep)}\Bigg [  \ps {x-y}   
     + (1  +   |y|)^{\ep} \frac{|\log (n+1)|^{\beta}}{n^{\alpha \wedge \frac{\ep}{2}}} \Bigg ], \label{nabla-u-nabla-ufn-est-pointwise} 
\end{align}
and for any  continuous function $[t,T]\ni s\mapsto {\bf x}(s)$ and any function $[t,T]\ni s\mapsto {\bf x}^n(s)$ that is constant on
the intervals $[t_{k-1},t_k)\cap [t,T]$ it holds
\begin{multline}
     \left ( \int_t^T  \left| \nabla u_\fn(s, \bx(s)) - \Delta^n_\fn(s, \bx^n(s))\right|^2   \od s \right )^\frac{1}{2} \\
 \le c_{\eqref{nabla-u-nabla-ufn-est}} \sup_{t\le s\le T} \Bigg [  \ps  {\bx(s) - \bx^n(s)} 
     +  \Big (1  +  |\bx^n(s)| \Big)^{\ep} \frac{|\log (n+1)|^{\beta}}{n^{\alpha \wedge \frac{\ep}{2}}} \Bigg ], \label{nabla-u-nabla-ufn-est}
\end{multline}
where $c_{\eqref{u-ufn-est}},c_\eqref{nabla-u-nabla-ufn-est-pointwise},c_{\eqref{nabla-u-nabla-ufn-est}}>0$ depend at most on 
$(T,\ep,\alpha,\gho,g(0),\fhot,\fhox,L_f,c^{(2)}_{f_0},M)>0$ and
$c_\eqref{nabla-u-nabla-ufn-est-pointwise},c_{\eqref{nabla-u-nabla-ufn-est}}>0$
may depend additionally on $\beta$.
\end{prop}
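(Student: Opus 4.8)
The plan is to read off all three estimates from the ``frozen generator'' results already proved, by inserting an intermediate spatial point (resp.\ an intermediate path), applying the triangle inequality, and then only managing the time weights $|T-t|$, $|T-\ut|$ and $\Phi_\beta$. Throughout I would use \cref{regularity-for-mainBSDE}, so that \cref{en:regu} and \cref{lemma37} are available for $u_\fn$ and $\nabla u_\fn$ with constants controlled by $(T,\ep,\alpha,\gho,g(0),\fhot,\fhox,L_f,c^{(2)}_{f_0},M)$, together with $\Phi_\beta((T/n)^\ep)\le c_\eqref{eqn:upper_bound_Phibeta}|\log(n+1)|^\beta$ from \eqref{eqn:upper_bound_Phibeta} and the monotonicity facts in \cref{Psi-lemma}.

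For \eqref{u-ufn-est} I would write $|u_\fn(t,x)-U^n_\fn(t,y)|\le|u_\fn(t,x)-u_\fn(t,y)|+|u_\fn(t,y)-U^n_\fn(t,y)|$ and bound the first summand by $c_{\eqref{u-estimates}}|x-y|^\ep$ using the spatial $\ep$-Hölder continuity of $u_\fn$ (\cref{en:regu}) and the second by $c_\eqref{eqn:1:en:mainPDE}(1+|y|)^\ep n^{-\alpha\wedge\frac{\ep}{2}}$ (\cref{en:mainPDE}\eqref{eqn:1:en:mainPDE}); since $\ep\le1$ one has $(1+|y|)^\ep\le(1+|x|)^\ep+|x-y|^\ep$ and $n^{-\alpha\wedge\frac{\ep}{2}}\le1$, which reshapes this into the asserted bound. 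The pointwise gradient estimate \eqref{nabla-u-nabla-ufn-est-pointwise} is handled identically: split off $\nabla u_\fn(t,y)$, estimate $|\nabla u_\fn(t,x)-\nabla u_\fn(t,y)|$ by $c_{\eqref{nabla u-diff-eq}}\,\ps{x-y}/(|T-t|^{1/2}\Phi_\beta(|T-t|^\ep))$ via \cref{lemma37}\eqref{nabla u-diff}, and $|\nabla u_\fn(t,y)-\Delta^n_\fn(t,y)|$ by \cref{en:mainPDE}\eqref{eqn:2:en:mainPDE}. The one computation needed is to pass from the weight of \eqref{eqn:2:en:mainPDE}, i.e.\ $|T-t|^{-(1-\ep)/2}|T-\ut|^{-\ep/2}\Phi_\beta(|T-\ut|^\ep)^{-1}\Phi_\beta((T/n)^\ep)$, to $|T-t|^{-1/2}\Phi_\beta(|T-t|^\ep)^{-1}|\log(n+1)|^\beta$: since $\ut\le t$ yields $T-\ut\ge T-t$ and the map $r\mapsto\ps{\sqrt r}=r^{\ep/2}\Phi_\beta(r^\ep)$ is non-decreasing by \cref{Psi-lemma}\eqref{the-Psi}, one gets $|T-\ut|^{\ep/2}\Phi_\beta(|T-\ut|^\ep)\ge|T-t|^{\ep/2}\Phi_\beta(|T-t|^\ep)$, and combining with $|T-t|^{(1-\ep)/2}|T-t|^{\ep/2}=|T-t|^{1/2}$ and the bound on $\Phi_\beta((T/n)^\ep)$ gives exactly \eqref{nabla-u-nabla-ufn-est-pointwise} (the factor $(1+|y|)^\ep$ coming in because the discretization error sits at $y$).

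For the integrated estimate \eqref{nabla-u-nabla-ufn-est} I would apply \eqref{nabla-u-nabla-ufn-est-pointwise} with $(t,x,y)$ replaced by $(s,\bx(s),\bx^n(s))$ — this is allowed since $\bx(s)$ and $\bx^n(s)$ are two spatial points at the common time $s$ — to obtain, for a.e.\ $s\in[t,T]$,
\[
  \bigl|\nabla u_\fn(s,\bx(s))-\Delta^n_\fn(s,\bx^n(s))\bigr|
  \le\frac{c}{|T-s|^{1/2}\Phi_\beta(|T-s|^\ep)}\Bigl[\ps{\bx(s)-\bx^n(s)}+(1+|\bx^n(s)|)^\ep\tfrac{|\log(n+1)|^\beta}{n^{\alpha\wedge\frac{\ep}{2}}}\Bigr].
\]
Squaring, integrating over $[t,T]$, pulling the $s$-independent supremum out of the integral, and using $\int_t^T|T-s|^{-1}\Phi_\beta(|T-s|^\ep)^{-2}\,\od s\le\int_0^Ts^{-1}\Phi_\beta(s^\ep)^{-2}\,\od s<\infty$ from \cref{phi_remark} (this is where $\beta>\tfrac12$ enters; for $\beta=0$ one instead estimates the two summands of the pointwise bound separately, the time singularity being milder there) then gives \eqref{nabla-u-nabla-ufn-est}.

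I expect the genuine obstacle to be exactly this time integrability. The naive gradient bound carries the non-square-integrable singularity $|T-s|^{-1/2}$, and the entire role of the weight $\Phi_\beta$ — and of replacing the Hölder modulus $|\cdot|^\ep$ by $\ps{\cdot}$ — is to render $\int_0^Ts^{-1}\Phi_\beta(s^\ep)^{-2}\,\od s$ finite while the polynomial rate $n^{-\alpha\wedge\frac{\ep}{2}}$ survives and only the logarithmic factor $|\log(n+1)|^\beta$ is lost. The delicate bookkeeping is keeping the two time arguments $s$ and $\us$ apart — the latter enters because $\Delta^n_\fn(s,\cdot)$ is frozen on each mesh interval — so that $|T-\us|$ can be traded for $|T-s|$ with constant loss only, and making sure the mismatch between $|T-t|^{(1-\ep)/2}|T-\ut|^{\ep/2}$ and $|T-t|^{1/2}$ in \cref{en:mainPDE}\eqref{eqn:2:en:mainPDE} is absorbed precisely by the monotonicity of $r\mapsto\ps{\sqrt r}$, and not by a constant that would inflate the ``logarithm only'' loss.
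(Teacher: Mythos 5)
Your argument is essentially the paper's own proof: insert $u_\fn(t,y)$ (resp.\ $\nabla u_\fn(t,y)$) and split by the triangle inequality, bound the first half by the spatial regularity of \cref{en:regu} / \cref{lemma37}\eqref{nabla-estimate}, bound the second half by \cref{en:mainPDE}, trade $|T-t|^{(1-\ep)/2}|T-\ut|^{\ep/2}\Phi_\beta(|T-\ut|^\ep)$ for $|T-t|^{1/2}\Phi_\beta(|T-t|^\ep)$ via $T-\ut\ge T-t$ and the monotonicity of $r\mapsto r^{\ep/2}\Phi_\beta(r^\ep)$ from \cref{Psi-lemma}\eqref{the-Psi}, and finally integrate using $\int_0^T s^{-1}\Phi_\beta(s^\ep)^{-2}\,\od s<\infty$ from \cref{phi_remark} (you state the squared integrability correctly, which is what is actually needed). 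One small correction: your parenthetical that for $\beta=0$ the time singularity in \eqref{nabla-u-nabla-ufn-est} becomes milder is not right — both summands of the pointwise bound still carry the non-square-integrable $|T-s|^{-1/2}$ — and indeed the paper itself only establishes \eqref{nabla-u-nabla-ufn-est} for $\beta>\frac{1}{2}$ even though the hypothesis nominally allows $\beta=0$.
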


\begin{proof}
By \cref{en:regu}  and \cref{en:mainPDE} we have
\begin{align*}
     |u_\fn(t, x) -  U^n_\fn (t,y) |
&\le |u_\fn(t, x) -  u_\fn(t, y)| + |u_\fn(t, y) - U^n_\fn (t, y)|\\
& \le  c_{\eqref{u-estimates}} |x-y|^\ep +   c_\eqref{eqn:1:en:mainPDE} \, (1+|y|)^{\ep} \,  n^{- \alpha \wedge\frac{ \ep}{2}}\\
& \le  c_{\eqref{u-estimates}} |x-y|^\ep +   c_\eqref{eqn:1:en:mainPDE} \, (1+|x| + |x-y|)^{\ep} \,  n^{- \alpha \wedge\frac{ \ep}{2}}\\
& \le (c_{\eqref{u-estimates}}+ c_\eqref{eqn:1:en:mainPDE} ) \,|x-y|^\ep
      +  c_\eqref{eqn:1:en:mainPDE} \, (1+|x|)^{\ep} \,  n^{- \alpha \wedge\frac{ \ep}{2}}.
\end{align*}

\cref{lemma37}\eqref{nabla-estimate} and \cref{en:mainPDE} imply
\begin{align*}
&    \left| \nabla u_\fn(t,x) - \Delta^n_\fn(t,y)\right| \\
&\le \left| \nabla u_\fn(t,x) - \nabla u_\fn(t,y)| +  |\nabla u_\fn(t,y) - \Delta^n_\fn(t,y)\right|\\
&\le c_{\eqref{nabla u-diff-eq}}  \frac{  \ps {x-y}   }{|T-t|^\frac{1}{2} \Phi_{\beta }(|T-t|^\ep)}
    + c_\eqref{eqn:2:en:mainPDE} \frac{(1+|y|)^\ep n^{-\alpha\wedge \frac{\ep}{2}} \Phi_{\beta}
        \left( \left( \frac{T}{n} \right)^{\ep} \right)}{|T - t|^\frac{1 - \ep}{2} \, |T - \ut|^\frac{\ep}{2} \Phi_{\beta}( |T - \ut|^{\ep} )} \\
&\le \frac{c_{\eqref{nabla u-diff-eq}} \vee c_\eqref{eqn:2:en:mainPDE} }{ |T-t|^\frac{1}{2} \Phi_{\beta }( |T-t|^\ep)}
     \left [    \ps {x-y}  
           +   \frac{(1+|y|)^\ep}{n^{\alpha\wedge \frac{\ep}{2}}} \Phi_{\beta}
        \left( \left( \frac{T}{n} \right)^{\ep} \right)  \right ].
\end{align*}
To get \eqref{nabla-u-nabla-ufn-est} we use \eqref{eqn:upper_bound_Phibeta} and observe that for $\beta>\frac{1}{2}$ one has
\[ \int_0^T \frac{\od s}{|T-s|^\frac{1}{2}  \, \Phi_{\beta }(|T-s|^\ep)} < \infty.\]
\end{proof}
\medskip

\begin{prop}\label{rate-singularity}
Assume \cref{H:A1}, $\beta \in \{0\} \cup ] \frac{1}{2},\infty [$, and $n \geq n_0$.
Then there are constants $c_\eqref{wasserstein-Y},c_\eqref{wasserstein-Z}>0$ 
that depend at most on 
$(T,\ep,\alpha,\gho,g(0),\fhot,\fhox,L_f,c^{(2)}_{f_0},M)>0$ and
where $c_\eqref{wasserstein-Z}>0$ may depend additionally on $\beta$, such that
\begin{align}
     \cW_2 \left([Y^{t_o,\xi}_{{\sf f}_n,t}], [Y^{n,t_o,\xi}_{{\sf f}_n,t}]\right)
&\le c_\eqref{wasserstein-Y}\, \, n^{-(\alpha \wedge\frac{\ep}{2})}
        \sptext{7.5}{for}{1}   t\in [t_o,T], \label{wasserstein-Y} \\
     \cW_2\left([Z^{t_o,\xi}_{{\sf f}_n,t}], [Z^{n,t_o,\xi}_{{\sf f}_n,t}]\right)
&\le c_\eqref{wasserstein-Z} \frac{n^{-(\alpha \wedge\frac{\ep}{2})} |\log (n+1)|^\beta}{|T-t|^{\frac{1}{2}}\Phi_{\beta }(|T-t|^\ep)}
     \sptext{2}{for}{1}
     t\in ]t_o,T[, \label{wasserstein-Z}
\end{align}
where we assume that $Z^{t_o,\xi}_{\fn,t}$ is given by   $Z^{t_o,\xi}_{\fn,t} = \nabla u_{\fn}(t,B_t^{t_o,\xi})$  to have a continuous version.
\end{prop}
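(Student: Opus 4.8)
The plan is to reduce everything to comparing the one‑dimensional marginals $[\xi+B_t]$ and $[\xi+\rwB_t]$. Indeed, by the representations $Y^{t_o,\xi}_{\fn,t}=u_\fn(t,B^{t_o,\xi}_t)=u_\fn(t,\xi+B_t)$ and $Z^{t_o,\xi}_{\fn,t}=\nabla u_\fn(t,\xi+B_t)$, and by \eqref{eqn:fn:U->Y} and \eqref{eqn:fn:Delta->Z-I} the discrete analogues $Y^{n,t_o,\xi}_{\fn,t}=U^n_\fn(t,\xi+\rwB_t)$ and $Z^{n,t_o,\xi}_{\fn,t}=\Delta^n_\fn(t,\xi+\rwB_t)$ (the latter for $t$ off the time‑net; the grid times $t=t_k$ being handled analogously via \eqref{eqn:fn:Delta->Z-II} and the time‑regularity of $\nabla u_\fn$ and $\Delta^n_\fn$), the solutions at the fixed time $t$ depend on the driving process only through its value at $t$. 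Since $\xi$ is independent of both driving processes, a coupling using a common $\xi$ together with an optimal $\cW_{2\ep}$‑coupling of $B_t$ and $\rwB_t$ (independent of $\xi$) gives $\cW_{2\ep}([\xi+B_t],[\xi+\rwB_t])\le \cW_{2\ep}([B_t],[\rwB_t])\le \cW_2([B_t],[\rwB_t])\le c(T)\,n^{-1/2}$, the last bound being Rio's theorem (recalled in the introduction) after a time rescaling and accounting for the $O(h)$ discrepancy between $t-t_o$ and $\langle \rwB\rangle_t$.

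For \eqref{wasserstein-Y} I would split, for $t\in[t_o,T]$,
\[
\cW_2\!\left([Y^{t_o,\xi}_{\fn,t}],[Y^{n,t_o,\xi}_{\fn,t}]\right)
\le \cW_2\!\left([u_\fn(t,\xi+B_t)],[u_\fn(t,\xi+\rwB_t)]\right)
 + \big\|u_\fn(t,\xi+\rwB_t)-U^n_\fn(t,\xi+\rwB_t)\big\|_{L^2}.
\]
The second term is bounded, using \eqref{u-ufn-est} with $x=y=\xi+\rwB_t$, by $c\,n^{-\alpha\wedge\frac\ep2}\,\|(1+|\xi+\rwB_t|)^\ep\|_{L^2}$, and the last norm is $\le c(M,T,\ep)$ because $\|\xi\|_{L^{2\ep}}\le M$ and $\E|\rwB_t|^{2\ep}\le T^\ep$. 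For the first term, the spatial $\ep$‑Hölder regularity $|u_\fn(t,\cdot)|_{\ep;x}\le c_{\eqref{u-estimates}}$ (\cref{en:regu}, applicable to $u_\fn$ by \cref{regularity-for-mainBSDE}) together with the coupling above gives $\cW_2(\cdots)\le c_{\eqref{u-estimates}}\,\cW_{2\ep}([\xi+B_t],[\xi+\rwB_t])^\ep\le c\,n^{-\ep/2}$. Since $n^{-\ep/2}\le n^{-\alpha\wedge\frac\ep2}$, this proves \eqref{wasserstein-Y}.

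For \eqref{wasserstein-Z} I would use the same splitting with $\nabla u_\fn,\Delta^n_\fn$ in place of $u_\fn,U^n_\fn$. The term $\|\nabla u_\fn(t,\xi+\rwB_t)-\Delta^n_\fn(t,\xi+\rwB_t)\|_{L^2}$ is controlled by \eqref{nabla-u-nabla-ufn-est-pointwise} with $x=y=\xi+\rwB_t$ (so the $\ps{x-y}$ term vanishes), which is already of the claimed form $\le c(M)\,n^{-\alpha\wedge\frac\ep2}|\log(n+1)|^\beta\big(|T-t|^{\frac12}\Phi_\beta(|T-t|^\ep)\big)^{-1}$. For the remaining term $\cW_2([\nabla u_\fn(t,\xi+B_t)],[\nabla u_\fn(t,\xi+\rwB_t)])$ I invoke the modified‑Hölder estimate \eqref{nabla u-diff-eq} (valid for $u_\fn$ by \cref{regularity-for-mainBSDE}),
\[
\big|\nabla u_\fn(t,a)-\nabla u_\fn(t,b)\big|\le \frac{c_{\eqref{nabla u-diff-eq}}\,\ps{a-b}}{|T-t|^{\frac12}\Phi_\beta(|T-t|^\ep)},
\]
so the term is at most $c_{\eqref{nabla u-diff-eq}}\big(|T-t|^{\frac12}\Phi_\beta(|T-t|^\ep)\big)^{-1}(\E\,\ps{B_t-\rwB_t}^2)^{\frac12}$ along the optimal coupling. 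Here is the delicate point: using $\ps r=\Ps{|r|^{2\ep}}{\frac12}$, the concave majorant $\Psi_\beta$ from \cref{Psi-lemma}\eqref{item:4:Psi-lemma}, and Jensen's inequality,
\[
\E\,\ps{B_t-\rwB_t}^2\le \E\,\Psi_\beta\big(|B_t-\rwB_t|^{2\ep}\big)\le \Psi_\beta\big(\E|B_t-\rwB_t|^{2\ep}\big)\le \kappa_\beta^2\,\Ps{\E|B_t-\rwB_t|^{2\ep}}{\tfrac12}^2 ,
\]
and then $\E|B_t-\rwB_t|^{2\ep}\le c\,n^{-\ep}$, the monotonicity of $r\mapsto\Ps r{\frac12}^2$ (\cref{Psi-lemma}\eqref{the-Psi}) and $\Phi_\beta(c\,n^{-\ep})\le c\,|\log(n+1)|^\beta$ (as in \eqref{eqn:upper_bound_Phibeta}) yield $(\E\,\ps{B_t-\rwB_t}^2)^{\frac12}\le c\,n^{-\ep/2}|\log(n+1)|^\beta\le c\,n^{-\alpha\wedge\frac\ep2}|\log(n+1)|^\beta$. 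Adding the two contributions gives \eqref{wasserstein-Z}.

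The main obstacle is exactly this last estimate in the $Z$‑part: one must push the modified modulus $\ps\cdot$ and the time‑singularity weight $\Phi_\beta$ through Jensen's inequality and the coupling while losing at most a $|\log(n+1)|^\beta$ factor, which is precisely what the concavity of $\Psi_\beta$ and the integrability $\int_0^T|T-s|^{-1/2}\Phi_\beta(|T-s|^\ep)^{-1}\,\od s<\infty$ are designed for. The minor technical points are the book‑keeping for the $O(h)$ discrepancy between $t$ and $\ut$ in the marginal Wasserstein bound, the moment control of $(1+|\xi+\rwB_t|)^\ep$ using only $\xi\in L^{2\ep}$, and the reduction of grid times to the generic case via \eqref{eqn:fn:Delta->Z-II}.
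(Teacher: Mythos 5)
Your proposal is correct and follows essentially the same route as the paper: the paper's proof likewise applies \eqref{u-ufn-est} and \eqref{nabla-u-nabla-ufn-est-pointwise} at a coupling of $B_t^{t_o,\xi}$ and $B_t^{n,t_o,\xi}$ (common $\xi$, optimal $\cW_{2\ep}$-coupling of the increments bounded by \cite[Prop.\ 4]{BGGL21}), then pushes $\ps{\cdot}$ through Jensen via the concave majorant $\Psi_\beta$ of \cref{Psi-lemma}(iii), and finally handles the grid times $t=\ut$ by a one-sided limit using continuity of $Z^{t_o,\xi}_{\fn,\cdot}$. The only cosmetic difference is that you unpack Proposition \ref{coro_pathwise_estimate_wasserstein} into its two constituent steps (spatial modified-H\"older regularity plus pointwise finite-difference rate) rather than invoking it as a black box.
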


\begin{proof}
From \cref{coro_pathwise_estimate_wasserstein} we get
\begin{align*}
      \cW_2 \left([Y^{t_o,\xi}_{{\sf f}_n,t}], [Y^{n,t_o,\xi}_{{\sf f}_n,t}]\right)
& =   \cW_2 \left(\left [u_\fn\left (t, B_t^{t_o,\xi}\right )\right ],\left [U^n_\fn  \left (t,B_t^{n,t_o,\xi} \right ) \right ] \right ) \\
&\le c_{\eqref{u-ufn-est}} \left [ \cW_{2\ep} \left ([B_t^{t_o}],[B_t^{n,t_o}] \right )^\ep
     + \left (1+ \left \| B_t^{t_o,\xi} \right \|_{L^{2\ep}}^\ep \right ) \,  n^{- \alpha \wedge\frac{ \ep}{2}} \right ]  \\
&\le c_{\eqref{u-ufn-est}} \left [ \left ( c \sqrt{\frac{T}{n}} \right )^\ep
     + \left ( 1 + T^\frac{\ep}{2}  \right ) \,  n^{- \alpha \wedge\frac{ \ep}{2}} \right ]
\end{align*}
for an absolute constant $c>0$, where we use that  $\cW_p \left ([B_t^{t_o}],[B_t^{n,t_o}] \right ) \le c_p\sqrt{\frac{T}{n}}  $  for $p\ge 1$  (see  \cite[Proposition 4]{BGGL21}). 
In the same way, for $t\not = \ut$ \cref{coro_pathwise_estimate_wasserstein}  and \cref{Psi-lemma} imply
\begin{align*}
&   \cW_2 \left([Z^{t_o,\xi}_{{\sf f}_n,t}], [Z^{n,t_o,\xi}_{{\sf f}_n,t}]\right) \\
& = \cW_2 \left(\left [ \nabla u_\fn\left (t, B_t^{t_o,\xi}\right )\right ],\left [\Delta^n_\fn  \left (t,B_t^{n,t_o,\xi} \right ) \right ] \right ) \\
&\le \frac{c_{\eqref{nabla-u-nabla-ufn-est-pointwise}} }{|T-t|^\frac{1}{2} \, \Phi_{\beta }(|T-t|^\ep)}
      \Bigg [ \sqrt{\E \Ps  {\,|X-Y|^{2\ep} \,} {\frac{1}{2}} ^2 }
     + \left ( 1   + \left \| B_t^{n,t_o} \right \|_{L^{2\ep}}^\ep  \right ) \frac{|\log (n+1)|^{\beta}}{n^{\alpha \wedge \frac{\ep}{2}}} \Bigg ] \\
&\le \frac{c_{\eqref{nabla-u-nabla-ufn-est-pointwise}} }{|T-t|^\frac{1}{2} \Phi_{\beta }(|T-t|^\ep)}
      \Bigg [ \kappa_\beta \Ps {\,\E |X-Y|^{2 \ep}\,}   {\frac{1}{2}}   
     + \left ( 1   + \left \| B_t^{n,t_o} \right \|_{L^{2\ep}}^\ep  \right ) \frac{|\log (n+1)|^{\beta}}{n^{\alpha \wedge \frac{\ep}{2}}} \Bigg ],
\end{align*}
where $(X,Y)$ is a coupling of $(B_t^{t_o},B_t^{n,t_o})$.
Again using  \cite[Proposition 4]{BGGL21}, we continue to
\begin{align*}
&   \cW_2 \left([Z^{t_o,\xi}_{{\sf f}_n,t}], [Z^{n,t_o,\xi}_{{\sf f}_n,t}]\right) \\
&\le \frac{c_{\eqref{nabla-u-nabla-ufn-est-pointwise}} }{|T-t|^\frac{1}{2} \Phi_{\beta }(|T-t|^\ep)}
      \Bigg [ \ps  { c \left ( \frac{T}{n} \right )^\frac{1}{2} }  
     + \left (1   + \left \| B_t^{n,t_o} \right \|_{L^{2\ep}}^\ep \right ) \frac{|\log (n+1)|^{\beta}}{n^{\alpha \wedge \frac{\ep}{2}}} \Bigg ].
\end{align*}
For $t=\ut$ we choose $(\ut -h) \vee t_0 < s_l \uparrow t$ so that 
$Z_{\fn,t}^{n,t_o,\xi}=Z_{\fn,s_l}^{n,t_o,\xi}$. As 
$Z_{\fn,s_l}^{t_o,\xi} \to Z_{\fn,t}^{t_o,\xi}$ in $L^2$ for $l\to \infty$ and
the RHS of \eqref{wasserstein-Z} is continuous in $t$, inequality \eqref{wasserstein-Z} holds for $t=\ut$ as well.
\end{proof}


\subsection{Comparing \texorpdfstring{$(Y^n_{{\sf f}_n}, Z^n_{{\sf f}_n})$}{YnfnZnfn} and  \texorpdfstring{$(Y^n,Z^n)$}{YnZn} }

We proceed with comparing the solutions to \eqref{eq:mainBSDEn-hat} and \eqref{eq:mainBSDEn}. To do so we use a priori estimates:
For this we assume probability spaces $(\Omega_j,\cG_j,\P_j)$ and $(\Omega_{j+1}^n,\cG_{j+1}^n,\P_{j+1}^n)$,
a random variable $\xi:\Omega_j\to \R$ and independent Rademacher variables $\zeta_{j+1},\ldots,\zeta_n:\Omega_{j+1}^n\to \{-1,1\}$.
We take the canonical
extensions $\xi,\zeta_{j+1},\ldots,\zeta_n:\Omega:= \Omega_j\times \Omega_{j+1}^n\to \R$ and define the $\sigma$-algebras
\[ \cF_k:= \sigma(\xi,\zeta_{j+1},\ldots,\zeta_k)
   \sptext{1}{for}{1}
   k=j,\ldots,n. \]

\begin{lemma}[A priori estimates]
\label{statement:a-priori-discrete}
Assume that $f^i:[t_o,T]\times \R\times \R\times\R\to \R$, $i=1,2$, are jointly continuous in $(x,y,z)$ for each fixed
$t\in [t_o,T]$ and such that
\[      \left| f^i(t,x,y,z)-  f^i(t,x,y',z')\right|
   \le  L_f \Big (  \left|y-y'\right|+ \left|z-z'\right|\Big ) \]
for some $L_f>0$, let  $\eta_i:\R\times \R^{n-j}\to \R$ be measurable.
For $n \geq n_0> T \max\{72  L^2_f+1\}$ assume the corresponding solutions
$(( Y^{n,i,\xi}_{t_k})_{k=j}^n,(Z^{n,i,\xi}_{t_k})_{k=j+1}^n)$
to the random walk BSDEs 
\[ Y^{n,i,\xi}_s =  \eta_i(\xi,(\zeta_k)_{k=j+1}^n)
   +  \int_{]s,T]} f^i(r, B^{n,t_o,\xi}_{r-},Y^{n,i,\xi}_{r-}, Z^{n,i,\xi}_r)  \, d\langle \rwB \rangle_r - \int_{]s,T]} Z^{n,i,\xi}_r\, d \rwB_r, \hspace{.3em} s \in [\ut,T]. \]
\begin{enumerate}[{\rm (i)}] 
\item \label{item:1:statement:a-priori-discrete:general}
      For $k=j+1,\ldots,n$ and $\eta_i(\omega):= \eta_i(\xi(\omega),\zeta_{j+1}(\omega),\ldots,\zeta_n(\omega))$ one has that
\begin{align} \label{eqn:item:1:statement:a-priori-discrete:general}
& \E \left [ \sup_{k-1 \le m \le  n} \left| Y^{n,1,\xi}_{t_m} - Y^{n,2,\xi}_{t_m} \right|^2
    +  \int_{]t_{k-1}, T]} \left | Z^{n,1,\xi}_s - Z^{n,2,\xi}_s\right |^2  d\langle \rwB \rangle_s \Bigg | \cF_{k-1}\right ]  \notag \\
&  \le  c_{\eqref{eqn:item:1:statement:a-priori-discrete:general}} \E \Bigg [|\eta_1 -\eta_2|^2  \notag \\
&
 \quad+   \int_{]t_{k-1}, T]}
  |       f^1(s,{B}_{s-}^{n,t_o,\xi},Y^{n,1,\xi}_{s-},Z^{n,1,\xi}_s)
         - f^2(s,{B}_{s-}^{n,t_o,\xi},Y^{n,1,\xi}_{s-},Z^{n,1,\xi}_s)|^2  d\langle \rwB \rangle_s\Bigg | \cF_{k-1} \Bigg  ]
\end{align}
with $c_{\eqref{eqn:item:1:statement:a-priori-discrete:general}}=c_{\eqref{eqn:item:1:statement:a-priori-discrete:general}}(T,L_f)>0$,
where we work with generalized conditional expectations for non-negative random variables.

\item \label{item:2:statement:a-priori-discrete:special-1}
        Specializing item \eqref{item:1:statement:a-priori-discrete:general} to $\eta_1= \eta_2$,
        \begin{align*}
       f^1(s,x,y,z)  & :=f(s\wedge t_{n-1},x,y,z,  \mu^1_s, \nu^1_s),\\
       f^2(s, x,y,z) & :=f(s\wedge t_{n-1},x,y,z,\mu^2_s, \nu^2_s),
       \end{align*}
        with $f$ satisfying \eqref{eq:prop_f}, and   $\mu^i, \nu^i : [t_o,T] \to \cP_2(\R)$ we get
         \begin{multline} \label{eqn:item:2:statement:a-priori-discrete:special-1}
         \left| Y^{n,1,\xi}_{t_{k-1}} - Y^{n,2,\xi}_{t_{k-1}}\right|^2
          +  \E \left [ \int_{]t_{k-1}, T[} \left| Z^{n,1,\xi}_s -  Z^{n,2,\xi}_s \right|^2  d\langle \rwB \rangle_s \Bigg | \cF_{k-1} \right ] \\
          \le  c_{\eqref{eqn:item:2:statement:a-priori-discrete:special-1}}
               \int_{ ]t_{k-1}, T]} \left [ \cW_2^2(\mu^1_{s}, \mu^2_s)  d\langle \rwB \rangle_s +
                \cW_2^2( \nu^1_{s}, \nu^2_{s}) \right ] d\langle \rwB \rangle_s
        \end{multline}
       with $c_{\eqref{eqn:item:2:statement:a-priori-discrete:special-1}}=c_{\eqref{eqn:item:2:statement:a-priori-discrete:special-1}}(T,L_f)>0$.

\item \label{item:3:statement:a-priori-discrete:special-2}
      Specializing item \eqref{item:2:statement:a-priori-discrete:special-1} further to $\eta_1= \eta_2$,
      \begin{align*}
      f^1(s,x,y,z)  & :=f( s\wedge t_{n-1},x,y,z,[Y^{n,1,\xi}_s],[Z^{n,1,\xi}_s]),\\
      f^2(s, x,y,z) & :=f( s\wedge t_{n-1},x,y,z, \mu^2_s,\nu^2_s),
      \end{align*} 
      we get
      \begin{multline}\label{eqn:item:3:statement:a-priori-discrete:special-2}
       \E \left [  \left| Y^{n,1,\xi}_{t_{k-1}} - Y^{n,2,\xi}_{t_{k-1}}\right|^2
           + \int_{]t_{k-1}, T[}  \left| Z^{n,1,\xi}_s -  Z^{n,2,\xi}_s \right|^2  d\langle \rwB \rangle_s \right ] \\
      \le c_{\eqref{eqn:item:3:statement:a-priori-discrete:special-2}}
           \int_{ ]t_{k-1}, T]}  \left [ \cW_2^2([Y^{n,2,\xi}_{s}], \mu^2_s) d\langle \rwB \rangle_s +
                                         \cW_2^{2}([Z^{n,2,\xi}_{s}], \nu^2_s ) \right] d\langle \rwB \rangle_s
     \end{multline}
     with $c_{\eqref{eqn:item:3:statement:a-priori-discrete:special-2}}=c_{\eqref{eqn:item:3:statement:a-priori-discrete:special-2}}(T,L_f) >0$.
\end{enumerate}
\end{lemma}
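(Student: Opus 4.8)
Write $\delta Y_{t_l}:=Y^{n,1,\xi}_{t_l}-Y^{n,2,\xi}_{t_l}$, $\delta Z_{t_l}:=Z^{n,1,\xi}_{t_l}-Z^{n,2,\xi}_{t_l}$, $\delta\eta:=\eta_1-\eta_2$, and let $\delta F_l$ denote the difference of the two discrete generators evaluated along the respective solutions, so that the one-step relation $\E[\delta Y_{t_{l+1}}\mid\cF_l]=\delta Y_{t_l}-h\,\delta F_l$ holds. Conditionally on $\cF_l$ the variable $\delta Y_{t_{l+1}}$ is a function of the single Rademacher variable $\zeta_{l+1}$, hence $\delta Y_{t_{l+1}}=\E[\delta Y_{t_{l+1}}\mid\cF_l]+\sqrt h\,\delta Z_{t_{l+1}}\zeta_{l+1}$ with $\delta Z_{t_{l+1}}=h^{-1/2}\E[\delta Y_{t_{l+1}}\zeta_{l+1}\mid\cF_l]$ being $\cF_l$-measurable. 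Squaring and taking $\E[\cdot\mid\cF_l]$ gives the discrete energy identity
\[
|\delta Y_{t_l}|^2+h|\delta Z_{t_{l+1}}|^2
=\E\!\left[|\delta Y_{t_{l+1}}|^2\mid\cF_l\right]+2h\,\delta Y_{t_l}\,\delta F_l-h^2|\delta F_l|^2
\le\E\!\left[|\delta Y_{t_{l+1}}|^2\mid\cF_l\right]+2h\,\delta Y_{t_l}\,\delta F_l .
\]
Now split $\delta F_l$ into the part coming from the $(y,z)$-Lipschitz continuity of $f^1$, bounded by $L_f(|\delta Y_{t_l}|+|\delta Z_{t_{l+1}}|)$, and the source term $g_l:=f^1(\cdot,Y^{n,2,\xi}_{t_l},Z^{n,2,\xi}_{t_{l+1}})-f^2(\cdot,Y^{n,2,\xi}_{t_l},Z^{n,2,\xi}_{t_{l+1}})$, and use Young's inequality to absorb $\tfrac h2|\delta Z_{t_{l+1}}|^2$ on the left together with a small multiple of $|\delta Y_{t_l}|^2$. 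This absorption is legitimate precisely because $h=T/n\le T/n_0<\frac{1}{72L_f^2+1}$, and yields $|\delta Y_{t_l}|^2+\tfrac h2|\delta Z_{t_{l+1}}|^2\le(1+ch)\,\E[|\delta Y_{t_{l+1}}|^2\mid\cF_l]+ch\,|g_l|^2$ with $c=c(T,L_f)>0$. Iterating this from any level $m$ (a discrete Gronwall lemma, whose geometric factor $\prod(1+ch)\le e^{cT}$ is bounded) produces, for all $m\ge k-1$,
\[
|\delta Y_{t_m}|^2+\E\!\left[\int_{]t_m,T]}\!|\delta Z_s|^2\,d\langle\rwB\rangle_s\,\Big|\,\cF_m\right]
\le c'\,\E\!\left[|\delta\eta|^2+\int_{]t_m,T]}\!|g_s|^2\,d\langle\rwB\rangle_s\,\Big|\,\cF_m\right],
\]
and the same right-hand side bounds $\E[h\sum_{l=m}^n|\delta Y_{t_l}|^2\mid\cF_m]$. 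For the supremum in \eqref{eqn:item:1:statement:a-priori-discrete:general} one uses $\delta Y_{t_m}=\E[\delta\eta+\sum_{l=m}^{n-1}h\delta F_l\mid\cF_m]$, so $|\delta Y_{t_m}|$ is dominated by the non-negative $(\cF_m)_{m\ge k-1}$-martingale $N_m:=\E[|\delta\eta|+\sum_{l\ge k-1}h|\delta F_l|\mid\cF_m]$; Doob's $L^2$ maximal inequality, Cauchy--Schwarz, and $|\delta F_l|\le L_f(|\delta Y_{t_l}|+|\delta Z_{t_{l+1}}|)+|g_l|$ bound $\E[\sup_m|\delta Y_{t_m}|^2\mid\cF_{k-1}]$ by a multiple of the previous right-hand side, which is \eqref{eqn:item:1:statement:a-priori-discrete:general}. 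All steps are performed with generalised conditional expectations of non-negative random variables, equivalently first for the truncated data $(\eta_i\wedge N)\vee(-N)$ and then $N\to\infty$ by monotone convergence, so no integrability of $\eta_i$ is assumed.

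\textbf{Proof of parts (ii) and (iii).}
For part~(ii) take $\eta_1=\eta_2$, so $\delta\eta=0$ and $g_l=f(\cdot,\mu^1,\nu^1)-f(\cdot,\mu^2,\nu^2)$, which by the Lipschitz property \eqref{eq:prop_f} in the measure arguments is bounded deterministically by $L_f(\cW_2(\mu^1_\cdot,\mu^2_\cdot)+\cW_2(\nu^1_\cdot,\nu^2_\cdot))$; inserting $\int_{]t_{k-1},T]}|g|^2\,d\langle\rwB\rangle$ into the conclusion of part~(i) gives \eqref{eqn:item:2:statement:a-priori-discrete:special-1}. For part~(iii), once the (unique, since $L_fh<1$) solution $(Y^{n,1,\xi},Z^{n,1,\xi})$ is fixed, $s\mapsto([Y^{n,1,\xi}_s],[Z^{n,1,\xi}_s])$ is a deterministic measure flow, so part~(ii) applies with $\mu^1=[Y^{n,1,\xi}]$, $\nu^1=[Z^{n,1,\xi}]$; its right-hand side then carries $\cW_2^2([Y^{n,1,\xi}_s],\mu^2_s)$ and $\cW_2^2([Z^{n,1,\xi}_s],\nu^2_s)$. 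Splitting these by the triangle inequality and estimating $\cW_2([Y^{n,1,\xi}_s],[Y^{n,2,\xi}_s])\le(\E|\delta Y_s|^2)^{1/2}$ (and likewise for $Z$) reintroduces $\E|\delta Y_s|^2+\E|\delta Z_s|^2$ on the right. Applying this bound at every level $t_m$, taking full expectations, and running a backward discrete Gronwall argument, admissible because $h$ is small, absorbs those contributions and leaves only the $(\mu^2,\nu^2)$-terms, which is \eqref{eqn:item:3:statement:a-priori-discrete:special-2}.

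\textbf{Where the difficulty lies.}
The discrete energy identity and the Young/absorption bookkeeping are routine, the only quantitative input being the smallness of $h$ forced by $n\ge n_0$ (the constant $72L_f^2+1$ being tuned so that every absorption and the ensuing geometric iteration stay bounded). The genuinely delicate step is part~(iii): since $f^1$ involves the law of its own solution there is no ready-made source term, and the triangle-inequality split reinserts $\E|\delta Y_s|^2$ and $\E|\delta Z_s|^2$ on the right-hand side at \emph{all} future times, so the a priori estimate must be applied simultaneously at all time levels and closed with a backward discrete Gronwall lemma. A pervasive minor technicality is keeping every quantity meaningful in $[0,+\infty]$ so that the statement holds without a priori integrability of $\eta_i$.
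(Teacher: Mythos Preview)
Your proofs of parts~(i) and~(ii) are correct and close to the paper's. The energy identity you derive is equivalent to the paper's telescoping of $|\tY_{t_m}|^2-|\tY_{t_{m-1}}|^2$, and your use of Doob's maximal inequality on the closed martingale $N_m=\E[|\delta\eta|+\sum h|\delta F_l|\mid\cF_m]$ is a clean alternative to the paper's route via the conditional BDG inequality for the stochastic integral $\int\tY_{s-}\tZ_s\,d\rwB_s$. One cosmetic point: you evaluate the source term $g_l$ at $(Y^2,Z^2)$ while the statement of~(i) has it at $(Y^1,Z^1)$; since $f^2$ is Lipschitz in $(y,z)$ as well, the decomposition $\delta F_l=[f^1(Y^1,Z^1)-f^2(Y^1,Z^1)]+[f^2(Y^1,Z^1)-f^2(Y^2,Z^2)]$ gives the version in the statement with the same effort.

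Your argument for part~(iii), however, has a genuine gap. After applying~(ii) with $\mu^1=[Y^{n,1,\xi}]$, $\nu^1=[Z^{n,1,\xi}]$ and using the triangle inequality, the right-hand side contains
\[
2c_{\eqref{eqn:item:2:statement:a-priori-discrete:special-1}}\int_{]t_{k-1},T]}\E|\delta Z_s|^2\,d\langle\rwB\rangle_s,
\]
while the left-hand side only controls $\E\int_{]t_{k-1},T[}|\delta Z_s|^2\,d\langle\rwB\rangle_s$ with coefficient~$1$. Since $c_{\eqref{eqn:item:2:statement:a-priori-discrete:special-1}}$ is a fixed $O(1)$ constant in $(T,L_f)$, no Gronwall-type argument can absorb this term: the quantity $\int\E|\delta Z|^2$ appears on both sides with comparable coefficients, and the smallness of $h$ does not help because the factor $h$ from $d\langle\rwB\rangle$ is present on both sides. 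A ``backward discrete Gronwall'' handles the $\E|\delta Y|^2$ contributions, but not the $\E|\delta Z|^2$ ones.

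The fix --- which is what the paper does --- is to \emph{not} pass through~(ii), but to return to the energy inequality \emph{before} any constant has been fixed, i.e.\ to your single-step relation with the free Young parameter (the paper's $\delta$) still in place. Taking full expectations there and substituting $|g_l|^2\le 3L_f^2\big(\E|\delta Y|^2+\E|\delta Z|^2\big)+3|f^{\cW}|^2$, the coefficient multiplying $\E|\delta Z_{t_{l+1}}|^2$ coming from the source is $3L_f^2(\delta+h)$. Choosing $\delta=\tfrac{1}{36L_f^2}$ and using $h<\tfrac{1}{72L_f^2+1}$ makes this coefficient strictly less than $\tfrac18$, so that together with the $3L_f^2(\delta+h)<\tfrac18$ already used in~(i) the total loss is $<\tfrac12$ and the $\E\int|\delta Z|^2$ term survives on the left with a positive coefficient $\tfrac12-4A'>0$. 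Gronwall is then applied only to the $\E|\delta Y|^2$ terms. This is exactly why the threshold $n_0>T(72L_f^2+1)$ carries the factor $72$: it is tuned so that the absorption in~(iii), not just the one in~(i), goes through.
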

\medskip

Item \eqref{item:3:statement:a-priori-discrete:special-2} of \cref{statement:a-priori-discrete}
describes in a general way how the solution processes of the mean field BSDE differ if in the 
mean field component not the correct distributions are inserted. Note that in the case 
$[Y_s^{n,2,\xi}]= \mu_s^2$ and  $[Z_s^{n,2,\xi}]= \nu_s^2$, i.e. if the correct distributions are inserted, we get
that the RHS equals zero.

\begin{proof}[Proof of \cref{statement:a-priori-discrete}]
Before we start with the actual proof we choose any
\[ n_0=n_0(T,L_f) >  (72  L^2_f+1)T
   \sptext{1}{and set}{1}
   \delta := \frac{1}{36 L_f^2}. \]
For $h = \frac{T}{n}$ with $n\ge n_0$ this implies
\begin{align}
A & :=  3L_f^2(\delta +h)
   \le 3L_f^2 \left (\frac{1}{36 L_f^2} +\frac{T}{n_0} \right ) =:A'
    < \frac{1}{8}, \label{eqn:condition_A} \\
B & := h \left ( \frac{1}{\delta} + A\right )
    < \frac{T}{n_0} \left ( 36 L_f^2  + \frac{1}{8}\right )
   \le \frac{5}{8}. \label{eqn:condition_B}
\end{align}

\eqref{item:1:statement:a-priori-discrete:general}
We first assume that $\xi\equiv x \in \R$.
We use the short notation
\[ \eta:=  |\eta_1-\eta_2|
   \sptext{1}{and}{1}
   {\tY}_{t_k}:= Y^{n,1,x}_{t_k} -  Y^{n,2,x}_{t_k} \sptext{1}{for}{1} k\ge j, \]
and for $k>j$,
\begin{align*}
{f^i}(t_k)&:= f^i(t_k,B_{t_{k-1}}^{n,t_o,x},Y^{n,i,x}_{t_{k-1}},Z^{n,i,x}_{t_k}),\\
\Delta Y^{n,i}_{t_k} &:= Y^{n,i,x}_{t_k} - Y^{n,i,x}_{t_{k-1}}   =- h   f^i(t_k) + Z^{n,i,x}_{t_k}  \Delta \rwB_{t_k},\\
                     {\tZ}_{t_k}&:= Z^{n,1,x}_{t_k} -  Z^{n,2,x}_{t_k},\\
            f_{t_{k}}&:= f^1(t_{k})-f^2(t_k), \\
         f^=_{t_{k}} &:= f^1(t_k,B_{t_{k-1}}^{n,t_0,x},Y^{n,1,x}_{t_{k-1}},Z^{n,1,x}_{t_k})-f^2(t_k,B_{t_{k-1}}^{n,t_o,x},Y^{n,1,x}_{t_{k-1}},Z^{n,1,x}_{t_k}).
  \end{align*}
Furthermore, we abbreviate
$\E_k [\cdot] := \E[\cdot | \cF_k ]$. Then, for $m>j$,
\begin{align*}
     |\tY_{t_m}|^2 -  |\tY_{t_{m-1}}|^2
& =  |\tY_{t_{m-1}} + \Delta \tY_{t_m}|^2 - |\tY_{t_{m-1}}|^2 \\
& =  |\Delta  \tY_{t_m}|^2 + 2 \tY_{t_{m-1}}  \Delta  \tY_{t_m}  \\
&\ge -h^2 f^2_{t_m} + \frac{1}{2} \tZ_{t_m}^2 ( \Delta \rwB_{t_m})^2 + 2 \tY_{t_{m-1}} \Delta  \tY_{t_m},
\end{align*}
  where we used that $|\Delta  \tY_{t_m}|^2 = (- h   f_{t_m}+ \tZ_{t_m}  \Delta \rwB_{t_m})^2 \ge -h^2 f_{t_m}^2 + \frac{1}{2} \tZ_{t_m}^2 ( \Delta \rwB_{t_m})^2. $
  From this we derive exploiting relations like 
\[ \sum_{m=k}^n    \tY_{t_{m-1}}  \Delta  \tY_{t_m}  = \int_{]t_{k-1},T]} \tY_{s-}  d \tY_s, \]
with the extension $Y_s=Y_{\us}$, for $k>j$  the estimates
   \begin{align*} 
   |\tY_{t_{k-1}}|^2 &=   \eta^2 -  \sum_{m=k}^n [|\tY_{t_m}|^2  - |\tY_{t_{m-1}}|^2]  \\
          &\le    \eta^2 - 2 \int_{]t_{k-1},T]} \tY_{s-} d \tY_s   +    \sum_{m=k}^n  (h^2    | f_{t_m}|^2 - \frac{1}{2} \tZ_{t_m}^2   (\Delta \rwB_{t_m})^2) \\
        &\le    \eta^2    + 2  \int_{]t_{k-1},T]}  |\tY_{s-}|| f_s|  d\langle \rwB \rangle_s  - 2 \int_{]t_{k-1},T]} \tY_{s-}  \tZ_s d\rwB_s 
        & \quad   \\
        &\quad  +  \int_{]t_{k-1},T]}   h | f_s|^2 d\langle \rwB \rangle_s  - \frac{1}{2} \int_{]t_{k-1},T]}   \tZ_s^2  d\langle \rwB \rangle_s.
 \end{align*}
Assume $\delta>0$.
By Young's inequality we have   $2|\tY_{s-}| | f_s| \le \frac{|\tY_{s-}|^2  }{\delta} + \delta| f_s|^2  $ which implies
 \begin{align*}
  & |\tY_{t_{k-1}}|^2   +  \frac{1}{2} \int_{]t_{k-1},T]}    \tZ_s^2  d\langle \rwB \rangle_s  \notag \\
   & \le   \eta^2  +  \frac{1}{\delta}  \int_{]t_{k-1},T]}  |\tY_{s-}|^2  d\langle \rwB \rangle_s + (\delta+h) \int_{]t_{k-1},T]}  | f_s|^2 d\langle \rwB \rangle_s \notag\\
  &\quad  - 2 \int_{]t_{k-1},T]} \tY_{s-}  \tZ_s d\rwB_s.   
   \end{align*}
   From  $|f_{t_k}|^2\le  3  L_f^2   |\tZ_{t_{k}}|^2 +   3  | f^=_{t_{k}}|^2+  3  L_f^2|\tY_{t_{k-1}}|^2$ and for
\begin{equation}\label{eqn:condition_on_delta_1}
  A  = 3L_f^2(\delta +h)<\frac{1}{8}
\end{equation}
we obtain 
\begin{align*} 
  & |\tY_{t_{k-1}}|^2   +   \Big (\frac{1}{2}-A \Big)  \int_{]t_{k-1},T]}   \tZ_s^2  d\langle \rwB \rangle_s    \\
  & \le   \eta^2  +  \left ( \frac{1}{\delta} + A  \right ) \int_{]t_{k-1},T]}  |\tY_{s-}|^2  d\langle \rwB \rangle_s  + 3  (\delta +h)   \int_{]t_{k-1},T]}  | f^=_s |^2 d \langle \rwB \rangle_s \\
  &\quad  - 2 \int_{]t_{k-1},T]} \tY_{s-}   \tZ_s d\rwB_s.
  \end{align*}
As we also have
\begin{equation}\label{eqn:condition_on_delta_2}
 B = h \left ( \frac{1}{\delta} + A\right ) < \frac{5}{8}
\end{equation}
and
\[ \int_{]t_{k-1},T]}  |\tY_{s-}|^2  d\langle \rwB \rangle_s = |\tY_{t_{k-1}}|^2 h +  \int_{]t_{k-1},T[}  |\tY_s|^2  d\langle \rwB \rangle_s \]
we continue to
\begin{align} \label{with_Gronwall_term}
&    \frac{3}{8} |\tY_{t_{k-1}}|^2   + \frac{3}{8} \int_{]t_{k-1},T]}   \tZ_s^2  d\langle \rwB \rangle_s  \notag  \\
&\le (1-B) |\tY_{t_{k-1}}|^2   +  \Big (\frac{1}{2}-A \Big)  \int_{]t_{k-1},T]}   \tZ_s^2  d\langle \rwB \rangle_s  \notag  \\
&\le \eta^2  +  \left ( \frac{1}{\delta} + A \right ) \int_{]t_{k-1},T[}  |\tY_{s}|^2  d\langle \rwB \rangle_s
      +  3 (\delta + h)   \int_{]t_{k-1},T]}  | f^=_s |^2 d \langle \rwB \rangle_s \notag\\
&\quad  - 2 \int_{]t_{k-1},T]} \tY_{s-}   \tZ_s d\rwB_s \notag \\
&\le \eta^2  +  \left ( \frac{1}{\delta} + \frac{1}{8} \right ) \int_{]t_{k-1},T[}  |\tY_{s}|^2  d\langle \rwB \rangle_s
      +  3 (\delta + h)   \int_{]t_{k-1},T]}  | f^=_s |^2 d \langle \rwB \rangle_s \notag\\
&\quad  - 2 \int_{]t_{k-1},T]} \tY_{s-}   \tZ_s d\rwB_s.
\end{align}
Taking the conditional expectation we get for $j < l \le k \le n$ that
\begin{multline}  \label{eqn:upper_bound_Yk-1_Zk-1-T}
      \frac{3}{8}  \E_{l-1} |\tY_{t_{k-1}}|^2   +   \frac{3}{8}  \E_{l-1} \int_{]t_{k-1},T]}   \tZ_s^2  d\langle \rwB \rangle_s  \\
 \le  \E_{l-1} \eta^2  +  \left ( \frac{1}{\delta} +  \frac{1}{8}  \right )
\int_{]t_{k-1},T[} \E_{l-1} |\tY_{s}|^2  d\langle \rwB \rangle_s \\ +  3   (\delta + h)   \E_{l-1} \int_{]t_{l-1},T]}  | f^=_s |^2 d \langle \rwB \rangle_s
\end{multline}
where we enlarged the RHS by extending $]t_{k-1},T]$ to $]t_{l-1},T]$.
Gronwall's \cref{gronwall1} implies
\begin{align} \label{eqn:upper_bound_Yk-1}
\E_{l-1} |\tY_{t_{k-1}}|^2  \le c_\eqref{eqn:upper_bound_Yk-1} \left [ \E_{l-1} \eta^2  +  \E_{l-1} \int_{]t_{l-1},T]}  | f^=_s |^2 d \langle \rwB \rangle_s \right ]
    \end{align}
with $c_\eqref{eqn:upper_bound_Yk-1}=c_\eqref{eqn:upper_bound_Yk-1}(T,L_f)>0$  for $k=l,...,n$.
Denoting
\[ K: = \eta^2 +  \int_{]t_{k-1},T]} | f^=_s |^2 d \langle \rwB \rangle_s, \]
the relations \eqref{eqn:upper_bound_Yk-1_Zk-1-T} and \eqref{eqn:upper_bound_Yk-1}
imply for $l=k$
\begin{align} \label{eqn:Z_upper_bound_K}
\e_{k-1} \,  \int_{]t_{k-1},T]}    \tZ_s^2  d\langle \rwB \rangle_s  
\le c_\eqref{eqn:Z_upper_bound_K}  \,\, \e_{k-1} \,K
\end{align}
with  $c_\eqref{eqn:Z_upper_bound_K}=c_\eqref{eqn:Z_upper_bound_K}(T,L_f)>0$.
Coming again  back to \eqref{with_Gronwall_term},
similarly as before we apply Gronwall's inequality, where we additionally estimate the stochastic integral by its supremum, and get
  \begin{align}\label{eqn:upper_bound:Y-star}
      \tY_{*}^2
& \le c_\eqref{eqn:upper_bound:Y-star}\Bigg ( K   + \sup_{k\le m\le n} \left|  \int_{]t_{m-1},T]}
      \tY_{s-}   \tZ_s d\rwB_s \right| \Bigg )
    \end{align}
with $\tY_{*}:= \sup_{m=k-1,\ldots,n} |\tY_{t_m}|$
and $c_\eqref{eqn:upper_bound:Y-star}=c_\eqref{eqn:upper_bound:Y-star}(T,L_f)>0$.
By the conditional BDG inequality for $p=1$ with constant $\beta_1>0$  and Young's inequality we have for $\lambda >0$ that
\begin{align} \label{gamma-estimate}
      \frac{1}{\beta_1} \e_{k-1} \,  \sup_{k \le m \le n} \left  | \int_{]t_{m-1},T]} \tY_{s-}  \tZ_sd\rwB_s \right  | \notag
& \le \e_{k-1} \, \left  |  \int_{]t_{k-1},T]} |\tY_{s-}|^{2}  \tZ_s^2  d \langle \rwB \rangle_s \right |^\frac{1}{2}  \notag \\
& \le \lambda   \e_{k-1} \, \tY_{*}^{2}  +  \frac{1}{\lambda}  \e_{k-1} \,  \left  |  \int_{]t_{k-1},T]}   \tZ_s^2  d \langle \rwB \rangle_s \right |.
\end{align}
By \eqref{eqn:upper_bound:Y-star}, \eqref{gamma-estimate}, and \eqref{eqn:Z_upper_bound_K} we conclude 
\begin{align*} 
     \e_{k-1} \, \tY_{*}^2
& \le c_\eqref{eqn:upper_bound:Y-star}  \e_{k-1} \Bigg ( K   + \sup_{k\le m\le n} \left|  \int_{]t_{m-1},T]}
      \tY_{s-}   \tZ_s d\rwB_s \right| \Bigg ) \\
& \le c_\eqref{eqn:upper_bound:Y-star}   \Bigg (\e_{k-1} K   + \beta_1 \lambda   \e_{k-1} \, \tY_{*}^{2}  +  \frac{\beta_1}{\lambda}  \e_{k-1} \,  \left  |  \int_{]t_{k-1},T]}   \tZ_s^2  d \langle \rwB \rangle_s \right | \Bigg ) \\
& \le c_\eqref{eqn:upper_bound:Y-star}  \Bigg (\e_{k-1} K   + \beta_1 \lambda   \e_{k-1} \, \tY_{*}^{2}  +  \frac{\beta_1}{\lambda}  
c_\eqref{eqn:Z_upper_bound_K}  \,\, \e_{k-1} \,K \Bigg ).
\end{align*}
For small enough  $\lambda>0$ we conclude 
\begin{equation}\label{eqn:end_of_proof_item:1:statement:a-priori-discrete:general}  
    \e_{k-1} \,\tY_{*}^2  
\le c_\eqref{eqn:end_of_proof_item:1:statement:a-priori-discrete:general} \e_{k-1} \, K 
\end{equation}
for 
$  c_\eqref{eqn:end_of_proof_item:1:statement:a-priori-discrete:general}
  =c_\eqref{eqn:end_of_proof_item:1:statement:a-priori-discrete:general}(T,L_f)>0$.
Now \eqref{eqn:item:1:statement:a-priori-discrete:general}
for $\xi=x$ follows from \eqref{eqn:Z_upper_bound_K} and from the last inequality.
So we proved the inequality
\begin{align*}
& \E_{k-1} \left [ \sup_{k-1 \le m \le  n} \left| Y^{n,1,x}_{t_m} - Y^{n,2,x}_{t_m} \right|^2
    +  \int_{]t_{k-1}, T]} \left | Z^{n,1,x}_s - Z^{n,2,x}_s\right |^2  d\langle \rwB \rangle_s\right ]  \notag \\
&  \le  c_{\eqref{eqn:item:1:statement:a-priori-discrete:general}} \E_{k-1} \Bigg [|\eta_1(x,\zeta) -\eta_2(x,\zeta)|^2  \notag \\
&
 \quad+   \int_{]t_{k-1}, T]}
   |   f^1 (s,{B}_{s-}^{n,t_o,x},Y^{n,1,x}_{s-},Z^{n,1,x}_s)
         - f^2(s,{B}_{s-}^{n,t_o,x},Y^{n,1,x}_{s-},Z^{n,1,x}_s)|^2 d\langle \rwB \rangle_s\Bigg  ]
\end{align*}
when the Brownian motion starts in $x\in \R$. By replacing $x$ by $\xi(\omega)$ and using the
measurable dependence of the solution processes on the initial value $x\in \R$, we get the desired result.
\medskip

\eqref{item:2:statement:a-priori-discrete:special-1}
By  \eqref{eq:prop_f} we observe that 
\begin{align*}
     |f^1(s,x,y,z)-f^2(s, x,y,z)|^2
& =  |   f( s\wedge t_{n-1}, x,y,z,  \mu^1_s, \nu^1_s)
       - f(s\wedge t_{n-1},x,y,z,  \mu^2_s, \nu^2_s)|^2 \\
&\le 2 L_f^2 \Big [ 
           \cW_2^2( \mu^1_s, \mu^2_s) +
           \cW_2^2(\nu^1_s,\nu^2_s) \Big ].
\end{align*}
The assertion follows by \eqref{item:1:statement:a-priori-discrete:general}.
\medskip

\eqref{item:3:statement:a-priori-discrete:special-2}
Similar as above we use
$\tY_{t_k}^\xi:= Y_{t_k}^{n,1,\xi}-Y_{t_k}^{n,2,\xi}$,
$\tZ_{t_k}^\xi:= Z_{t_k}^{n,1,\xi}-Z_{t_k}^{n,2,\xi}$, and
$f_{t_k}^{=,\xi}:= f^1(t_k,B_{t_{k-1}}^{n,t_o,\xi},Y_{t_{k-1}}^{n,t_o,\xi},Z_{t_k}^{n,t_o,\xi})-
                   f^2(t_k,B_{t_{k-1}}^{n,t_o,\xi},Y_{t_{k-1}}^{n,t_o,\xi},Z_{t_k}^{n,t_o,\xi})$
and get
\begin{align} \label{fdifeq_estimate}
      \left| f_s^{=,\xi} \right| &\leq L_f \left [\cW_2([Y_s^{n,1,\xi}], \mu_s^2) + \cW_2 ([Z_s^{n, 1,\xi}], \nu_s^2 \right ] \notag \\
&\leq L_f \left [ \left\Vert \tY_s^{\xi} \right\Vert _{L^2} + \left\Vert \tZ_s^{\xi} \right\Vert _{L^2}
                  + \cW_2([Y_s^{n,2,\xi}], \mu_s^2) + \cW_2 ([Z_s^{n, 2,\xi}], \nu_s^2) \right ],
\end{align}
so that, with $f_s^{\cW,\xi}:= \cW_2([Y_s^{n,2,\xi}], \mu_s^2) + \cW_2 ([Z_s^{n, 2,\xi}], \nu_s^2)$,
  we have
  \begin{align*}
   \left| f_s^{=,\xi} \right|^2
   &\leq   3   L_f^2 ( \| \tZ_s^{\xi}\|^2_{L^2}  +  \|\tY_s^{\xi}\|_{L^2}^2  +   |f_s^{\cW,\xi}|^2).
   \end{align*}
If we take the expectation  in \eqref{with_Gronwall_term}  
we get using \eqref{eqn:condition_A}
\begin{align}
&      \frac{3}{8} \e |\tY_{t_{k-1}}^{\xi}|^2   +   \left ( \frac{1}{2} - A' \right )  \e \int_{]t_{k-1},T]}    |\tZ_s^{\xi}|^2  d\langle \rwB \rangle_s \notag \\
& \le  \e \eta^2  +   \left ( \frac{1}{\delta} + \frac{1}{8} \right ) \e \int_{]t_{k-1},T[} \e |\tY_s^{\xi}|^2  d \langle \rwB \rangle_s
       +    \frac{3}{8}  \int_{]t_{k-1},T]}  \e |\tY_s^{\xi}|^2  d \langle \rwB \rangle_s \notag \\
& \quad  +  3 A' \int_{]t_{k-1},T]}  \e |\tZ_s^{\xi}|^2  d \langle \rwB \rangle_s +  \frac{3}{8}  \e \int_{]t_{k-1},T]}   | f_s^{\cW,\xi} |^2 d \langle \rwB \rangle_s.  \notag
\end{align}
Since $\eta =0$, denoting $K' := \int_{]t_{k-1},T]}   | f_s^{\cW,\xi} |^2 d \langle \rwB \rangle_s$, we derive
\begin{align}
&    \frac{3}{8} \e |\tY_{t_{k-1}}^{\xi}|^2   +    \Big (\frac{1}{2}- 4A' \Big)  \e \int_{]t_{k-1},T]}    |\tZ_s^{\xi}|^2  d\langle \rwB \rangle_s \notag \\
&\le c_\eqref{eqn:before_Growall_EYt} \left [ \int_{]t_{k-1},T[} \e |\tY_s^{\xi}|^2  d \langle \rwB \rangle_s +  \e K' \right ] \label{eqn:before_Growall_EYt}
\end{align}
with $c_\eqref{eqn:before_Growall_EYt}=c_\eqref{eqn:before_Growall_EYt}(T,L_f)>0$.
By  Gronwall's inequality and using $\frac{1}{2}-4A'>0$,
\begin{equation}\label{eqn_Yk-1:after_Grownall}
  \e |\tY_{t_{k-1}}^{\xi}|^2 \le c_\eqref{eqn_Yk-1:after_Grownall} \e K'
\end{equation}
for $c_\eqref{eqn_Yk-1:after_Grownall}=c_\eqref{eqn_Yk-1:after_Grownall}(T,L_f)>0$.
Therefore we get 
\begin{equation}\label{eqn_Zk-1:after_Grownall}
 \e \int_{]t_{k-1},T]}    |\tZ_s^{\xi}|^2  d\langle \rwB \rangle_s \le  c_\eqref{eqn_Zk-1:after_Grownall} \e K'
\end{equation}
for $c_\eqref{eqn_Zk-1:after_Grownall}=c_\eqref{eqn_Zk-1:after_Grownall}(T,L_f)>0$.
\end{proof}
\bigskip

We prove the following statement in \cref{sec:proof:statement:disrcrete-rate}:

\begin{prop}\label{statement:disrcrete-rate}
Suppose \cref{H:A1},  $\beta>\frac{1}{2}$, and $k\in \{j+1,\ldots,n\}$. Then for the solutions to \eqref{eq:mainBSDEn-hat} and \eqref{eq:mainBSDEn} it holds  for any $n \geq n_0$ that
\begin{multline}\label{eqn:item:1:statement:disrcrete-rate:start-in-0}
 \E \left [ \left|    Y^{n,t_o,\xi}_{t_{k-1}} - Y^{n,t_o,\xi}_{{{\sf f}_n},t_{k-1}} \right|^2  +  \int_{]t_{k-1}, T[} \left|Z^{n,t_o,\xi}_s-  Z^{n,t_o,\xi}_{{{\sf f}_n},s} \right|^2
         d\langle \rwB \rangle_s\right ] \\
 \le   c_\eqref{eqn:item:1:statement:disrcrete-rate:start-in-0}^2 \frac{|\log (n+1)|^{2\beta}}{n^{2(\alpha \wedge \frac{\ep}{2})}},
 \end{multline}
where 
$c_\eqref{eqn:item:1:statement:disrcrete-rate:start-in-0}=
 c_\eqref{eqn:item:1:statement:disrcrete-rate:start-in-0}(\Theta,f_0,M)>0$.
For $ U^n, U^n_\fn, \Delta^n$, and $ \Delta^n_\fn$ as given in
\eqref{eqn:def:Deltanfn}, \eqref{eq:mainPDEnfn}, and \eqref{eqn:def:mainPDEnf_Deltanf} we get
\begin{multline} \label{eqn:item:2:statement:disrcrete-rate:start-in-x}
       \left|    U^n(t_{k-1},x)  -   U^n_\fn(t_{k-1},x)  \right|^2
     + \E \int_{]t_{k-1}, T[} \left| Z^{n,t_{k-1},x}_{{\sf f}_n,s} -   Z^{n,t_{k-1},x}_s \right|^2  d\langle \rwB \rangle_s   \\
  \le  c^2_{\eqref{eqn:item:2:statement:disrcrete-rate:start-in-x}} \frac{|\log (n+1)|^{2\beta}}{n^{2(\alpha \wedge \frac{\ep}{2})}},
\end{multline} 
where
$c_{\eqref{eqn:item:2:statement:disrcrete-rate:start-in-x}}=
 c_{\eqref{eqn:item:2:statement:disrcrete-rate:start-in-x}}(\Theta,f_0,M)>0$
and for $k=n$ the integral is set to zero.
Moreover, there is an absolute constant $c_\eqref{eqn:statement:Lexp_bound_difference_gradient_discrete_case}>0$ such that 
\begin{equation}\label{eqn:statement:Lexp_bound_difference_gradient_discrete_case}
\left |  \int_{]t_{k-1}, T[} \left| Z^{n,t_{k-1},x}_{{\sf f}_n,s} -   Z^{n,t_{k-1},x}_s \right|^2  d\langle \rwB \rangle_s 
\right |_{M_{\varphi_1}}^\frac{1}{2} 
  \le c_\eqref{eqn:statement:Lexp_bound_difference_gradient_discrete_case} c_{\eqref{eqn:item:2:statement:disrcrete-rate:start-in-x}}
       \frac{|\log (n+1)|^{\beta}}{n^{\alpha \wedge   \frac{\ep}{2}}}.
\end{equation}
\end{prop}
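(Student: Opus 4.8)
The plan is to obtain \eqref{eqn:item:1:statement:disrcrete-rate:start-in-0} and \eqref{eqn:item:2:statement:disrcrete-rate:start-in-x} from the discrete a priori estimates of \cref{statement:a-priori-discrete} applied to the pair \eqref{eq:mainBSDEn-hat}, \eqref{eq:mainBSDEn}, reducing the resulting right-hand sides to Wasserstein distances already controlled by \cref{rate-singularity}, \cref{prop_tilde_difference}, and \eqref{eqn:upper_bound:Wasserstein-xi-solution}; the exponential bound \eqref{eqn:statement:Lexp_bound_difference_gradient_discrete_case} will then follow by a discrete John--Nirenberg argument. For \eqref{eqn:item:1:statement:disrcrete-rate:start-in-0} we note that the measure parameters of \eqref{eq:mainBSDEn-hat} started in $(t_o,\xi)$ are exactly the laws of its own solution, so \cref{statement:a-priori-discrete}\eqref{item:3:statement:a-priori-discrete:special-2} applies with $(\mu^2_s,\nu^2_s)=([Y^{t_o,\xi}_s],[Z^{t_o,\xi}_{s\wedge t_{n-1}}])$ and bounds the left-hand side of \eqref{eqn:item:1:statement:disrcrete-rate:start-in-0} by a constant times
\[ \int_{]t_{k-1},T]}\Big[\cW_2^2\big([Y^{n,t_o,\xi}_{{\sf f}_n,s}],[Y^{t_o,\xi}_s]\big)+\cW_2^2\big([Z^{n,t_o,\xi}_{{\sf f}_n,s}],[Z^{t_o,\xi}_{s\wedge t_{n-1}}]\big)\Big]\,d\langle\rwB\rangle_s. \]

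Next I would split each Wasserstein distance by inserting the laws of the \emph{continuous} frozen solution $(Y^{t_o,\xi}_{{\sf f}_n},Z^{t_o,\xi}_{{\sf f}_n})$ of \eqref{eq:mainBSDEn-tilde}. For the $Y$-term, $\cW_2([Y^{n,t_o,\xi}_{{\sf f}_n,s}],[Y^{t_o,\xi}_s])\le\cW_2([Y^{n,t_o,\xi}_{{\sf f}_n,s}],[Y^{t_o,\xi}_{{\sf f}_n,s}])+\cW_2([Y^{t_o,\xi}_{{\sf f}_n,s}],[Y^{t_o,\xi}_s])$, the first summand being $\le c\,n^{-(\alpha\wedge\frac{\ep}{2})}$ by \eqref{wasserstein-Y} and the second $\le c\,n^{-((\alpha\wedge\frac{\ep}{2})+\frac12)}$ by \eqref{cont_tilde_a_priori} (using $\cW_2([u_{{\sf f}_n}(s,B^{t_o,\xi}_s)],[u(s,B^{t_o,\xi}_s)])\le\|(u_{{\sf f}_n}-u)(s,B^{t_o,\xi}_s)\|_{L^2}$ together with $Y^{t_o,\xi}_s=u(s,B^{t_o,\xi}_s)$, $Y^{t_o,\xi}_{{\sf f}_n,s}=u_{{\sf f}_n}(s,B^{t_o,\xi}_s)$); since $d\langle\rwB\rangle$ has total mass $\le T$ this part contributes only $O(n^{-2(\alpha\wedge\frac{\ep}{2})})$. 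For the $Z$-term one inserts both $[Z^{t_o,\xi}_{{\sf f}_n,s}]$ and $[Z^{t_o,\xi}_s]$ and uses \eqref{wasserstein-Z}, the integrated estimate \eqref{cont_tilde_a_priori}, and the time regularity \eqref{eqn:upper_bound:Wasserstein-xi-solution} of $Z^{t_o,\xi}$; the passage from the integrated form of \eqref{cont_tilde_a_priori} to a bound at the grid points exploits that $\f$ and $\fn$ coincide on $[t_o,t_{n-1}]$, so that the difference is governed by the last step and inherits the damped singular factor $|T-s|^{-1/2}\Phi_\beta(|T-s|^\ep)^{-1}$.

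The delicate point is the integration of the squared $Z$-bounds against the discrete measure $d\langle\rwB\rangle$. Summing the square of \eqref{wasserstein-Z} over the grid points $t_k,\dots,t_{n-1}$ produces the Riemann-type sum $\sum_{j\ge1}\frac{1}{j\,\Phi_\beta((jh)^\ep)^2}$, which is bounded uniformly in $n$ \emph{precisely because} $\beta>\frac12$ makes $\int_0^T\frac{ds}{s\,\Phi_\beta(s^\ep)^2}<\infty$ (\cref{phi_remark}); only the prefactor $|\log(n+1)|^{2\beta}$ survives, yielding the claimed rate $n^{-2(\alpha\wedge\frac{\ep}{2})}|\log(n+1)|^{2\beta}$. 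The atom at $s=t_n$ --- where $\nu^2_{t_n}=[Z^{t_o,\xi}_{t_{n-1}}]$ and $Z^{n,t_o,\xi}_{{\sf f}_n,t_n}=\Delta^n_{{\sf f}_n}(t_{n-1},B^{n,t_o,\xi}_{t_{n-1}})$ by \eqref{eqn:fn:Delta->Z-II}, \eqref{eqn:def:Deltanfn} --- must be handled on its own: \cref{en:mainPDE}\eqref{eqn:2:en:mainPDE} at $t=t_{n-1}$ yields a bound with a genuine $h^{-1/2}$ blow-up, but the atom carries the weight $h$, which compensates, so its contribution is again $O(n^{-2(\alpha\wedge\frac{\ep}{2})})$; the remaining pieces of this atom ($\nabla u_{{\sf f}_n}$ versus $\nabla u$, and the coupling of $B^n$ with $B$ at $t_{n-1}$) are controlled via \eqref{nabla-u-representation}, \eqref{eqn:upper_bound:Wasserstein-xi-solution}, \cref{lemma37}\eqref{nabla-estimate}, and $\cW_{2\ep}([B^{n,t_o}_{t_{n-1}}],[B^{t_o}_{t_{n-1}}])\le c\sqrt h$ from \cite[Proposition 4]{BGGL21}. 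Collecting everything gives \eqref{eqn:item:1:statement:disrcrete-rate:start-in-0}.

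For \eqref{eqn:item:2:statement:disrcrete-rate:start-in-x} one repeats the scheme for the $(t_{k-1},x)$-started equations, whose hat generator now uses the global laws $[Y^{n,t_o,\xi}_\cdot],[Z^{n,t_o,\xi}_\cdot]$ rather than the laws of the local solution; hence one invokes \cref{statement:a-priori-discrete}\eqref{item:2:statement:a-priori-discrete:special-1}, and the right-hand side involves $\cW_2^2([Y^{n,t_o,\xi}_s],[Y^{t_o,\xi}_s])$ and $\cW_2^2([Z^{n,t_o,\xi}_s],[Z^{t_o,\xi}_{s\wedge t_{n-1}}])$, which are reduced to the quantities already estimated by one further triangle step through $[Y^{n,t_o,\xi}_{{\sf f}_n,s}],[Z^{n,t_o,\xi}_{{\sf f}_n,s}]$ and an application of \eqref{eqn:item:1:statement:disrcrete-rate:start-in-0}; the $d\langle\rwB\rangle$-integration is then identical. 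Finally, \eqref{eqn:statement:Lexp_bound_difference_gradient_discrete_case} follows because the a priori estimate behind \eqref{eqn:item:2:statement:disrcrete-rate:start-in-x} is in fact conditional, so the increasing process $A_{t_l}:=\int_{]t_j,t_l]}|Z^{n,t_{k-1},x}_{{\sf f}_n,s}-Z^{n,t_{k-1},x}_s|^2\,d\langle\rwB\rangle_s$ has discrete BMO-norm of order $c_{\eqref{eqn:item:2:statement:disrcrete-rate:start-in-x}}^2|\log(n+1)|^{2\beta}n^{-2(\alpha\wedge\frac{\ep}{2})}$, and the discrete John--Nirenberg theorem (the discrete counterpart of \cref{statement:JN_continuous_time}) gives \eqref{eqn:statement:Lexp_bound_difference_gradient_discrete_case}, exactly as \eqref{eqn:statement:Lexp_bound_difference_gradient_continuous_case} was deduced in \cref{prop_tilde_difference}. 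The main obstacle is, besides the bookkeeping of the many intermediate measures and the distinct functions $u,u_{{\sf f}_n},U^n,U^n_{{\sf f}_n}$, the behaviour near $s=T$: one must see that the $\Phi_\beta$-damping --- available only for $\beta>\frac12$ --- keeps the singular discrete sum bounded and that the genuine $h^{-1/2}$ blow-up of the gradient estimate at $t_{n-1}$ is absorbed by the weight $h$, so that exactly one extra power $|\log(n+1)|^{2\beta}$, and no more, is lost.
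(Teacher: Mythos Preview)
Your overall plan matches the paper's: apply \cref{statement:a-priori-discrete}\eqref{item:3:statement:a-priori-discrete:special-2} to obtain the Wasserstein integral, split through $(Y^{t_o,\xi}_{\fn},Z^{t_o,\xi}_{\fn})$, invoke \cref{rate-singularity} and \cref{prop_tilde_difference}, exploit the $\Phi_\beta$-integrability (\cref{phi_remark}) for the singular discrete sum, then deduce \eqref{eqn:item:2:statement:disrcrete-rate:start-in-x} from \cref{statement:a-priori-discrete}\eqref{item:2:statement:a-priori-discrete:special-1} and \eqref{eqn:statement:Lexp_bound_difference_gradient_discrete_case} from a discrete John--Nirenberg argument.

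Two technical points differ. First, for the piece $\int\cW_2^2([Z^{t_o,\xi}_{\fn,\cdot}],[Z^{t_o,\xi}_\cdot])\,d\langle\rwB\rangle$ your mechanism is not quite right: the coincidence $\f=\fn$ on $[t_o,t_{n-1}]$ does \emph{not} by itself give a pointwise bound on $\nabla u-\nabla u_\fn$ with the damped singular factor, because the solution difference still propagates backward from $]t_{n-1},T[$ into the whole interval. The paper avoids any pointwise gradient comparison here: it rewrites $\int\cdots\,d\langle\rwB\rangle_s=\int\cW_2^2([Z^{t_o,\xi}_{\fn,\bar s\wedge t_{n-1}}],[Z^{t_o,\xi}_{\bar s\wedge t_{n-1}}])\,ds$, then shifts $\bar s\to s$ and $s\wedge t_{n-1}\to s$ at the cost of the time-regularity error \eqref{Z-differences} (this is where your reference to \eqref{eqn:upper_bound:Wasserstein-xi-solution} actually enters), and finally applies the \emph{integrated} bound \eqref{cont_tilde_a_priori} directly. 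Second, for the atom at $s=t_n$ the paper does not go through \cref{en:mainPDE}: it isolates the remainder $h\,\cW_2^2([Z^{n,t_o,\xi}_{\fn,t_{n-1}}],[Z^{n,t_o,\xi}_{\fn,t_n}])$ and bounds $|Z^{n,t_o,\xi}_{\fn,t_n}|\le \gho h^{(\ep-1)/2}$ and $\E|Z^{n,t_o,\xi}_{\fn,t_{n-1}}|^2$ by direct computation from the one-step scheme \eqref{z-fn-eqn}--\eqref{y-fn-eqn}, so the contribution is $O(h^\ep)$ without ever coupling $B$ and $B^n$. Your route via \eqref{eqn:2:en:mainPDE} at $t_{n-1}$ is a viable alternative but needs the extra coupling estimate you mention.
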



\section{Proof of \texorpdfstring{
\cref{statement:main_result_local_version},
\cref{coro_final_pathwise_estimate}}{main theorems},
and  \texorpdfstring{\cref{statement:meanfield_PDE}}{coro} }
\label{subsection_pathwise}

\begin{proof}[Proof of \cref{statement:main_result_local_version}]
\underline{Estimate \eqref{eqn:1:statement:main_result_local_version}:}
We have by Propositions 
\ref{prop_tilde_difference},
\ref{coro_pathwise_estimate_wasserstein}, and
\ref{statement:disrcrete-rate} that
\begin{align*}    |u(t,x) - U^n(t,y)| &\le  |u(t,x)-u_{{\sf f}_n}(t,x)| +  |u_\fn(t, x) -  U^n_\fn  (t,y) |  +    |U^n_\fn  (t,y) -  U^n  (t,y) | \\
&\le  c_{\eqref{cont_tilde_a_priori}} \, n^{-{((\alpha \wedge \frac{\ep}{2}) +\frac{1}{2})}} 
      + c_{\eqref{u-ufn-est}} \left [  |x-y|^\ep + (1+|x|)^{\ep} \,  n^{- \alpha \wedge\frac{ \ep}{2}} \right ] \\
&\quad + c_{\eqref{eqn:item:2:statement:disrcrete-rate:start-in-x}} \frac{|\log (n+1)|^{\beta}}{n^{\alpha \wedge   \frac{\ep}{2}}}.
\end{align*}
\underline{Estimate \eqref{eqn:2:statement:main_result_local_version}:}
We recall that $ \| \cdot \|_{L^{\exp(\gamma)}}  = | \cdot |_{M^0_{\varphi_\frac{1}{\gamma}}}$. Again by
\cref{prop_tilde_difference},
\cref{coro_pathwise_estimate_wasserstein}, and
\cref{statement:disrcrete-rate} we get
\begin{align}
&\hspace{-1em}    \frac{1}{3} \left \| \int_t^T  \Big| \nabla u(s, B_s^{t,x}) - \Delta^n(s, B^{n,\underline{t},y}_s)\Big|^2  \od s  \right \| \notag \\
&\le \left \| \int_t^T  \Big| \nabla u(s, B_s^{t,x}) - \nabla u_\fn (s, B_s^{t,x}) \Big|^2  \od s \right \| \notag \\
& \quad +  \left \| \int_t^T  \Big| \nabla u_\fn (s, B_s^{t,x}) - \Delta^n_\fn(s, B^{n,\underline{t},y}_s)\Big|^2  \od s \right \| 
        +  \left \| \int_t^T  \Big| \Delta^n_\fn (s, B^{n,\underline{t},y}_s) - \Delta^n(s, B^{n,\underline{t},y}_s)\Big|^2  \od s \right \| \notag \\
&\le \kappa \left [ c_{\eqref{eqn:statement:Lexp_bound_difference_gradient_continuous_case}} c_{\eqref{cont_tilde_a_priori}} \, 
     n^{-{((\alpha \wedge \frac{\ep}{2}) + \frac{1}{2})}} \right ]^2 \notag \\
&\quad +  2 c_{\eqref{nabla-u-nabla-ufn-est}}^2
     \left [ \left \| \sup_{t\le s\le T} \ps {B_s^{t,x}-B^{n,\underline{t},y}_s}^2 \right \|  
   +  \left \| \sup_{t\le s\le T}  (1+|B^{n,\underline{t},y}_s|)^{2\ep} \right 
      \| \left | \frac{|\log (n+1)|^{\beta}}{n^{\alpha \wedge \frac{\ep}{2}}} \right |^2 \right ] \label{eqn:part_with_supBnts} \\
&\quad  +\kappa \left [ c_\eqref{eqn:statement:Lexp_bound_difference_gradient_discrete_case} 
      c_{\eqref{eqn:item:2:statement:disrcrete-rate:start-in-x}}
      \frac{|\log (n+1)|^{\beta}}{n^{\alpha \wedge   \frac{\ep}{2}}} \right ]^2, \notag
\end{align}
where for the last estimate we use
\begin{align*}
     \int_t^T  \Big| \Delta^n_\fn (s, B^{n,\underline{t},y}_s) - \Delta^n(s, B^{n,\underline{t},y}_s)\Big|^2  \od s 
&\le \int_{]\ut,T]}  \Big| Z_{\fn,s}^{n,\underline{t},y} - Z_s^{n,\underline{t},y} \Big|^2  \od s \\
& =  \int_{]\ut,T]}  \Big| Z_{\fn,s}^{n,\underline{t},y} - Z_s^{n,\underline{t},y} \Big|^2  \od \langle B^{n} \rangle_s \\
& =  \int_{]\ut,T[}  \Big| Z_{\fn,s}^{n,\underline{t},y} - Z_s^{n,\underline{t},y} \Big|^2  \od \langle B^{n} \rangle_s.
\end{align*}
We get the above  inequality  since by  \eqref{eqn:fn:Delta->Z-I} we may replace the integrand on the LHS by
$ \Big| Z_{\fn,s}^{n,\underline{t},y} - Z_s^{n,\underline{t},y} \Big|^2 $. Furthermore, the  discrete scheme
defined in \cref{sec:discrete_scheme} implies   $Z_{\fn,T}^{n,\underline{t},y} = Z_T^{n,\underline{t},y}$ so that
we may integrate over the open interval.
\medskip

Next we use that from
Doob's maximal inequality, Hoeffding's inequality \eqref{eqn:hoeffding}, and
\eqref{eqn:statement:LPhi_vs_Lp} it follows for   $B^{n,\underline{t}}_s:=B^{n,\underline{t},0}_s$  and $p\in [1,\infty[$ that 
\begin{equation}\label{eqn:Lp_of_Btnx}
      \left \| \sup_{t\le s\le T}  |B^{n,\underline{t}}_s|^{\ep} \right \|_{L^p}
   \le \left \| \sup_{\ut  \le s\le T}  |B^{n,\underline{t}}_s| \right \|_{L^{2p}}^\ep
   \le 2^\ep \left \| B^{n,\underline{t}}_T \right \|_{L^{2p}}^\ep
   \le 2^\ep \left [  c_\eqref{eqn:hoeffding} c_\eqref{eqn:statement:LPhi_vs_Lp} \sqrt{2p} \sqrt{n} \sqrt{\frac{T}{n}} \right ]^\ep
     =: c p^\frac{\ep}{2}.
\end{equation}
With this we estimate the term
$\left \| \sup_{t\le s\le T}  (1+|B^{n,\underline{t},y}_s|)^{2\ep} \right \|$ in \eqref{eqn:part_with_supBnts} by
\begin{align*}
     \left \| \sup_{t\le s\le T}  (1+|B^{n,\underline{t},y}_s|)^{2\ep} \right \|
&\le \kappa \left | \sup_{t\le s\le T}  (1+|B^{n,\underline{t},y}_s|)^{2\ep} \right |_{M_{\varphi_1}^0} \\
&\le \kappa \, c_\eqref{eqn:statement:LPhi_vs_Lp}(1) \,\,
     \sup_{p\in [1,\infty[}\frac{\left \| \sup_{t\le s\le T}  (1+|B^{n,\underline{t},y}_s|)^{2\ep} \right \|_{L_p}}{p} \\
& =  \kappa \, c_\eqref{eqn:statement:LPhi_vs_Lp}(1) \,\,
     \left [ \sup_{p\in [1,\infty[}\frac{\left \| \sup_{t\le s\le T}  (1+|B^{n,\underline{t},y}_s|)^{\ep} \right \|_{L_{2p}}}{\sqrt{p}} \right ]^2\\
&\le \kappa \, c_\eqref{eqn:statement:LPhi_vs_Lp}(1) \,\, \left [ (1+|y|)^\ep +
     \sup_{p\in [1,\infty[}\frac{\left \| \sup_{t\le s\le T}  |B^{n,\underline{t}}_s|^{\ep} \right \|_{L_{2p}}}{\sqrt{p}}
     \right ]^2 \\
&\le \kappa \, c_\eqref{eqn:statement:LPhi_vs_Lp}(1) \,\, \left [ (1+|y|)^\ep +
     c 2^\frac{\ep}{2}\right ]^2.
\end{align*}
\end{proof}
\smallskip

For the proof of \cref{coro_final_pathwise_estimate} we use the following relation
which we verify in \cref{sec:proof:statement:estimate_exponential_norm_with_Psi}.

\begin{lemma}\label{statement:estimate_exponential_norm_with_Psi}
Let $\beta > \frac{1}{2}$.
Assume a random variable $\eta:\Omega \to \R$ such that $\sup_{p\in [1,\infty[} \frac{\|\eta\|_{L^p}}{p} \le c$ for some $c>0$. 
Then for $r_\beta=e^{-2\beta}$ one has
\begin{equation}\label{eqn:statement:estimate_exponential_norm_with_Psi}
 \sup_{p\in [1,\infty[} \frac{\left \| \ps {\eta}  \right \|_{L^p}}{ p^{\ep+\beta}}
   \le c_\eqref{eqn:statement:estimate_exponential_norm_with_Psi} c^\ep \left| \log \left (\frac{1}{c^{2\ep}\wedge \sqrt{r_\beta}} \right ) \right |^{\beta},
\end{equation}
where $c_\eqref{eqn:statement:estimate_exponential_norm_with_Psi}=c_\eqref{eqn:statement:estimate_exponential_norm_with_Psi}(\ep,\beta)>0$.
\end{lemma}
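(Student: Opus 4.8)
The plan is to bound $\|\ps{\eta}\|_{L^p}$ for each fixed $p\in[1,\infty[$ directly and then take the supremum over $p$; no tail estimates or function–space machinery from \cref{sec:mphi-spaces} are needed, only the moment hypothesis on $\eta$ together with elementary properties of $\ps{\cdot}$.

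First I would record two elementary facts. From the explicit formula $\Phi_{\beta}(r)=\left(\log\big(1/(r\wedge r_\beta)\big)\right)^{\beta}=\max\big(\log(1/r),2\beta\big)^{\beta}$ one obtains, for all $t\ge 0$,
\[
 \ps{t}=t^{\ep}\,\Phi_{\beta}(t^{2\ep})=t^{\ep}\max\big(2\ep\log(1/t),2\beta\big)^{\beta}\le \big(2(\ep+\beta)\big)^{\beta}\,t^{\ep}\big(1+\log_+(1/t)\big)^{\beta},
\]
where $\log_+:=\max(\log,0)$; this is just $\max\big(2\ep\log(1/t),2\beta\big)\le 2\ep\log_+(1/t)+2\beta\le 2(\ep+\beta)\big(1+\log_+(1/t)\big)$. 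Secondly, by \cref{Psi-lemma}\eqref{the-Psi} the function $\ps{\cdot}$ is non-decreasing on $[0,\infty[$, hence $\ps x\le\ps y$ whenever $|x|\le|y|$ and in particular $\sup_{|x|\le c}\ps{x}=\ps{c}$.

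Then I would split at the level $|\eta|=c$. On $\{|\eta|\le c\}$ monotonicity gives $\ps{\eta}\le\ps{c}$, while on $\{|\eta|>c\}$ we have $\log_+(1/|\eta|)\le\log_+(1/c)$, so the first display yields $\ps{\eta}\,\ind_{\{|\eta|>c\}}\le\big(2(\ep+\beta)\big)^{\beta}\big(1+\log_+(1/c)\big)^{\beta}|\eta|^{\ep}$. Consequently, for every $p\in[1,\infty[$,
\[
 \|\ps{\eta}\|_{L^p}\le\ps{c}+\big(2(\ep+\beta)\big)^{\beta}\big(1+\log_+(1/c)\big)^{\beta}\,\big\||\eta|^{\ep}\big\|_{L^p}.
\]
For the last factor I would use $\big\||\eta|^{\ep}\big\|_{L^p}=\|\eta\|_{L^{\ep p}}^{\ep}$ and the hypothesis, which gives $\|\eta\|_{L^q}\le c\max(q,1)$ for all $q\in\,]0,\infty[$ (the case $q<1$ from $\|\eta\|_{L^q}\le\|\eta\|_{L^1}$), hence $\big\||\eta|^{\ep}\big\|_{L^p}\le c^{\ep}\max(1,\ep p)^{\ep}\le c^{\ep}p^{\ep}$; combining with $\ps{c}\le\big(2(\ep+\beta)\big)^{\beta}c^{\ep}\big(1+\log_+(1/c)\big)^{\beta}$ (again the first display, at $t=c$) and with $p^{\ep}\le p^{\ep+\beta}$ for $p\ge 1$ gives
\[
 \|\ps{\eta}\|_{L^p}\le 2\big(2(\ep+\beta)\big)^{\beta}\,c^{\ep}\big(1+\log_+(1/c)\big)^{\beta}\,p^{\ep+\beta}.
\]

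Finally I would rewrite $1+\log_+(1/c)$ in the form appearing in \eqref{eqn:statement:estimate_exponential_norm_with_Psi}. Since $\sqrt{r_\beta}=e^{-\beta}$ one has $\log\big(1/(c^{2\ep}\wedge\sqrt{r_\beta})\big)=\max\big(2\ep\log(1/c),\beta\big)\ge\beta>\tfrac12$, and the elementary comparison $C\max\big(2\ep\log(1/c),\beta\big)\ge\max\big(2\log(1/c),2\big)\ge 1+\log_+(1/c)$ with $C:=\max(1/\ep,2/\beta)$ shows $1+\log_+(1/c)\le C\,\big|\log\big(1/(c^{2\ep}\wedge\sqrt{r_\beta})\big)\big|$. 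Dividing the previous display by $p^{\ep+\beta}$ and taking the supremum over $p\in[1,\infty[$ then yields \eqref{eqn:statement:estimate_exponential_norm_with_Psi} with $c_{\eqref{eqn:statement:estimate_exponential_norm_with_Psi}}:=2\big(2(\ep+\beta)\big)^{\beta}C^{\beta}$, depending only on $(\ep,\beta)$. There is no real obstacle here; the points requiring care are the bookkeeping with $\log_+$, the role of $r_\beta$ (which only serves to keep the logarithmic factor bounded away from $0$ for large $c$), and the use of the monotonicity of $\ps{\cdot}$ from \cref{Psi-lemma}\eqref{the-Psi} to control $\ps{\eta}$ on $\{|\eta|\le c\}$.
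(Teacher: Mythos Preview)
Your proof is correct and takes a genuinely different route from the paper. The paper normalizes $\eta=c\theta$ and handles the logarithmic factor by splitting the argument of $\Phi_\beta$ via $\frac{1}{(ab)\wedge r_\beta}\le\frac{1}{a\wedge\sqrt{r_\beta}}\cdot\frac{1}{b\wedge\sqrt{r_\beta}}$, so that one piece carries only $c$ and the other only $\theta$; the $\theta$--piece is then tamed by the inequality $\log x\le x^{\gamma}/\gamma$ with a $p$--dependent choice $\gamma=\frac{1}{2\beta p}$, which converts the logarithmic weight into an extra moment of $\theta$. This works but introduces a range restriction $p\ge 1+\tfrac{1}{\ep}$ that has to be removed at the end by monotonicity in~$p$. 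Your argument instead splits the sample space at $\{|\eta|\le c\}$ versus $\{|\eta|>c\}$: on the small set you invoke the monotonicity of $\ps{\cdot}$ from \cref{Psi-lemma}\eqref{the-Psi}, and on the large set the logarithmic factor is automatically bounded by the constant $(1+\log_+(1/c))^\beta$, so only the moment $\||\eta|^\ep\|_{L^p}$ remains. This is shorter, avoids the $\gamma$--trick and the range restriction, and gives an explicit constant $c_\eqref{eqn:statement:estimate_exponential_norm_with_Psi}=2\big(2(\ep+\beta)\big)^\beta\big(\max(1/\ep,2/\beta)\big)^\beta$. The paper's approach, on the other hand, would generalize more readily to situations where $\ps{\cdot}$ is not known to be monotone.
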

\bigskip

\begin{proof}[Proof of \cref{coro_final_pathwise_estimate}]
We assume a coupling   $(\cW, \cW^n)$ of the Brownian motion with the random walk 
such that \eqref{eqn:th_random_walk_uniform_convergence} is satisfied and define 
$B_t:= W_t-W_{t_o}$ and $B_t^n:= W_t^n - W_{\uto}^n$.
By \eqref{eqn:statement:LPhi_vs_Lp} we have for all $p\in   [2,\infty[$ and $q:=p/2 \in [1,\infty[$ that
\[ \|\cdot \|_{L^q} \le \kappa_q |\cdot |_{M_{\varphi_1}^0}
   \sptext{1}{with}{1} \kappa_q:= c_\eqref{eqn:statement:LPhi_vs_Lp} \, q =  c_\eqref{eqn:statement:LPhi_vs_Lp} \frac{p}{2}. \]  
Then \cref{statement:main_result_local_version} with $\|\cdot\|:= \|\cdot \|_{L^q}$ and
$\kappa:=\kappa_q$ implies that
\begin{align*}
& \hspace*{-4em}\frac{1}{c_\eqref{eqn:2:statement:main_result_local_version}}
      \left \| \left ( \int_t^T  \Big| \nabla u(s, B_s^{t,x}) - \Delta^n(s, B^{n,\underline{t},y}_s)\Big|^2  \od s \right )^\frac{1}{2} \right \|_{L^p}  \\
& =  \frac{1}{c_\eqref{eqn:2:statement:main_result_local_version}}
      \left \| \int_t^T  \Big| \nabla u(s, B_s^{t,x}) - \Delta^n(s, B^{n,\underline{t},y}_s)\Big|^2  \od s \right \|^\frac{1}{2}_{L^q}  \\
&\le  \left \| \sup_{t\le s\le T} \ps {B_s^{t,x}-B^{n,\underline{t},y}_s}^2  \right \|_{L^q}^\frac{1}{2}
    + \left ( 1 + \sqrt{c_\eqref{eqn:statement:LPhi_vs_Lp} q} \, (1+|y|)^\ep \right )
        \frac{|\log (n+1)|^{\beta}}{n^{\alpha \wedge \frac{\ep}{2}}}.
\end{align*}
We observe
\[ \sup_{p\in [1,\infty[} \frac{\left \|  \sup_{t\le s\le T} |B_s^{t,x}-B^{n,\underline{t},y}_s| \right \|_{L^p}}{p}
   \le c_n:= |x-y|+2 c_\eqref{eqn:th_random_walk_uniform_convergence}  \frac{\log(n+1)}{\sqrt{n}} \]
by our assumption  on the coupling. 
We deduce by \cref{statement:estimate_exponential_norm_with_Psi} that
\begin{align}
& \hspace*{-1em}\frac{1}{c_\eqref{eqn:2:statement:main_result_local_version}}
      \left \| \left ( \int_t^T  \Big| \nabla u(s, B_s^{t,x}) - \Delta^n(s, B^{n,\underline{t},y}_s)\Big|^2  \od s \right )^\frac{1}{2} \right \|_{L^p}  \\
&\le   \left \| \sup_{t\le s\le T} \ps {B_s^{t,x}-B^{n,\underline{t},y}_s}  \right \|_{L^p}
     + \left ( 1 + \sqrt{c_\eqref{eqn:statement:LPhi_vs_Lp} q} \,\, (1+|y|)^\ep \right )
        \frac{|\log (n+1)|^{\beta}}{n^{\alpha \wedge \frac{\ep}{2}}} \notag \\
&\le c_\eqref{eqn:statement:estimate_exponential_norm_with_Psi}  p^{\ep+\beta} c_n^\ep 
     \left| \log \Big (\frac{1}{c_n^{2\ep}\wedge \sqrt{r_\beta}} \Big ) \right |^{\beta}  
     + \left ( 1 + \sqrt{c_\eqref{eqn:statement:LPhi_vs_Lp} \frac{p}{2}} \,\, (1+|y|)^\varepsilon \right )
     \frac{|\log (n+1)|^{\beta}}{n^{\alpha \wedge \frac{\ep}{2}}}. \label{eqn:proof:Z:Thm12}
\end{align}
Now using \eqref{eqn:statement:LPhi_vs_Lp}  the inequality for the $Z$-term follows from \eqref{eqn:proof:Z:Thm12}.
Similarly, \cref{statement:main_result_local_version} implies that
\begin{align}\label{eqn:uUn}
&     \left \| \sup_{t\le s \le T} \left | u(s,B_s^{t,x})-U^n(s,B_s^{n,t,y})\right | \right \|_{L^p} \notag \\
&\le  c_\eqref{eqn:1:statement:main_result_local_version}
      \left \| \sup_{t\le s \le T} \left [ |B_s^{t,x}-B_s^{n,t,y}|^\ep + (1+|B_s^{t,x}|)^\ep \frac{|\log (n+1)|^\beta}{n^{\alpha \wedge \frac{\ep}{2}}} \right ]
      \right \|_{L^p} \notag \\
&\le  c_\eqref{eqn:1:statement:main_result_local_version} \Bigg [
          \left \| \sup_{t\le s \le T} |B_s^{t,x}-B_s^{n,t,y}|^\ep  \right \|_{L^p}
        + \left ( 1 + \left \| \sup_{t\le s \le T} |B_s^{t,x}|^\ep  \right \|_{L^p} \right ) \frac{|\log (n+1)|^\beta}{n^{\alpha \wedge \frac{\ep}{2}}}
          \Bigg ].
\end{align}
Finally, we use \cref{th_random_walk_uniform_convergence} which gives
\begin{align}\label{eqn:BtxBnty}
     \left \| \sup_{t\le s \le T} |B_s^{t,x}-B_s^{n,t,y}|^\ep  \right \|_{L^p}
&\le |x-y|^\ep+ \left \| \sup_{t\le s \le T} |B_s^{t,0}-B_s^{n,t,0}| \right \|_{L^p}^\ep \notag \\
&\le |x-y|^\ep+ (2 c_\eqref{eqn:th_random_walk_uniform_convergence})^\ep  \left | \, p \, \frac{\log (n+1)}{n^{\frac{1}{2}}} \right |^\ep,
\end{align}
and the fact that
\begin{equation}\label{eqn:Btx}
       1+\left \| \sup_{t\le s \le T} |B_s^{t,x}|^\ep  \right \|_{L^p}
   \le 1+|x|^\ep + c' p^\frac{\ep}{2}
   \sptext{1}{for}{1}
   p\in [1,\infty[
\end{equation}
and some $c'=c'(T,\ep)>0$ proved on the lines of \eqref{eqn:Lp_of_Btnx}.
Combining \eqref{eqn:uUn}, \eqref{eqn:BtxBnty}, and \eqref{eqn:Btx}   with \eqref{eqn:statement:LPhi_vs_Lp}  yields to the estimate of the $Y$-term.
\end{proof}

\begin{proof}[Proof of \cref{statement:meanfield_PDE}]
We assume a coupling $(\xi,\gamma)$ with $\mu=[\xi]$ and $\nu=[\gamma]$ and choose
independently from this coupling (by taking product spaces) a coupling
$(\cB, \cB^n)   \in \Gamma(B, B^n)$ as in \cref{coro_final_pathwise_estimate}.
With this we get
\begin{align*}
     \left |u(t_o,x,\mu) - U^n(\uto,y,\nu) \right |
&\le \left |u(t_o,x,\mu) - u(t_o,y,\nu) \right | + \left |u(t_o,y,\nu) - U^n(\uto,y,\nu) \right | \\
&\le \left |u(t_o,x,\mu) - u(t_o,y,\nu) \right |
     + c_\eqref{eqn:coro_final_pathwise_estimate:Y} (1+ |y|)^\ep \frac{|\log (n+1)|^{\ep \vee \beta} } {n^{\alpha \wedge\frac{\ep}{2}}}
\end{align*}
and
\begin{align*}
&    \left \| \sqrt{\int_{t_o}^T  \Big| \nabla u(s, B_s^{t_o,x},\mu) - \Delta^n(s, B^{n,\uto,y}_s,\nu)\Big|^2  \od s}  \right \|_{L^2} \\
&\le \left \| \sqrt{\int_{t_o}^T  \Big| \nabla u(s, B_s^{t_o,x},\mu) -  \nabla u(s, B_s^{t_o,y},\nu)\Big|^2  \od s}  \right \|_{L^2} \\
&\hspace*{17em} + \left \| \sqrt{\int_{t_o}^T  \Big| \nabla u(s, B_s^{t_o,y},\nu) - \Delta^n(s, B^{n,\uto,y}_s,\nu)\Big|^2  \od s}  \right \|_{L^2} \\
&\le \left \| \sqrt{\int_{t_o}^T  \Big| \nabla u(s, B_s^{t_o,x},\mu) - \nabla u(s, B_s^{t_o,y},\nu)\Big|^2  \od s}  \right \|_{L^2}
    +  c'_\eqref{eqn:coro_final_pathwise_estimate:Z}
      (1 + |y|)^\ep \frac{|\log (n+1)|^{\ep+\beta} } {n^{\alpha \wedge\frac{\ep}{2}}},
\end{align*}
where we use in the last step that
$\|\cdot\|_{L^2} \le c_{\beta+\ep} \|\cdot\|_{L^{\exp(\beta+\ep)}}$, and that for
$x=y$ we have $D_n=c_0 \log(n+1)/\sqrt{n}$ and
\[ D_n^\ep \left | 1\vee \log \frac{1}{D_n}\right |^\beta
   \le c'_{\ep,\beta,T} \frac{|\log(n+1)|^{\ep+\beta}}{n^\frac{\ep}{2}}.\]
Moreover, we have that
\begin{multline*}
 \left |u(t_o,x,\mu) - u(t_o,y,\nu) \right | +
 \left \| \sqrt{\int_{t_o}^T  \Big| \nabla u(s, B_s^{t_o,x},\mu) -   \nabla u(s, B_s^{t_o,y},\nu)\Big|^2  \od s}  \right \|_{L^2} \\
\le c \left [ \| |\xi-\gamma|^\ep \|_{L^2} + |x-y|^\ep \right ]
\end{multline*}
for some $c=c(T,|g|_\ep, |f|_{\ep;x},\ep,L_f)>0$. This estimate can be obtained in two steps: We estimate the LHS by
a multiple of $\left [|x-y|^\ep + \int_{t_o}^T [\cW_2^2(Y_s^{t_o,\mu},Y_s^{t_o,\nu})+ \cW_2^2(Z_s^{t_o,\mu},Z_s^{t_o,\nu}) ] \od s \right ]$
by a standard a-priori estimate (one can use \cite[Lemma 5.26]{GeissYlinen}). In the next step
the integral term can be estimated by an a-priori estimate in the mean field context, for example \cite[Theorem A.2 for $t=0$,
taking the expected value instead of the conditional expectation]{Li17}.
\end{proof}


\section{Numerical results} 
\label{algorithm}

In this Section we explain how to numerically compute the solution of \eqref{eq:mainBSDEn-hat}. We rewrite this equation in the following way

\begin{align}\label{num_scheme_0}
  \left\{\begin{array}{l}
    Y^{n,t,x}_{t_n}=g(B^{n,t,x}_T) \mbox{ and for all } k \in \{n-1,\cdots,0\} :\\
    Z^{n,t,x}_{t_{k+1}}=h^{-1/2}\E(Y^{n,t,x}_{t_{k+1}}\zeta_{k+1}|\cF^n_{t_k})\\
    Y^{n,t,x}_{t_{k}}=\E(Y^{n,t,x}_{t_{k+1}}|\cF^n_{t_k})+h\E(f(t_{k+1}\wedge t_{n-1},B^{n,t,x}_{t_k},Y^{n,t,x}_{t_{k+1}},Z^{n,t,x}_{t_{k+1}},[Y^{n,t_o,\xi}_{t_{k+1}}],[Z^{n,t_o,\xi}_{t_{k+1}}])|\cF^n_{t_k})
  \end{array}\right.
\end{align}
where $t\ge t_o$.
To compute $(Y^{n,t,x},Z^{n,t,x})$ we first need to compute $(Y^{n,t_o,\xi},Z^{n,t_o,\xi})$ which solves \eqref{z-eqn}-\eqref{y-eqn}. 

\begin{enumerate}
\item {\bf Computation of $(Y^{n,t_o,\xi},Z^{n,t_o,\xi})$}. For the sake of clarity we recall \eqref{z-eqn}-\eqref{y-eqn}:

\begin{align}\label{num_scheme}
  \left\{\begin{array}{ll}
    Y^{n,t_o,\xi}_{t_n}&=g(B^{n,t_o,\xi}_T) \mbox{ and for all } k \in \{n-1,\cdots,0\} :\\
    Z^{n,t_o,\xi}_{t_{k+1}}&=h^{-1/2}\E(Y^{n,t_o,\xi}_{t_{k+1}}\zeta_{k+1}|\cF^n_{t_k})\\
    Y^{n,t_o,\xi}_{t_{k}}&=\E(Y^{n,t_o,\xi}_{t_{k+1}}|\cF^n_{t_k})\\
     & \quad +h\E(f(t_{k+1}\wedge t_{n-1},B^{n,t_o,\xi}_{t_k},Y^{n,t_o,\xi}_{t_{k+1}},Z^{n,t_o,\xi}_{t_{k+1}},[Y^{n,t_o,\xi}_{t_{k+1}}],[Z^{n,t_o,\xi}_{t_{k+1}}])|\cF^n_{t_k}).\\
  \end{array}\right.
\end{align}

 \begin{remark}
The scheme \eqref{num_scheme} is an implicit one, meaning that   we use $Y^{n,t_o,\xi}_{t_{k+1}}$ in the driver while   \eqref{y-eqn}  contains  $Y^{n,t_o,\xi}_{t_{k}}.$ 
 The scheme \eqref{num_scheme}  is easier to solve numerically than \eqref{y-eqn}. The error between \eqref{num_scheme} and \eqref{z-eqn}-\eqref{y-eqn} is negligible compared to the other terms of the error, we neglect it in the following.
\end{remark}

\begin{remark}\label{rem_num_scheme}
  Since $Y^{n,t_o,\xi}_{t_{k+1}}$ is a function $F(\zeta_1,\cdots, \zeta_{k+1},\xi)$ we have $$\E(Y^{n,t_o,\xi}_{t_{k+1}}\zeta_{k+1}|\cF^n_{t_k})= \frac{1}{2}(F(\zeta_1,\cdots,\zeta_k,1,\xi)-F(\zeta_1,\cdots,\zeta_k,-1,\xi)).$$ Moreover $Z^{n,t,x}_{t_{k+1}}$ is $\cF^n_{t_k}$-measurable. \smallskip
\end{remark}

Compared to standard BSDEs the main difficulty lies in the presence of the law of $Y^{n,t_o,\xi},Z^{n,t_o,\xi}$ in the driver.
Since we solve \eqref{num_scheme} by using a random walk on a binomial tree, the law of $Y^{n,t_o,\xi},Z^{n,t_o,\xi}$ is a discrete law given that $\xi$ is discrete.
We are able to compute this law exactly since \eqref{num_scheme} computes the value of $Y^{n,t_o,\xi}$ and $Z^{n,t_o,\xi}$ at each
node of the tree in a backward manner. The law of $Y^{n,t_o,\xi}_{t_{k+1}}$ and $Z^{n,t_o,\xi}_{t_{k+1}}$ is completely known when we compute $Y^{n,t_o,\xi}_{t_{k}}$.  \smallskip

Let us describe the procedure in the special case $(t_o,\xi)=(0,\delta_0)$ (we refer to the following remark for the general case). We first build the binomial tree representing $B^{n,0,0}$. The law of $B^{n,0,0}_{t_k}$ is known for each $k$ in $\{0,\cdots,n\}$ so we know the law of $Y^{n,0,0}_{t_n}$. Using  \cref{rem_num_scheme} enables to compute $Z^{n,0,0}_{t_{n}}$ at each node of step $n$. Since we know the law of $Y^{n,0,0}_{t_n}$ and $Z^{n,0,0}_{t_{n}}$, the last equation of \eqref{num_scheme} enables to compute $Y^{n,0,0}_{t_{n-1}}$ at each node of step $n-1$ and then we iterate. We refer to the following section for a detailed algorithm how to solve \eqref{num_scheme} starting from $(t_o,\xi)=(0,\delta_0)$.

\begin{remark}
  When $(t_o,\xi)\neq (0,\delta_0)$, we use that $B^{n,t_o,\xi}_s=\xi+B^n_s-B^n_{t_o}$ and we solve the BSDE on the sub-tree of depth $\frac{T-\underline{t_o}}{h}$ whose root is $B^n_{t_o}$.
\end{remark}

\item {\bf Computation of $(Y^{n,t,x},Z^{n,t,x})$}. Once the law of $Y^{n,t_o,\xi}$ and $Z^{n,t_o,\xi}$ is known we are able to compute the law of $(Y^{n,t,x},Z^{n,t,x})$ given by \eqref{num_scheme_0}. To do so we follow the procedure described above to compute $Y^{n,t_o,\xi}$ and $Z^{n,t_o,\xi}$. Since $B^{n,t,x}_s=x+B^n_s-B^n_t$ we consider the sub-tree of depth $\frac{T-\underline{t}}{h}$ whose root is $B^n_{t}$. Then we solve \eqref{num_scheme_0} in a backward manner on this sub-tree, by plugging in at each step the known law of $Y^{n,t_o,\xi}$ and $Z^{n,t_o,\xi}$ in the driver.

\end{enumerate}

\subsection{Algorithm}
In the following we assume $(t_o,\xi)=(0,0)$ for the sake of clarity. We only describe the procedure how to compute $Y^{n,t_o,\xi}$ and $Z^{n,t_o,\xi}$ (i.e. we solve \eqref{num_scheme}) since this is the key issue. 
\begin{enumerate}
  \item We first construct the binomial tree representing $B^n$. We store it into an upper triangular matrix of size $(n+1)^2$ whose component $B[i,j]$ represents the value of $B^n_{t_j}$ when $(\zeta_1,\cdots,\zeta_j)$ contains $i$ times the value $(-1)$ and $(j-i)$ times the value $(+1)$. 
  \item We compute the probability matrix $P$ associated to the matrix $B$. $P[i,j]$ represents the probability that $B^n_{t_j}$ contains $i$ times the value $(-1)$ and $(j-i)$ times the value $(+1)$, i.e. $P[i,j]=\frac{(^j_i)}{2^j}$.
  \item We implement the scheme \eqref{num_scheme}. To do this, we compute the values of $Y^n$ and $Z^n$ and the whole tree. As for $B$, the matrices $Y$ and $Z$ are upper triangular ones and $Y[i,j]$ represents the value of $Y^n_{t_j}$ when $(\zeta_1,\cdots,\zeta_j)$ contains $i$ times the value $(-1)$ and $(j-i)$ times the value $(+1)$ (the same definition holds for the matrix $Z$, except that $Z$ is of size $n^2$ because $Z^n_{t_0}$ is not used).
  \begin{algorithm}[H]
    \caption{Computation of $Y^n$ and $Z^n$ on the tree}\label{algo1}
    \begin{algorithmic} 
      \State  Compute the matrices $B$ and $P$ as explained above\\
      \State {\it Computation of $Y^n$ and $Z^n$ at the last time step:}
      \State $Y[:,n]=g(B[:,n])$
      \For {$i=0 : (n-1)$}
      \State Compute $Z[i,n-1]=\frac{1}{2\sqrt{h}}(Y[i,n]-Y[i+1,n])$
      \EndFor \\
      \State {\it Computation of $Y^n$ and $Z^n$ in a backward way:}
      \For {$j=n-1 : 0$}
      \State Store the law of $Y^n$ at time $t_{j+1}$ in $law_Y$
      \For {$i=0 : j$}
      \State Compute $Z[i,j]=\frac{1}{2\sqrt{h}}(Y[i,j+1]-Y[i+1,j+1])$
      \EndFor
      \State Store the law of $Z^n$ at time $t_j$ in $law_Z$
      \For {$i=0 : j$}
      \State Compute $\E(Y^n_{t_{j+1}}|\cF^n_{t_j})$ ($y=\frac{1}{2}(Y[i,j+1]+Y[i+1,j+1])$)
      \State Compute $Y[i,j]=y+\frac{h}{2}(f(j*h,B[i,j],Y[i,j+1],Z[i,j],law_y,law_z)+f(j*h,B[i,j],Y[i+1,j+1],Z[i,j],law_y,law_z))$
      \EndFor
      \EndFor 
      
      \State return (Y,Z)
    \end{algorithmic}
    \end{algorithm}
\end{enumerate}

\begin{remark}
  To store the law of $Y^n_{t_{j}}$ we construct a matrix $law_Y$ of size $(j+1)\times 2$. The first column contains the $(j+1)$ values that can take $Y^n_{t_{j}}$. The second column contains the associated probability : $law_Y[:,0]=Y[0:j,j]$ and $law_Y[:,1]=P[0:j,j]$.
\end{remark}

  
\subsection{Numerical experiments}
In the following numerical examples we assume $(t,x)=(t_o,\xi)=(0,0)$. This boils down to solve \eqref{num_scheme}. We consider cases where the exact solution $(Y^{0,0},Z^{0,0})$ is known. Since we are not able to illustrate numerically the result of Theorem \ref{coro_final_pathwise_estimate} (and more specifically to compute numerically the Wasserstein distance), we aim to emphasize Theorem \ref{statement:main_result_local_version}. To do so, in \eqref{eqn:1:statement:main_result_local_version} we replace $x$ by $B^{0,0}_t$ and $y$ by $B^{n,0,0}_t$ in the case where $B^{n,0,0}_t$ is built by using the Skorokhod embedding (see \cite[Section 3]{GLL20a}).  So $Y^{0,0}$ and $Y^{n,0,0}$ are defined on the same probability space and by taking expectation of the square of the difference, we get
\begin{align}\label{eq:num_error}
\E|Y^{0,0}_t-Y^{n,0,0}_t|^2&\le c_\eqref{eqn:1:statement:main_result_local_version} \left [ \E|B^{0,0}_t-B^{n,0,0}_t|^{2\ep} + C\frac{|\log (n+1)|^{2\beta}}{n^{2\alpha \wedge \ep}} \right ]\notag \\
& \le c_\eqref{eqn:1:statement:main_result_local_version} \left [ \frac{C}{n^{\frac{\ep}{2}}} + C\frac{|\log (n+1)|^{2 \beta}}{n^{2 \alpha \wedge \ep}} \right ]
\end{align}
where the second inequality is obtained by using \cite[Lemma A.1]{GLL20a}. \bigskip

We study numerically the convergence in $n$ of $\E|Y^{n,0,0}_{t}-Y^{0,0}_{t}|^2$, where $(Y^{0,0},Z^{0,0})$ solves \eqref{eq:mainBSDE} and $(Y^{n,0,0},Z^{n,0,0})$ solves \eqref{eq:mainBSDEn-hat}. To do so, we approximate the error $\E|Y^{n,0,0}_{t_k}-Y^{0,0}_{t_k}|^2$ by Monte-Carlo:

\begin{align*}
  \E|Y^{n,0,0}_{t}-Y^{0,0}_{t}|^2\sim \frac{1}{M}\sum_{m=1}^M  |Y^{n,m,0,0}_{t}-Y^{m,0,0}_{t}|^2:=E_Y.
\end{align*}


\subsubsection{Case 1 : \texorpdfstring{$g(x)=x$ and $f(t,x,y,z,\mu,\nu)=y+\E(Y_t)+\E(Z_t)$}{case1}}
In such a case the exact solution is known (see \cite[Section 6]{ABGL22}). $(Y_t,Z_t)=(e^{T-t}(B_t+e^{T-t}-1),e^{T-t})$.
Figure \ref{fig1} represents the evolution of $\log(E_Y)$ w.r.t. $\log(n)$. A linear regression gives that the slope of the figure of the r.h.s. is $-0.52$ (Since $g$ is Lipschitz, \eqref{eq:num_error} ensures that the slope is less than $-0.5$). We run M=20000 Monte Carlo computations and compute the value of $E_Y$ at time $t=0.3$.

\begin{figure}[H]\centering
  \includegraphics[width=7cm]{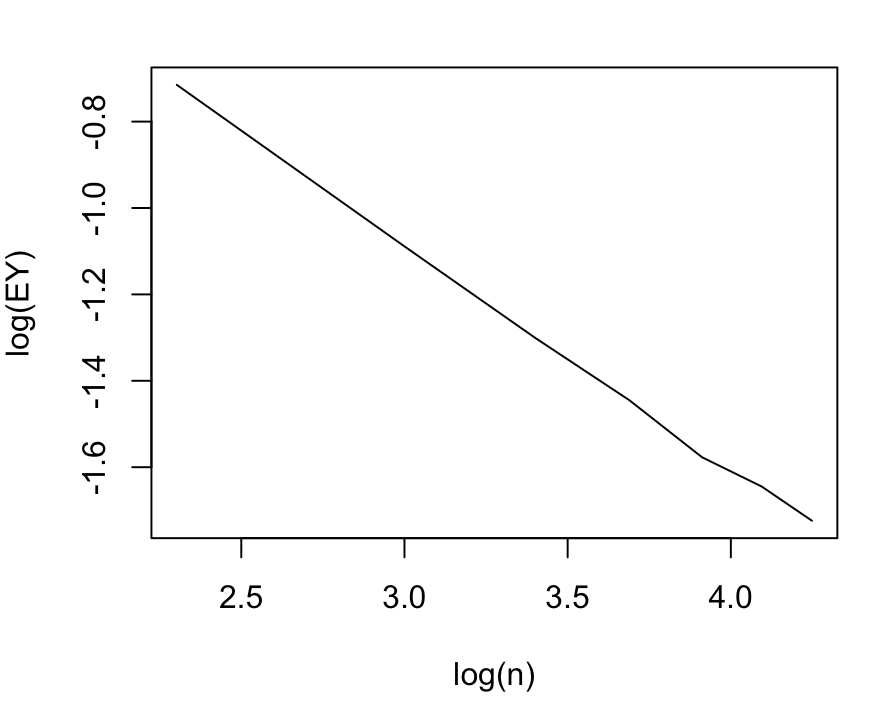}
  \caption{Evolution of $\log(E_Y)$ w.r.t. $\log(n)$}
  \label{fig1}
  \end{figure}


\subsubsection{Case 2: \texorpdfstring{$g(x)=x^2\wedge K$ and $f(t,x,y,z,\mu,\nu)=y+\E(Y_t)+\E(Z_t)$}{case2}}
In such a case the exact solution is known (see \cite[Section 6]{ABGL22}). Since $\E(Z_t)=0$, we get $Y_t=e^{T-t}(\E_t(B^2_T\wedge K)+\E(B^2_T\wedge K)(e^{T-t}-1))$.

Figure \ref{fig3} represents the evolution of $\log(E_Y)$ w.r.t. $\log(n)$.  A linear regression gives that the slope of the figure of the r.h.s. is $-0.63$ (Since $g$ is Lipschitz, \eqref{eq:num_error} ensures that the slope is less than $-0.5$).  We run M=50000 Monte Carlo computations and compute the value of $E_Y$ at time $t=0.3$ and $K=5$.

\begin{figure}[H]
\centering
  \includegraphics[width=7cm]{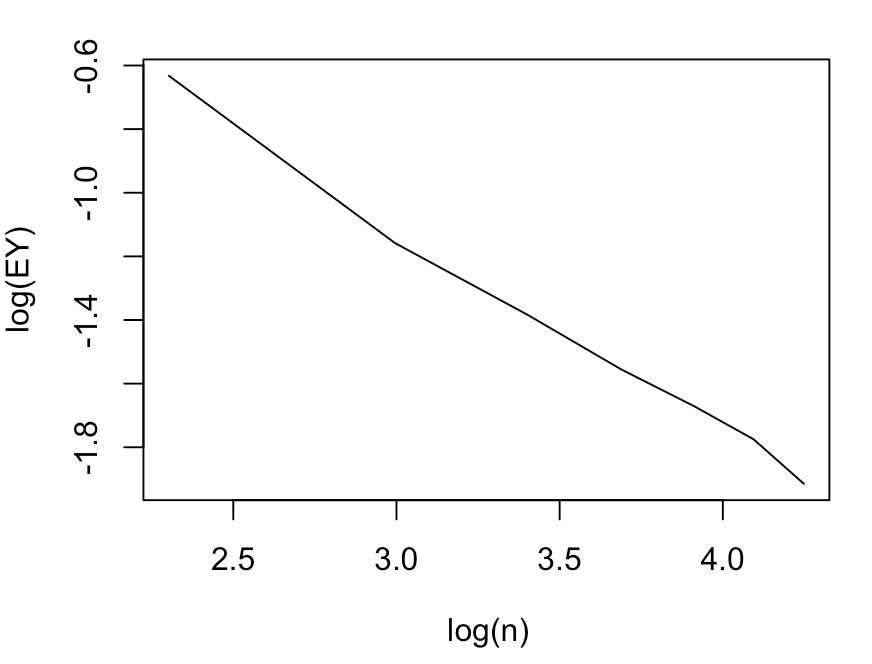}
  \caption{Evolution of $\log(E_Y)$ w.r.t. $\log(n)$}
  \label{fig3}
  \end{figure}


\section{Appendix}
\subsection{Gronwall lemmas}


\begin{lemma} \label{gronwall1}
Let  $T>0$, $\theta \in  [0,1[$ and $a, b\ge 0,$  and assume that $f: [0,T]\to [0,\infty[ $ is a bounded
Borel measurable function. Then  
\begin{enumerate}[{\rm (i)}]
\item  \label{Gronwall-continuous} 
      $f(t)\le a+b \int_t^T \frac{f(s)}{(T-s)^\theta}ds$ for $t\in [0,T]$ implies  that
      $ f(t) \le a e^{ b \int_t^T (T-s)^{-\theta} ds}$ for $t\in [0,T]$,
\item \label{Gronwall-discrete}
$
f(t_{k-1}) \le a+b \int_{]t_{k-1}, T[} \frac{f(r)}{(T-r)^\theta}d\langle \rwB \rangle_r   
$
 for $k=1,...,n$ implies that 
\[ f(t_{k-1})  \le a \, e^{ b \int_{t_k}^T (T-s)^{-\theta} ds}
   \sptext{1}{for}{1} k=1,...,n. \]   

\end{enumerate}
\end{lemma}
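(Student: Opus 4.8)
The plan is to handle both parts around the single auxiliary function
\[ \phi(t) := \int_t^T \frac{\od s}{(T-s)^\theta} = \frac{(T-t)^{1-\theta}}{1-\theta}, \]
which is finite precisely because $\theta<1$ (this is the only place integrability of the singular kernel is used), is nonincreasing, and satisfies $\phi(T)=0$ and $\phi'(t)=-(T-t)^{-\theta}$ a.e. The claimed bounds then read $f(t)\le a\,e^{b\phi(t)}$ in (i) and $f(t_{k-1})\le a\,e^{b\phi(t_k)}$ in (ii).

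For (i) I would first reduce to $a>0$: if $a=0$, the hypothesis also holds with $a$ replaced by any $\ep>0$, so the estimate for $\ep$ followed by $\ep\downarrow 0$ forces $f\equiv 0$. For $a>0$ set $u(t):=a+b\int_t^T (T-s)^{-\theta}f(s)\,\od s$; since $(T-s)^{-\theta}f(s)\in L^1([0,T])$ (using $\theta<1$ and boundedness of $f$), $u$ is absolutely continuous on $[0,T]$ with $u'(t)=-b(T-t)^{-\theta}f(t)$ a.e. and $u(T)=a$. From $0\le f\le u$ and $u\ge a>0$ the function $\log u$ is absolutely continuous with $(\log u)'(t)=u'(t)/u(t)\ge -b(T-t)^{-\theta}$ a.e.; integrating over $[t,T]$ gives $\log a-\log u(t)\ge -b\phi(t)$, i.e. $u(t)\le a\,e^{b\phi(t)}$, and $f\le u$ finishes the proof. (An equivalent route is to iterate the inequality and use boundedness of $f$ to identify the limit with the exponential series; this variant covers $a=0$ and $b=0$ directly.)

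For (ii) the first step is to make the integrator explicit: since $\langle\rwB\rangle$ is a pure-jump process with jumps of size $h$ at the grid points, one has $\int_{]t_{k-1},T[}(T-r)^{-\theta}f(r)\,\od\langle\rwB\rangle_r\le h\sum_{l=k}^{n-1}(T-t_l)^{-\theta}f(t_l)$, so the hypothesis becomes $f(t_{k-1})\le a+bh\sum_{l=k}^{n-1}(T-t_l)^{-\theta}f(t_l)$. I would then prove $f(t_{k-1})\le a\,e^{b\phi(t_k)}$ by backward induction on $k=n,n-1,\dots,1$; for $k=n$ the sum is empty and $\phi(t_n)=0$. In the inductive step the bounds $f(t_l)\le a\,e^{b\phi(t_{l+1})}$ hold for $l=k,\dots,n-1$, and two elementary monotonicity facts close the argument: because $s\mapsto(T-s)^{-\theta}$ is nondecreasing, $h(T-t_l)^{-\theta}\le\int_{t_l}^{t_{l+1}}(T-s)^{-\theta}\,\od s=\phi(t_l)-\phi(t_{l+1})$; and because $x\mapsto e^{bx}$ is nondecreasing (the case $b=0$ being trivial), $b\,e^{b\phi(t_{l+1})}\big(\phi(t_l)-\phi(t_{l+1})\big)\le e^{b\phi(t_l)}-e^{b\phi(t_{l+1})}$. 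Plugging the induction hypothesis and these two estimates into the recursion, the sum telescopes to $e^{b\phi(t_k)}-e^{b\phi(t_n)}=e^{b\phi(t_k)}-1$, so $f(t_{k-1})\le a\big(1+(e^{b\phi(t_k)}-1)\big)=a\,e^{b\phi(t_k)}$.

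The argument is essentially routine; the only genuinely substantive points are the use of $\theta<1$ to keep $\phi$ finite (integrability of the singular kernel) and, in (ii), the bookkeeping that rewrites $\od\langle\rwB\rangle$ as $h$ times counting measure on the grid and arranges the two comparison estimates so that the resulting sum telescopes. I expect this discrete telescoping step to be the least mechanical and hence the one deserving the most care.
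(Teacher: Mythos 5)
Your argument is correct, but it reaches the two conclusions by a different route than the paper. The paper's proof of both items reduces to a single black-box Gronwall inequality cited from Ethier--Kurtz (\cite[Theorem 5.1, Appendix]{MR838085}): if $f(t)\le a+\int_{]t,T[}f\,\od\mu$ for a finite Borel measure $\mu$, then $f(t)\le a\,e^{\mu(]t,T[)}$. For (i) one plugs in $\od\mu=b(T-s)^{-\theta}\od s$; for (ii) one plugs in $\od\mu=b(T-r)^{-\theta}\od\langle\rwB\rangle_r$ (after noting one may take $f$ piecewise constant so the hypothesis extends off the grid), obtains $f(t_{k-1})\le a\,e^{b\int_{]t_{k-1},T[}(T-r)^{-\theta}\od\langle\rwB\rangle_r}$, and then dominates the discrete exponent by the continuous one via exactly your estimate $h(T-t_m)^{-\theta}\le\int_{t_m}^{t_{m+1}}(T-s)^{-\theta}\od s$. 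You instead give self-contained arguments: for (i), the standard differential inequality for $\log u$ with $u(t)=a+b\int_t^T(T-s)^{-\theta}f(s)\,\od s$ plus a limiting argument for $a=0$; for (ii), a direct backward induction on the grid where the same comparison estimate, combined with the convexity bound $b\,e^{by}(x-y)\le e^{bx}-e^{by}$, produces a telescoping sum. Both are valid. What your version buys is self-containedness and, in (ii), avoidance of the small bookkeeping step about extending $f$ to a step function; what the paper's version buys is brevity by citation and a uniform treatment of the two cases as instances of one measure-theoretic lemma. One cosmetic remark: in your (ii) the first displayed relation is in fact an equality, not merely an inequality, since $\od\langle\rwB\rangle$ is precisely $h$ times counting measure on the grid points in $]t_{k-1},T[$; writing $\le$ there is harmless but slightly misleading.
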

\begin{proof}  We follow \cite[Theorem 5.1,  Appendix]{MR838085} saying that (after a change of variables)
for a  finite Borel measure $\mu$ on $[0,T]$  
the relation
\begin{equation}\label{eqn:gronwall:EK}
   f(t) \le a + \int_{]t,T[} f(s) d\mu(s) 
   \sptext{.5}{for}{.5} 
   t\in [0,T] 
   \sptext{1}{implies}{1}
   f(t) \le a e^{\mu(]t,T[)}.
\end{equation}
Item \eqref{Gronwall-continuous} follows from setting $\mu(]t,T[) := b \int_t^T (T-s)^{-\theta} ds.$  
\medskip 

Item \eqref{Gronwall-discrete}: We put $\mu(]t,T[) := b \int_{]t, T[} \frac{1}{(T-r)^\theta}d\langle \rwB \rangle_r$ 
and may assume that $f$ is constant on all $[t_{k-1},t_k)$. Then our assumption of item \eqref{Gronwall-discrete} implies 
that \eqref{eqn:gronwall:EK} can be used, which yields to
\[ f(t_{k-1})  \le a e^{ b \int_{]t_{k-1}, T[} \frac{1}{(T - r)^{\theta}} d \langle \rwB \rangle_r }.\]
Using the definition of $\od \langle \rwB \rangle_r$ we get the assertion from
\[   \int_{]t_{k-1}, T[} \frac{1}{(T - r)^{\theta}} \od \langle \rwB \rangle_r 
   = \sum_{m=k}^{n-1} h \frac{1}{(T - t_m)^{\theta}} 
  \le \sum_{m=k}^{n-1} \int_{t_m}^{t_{m+1}}\frac{1}{(T - r)^{\theta}} dr 
   = \int_{t_k}^T\frac{1}{(T - r)^{\theta}} dr. \]
\end{proof} 
\medskip

We  formulate a Gronwall lemma of Volterra type.
Similar inequalities can be found in \cite{henry}.

\begin{lemma}
\label{volterra_gronwall}
Assume $\ep \in ]0,1]$, $\beta>\frac{1}{2}$, a  nonnegative  $g\in L_1([0,T[)$ and $a,b\ge0$  such that for all $t\in [0,T[$
one has
\[ g(t) \le \frac{a}{|T-t|^\frac{1}{2} \Phi_\beta(|T-t|^\ep)} + b \int_t^T \frac{g(s)}{|s-t|^\frac{1}{2}} ds. \]
Then one has
\begin{equation}\label{eqn:volterra_gronwall}
g(t) \le c_\eqref{eqn:volterra_gronwall} \frac{a}{|T-t|^{\frac{1}{2}} \Phi_\beta(|T-t|^\ep)  }
\sptext{1}{for}{1}
t\in [0,T[,
\end{equation}
where $c_\eqref{eqn:volterra_gronwall}=c_\eqref{eqn:volterra_gronwall}(T,\ep,\beta,b)>0$ is non-decreasing in $T$.
\end{lemma}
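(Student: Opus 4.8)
The plan is to remove the weakly singular kernel by iterating the inequality once, turning it into a regular linear integral inequality, and then to close it with an elementary Gronwall argument; the one delicate point is to reproduce, up to a multiplicative constant, the weight
\[ w(t):=\frac{1}{|T-t|^{1/2}\,\Phi_\beta(|T-t|^\ep)} . \]
Writing $(K\phi)(t):=\int_t^T\frac{\phi(s)}{\sqrt{s-t}}\,\od s$, the hypothesis reads $g\le aw+bKg$, and I would begin by recording two facts. By Tonelli's theorem on the set $\{t\le r\le s\le T\}$ together with the Beta integral $\int_t^s\frac{\od r}{\sqrt{r-t}\sqrt{s-r}}=\mathrm B(\tfrac12,\tfrac12)=\pi$, one has $(K^2\phi)(t)=\pi\int_t^T\phi(s)\,\od s$ for every nonnegative $\phi$. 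Secondly, the substitution $s=t+(T-t)\theta$ gives $(Kw)(t)=\int_0^1\frac{\theta^{-1/2}(1-\theta)^{-1/2}}{\Phi_\beta(|T-t|^\ep(1-\theta)^\ep)}\,\od\theta$, and since $\Phi_\beta$ is non-increasing and $\Phi_\beta\ge 1$ (\cref{Psi-lemma}\eqref{the-Phi}) this yields
\[ (Kw)(t)\le \pi\sqrt{T-t}\,w(t)\le \pi\sqrt{T}\,w(t)\qquad\text{and}\qquad (Kw)(t)\le \pi . \]
In particular $w$ and $Kw$ are integrable on $[0,T[$, because $\int_0^T|T-s|^{-1/2}\od s<\infty$ and, again by Tonelli, $\int_0^T(Kw)(s)\,\od s=\int_0^T 2\sqrt{s}\,w(s)\,\od s<\infty$.

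Next I would iterate: applying the positive operator $K$ to $g\le aw+bKg$ and inserting the two identities gives, for every $t\in[0,T[$,
\[ g(t)\le aw(t)+ab\,(Kw)(t)+\pi b^2\int_t^T g(s)\,\od s . \]
Setting $G(t):=\int_t^T g(s)\,\od s$, which is absolutely continuous with $G(T)=0$ since $g\in L^1([0,T[)$, this is the differential inequality $-G'(t)\le aw(t)+ab\,(Kw)(t)+\pi b^2 G(t)$ for a.e.\ $t$. Multiplying by the integrating factor $e^{\pi b^2 t}$ and integrating from $t$ to $T$ then yields
\[ G(t)\le e^{\pi b^2 T}\,a\int_0^T\bigl(w(s)+b\,(Kw)(s)\bigr)\,\od s=:a\,C_2 , \]
where $C_2=C_2(T,\ep,\beta,b)<\infty$ by the integrability just noted.

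To conclude I would lower-bound $w$. The map $[0,T[\ni t\mapsto w(t)$ is continuous (since $\Phi_\beta$ is continuous and $\ge 1$) and tends to $+\infty$ as $t\uparrow T$, because $|T-t|^{-1/2}$ dominates the logarithmic factor $\Phi_\beta(|T-t|^\ep)$; hence $m:=\inf_{t\in[0,T[}w(t)\in\,]0,\infty[$. Plugging the bounds for $(Kw)(t)$ and $G(t)$ into the displayed inequality for $g$ gives
\[ g(t)\le aw(t)+ab\pi\sqrt{T}\,w(t)+\pi b^2 a\,C_2\le a\,w(t)\Bigl(1+\pi b\sqrt{T}+\tfrac{\pi b^2 C_2}{m}\Bigr), \]
which is exactly \eqref{eqn:volterra_gronwall} with $c_\eqref{eqn:volterra_gronwall}:=1+\pi b\sqrt{T}+\pi b^2 C_2/m$. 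For the asserted monotonicity in $T$ I would observe that $\sqrt{T}$ and $C_2$ are manifestly non-decreasing in $T$, while $m$ is the infimum of the $T$-independent function $u\mapsto u^{-1/2}\Phi_\beta(u^\ep)^{-1}$ over $u\in\,]0,T]$ and is therefore non-increasing in $T$, so that $c_\eqref{eqn:volterra_gronwall}$ is non-decreasing in $T$.

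The main work is not any deep estimate but keeping the final bound genuinely linear in $a$: one must verify that $C_2$ and $m$ do not depend on $a$, which in particular makes the degenerate case $a=0$ (where the inequality forces $g\equiv 0$) automatic. A minor technical point is to legitimise the manipulations with $K$ for merely $L^1$ data, which follows from the pointwise bound $(Kw)(t)\le\pi$, the positivity of $K$, and Tonelli's theorem for the double integral defining $K^2$.
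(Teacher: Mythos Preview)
Your proof is correct and follows essentially the same route as the paper: iterate the inequality once so that the Beta identity $K^2\phi(t)=\pi\int_t^T\phi(s)\,\od s$ removes the singular kernel, apply a standard Gronwall argument to the resulting linear inequality, and absorb the leftover constants into the weight $w$ via a positive lower bound on $w$. The only differences are cosmetic: the paper cites an external Gronwall lemma where you use an integrating factor directly, and the paper obtains the lower bound on $w$ from the monotonicity of $u\mapsto u^{1/2}\Phi_\beta(u^\ep)$ (via \cref{Psi-lemma}\eqref{the-Psi}) rather than from continuity and blow-up, which in fact gives the explicit value $m=T^{-1/2}\Phi_\beta(T^\ep)^{-1}$.
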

\begin{proof}  From  the fact that $ \Phi_\beta\ge 1$  we derive by inserting the inequality for $g$ into itself
\begin{align*}
g(t)  &\le  \frac{a}{|T-t|^{\frac{1}{2}}  \Phi_\beta(|T-t|^\ep)  }   + b \int_t^T  \frac{a}{|T-s|^\frac{1}{2}  } \frac{1}{|s-t|^\frac{1}{2}}  ds \\
&\quad   +   b^2 \int_t^T  \left (  \int_s^T \frac{g(u)}{|u-s|^\frac{1}{2}} du \right ) \frac{1}{|s-t|^\frac{1}{2}} ds \\
&=  \frac{a}{|T-t|^{\frac{1}{2}}  \Phi_\beta(|T-t|^\ep)  }  + a b \,  \mathrm{B}(\tfrac{1}{2},\tfrac{1}{2}) \\
&\quad +   b^2   \int_t^T  \left ( \int_t^u \frac{1}{|u-s|^\frac{1}{2}|s-t|^\frac{1}{2}} ds  \right ) g(u) du \\
&\le c \frac{a}{|T-t|^{\frac{1}{2}}  \Phi_\beta(|T-t|^\ep)  }   +   b^2  \,  \mathrm{B}(\tfrac{1}{2},\tfrac{1}{2})   \int_t^T  g(u) du,
\end{align*}
where $c=c(T,\ep,\beta,b)>0$ and $\mathrm{B}$ stands for the beta function. For the last inequality we used that  the function
$t\mapsto |T-t|^{\frac{1}{2}}  \Phi_\beta(|T-t|^\ep)$ is non-increasing, especially that $1\le \frac{T^{\frac{1}{2}}  \Phi_\beta(T^\ep)}{|T-t|^{\frac{1}{2}}  \Phi_\beta(|T-t|^\ep)}.$
 Now a
Gronwall lemma (see, for example, \cite[Lemma 15]{BGGL21}) yields 
\begin{align*}
g(t)  &\le  c \frac{a}{|T-t|^{\frac{1}{2}}  \Phi_\beta(|T-t|^\ep)  }   +  a c  b^2   \,  \mathrm{B}(\tfrac{1}{2},\tfrac{1}{2})  \int_t^T  \frac{1}{|T-u|^{\frac{1}{2}}  \Phi_\beta(|T-u|^\ep)  }  e^{b^2  \, \mathrm{B}(\tfrac{1}{2},\tfrac{1}{2})(u-t) }du.
\end{align*}
Since the integral is uniformly bounded in $t$ we  use  again  $1\le \frac{T^{\frac{1}{2}}  \Phi_\beta(T^\ep)}{|T-t|^{\frac{1}{2}}  \Phi_\beta(|T-t|^\ep)}$ to get  \eqref{eqn:volterra_gronwall}.
\end{proof}


\subsection{Bounded mean oscillation}

We recall the following notation: If $(\Omega,\cF,\P)$ is a probability space
then, by \cref{sec:mphi-spaces},
\[ |f|_{M^0_{\varphi_1}}
   = \inf \left \{ c > 0 : \P(|f| > \lambda) \le e^{1-\frac{\lambda}{c}} \mbox{ for }
       \lambda \ge c \right \}.\]

\begin{df}  \label{contBMO-def}
For  an adapted, {\it continuous},  nonnegative, and nondecreasing  process \linebreak $(A_t)_{t\in [0,T]}$ 
with $A_0\equiv0$ on a stochastic basis
$(\Omega, \cF, \p , (\cF_t)_{t\in [0,T]})$ satisfying the usual conditions we define
$\|A\|_{{\rm BMO}([0,T])}:=\inf c$, 
where the infimum is taken over all $c\ge 0$ such that for all $0\le t <T$ one has
\[ \e [|A_T-A_t|\,|\cF_t] \le c \quad a.s. \]
\end{df}

The John-Nirenberg theorem for {\it continuous} adapted processes $(A_t)_{t\in [0,T]}$  reads as follows:

\begin{thm}[{\cite[p. 176]{Dellacherie:Meyer:B}}]
\label{statement:JN_continuous_time}
There is a $c_\eqref{eqn:statement:JN_continuous_time}>0$
such that for all $t\in [0,T]$ one has
\begin{equation}\label{eqn:statement:JN_continuous_time}
  |A_T-A_t|_{M^0_{\varphi_1}} \le c_\eqref{eqn:statement:JN_continuous_time}  \|A\|_{{\rm BMO}([0,T])}.
\end{equation}
\end{thm}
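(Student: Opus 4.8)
The final statement to prove is the John--Nirenberg theorem for continuous adapted processes, \cref{statement:JN_continuous_time}, asserting that $|A_T-A_t|_{M^0_{\varphi_1}} \le c_\eqref{eqn:statement:JN_continuous_time}\|A\|_{{\rm BMO}([0,T])}$.

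\textbf{Plan of proof.} The statement is cited from \cite[p. 176]{Dellacherie:Meyer:B}, so strictly speaking it suffices to refer to that source; but let me sketch the standard argument one would reproduce. Normalize so that $\|A\|_{{\rm BMO}([0,T])}=1$, i.e.\ $\e[A_T-A_t\mid \cF_t]\le 1$ a.s.\ for every $t$. Fix $t\in[0,T]$ and, working conditionally on $\cF_t$, set $M:=A_T-A_t$ and more generally $M_s:=\e[A_T-A_s\mid\cF_s]-$ no, better: consider the increasing process $s\mapsto A_s-A_t$ on $[t,T]$ and the (conditional) stopping times $\tau_k:=\inf\{s\ge t: A_s-A_t\ge k\}\wedge T$ for $k=0,1,2,\dots$, which are well defined and nondecreasing precisely because $A$ has \emph{continuous} paths, so that $A_{\tau_k}-A_t=k$ on $\{\tau_k<T\}$ and the paths do not jump past the level $k$. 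The plan is then to iterate the BMO bound along these levels: on the event $\{\tau_k<T\}$ one has, by the strong Markov-type property of conditional expectations and the optional sampling of the BMO condition, $\e[(A_T-A_{\tau_k})\mathbf 1_{\{\tau_k<T\}}\mid\cF_{\tau_k}]\le \mathbf 1_{\{\tau_k<T\}}$, hence
\[
  \P(\tau_{k+1}<T\mid\cF_t)\le \e\big[(A_T-A_{\tau_k})\mathbf 1_{\{\tau_k<T\}}\mid\cF_t\big]\le \P(\tau_k<T\mid\cF_t),
\]
which by induction gives the geometric decay $\P(A_T-A_t> k\mid\cF_t)\le \P(\tau_k<T\mid\cF_t)\le q^{k}$ for a suitable $q\in(0,1)$ (one gets $q$ strictly below $1$ by using a level spacing larger than $1$, e.g.\ replacing $k$ by $2k$, to make the one-step factor $\le 1/2$). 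Interpolating between integer levels yields $\P(A_T-A_t>\lambda\mid\cF_t)\le c\,e^{-c'\lambda}$ a.s.\ for all $\lambda\ge 0$, with absolute constants $c,c'$. Taking expectations removes the conditioning and, after adjusting constants, gives $\P(|A_T-A_t|>\lambda)\le e^{1-\lambda/c_\eqref{eqn:statement:JN_continuous_time}}$ for $\lambda\ge c_\eqref{eqn:statement:JN_continuous_time}$, which by the definition of $|\cdot|_{M^0_{\varphi_1}}$ recalled just above the theorem is exactly the claimed bound.

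\textbf{Main obstacle.} The delicate point is the step that turns the BMO inequality into a recursion for the stopped process: one must justify applying the defining inequality $\e[A_T-A_\sigma\mid\cF_\sigma]\le \|A\|_{{\rm BMO}}$ at the \emph{stopping time} $\sigma=\tau_k$ rather than at a fixed time, and one must check the measurability and the nesting $\tau_0\le\tau_1\le\cdots$ of the level-crossing times --- this is precisely where path-continuity of $A$ (and the usual conditions on the filtration) enters, since it guarantees $A_{\tau_k}-A_t=k$ exactly on $\{\tau_k<T\}$ and lets the increments telescope cleanly. In our setting this is harmless: since this is a quoted result we may simply invoke \cite[p. 176]{Dellacherie:Meyer:B}, and the applications in \cref{prop_tilde_difference} and \cref{statement:disrcrete-rate} only use the inequality \eqref{eqn:statement:JN_continuous_time} as a black box with the continuous increasing process $A_s=\int_t^s|\nabla u(r,B_r^{t,x})-\nabla u_{\f_n}(r,B_r^{t,x})|^2\,\od r$.
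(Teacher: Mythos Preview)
The paper does not supply its own proof of \cref{statement:JN_continuous_time}; it merely quotes the result from \cite[p.~176]{Dellacherie:Meyer:B} and uses it as a black box. Your proposal correctly recognizes this and then goes further by sketching the classical level-crossing argument, which is indeed the standard proof and is accurate as a sketch. You also correctly isolate the one genuine technical point --- passing from the fixed-time BMO bound of \cref{contBMO-def} to the bound at stopping times $\tau_k$ --- and note that this is where path-continuity and the usual conditions are used. So your proposal is correct and in fact more informative than the paper's treatment, which simply refers the reader to Dellacherie--Meyer.
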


\begin{df}  \label{discrBMO-def}
For a stochastic basis $(\Omega, \cF, \p , (\cG_n)_{n=0}^N)$ and
an adapted  process $(A_n)_{n= 0}^N$ with $A_0\equiv 0$ we define
$ \|A\|_{{\rm BMO}}:=\inf c$
where the infimum is taken over all $c\ge 0$ such that for all $1\le n \le N$ one has
\[ \e [|A_N-A_{n-1}||\cG_n] \le c \quad a.s. \]
\end{df}

The John-Nirenberg theorem for {\it discrete time} adapted processes $(A_n)_{n=0}^N $ reads as follows:

\begin{thm}[{\cite[Theorem III.1.1]{Garsia:1973}}]
\label{statement:JN_discrete_time}
There is a constant $c_\eqref{eqn:statement:JN_discrete_time}>0$
such that for all $1\le n \le N$   one has
\begin{equation}\label{eqn:statement:JN_discrete_time}
 |A_N-A_{n-1}|_{M^0_{\varphi_1}} \le c_\eqref{eqn:statement:JN_discrete_time}  \|A\|_{{\rm BMO}}. 
\end{equation}
\end{thm}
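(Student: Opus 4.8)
The plan is to establish the stronger pointwise tail estimate that, after normalising $\|A\|_{{\rm BMO}}=1$, the random variable $A_N-A_{n-1}$ has an exponential tail with an \emph{absolute} decay rate, and then to read off the $M^0_{\varphi_1}$-bound directly from the definition in \cref{sec:mphi-spaces}. The normalisation is harmless: if $\|A\|_{{\rm BMO}}=0$ then $\E[|A_N-A_{k-1}|\mid\cG_k]=0$ for every $k$, so inductively $A\equiv A_0=0$; otherwise replace $A$ by $A/\|A\|_{{\rm BMO}}$. Thus we may assume $\E[|A_N-A_{k-1}|\mid\cG_k]\le 1$ for $1\le k\le N$, and by the tower property also $\E[|A_N-A_k|\mid\cG_k]\le 1$ for $0\le k\le N$.

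The one place where adaptedness (the substitute for path continuity used in the continuous-time result \cref{statement:JN_continuous_time}) enters is a preliminary observation that the increments of $A$ are automatically bounded: since $A_k-A_{k-1}$ is $\cG_k$-measurable,
\[ |A_k-A_{k-1}| = \E\big[\,|A_k-A_{k-1}|\,\big|\,\cG_k\big] \le \E\big[\,|A_N-A_{k-1}|+|A_N-A_k|\,\big|\,\cG_k\big] \le 2 \quad\text{a.s.}\]
for $1\le k\le N$.

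Fixing $m\in\{0,\dots,N-1\}$ (the case $n=m+1$ of the statement), I would run the standard good-$\lambda$ iteration. For $\lambda>0$ set $\tau:=\min\{k\in\{m+1,\dots,N\}:|A_k-A_m|>\lambda\}$, with $\tau:=\infty$ if no such $k$ exists. On $\{\tau\le N\}$ minimality gives $|A_{\tau-1}-A_m|\le\lambda$ (also when $\tau=m+1$, as $A_m-A_m=0$), whence $|A_\tau-A_m|\le\lambda+2$. Hence for every $\mu>0$,
\[ \{|A_N-A_m|>\lambda+2+\mu\}\subseteq\{\tau\le N\}\cap\{|A_N-A_\tau|>\mu\},\]
since on the left event $|A_N-A_m|>\lambda$ forces $\tau\le N$ and then $|A_N-A_\tau|\ge|A_N-A_m|-|A_\tau-A_m|>\mu$. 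Splitting over the $\cG_k$-measurable events $\{\tau=k\}$ and using the conditional Markov inequality with $\E[|A_N-A_k|\mid\cG_k]\le 1$ yields
\[ \P\big(|A_N-A_m|>\lambda+2+\mu\big)\le\sum_{k=m+1}^{N}\E\Big[\ind_{\{\tau=k\}}\,\P\big(|A_N-A_k|>\mu\mid\cG_k\big)\Big]\le\frac{1}{\mu}\,\P\big(|A_N-A_m|>\lambda\big).\]
Taking $\mu=2$ gives $\P(|A_N-A_m|>\lambda+4)\le\tfrac12\,\P(|A_N-A_m|>\lambda)$; iterating from $\lambda=0$ gives $\P(|A_N-A_m|>4j)\le 2^{-j}$ for $j\ge1$, hence $\P(|A_N-A_m|>\lambda)\le 2^{\,1-\lambda/4}\le e^{\,1-(\ln 2)\lambda/4}$ for $\lambda\ge4$.

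Finally, recalling that $\varphi_1(t)=1/(1+\log(1/t))$ corresponds to $\psi_1(x)=x$, so that $|f|_{M^0_{\varphi_1}}=\inf\{c>0:\P(|f|>\lambda)\le e^{1-\lambda/c}\ \text{for}\ \lambda\ge c\}$, and noting $4/\ln2>4$, the displayed tail bound holds for all $\lambda\ge 4/\ln2$; therefore $|A_N-A_m|_{M^0_{\varphi_1}}\le 4/\ln2$, and undoing the normalisation gives the theorem with $c_\eqref{eqn:statement:JN_discrete_time}=4/\ln2$. I expect the increment bound together with the measurability bookkeeping in the stopping-time decomposition (that $\{\tau=k\}\in\cG_k$ and that $A_k$ is $\cG_k$-measurable, so the conditional Markov step is legitimate) to be the delicate point, since this is exactly where the discrete-time argument must compensate for the lack of path regularity; the rest is a routine Garsia-type iteration.
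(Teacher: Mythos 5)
Your preliminary steps (normalisation, the automatic increment bound $|A_k-A_{k-1}|\le 2$, the inclusion $\{|A_N-A_m|>\lambda+2+\mu\}\subseteq\{\tau\le N\}\cap\{|A_N-A_\tau|>\mu\}$, and the conditional Markov estimate) are all correct, but the final inequality of your display is not. After summing over $k$ you obtain
\[
\frac{1}{\mu}\sum_{k=m+1}^{N}\P(\tau=k)
=\frac{1}{\mu}\,\P(\tau\le N)
=\frac{1}{\mu}\,\P\Big(\max_{m\le k\le N}|A_k-A_m|>\lambda\Big),
\]
not $\frac{1}{\mu}\,\P(|A_N-A_m|>\lambda)$. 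The event $\{\tau\le N\}$ strictly contains $\{|A_N-A_m|>\lambda\}$ in general (the running difference can cross level $\lambda$ and return), so your good-$\lambda$ inequality relates the tail of the endpoint to the tail of the running maximum, which is a larger quantity, and the iteration from $\lambda$ to $\lambda+4$ does not close.

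To repair this you have to run the iteration on $M_m:=\max_{m\le k\le N}|A_k-A_m|$. Decomposing $\{M_m>\lambda+2+\mu\}$ over $\{\tau=k\}$ gives, by the same triangle-inequality computation, $\max_{k\le l\le N}|A_l-A_k|>\mu$ on each piece, and you now need a \emph{conditional maximal inequality} $\P(\max_{k\le l\le N}|A_l-A_k|>\mu\,|\,\cG_k)\le c/\mu$. This is not conditional Markov; it requires one further stopping-time argument: with $\rho:=\min\{l\ge k:|A_l-A_k|>\mu\}$ one has, using the BMO bound at the random time $\rho$ (namely $\E[|A_N-A_\rho|\,|\,\cG_\rho]\le 1$ on $\{\rho\le N\}$, which follows from the defining bound and the tower property exactly as in your first step),
\[
\mu\,\P(\rho\le N\,|\,\cG_k)\le\E\big[\ind_{\{\rho\le N\}}\big(|A_\rho-A_N|+|A_N-A_k|\big)\,\big|\,\cG_k\big]\le 2,
\]
so $\P(M_m>\lambda+2+\mu)\le\frac{2}{\mu}\P(M_m>\lambda)$, and iterating (say with $\mu=4$, step $6$) gives an absolute-constant exponential tail for $M_m$ and a fortiori for $|A_N-A_m|$. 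Incidentally, in the one place the paper invokes this theorem (the proof of \cref{statement:disrcrete-rate}) the process $A$ is nondecreasing, in which case $|A_N-A_m|=\max_{m\le k\le N}|A_k-A_m|$ and your iteration does close as written; but \cref{discrBMO-def} and \cref{statement:JN_discrete_time} are stated for arbitrary adapted $A$, and there the extra maximal-inequality step is indispensable.
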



\subsection{Proof of \texorpdfstring{ \cref{statement:disrcrete-rate}}{proposition87}}
\label{sec:proof:statement:disrcrete-rate}
{\bf Inequality \eqref{eqn:item:1:statement:disrcrete-rate:start-in-0}:}
We use \cref{statement:a-priori-discrete}\eqref{item:3:statement:a-priori-discrete:special-2}
 for $ \eta_1 =\eta_2 =  g( B^{n,t_o,\xi}_T)$ and
\begin{align*}
  f^1(s,x,y,z)&:=f( s\wedge t_{n-1}, x,y,z, [  Y^{n,t_o,\xi}_s], [ Z^{n,t_o,\xi}_s] ) \\
  f^2(s,x,y,z)&:= f( s\wedge t_{n-1},x,y,z,[Y^{t_o,\xi}_s],   [Z^{t_o,\xi}_{s\wedge t_{n-1}}]),
 \end{align*}
to get
\begin{align}
& \E   \left|    Y^{n,t_o,\xi}_{t_{k-1}} - Y^{n,t_o,\xi}_{{{\sf f}_n},t_{k-1}} \right|^2
  + \E  \int_{]t_{k-1}, T[} \left|Z^{n,t_o,\xi}_s- Z^{n,t_o,\xi}_{{{\sf f}_n},s}  \right|^2  d\langle \rwB \rangle_s  \notag \\
&\le  c_{\eqref{eqn:item:3:statement:a-priori-discrete:special-2}}
      \left [  \int_{]t_{k-1}, T]} \cW_2^2([Y^{t_o,\xi}_{s}],  [Y^{n,t_o,\xi}_{{{\sf f}_n},s}])  d\langle \rwB \rangle_s
     +         \int_{]t_{k-1}, T]}   \cW_2^2([Z^{t_o,\xi}_{s\wedge t_{n-1}}], [Z^{n,t_o,\xi}_{{{\sf f}_n},s}]) d\langle \rwB \rangle_s \label{W-Y-Y-n} \right ] \\
& \le 3 c_{\eqref{eqn:item:3:statement:a-priori-discrete:special-2}} \Bigg [ \int_{]t_{k-1}, T]} \cW_2^2([Y^{t_o,\xi}_{s}],  [Y^{t_o,\xi}_{{\sf f}_n,s}]) d\langle \rwB \rangle_s
     +   \int_{]t_{k-1}, T]} \cW_2^{2}([Z^{t_o,\xi}_{s\wedge t_{n-1}}],[Z^{t_o,\xi}_{{\sf f}_n,s\wedge t_{n-1}}]) d\langle \rwB \rangle_s \label{continuous-nwidetilde}  \\
& \quad +     \int_{]t_{k-1}, T]}  \cW_2^2([Y^{t_o,\xi}_{{{\sf f}_n},s}], [Y^{n,t_o,\xi}_{{{\sf f}_n},s}]) \od \langle \rwB \rangle_s
        +     \int_{]t_{k-1}, T]}  \cW_2^{2}([Z^{t_o,\xi}_{{{\sf f}_n},s\wedge t_{n-1}}], [Z^{n,t_o,\xi}_{{{\sf f}_n},s\wedge t_{n-1}}]) \od \langle \rwB \rangle_s
         \label{widetilde-bar-n}  \\
& \quad +   \int_{]t_{n-1}, T]}  \cW_2^{2}([Z^{n,t_o,\xi}_{{{\sf f}_n},s\wedge t_{n-1}}], [Z^{n,t_o,\xi}_{{{\sf f}_n},s}])d\langle \rwB \rangle_s \Bigg ]
   \label{remainder-term}.  
\end{align}
\underline{Estimate of \eqref{continuous-nwidetilde}, first term:} By \cref{prop_tilde_difference}, exploiting that $Y^{t_o,\xi}_{s}= u(s, B^{t_o,\xi}_s)$ and $Y^{t_o,\xi}_{{\sf f}_n,s}=u_{{\sf f}_n}(s, B^{t_o,\xi}_s),$
\[     \int_{]t_{k-1}, T]}  \cW_2^2([Y^{t_o,\xi}_{s}],   [Y^{t_o,\xi}_{{\sf f}_n, s}])d\langle \rwB \rangle_s
   \le \int_{]t_{k-1}, T]}  \e|Y^{t_o,\xi}_{s} -  Y^{t_o,\xi}_{{\sf f}_n,s}|^2 d\langle \rwB \rangle_s
   \le T c_{\eqref{cont_tilde_a_priori}}^2  n^{-2 \left ( \left( \alpha \wedge   \frac{\ep}{2}\right )   + \frac{1}{2}\right )}.
\]
\underline{Estimate of \eqref{continuous-nwidetilde}, second term:}
We rewrite the term as an integral w.r.t.~$ds$ and get
\begin{align*}
&\int_{]t_{k-1}, T]}   \cW_2^{2}([Z^{t_o,\xi}_{s\wedge t_{n-1}}], [Z^{t_o,\xi}_{{\sf f}_n,s\wedge t_{n-1}}])d\langle \rwB \rangle_s  \\
&= \int_{]t_{k-1}, T]}  \cW_2^{2}([Z^{t_o,\xi}_{\os\wedge t_{n-1}}], [Z^{t_o,\xi}_{{\sf f}_n,\os\wedge t_{n-1}}])ds\\
&\le 3 \int_{]t_{k-1}, T]}   \cW_2^{2}([Z^{t_o,\xi}_{\os\wedge t_{n-1}}], [Z^{t_o,\xi}_{s\wedge t_{n-1}}]) ds    + 3\int_{]t_{k-1}, T]}  \cW_2^{2}([Z^{t_o,\xi}_{s\wedge t_{n-1}}], [Z^{t_o,\xi}_{{\sf f}_n,s\wedge t_{n-1}}])ds\\
& \quad +3 \int_{]t_{k-1}, T]}   \cW_2^{2}( [Z^{t_o,\xi}_{{\sf f}_n,s\wedge t_{n-1}}],[Z^{t_o,\xi}_{{\sf f}_n,\os\wedge t_{n-1}}]) ds.
\end{align*}
Proceeding similarly to \eqref{Z-difference}, by \cref{regularity-for-mainBSDE} and \cref{lemma37}  for any $s, s' \in [t_o, T[$ we get
\begin{align} \label{Z-differences}
     \|Z^{t_o,\xi}_{s'}- Z^{t_o,\xi}_{s}\|_{L^2}
&\le \frac{c_{\eqref{nabla u-diff-eq}} \left ( \e   \Ps  {\, |B_{s'}-B_s|^{2\ep} \,} {\frac{1}{2}}^2   \right )^\frac{1}{2} }{|T-(s' \vee s)|^{\frac{1}{2}} \Phi_{\beta }(|T-(s' \vee s)|^\ep)} +
     \frac{c_{\eqref{en:timenabu-eq}} \,  \Ps {\, s'-s \, }  {\frac{\ep}{2}} }{ |T-(s' \vee s)|^{\frac{1}{2}} \Phi_{\beta }(|T-(s' \vee s)|^\ep)}   \notag\\
&\le c_\eqref{Z-differences} \, \kappa_\beta \,
     \frac{ \,     \Ps {\, s'-s \, }  {\frac{\ep}{2}}  }{ |T-(s' \vee s)|^{\frac{1}{2}} \Phi_{\beta }(|T-(s' \vee s)|^\ep)}
\end{align}
with $c_\eqref{Z-differences}:= c_{\eqref{nabla u-diff-eq}} + c_{\eqref{en:timenabu-eq}}$, where we use \cref{Psi-lemma}.
This yields
\begin{align*}
     \int_{]t_{k-1}, T]} \cW_2^{2}([Z^{t_o,\xi}_{\os\wedge t_{n-1}}], [Z^{t_o,\xi}_{s\wedge t_{n-1}}])  ds 
&\le \int_{]t_{k-1}, t_{n-1}]} \left\Vert Z^{t_o,\xi}_{\os}- Z^{t_o,\xi}_{s}  \right\Vert_{L^2}^{2} ds\\
&\le c_{\eqref{Z-differences}}^2 \left [ \kappa_\beta  \left | \frac{T}{n} \, \right |_  {(\frac{\ep}{2},\beta)} \right ]^2  \int_{]t_{k-1}, t_{n-1}]}  \frac{d s}{ |T-\os| \Phi_{\beta }(|T-\os|^\ep)^{2}} \\
&\le c_{\eqref{Z-differences}}^2 c_1\,  n^{-{{\ep}}}    |\log (n+1)|^{{2}\beta},
\end{align*}
for some $c_1=c_1(T,\ep,\beta)>0$, where for  last inequality also \cref{phi_remark} was used (as we integrate only up to $t_{n-1}$ we can shift
the function in the argument and then use  \cref{phi_remark}). By the same arguments we get
\begin{align*}
\int_{]t_{k-1}, T]}   \cW_2^{2}([Z^{t_o,\xi}_{{\sf f}_n,s\wedge t_{n-1}}], [Z^{t_o,\xi}_{{\sf f}_n,\os\wedge t_{n-1}}]) ds
 \le (c'_{\eqref{Z-differences}})^2 c_2  n^{-{{\ep} }}    |\log (n+1)|^{2\beta}
\end{align*}
for some $c_2=c_2(T,\ep,\beta)>0$. The remaining term we  split again into three terms,

\begin{align*}
&\int_{]t_{k-1}, T]}  \cW_2^{2}([Z^{t_o,\xi}_{s\wedge t_{n-1}}], [Z^{t_o,\xi}_{{\sf f}_n,s\wedge t_{n-1}}])ds \\
&\le 3 \int_{]t_{n-1}, T]}  \|Z^{t_o,\xi}_{s\wedge t_{n-1}} - Z^{t_o,\xi}_{s}\|_{L^{2}}^{2} ds + 3 \int_{]t_{k-1}, T]}  \|Z^{t_o,\xi}_{s} - Z^{t_o,\xi}_{{\sf f}_n,s}\|_{L^{2}}^{2} ds \\
& \quad + 3 \int_{]t_{n-1}, T]}  \|Z^{t_o,\xi}_{{\sf f}_n,s} - Z^{t_o,\xi}_{{\sf f}_n,s\wedge t_{n-1}}\|_{L^{2}}^{2} ds \\ \pagebreak 
& \le  3 c^2_{\eqref{cont_tilde_a_priori}} \, n^{-{2((\alpha \wedge \frac{\ep}{2}) +\frac{1}{2})}} +
       3\left [c_{\eqref{Z-differences}}^2 c_1 + (c'_{\eqref{Z-differences}})^2 c_2\right]\,  n^{-{{\ep}}}    |\log (n+1)|^{2\beta},
\end{align*}
where we applied  \cref{cont_tilde_a_priori} for the second term on the RHS and \cref{Z-differences} as before to the first and third term
(again, thanks to  \cref{regularity-for-mainBSDE}  we may use   \cref{Z-differences} for   $Z^{t_0,\xi}_{\sf f_n,s}$ as well, denoting the corresponding constant with $c'$). 
Hence the resulting order for \eqref{continuous-nwidetilde}  is  
\[ n^{-{{\ep}}}    |\log (n+1)|^{{ 2}\beta}.  \]
\underline{Estimate of \eqref{widetilde-bar-n}:}
We apply \cref{rate-singularity}  which gives
\begin{align*}
&       \int_{]t_{k-1}, T]}  \cW_2^2([Y^{t_o,\xi}_{{\sf f}_n,s}], [Y^{n,t_o,\xi}_{{\sf f}_n,s}]) \od \langle \rwB \rangle_s
      + \int_{]t_{k-1}, T]}  \cW_2^{2}([Z^{t_o,\xi}_{{\sf f}_n, s\wedge t_{n-1}}], [Z^{n,t_o,\xi}_{{\sf f}_n,s\wedge t_{n-1}}]) \od \langle \rwB \rangle_s  \\
&\leq T c_\eqref{wasserstein-Y}^2 n^{-2(\alpha \wedge \frac{ \ep}{2})}  \\
&     \quad + c_\eqref{wasserstein-Z}^2 n^{-2(\alpha \wedge \frac{ \ep}{2})} |\log (n+1)|^{2\beta}
      \int_{]t_{k-1}, T]}   \frac{d s}{|T-(\os\wedge t_{n-1})|\Phi_{\beta }(|T-(\os\wedge t_{n-1})|^\ep)^2}  \\
& \leq c_3 [c_\eqref{wasserstein-Y}^2+c_\eqref{wasserstein-Z}^2] n^{-2(\alpha \wedge \frac{\ep}{2})} |\log (n+1)|^{2\beta},
\end{align*}
by using again \cref{phi_remark} in the way before, and where $c_3=c_3(T,\ep,\beta)>0$. \smallskip

\underline{Estimate of \eqref{remainder-term}:} We start with
\begin{align}  \label{eq_finally_we_estimate_1}
    \int_{]t_{n-1}, T]}  \cW_2^{2}([Z^{n,t_o,\xi}_{{\sf f}_n,s\wedge t_{n-1}}], [Z^{n,t_o,\xi}_{{\sf f}_n,s}])d\langle \rwB \rangle_s
&= h \, \cW_2^{2}([Z^{n,t_o,\xi}_{{\sf f}_n, t_{n-1}}], [Z^{n,t_o,\xi}_{{\sf f}_n,T}]) \notag \\
&\le h \e |Z^{n,t_o,\xi}_{{\sf f}_n, t_{n-1}} -  Z^{n,t_o,\xi}_{{\sf f}_n,T}|^2 \notag \\
&\le 2 h \left( \e |Z^{n,t_o,\xi}_{{\sf f}_n, t_{n-1}}|^2  + \e|Z^{n,t_o,\xi}_{{\sf f}_n,T}|^2  \right).
\end{align}
First we show that $|Z^{n,t_o,\xi}_{{\sf f}_n,T}|$ is bounded:
For the solution to \eqref{eq:mainBSDEn} we use the scheme \eqref{z-fn-eqn}-\eqref{y-fn-eqn}.
Denoting by $\widetilde \E$  the expectation w.r.t. $\widetilde \zeta_n,$ independent of $B^{n,t_o,\xi}_{t_{n-1}}$,
by \eqref{z-fn-eqn} we have
\begin{align}
    | Z^{n,t_o,\xi}_{{\sf f}_n,T} |& = \left  |h^{-1/2}\,\e\left[Y^{n,t_o,\xi}_{{\sf f_n},t_n} \,\zeta_{n}\,|\,\m F^{n,t_o,\xi}_{t_{n-1}}\right]\right |
  = | h^{-1/2} \widetilde \e g(B^{n,t_o,\xi}_{t_{n-1}} + \sqrt{h}\widetilde\zeta_n)\tilde \zeta_n| \notag \\
&\le \frac{|g(B^{n,t_o,\xi}_{t_{n-1}} + \sqrt{h})-g(B^{n,t_o,\xi}_{t_{n-1}} -\sqrt{h})|}{2 \sqrt{h}} 
 \le \gho h^{\frac{\ep-1}{2}}. \label{eqn_Zn_upper_bound_g}
\end{align}
To estimate $\e|Z^{n,t_o,\xi}_{{\sf f}_n, t_{n-1}}|^2$ we first use \eqref{y-fn-eqn} and \eqref{z-fn-eqn} to get 
\begin{align*} 
    Z^{n,t_o,\xi}_{{\sf f}_n, t_{n-1}}
& = h^{-1/2} \e\left[ Y^{n,t_o,\xi}_{\f_n, t_{n-1}}\zeta_{n-1}\,|\,\m F^{n,t_o,\xi}_{t_{n-2}}\right]\\ 
& = h^{-1/2} \widetilde\e\left[g(B^{n,t_o,\xi}_{t_{n-2}} +\sqrt{h}\tilde\zeta_{n-1}+ \sqrt{h}\tilde\zeta_n)\widetilde \zeta_{n-1} \right]\\
&\quad+h^{1/2}\,\e\left[{\sf f}_n\left(t_n,B^{n,t_o,\xi}_{t_{n-1}},Y^{n,t_o,\xi}_{\f_n, t_{n-1}},
Z^{n,t_o,\xi}_{\f_n, t_{n}}\right)\zeta_{n-1}\,|\,\m F^{n,t_o,\xi}_{t_{n-2}}\right].
\end{align*}
Like above we have
\begin{align*}
\left |  h^{-1/2} \widetilde\e\left[g(B^{n,t_o,\xi}_{t_{n-2}} +\sqrt{h}\widetilde\zeta_{n-1}+ \sqrt{h}\widetilde\zeta_n)\tilde \zeta_{n-1} \right] \right | \le  \gho h^{\frac{\ep-1}{2}}.
\end{align*}
By Jensen's inequality we estimate the second term
\begin{align*}
&    \e  \left | h^{1/2}\,\e\left[{\f}_n\left(t_n,B^{n,t_o,\xi}_{t_{n-1}},Y^{n,t_o,\xi}_{\f_n, t_{n-1}},
                 Z^{n,t_o,\xi}_{\f_n, t_{n}}\right)\zeta_{n-1}\,|\,\m F^{n,t_o,\xi}_{t_{n-2}}\right]  \right |^2 \\
&\le h \e   \left |f\left(t_{n-1},B^{n,t_o,\xi}_{t_{n-1}},Y^{n,t_o,\xi}_{\f_n, t_{n-1}},
                           Z^{n,t_o,\xi}_{\f_n, t_{n}},[Y^{t_o,\xi}_{t_n}],  [Z^{t_o,\xi}_{t_{n-1}}]\right)\,  \right |^2 \\ 
&\le 2 h f_0^2 +  2h \e   \left |f\left(t_{n-1},B^{n,t_o,\xi}_{t_{n-1}},Y^{n,t_o,\xi}_{\f_n, t_{n-1}},
                           Z^{n,t_o,\xi}_{\f_n, t_{n}},[Y^{t_o,\xi}_{t_n}],  [Z^{t_o,\xi}_{t_{n-1}}]\right) -f_0 \right |^2 \\ 
&\le 2 h f_0^2 +  12h \e   \Bigg [   (\fhot)^2 \frac{ \left|t_{n-1}\right|^{2\alpha}}{T- t_{n-1}} 
                                    + \fhox^2 \E |B^{n,t_o,\xi}_{t_{n-1}}|^{2\ep} \\
&    \hspace{12em}                         + L_f^2  \left (   \E|Y^{n,t_o,\xi}_{\f_n, t_{n-1}}|^2
                                                   + \E |Z^{n,t_o,\xi}_{\f_n, t_{n}}|^2
                                                   + \| Y^{t_o,\xi}_{t_n} \|_{L^2}^2 
                                                   + \| Z^{t_o,\xi}_{t_{n-1}} \|_{L^2}^2 \right )\\
 & = 2 h f_0^2 +  12h \e   \Bigg [   (\fhot)^2 \frac{ \left|t_{n-1}\right|^{2\alpha}}{T- t_{n-1}} 
                                    + \fhox^2 \E |B^{n,t_o,\xi}_{t_{n-1}}|^{2\ep} 
        + L_f^2\left (\E |Z^{n,t_o,\xi}_{\f_n, t_{n}}|^2+\|g(B^{t_o,\xi}_T)\|_{L^2}^2 \right )  \\
&    \hspace{24em}                          + L_f^2  \left (   \E|Y^{n,t_o,\xi}_{\f_n, t_{n-1}}|^2
                                                + \| Z^{t_o,\xi}_{t_{n-1}} \|_{L^2}^2 \right ) \Bigg ].
\end{align*}
Also using \cref{eqn_Zn_upper_bound_g}, one has
\[ 2 h f_0^2 +  12h \left [ (\fhot)^2 \frac{ \left|t_{n-1}\right|^{2\alpha}}{T- t_{n-1}} 
  + \fhox^2 \E |B^{n,t_o,\xi}_{t_{n-1}}|^{2\ep} + L_f^2 \left ( \E |Z^{n,t_o,\xi}_{\f_n, t_{n}}|^2+\|g(B^{t_o,\xi}_T)\|_{L^2}^2 \right )
 \right ]  \le c^2_4 \]
with $c_4=c_4(T,\ep,\alpha,|g|_\ep,g(0),\fhot,\fhox,L_f,f_0,M)<\infty$. From \cref{lemma37} we get
\[     h \| Z^{t_o,\xi}_{t_{n-1}} \|_{L^2}^2 
   \le h \frac{c^2_\eqref{eqn:item:2:lemma37}}{|T-t_{n-1}|^{1-\ep}} 
    =  c^2_\eqref{eqn:item:2:lemma37}  h^\ep
   \le c^2_\eqref{eqn:item:2:lemma37}  T^\ep. \]
The estimate for $\e|Y^{n,t_o,\xi}_{\f_n, t_{n-1}}|^2 $  follows from
\cref{statement:a-priori-discrete}\eqref{item:1:statement:a-priori-discrete:general}
by setting $(\eta^1,f^1) := ( g(B^{n,t_o,\xi}_T), \fn)$  and $(\eta^2,f^2) := (0, 0)$ which gives
\begin{align*} 
     \e|Y^{n,t_o,\xi}_{\f_n, t_{n-1}}|^2 
&\le c_{\eqref{eqn:item:1:statement:a-priori-discrete:general}} \left(  \e |g(B^{n,t_o,\xi}_T)|^2 +  h \e    
     \left | f \left (t_{n-1},B^{n,t_o,\xi}_{t_{n-1}},0,0, [Y^{t_o,\xi}_{t_n}], [Z^{t_o,\xi}_{t_{n-1}}] \right ) \right |^2      \right)
 \le c_5^2
\end{align*}
with $c_5=c_5(T,\ep,\alpha,|g|_\ep,g(0),\fhot,\fhox,L_f,c_{f_0}^{(2)},f_0,M)<\infty$.
Summarizing, the term \eqref{remainder-term} is of order
$h^\ep$.
\medskip

\underline{Estimate of all terms:}
Collecting all terms,  yields to our first assertion as we get
\begin{align*}
 \E  \left| Y^{n,t_o,\xi}_{{\sf f}_n,t_{k-1}} -  Y^{n,t_o,\xi}_{t_{k-1}} \right|^2
+    \E \int_{]t_{k-1}, T[} \left| Z^{n,t_o,\xi}_{{\sf f}_n,s} -   Z^{n,t_o,\xi}_s \right|^2  d\langle \rwB \rangle_s   
\le  c   \frac{|\log (n+1)|^{2\beta}}{n^{2(\alpha \wedge \frac{ \ep}{2})}}.
\end{align*}
\bigskip

{\bf Inequality \cref{eqn:item:2:statement:disrcrete-rate:start-in-x}:}
We get from \cref{statement:a-priori-discrete}\eqref{item:2:statement:a-priori-discrete:special-1} for $\xi:= x$
that
\begin{align*}
&     \left|Y^{n,t_{k-1},x}_{{\sf f}_n,t_{k-1}} -   Y^{n,t_{k-1},x}_{t_{k-1}} \right|^2
      + \E \int_{]t_{k-1}, T[} \left| Z^{n,t_{k-1},x}_{{\sf f}_n,s} -   Z^{n,t_{k-1},x}_s \right|^2  d\langle \rwB \rangle_s   \\
&\le  c_{\eqref{eqn:item:2:statement:a-priori-discrete:special-1}}
      \left[ \int_{]t_{k-1}, T]} \left [ \cW_2^2([Y^{t_o,\xi}_s], [Y^{n,t_o,\xi}_s]) + \cW_2^{2}([Z^{t_o,\xi}_{s\wedge t_{n-1}}], [Z^{n,t_o,\xi}_s]) \right ]  d\langle \rwB \rangle_s \right] \\
&\le   c^2_{\eqref{eqn:item:2:statement:disrcrete-rate:start-in-x}} \frac{|\log (n+1)|^{2\beta}}{n^{2(\alpha \wedge \frac{ \ep}{2})}},
\end{align*}
where we use in the last step that
\begin{align*}
      \cW_2([Y^{n,t_o,\xi}_s], [Y^{t_o,\xi}_s])
& \le  \cW_2([Y^{n,t_o,\xi}_s], [Y^{n,t_o,\xi}_{{\sf f}_n,s}]) +\cW_2([Y^{n,t_o,\xi}_{{\sf f}_n,s}], [Y^{t_o,\xi}_s]), \\
      \cW_2([Z^{n,t_o,\xi}_s], [Z^{t_o,\xi}_{s\wedge t_{n-1}}] )
& \le \cW_2([Z^{n,t_o,\xi}_s], [Z^{n,t_o,\xi}_{{\sf f}_n,s}])  +\cW_2([Z^{n,t_o,\xi}_{{\sf f}_n,s}] ,[Z^{t_o,\xi}_{s\wedge t_{n-1}}]),
\end{align*} 
apply  \eqref{eqn:item:1:statement:disrcrete-rate:start-in-0}  and use the estimate we have done above for \eqref{W-Y-Y-n}.
Finally we use that
\[ Y_{t_{k-1}}^{n,t_{k-1},x} = U^n(t_{k-1},x)
   \sptext{.75}{and}{.75}
   Y_{\fn,t_{k-1}}^{n,t_{k-1},x} = U^n_\fn(t_{k-1},x)
   \sptext{.75}{for}{.75} k=j+1,\ldots,n. \]

{\bf Inequality \eqref{eqn:statement:Lexp_bound_difference_gradient_discrete_case}:}
Finally, we use \cref{statement:JN_discrete_time} to deduce from \eqref{eqn:item:2:statement:disrcrete-rate:start-in-x}
the relation \eqref{eqn:statement:Lexp_bound_difference_gradient_discrete_case}.
For the following we fix $k\in \{j+1,\ldots,n\}$, $x\in \R$, and set
$A_0:= 0$ and
\[     A_l 
   := \begin{cases} 
       0 &: l=0\\
       \int_{]t_{k-1},t_{k-1+l}]} \left| Z^{n,t_{k-1},x}_{{\sf f}_n,s} -   Z^{n,t_{k-1},x}_s \right|^2  d\langle \rwB \rangle_s 
         &: l=1,\ldots,n-k
       \end{cases}.
 \]
Define $\cG_l:= \sigma (\zeta_k,\ldots,\zeta_{k-1+l})$ for $l=1,\ldots,n-k$ and 
$\cG_0$ to be the trivial $\sigma$-algebra.
Then $(A_l)_{l=0}^{n-k}$ is $(\cG_l)_{l=0}^{n-k}$-adapted and for 
$l\in \{1,\ldots,n-k\}$ we get
\begin{align*} 
&    \E [A_{n-k} - A_{l-1}| \cG_l ] \\
& =   2^{-(n-k-l)} \sum_{ \zeta_{k+l}=\pm 1,\ldots,\zeta_{n-1}=\pm 1 }   \int_{]t_{k+l-2},t_{n-1}]} \left| Z^{n,t_{k-1},x}_{{\sf f}_n,s} -   Z^{n,t_{k-1},x}_s \right|^2  
     d\langle \rwB \rangle_s\\
& =     2^{-(n-k-l)} \sum_{ \zeta_{k+l}=\pm 1,\ldots,\zeta_{n-1}=\pm 1 }      \int_{]t_{k+l-2},t_{n-1}]} \left| Z^{n,t_{k+l-2},B_{t_{k+l-2}}^{n,t_{k-1},x}}_{{\sf f}_n,s} -  
       Z^{n,t_{k+l-2},B_{t_{k+l-2}}^{n,t_{k-1},x}}_{s} \right|^2  
     d\langle \rwB \rangle_s\\
&\le     2^{-(n-k-l)} \sum_{ \zeta_{k+l-1}=\pm 1,\ldots,\zeta_{n-1}=\pm 1 }    \int_{]t_{k+l-2},t_{n-1}]} \left| Z^{n,t_{k+l-2},B_{t_{k+l-2}}^{n,t_{k-1},x}}_{{\sf f}_n,s} -  
       Z^{n,t_{k+l-2},B_{t_{k+l-2}}^{n,t_{k-1},x}}_{s} \right|^2  
     d\langle \rwB \rangle_s  \\
&\le 2 c^2_{\eqref{eqn:item:2:statement:disrcrete-rate:start-in-x}}
      \frac{|\log (n+1)|^{2\beta}}{n^{2(\alpha \wedge   \frac{\ep}{2})}}. 
      \end{align*}
In the notation of \cref{discrBMO-def} this means that
\[\|(A_l)_{l=0}^{n-k}\|_{{\rm BMO}} \le 2 c^2_{\eqref{eqn:item:2:statement:disrcrete-rate:start-in-x}}
   \frac{|\log (n+1)|^{2\beta}}{n^{2(\alpha \wedge   \frac{\ep}{2})}}.    \]
Then  \cref{statement:JN_discrete_time} implies that
 \[     |A_{n-k}|_{M^0_{\varphi_1}} 
    \le 2 c_\eqref{eqn:statement:JN_discrete_time} 
        c^2_{\eqref{eqn:item:2:statement:disrcrete-rate:start-in-x}} \frac{|\log (n+1)|^{2\beta}}{n^{2(\alpha \wedge\frac{\ep}{2})}}.  \]
\qed

\subsection{Proof of \texorpdfstring{\cref{statement:estimate_exponential_norm_with_Psi}}{lemma91}}
\label{sec:proof:statement:estimate_exponential_norm_with_Psi}

For the proof we write $\eta=c \theta$ where
 \[ \sup_{p\in [1,\infty[} \frac{\| \theta \|_{L^p}}{p} \le 1.\]
We recall that $\ps r^2 =    r^{2 \ep} \left( \log \Big (\frac{1}{r^{2\ep} \wedge r_\beta} \Big ) \right)^{ 2\beta}$ and
use that it holds for $\gamma,a,b>0$ and $x\ge 1$ that
\[ \frac{1}{(ab)\wedge r_\beta} \le \frac{1}{a\wedge\sqrt{r_\beta}}\frac{1}{b\wedge\sqrt{r_\beta}}
   \sptext{1}{and}{1}
   \log x \le \frac{1}{\gamma} x^{\gamma}. \]
Let $\gamma =   \frac{1 }{2 \beta p}$. Then we get
\begin{align*}
&    \E \ps {c \theta} ^p  \\
&  = \E \left [|c \theta|^{\ep p}\left( \log \Big (\frac{1}{|c \theta|^{2 \ep}\wedge r_\beta} \Big ) \right)^{\beta p}\right ]\\
& \le 2^{(p \beta -1)^+} \left \{ \E  \left [ |c \theta|^{\ep p}\left( \log \Big (\frac{1}{c^{2\ep }\wedge \sqrt{r_\beta}} \Big ) \right)^{\beta p} \right ]
      + \E  \left [ |c \theta|^{\ep p}\left( \log \Big (\frac{1}{|\theta|^{2\ep}\wedge \sqrt{r_\beta}} \Big ) \right)^{\beta p} \right ] \right \} \\
& = c^{\ep p} 2^{(p \beta -1)^+} \left \{  \left| \log \Big (\frac{1}{c^{2\ep}\wedge \sqrt{r_\beta}} \Big ) \right |^{\beta p}  \E  \left [ |\theta|^{\ep p} \right ]
    +  \E  \left [ |\theta|^{\ep p}\left( \log \Big (\frac{1}{|\theta|^{2\ep}\wedge \sqrt{r_\beta}} \Big ) \right)^{\beta p} \right ] \right \}.
\end{align*}
Now, using $\beta p \gamma = \frac{1}{2}$,
\begin{align*}
 \E  \left [ |\theta|^{\ep p}\left( \log \Big (\frac{1}{|\theta|^{2\ep}\wedge \sqrt{r_\beta}} \Big ) \right)^{\beta p} \right ]
&\le \E \left [|\theta|^{\ep p} \frac{1}{\gamma^{\beta p} } \left | \frac{1}{|\theta|^{2 \ep}\wedge r_\beta} \right |^{\beta p\gamma}\right ]\\
&\le \E \left [|\theta|^{\ep p} \frac{1}{\gamma^{\beta p} } \left | \frac{1}{|\theta|^{2 \ep}} + \frac{1}{r_\beta}\right |^{\beta p\gamma}\right ]\\
&\le  \frac{1}{\gamma^{\beta p} } \E \left [|\theta|^{\ep p} \left [ \frac{1}{|\theta|^{{\ep}}} +  \frac{1}{\sqrt{r_\beta}}\right ]\right ]\\
& =  (2 \beta p )^{\beta p} \left [ \E |\theta|^{\ep (p-1)}  +   \E |\theta|^{\ep p} \frac{1}{\sqrt{r_\beta}}\right ].
\end{align*}
Finally, we observe for $\ep (p-1) \ge 1$ that
\[    \left ( (\E |\theta|^{\ep (p-1)})^\frac{1}{\ep (p-1)} \right )^\frac{\ep (p-1)}{p}
   \le (\ep (p-1))^{\frac{\ep (p-1)}{p}}
   \le (\ep p)^{\frac{\ep (p-1)}{p}}.\]
So we get for $p\ge 1 +\frac{1}{\ep}$ that
\begin{multline*}
    \left \| \ps {c \theta}  \right \|_{L^p}^p \\
    \le c^{\ep p} 2^{(p \beta -1)^+}  \left \{
           \left| \log \Big (\frac{1}{c^{2\ep}\wedge \sqrt{r_\beta}} \Big ) \right |^{\beta p}  (\ep p)^{\ep p}
        +  (2 \beta p )^{\beta p} \left [ (\ep (p-1))^{\ep (p-1)} + (\ep p)^{\ep p} \frac{1}{\sqrt{ r_\beta}}  \right ] \right \}.
\end{multline*}
Taking the $p$--th root and using
$\log \left (\frac{1}{c^{2\ep}\wedge \sqrt{r_\beta}}\right )\ge \beta > \frac{1}{2}$
the result follows.  The range $p\ge 1 +\frac{1}{\ep}$  one can extend
to $p\ge 1$ by using the monotonicity of the $L_p$-norm in $p$ and by adding the
factor $(1+\frac{1}{\ep})^{\ep+\beta}$ on the RHS.
\qed


\bibliographystyle{alpha}

\end{document}